\documentclass[cjm]{ipart}

\pubyear{2014}
\volume{0}
\issue{0}
\firstpage{1}
\lastpage{1}

\makeatletter
\let\c@author\relax
\makeatother
\usepackage{etex}
\usepackage[style=alphabetic,doi=false,isbn=false,url=false,maxbibnames=99,backend=bibtex]{biblatex}
\DeclareFieldFormat[article,incollection,unpublished]{title}{#1}\renewbibmacro{in:}{\ifentrytype{article}{}{\printtext{\bibstring{in}\intitlepunct}}}
\usepackage{fixltx2e}

\bibliography{breuil-schneider}

\RequirePackage{amsmath}
\RequirePackage{amssymb}
\RequirePackage{amsxtra}
\RequirePackage{amsfonts}
\RequirePackage{latexsym}
\RequirePackage{euscript}
\RequirePackage{amscd}
\RequirePackage{amsthm}
\RequirePackage{xypic}
\usepackage{stmaryrd}
\usepackage{mathtools}
\usepackage{bm}
\xyoption{all}

\newcommand{\BS}{\operatorname{BS}}
\newcommand{\ybar}{\overline{y}}

\newcommand{\Rinfty}{{R_\infty}}

\let\emptyset\varnothing

\def\A{\mathbb A}

\def\C{\mathbb C}
\def\F{\mathbb F}

\def\Q{\mathbb{Q}}

\def\T{\mathbb{T}}

\def\Z{\mathbb{Z}}

\def\Qbar{\overline{\Q}}

\def\Zbar{\overline{\Z}}

\def\m{\mathfrak m}

\def\Mod{\mathrm{Mod}}

\def\unif{\varpi}
\def\pr{\mathrm{pr}}

\def\alg{\mathrm{alg}}
\def\lalg{\mathrm{l.alg}}
\def\ladm{\mathrm{l.adm}}

\def\rig{\mathrm{rig}}
\def\ab{\mathrm{ab}}

\def\sm{\mathrm{sm}}

\def\cont{\mathrm{cont}}

\def\GL{\operatorname{GL}}

\def\Gal{\mathrm{Gal}}

\def\End{\mathrm{End}}

\def\Art{\mathop{\mathrm{Art}}\nolimits}

\def\Hom{\mathop{\mathrm{Hom}}\nolimits}

\def\Spec{\mathop{\mathrm{Spec}}\nolimits}
\def\Spf{\mathop{\mathrm{Spf}}\nolimits}
\def\Frob{\mathop{\mathrm{Frob}}\nolimits}

\def\Ind{\mathop{\mathrm{Ind}}\nolimits}
\newcommand{\cInd}{{\operatorname{c-Ind}}}

\def\rhobar{\overline{\rho}}

\def\cotimes{\widehat{\otimes}}

\def\WD{\mathrm{WD}}
\def\St{\mathrm{St}}

\def\m{\mathfrak{m}}

\def\iso{\xrightarrow{\sim}}

\def\fkZ{\mathfrak{Z}}
\DeclareMathOperator{\MaxSpec}{MaxSpec}
\DeclareMathOperator{\Tor}{Tor}

\swapnumbers

\newcommand{\onto}{\twoheadrightarrow}
\newcommand{\ra}{\rightarrow}

\newcommand{\into}{\hookrightarrow}

\newcommand{\isoto}{\xrightarrow{\sim}}

\newlength{\ownl}

\newcommand{\ad}{{\operatorname{ad}\,}}

\newcommand{\Ann}{{\operatorname{Ann}\,}}

\newcommand{\diag}{{\operatorname{diag}}}

\renewcommand{\Im}{{\operatorname{Im}\,}}

\newcommand{\Irr}{{\operatorname{Irr}}}

\newcommand{\nind}{\mathop{i}}

\newcommand{\rec}{{\operatorname{rec}}}

\newcommand{\Res}{{\operatorname{Res}}}

\newcommand{\tr}{{\operatorname{tr}\,}}

\newcommand{\wt}[1]{\widetilde{#1}}

\newcommand{\Sp}{\operatorname{Sp}}

\newcommand{\loc}{{\operatorname{loc}}}

\newcommand{\op}{{\operatorname{op}}}

\newcommand{\semis}{{\operatorname{ss}}}

\newcommand{\univ}{{\operatorname{univ}}}

\newcommand{\CO}{{\mathcal{O}}}

\newcommand{\CX}{{\mathcal{X}}}

\newcommand{\cA}{\mathcal{A}}

\newcommand{\cG}{\mathcal{G}}
\renewcommand{\cH}{\mathcal{H}}

\newcommand{\cM}{\mathcal{M}}
\newcommand{\cN}{\mathcal{N}}
\newcommand{\cO}{\mathcal{O}}

\newcommand{\cP}{\mathcal{P}}
\newcommand{\cQ}{\mathcal{Q}}
\newcommand{\cS}{\mathcal{S}}

\newcommand{\cX}{\mathcal{X}}

\newcommand{\ga}{{\mathfrak{a}}}
\newcommand{\gb}{{\mathfrak{b}}}

\newcommand{\gd}{{\mathfrak{d}}}

\newcommand{\gp}{{\mathfrak{p}}}

\newcommand{\barR}{\overline{{R}}}

\newcommand{\tF}{\widetilde{{F}}}
\newcommand{\tG}{\widetilde{{G}}}

\newcommand{\tQ}{\widetilde{{Q}}}

\newcommand{\tT}{\widetilde{{T}}}

\newcommand{\tZ}{\widetilde{{Z}}}

\newcommand{\tp}{\widetilde{\mathfrak{p}}}

\newcommand{\tv}{{\widetilde{{v}}}}

\newcommand{\alphabar   }{\overline{\alpha  }}

\newcommand{\epsilonbar    }{\overline{\epsilon}}
\newcommand{\varepsilonbar  }{\overline{\varepsilon}}

\newcommand{\psibar   }{\overline{\psi}}

\def\RCS$#1: #2 ${\expandafter\def\csname RCS#1\endcsname{#2}}
\RCS$Revision: 85 $
\RCS$Date: 2011-12-16 18:54:17 -0600 (Fri, 16 Dec 2011) $

 \newcommand{\bigO}{\mathcal{O}}

 \newcommand{\p}{\mathfrak{p}}

\newcommand{\bb}{\mathbb}
\newcommand{\mc}{\mathcal}
\newcommand{\mf}{\mathfrak}

\newcommand{\rbar}{{\bar{r}}}
\newcommand{\sbar}{{\bar{s}}}

\newcommand{\HT}{\operatorname{HT}}

 \newcommand{\Qp}{{\Q_p}}

\newcommand{\Qpbar}{{\overline{\Q}_p}}
\newcommand{\Qpbartimes}{{\overline{\Q}_p^\times}}

 \newcommand{\OO}{\mathcal O}
\DeclareMathOperator{\wtimes}{\widehat{\otimes}}
\newcommand{\mm}{\mathfrak m}
\newcommand{\qqq}{\mathfrak q}
\newcommand{\pp}{\mathfrak p}
\newcommand{\Zfin}{ Z\mathrm{-fin}}
\newcommand{\proaug}{\mathrm{pro.aug}}

\usepackage{hyperref}

\newtheorem{thm}[subsection]{Theorem}

\newtheorem{lem}[subsection]{Lemma}

\newtheorem{ithm}{Theorem}

\newtheorem{iconj}[ithm]{Conjecture}

\newtheorem{cor}[subsection]{Corollary}
\newtheorem{conj}[subsection]{Conjecture}

\newtheorem{prop}[subsection]{Proposition}

\theoremstyle{definition}

\theoremstyle{remark}
\newtheorem{remark}[subsection]{Remark}
\newtheorem{rem}[subsection]{Remark}

\newtheorem{condition}[subsection]{Condition}

\def\numequation{\addtocounter{subsection}{1}\begin{equation}}
\def\nummultline{\addtocounter{subsubsection}{1}\begin{multline}}
\def\anumequation{\addtocounter{subsection}{1}\begin{equation}}

\begin{document}

\begin{frontmatter}

\title{Patching and the $p$-adic local Langlands correspondence\protect\thanksref{T1}}
\runtitle{$p$-adic local Langlands correspondence}
\thankstext{T1}{A.C.\ was partially
  supported by the NSF Postdoctoral Fellowship DMS-1204465 and NSF Grant DMS-1501064. M.E.\ was
  partially supported by NSF grants DMS-1003339, DMS-1249548, and DMS-1303450. T.G.\ was
  partially supported by a Marie Curie Career Integration Grant, and by an
  ERC Starting Grant. D.G.\ was partially supported by NSF grants
  DMS-1200304 and DMS-1128155. V.P.\ was partially supported by the DFG,
  SFB/TR45. S.W.S.\ was partially supported by NSF grant DMS-1162250
  and a Sloan Fellowship.
}

\begin{aug}
 \author{Ana Caraiani\ead[label=e1]{caraiani@princeton.edu}},
 \address{Department of Mathematics, Princeton University, Fine Hall,
\\Washington Rd., Princeton, NJ 08544, USA \\ 
 \printead{e1}}
 \author{Matthew Emerton\ead[label=e2]{emerton@math.uchicago.edu}},
\address{Department of Mathematics, University of Chicago,\\
5734 S.\ University Ave., Chicago, IL 60637, USA \\
\printead{e2}}
\author{Toby Gee\ead[label=e3]{toby.gee@imperial.ac.uk}},
 \address{Department of
  Mathematics, Imperial College London,\\
  London SW7 2AZ, UK \\ \printead{e3}}
 \author{David Geraghty\ead[label=e4]{david.geraghty@bc.edu}},
 \address{Department of Mathematics, 301 Carney Hall,
  Boston College, \\Chestnut Hill, MA 02467, USA \\ \printead{e4}} 
\author{Vytautas
  Pa\v{s}k\=unas\ead[label=e5]{paskunas@uni-due.de}}\address{
   Fakult\"at f\"ur Mathematik, Universit\"at Duisburg-Essen,  \\45117 Essen, Germany \\ \printead{e5}}
\and
\author{Sug Woo Shin\ead[label=e6]{sug.woo.shin@berkeley.edu}} \address{Department of Mathematics, UC Berkeley, Berkeley, CA 94720, USA \\Korea Institute for Advanced Study, 85 Hoegiro,
Dongdaemun-gu, \\Seoul 130-722, Republic of Korea \\ \printead{e6}}

 \runauthor{A. Caraiani et al.}

\end{aug}
\begin{abstract}We use the patching method of Taylor--Wiles and Kisin
  to construct a candidate for the $p$-adic local Langlands
  correspondence for $\GL_n(F)$, $F$ a finite extension of $\Qp$. We
  use our construction to prove many new cases of the
  Breuil--Schneider conjecture.
 \end{abstract}

\tableofcontents

\end{frontmatter}

\setcounter{tocdepth}{1}
\section{Introduction}\label{sec:introduction}
Our goal in this paper is to use global methods (specifically, the
Taylor--Wiles--Kisin patching method) to construct a candidate for the
$p$-adic local Langlands correspondence for $\GL_n(F)$, where $F$ is an
arbitrary finite extension of $\Qp$, and $p\nmid 2n$. At present, the existence of such
a correspondence is only known for $\GL_1(F)$ (where it is given by local
class field theory), and for $\GL_2(\Qp)$ (\emph{cf.}\ \cite{MR2642409,paskunasimage}). We do not prove that our construction gives a purely local correspondence (and
it would perhaps be premature to conjecture that it should),
but we are able
to say enough about our construction to prove many new cases of the
Breuil--Schneider conjecture, and to reduce the general case of the
Breuil--Schneider conjecture (under some mild technical hypotheses) to standard conjectures related to automorphy lifting
theorems.

The idea that global methods could be used to
construct the correspondence is a natural one; the only proofs at present of the
classical local Langlands correspondence~\cite{ht,MR1738446, scholze_local_LL} are by global means,
and indeed the first proofs of local class field theory were global. The basic
idea is to embed a local situation into a global one, apply a global
correspondence (for example, the association of Galois representations to
certain automorphic forms),
and then to prove that the construction is
independent of the choice of global situation. In this paper, we carry out the
first half of this idea (although, in contrast to the constructions of~\cite{ht}, the direction of the correspondence we construct is from
representations of the local Galois group $G_F$
to representations of $\GL_n(F)$). We intend to return to the second half (investigating
the question of independence of the global situation) in subsequent
work. (See Section~\ref{sec:LL} for a discussion of the relationship of our
construction to conjectural extensions of the existing $p$-adic Langlands
correspondence for $\GL_2(\Qp)$.)

\subsection{A candidate for $p$-adic local Langlands}
Let $p\nmid 2n$ be prime, and let $\F$ be the residue field of the ring
of integers $\cO$ in some fixed field of coefficients $E$, a finite
extension of~$\Qp$. The main construction of the paper associates to any representation
$\rbar:G_F \to \GL_n(\F)$
a compact, torsion free $\cO$-module
$M_{\infty}$ equipped with commuting actions
of $\GL_n(F)$ and of a local $\cO$-algebra $R_{\infty}$, which is a formal
power series ring
over the universal lifting $\cO$-algebra $R_{\rbar}^{\square}$. 

If $r: G_F \to \GL_n(E)$ is a continuous lifting of $\rbar$ (with respect
to some suitable integral structure on $r$), and $y :R_{\infty} \to \cO$
is a homomorphism  compatible with the homomorphism $x:R_{\rbar}^{\square} \to
E$ arising from (an appropriate choice of integral structure on) $r$,
then we define $V(r) := (M_{\infty} \otimes_{R_{\infty},y}\cO)^d[1/p]$
(where $d$ denotes the Schikhof dual); this is a continuous, unitary
$E$-Banach space representation of $\GL_n(F)$, which we show is furthermore
admissible
(Proposition~\ref{prop: V(r) is admissible Banach} below).\footnote{In the
definition of $V(r)$ given there, certain additional restrictions are
placed on the choice of extension of $x$ to $y$; we suppress this technical
point in the present discussion.}

This construction is the key point of this paper,
and the most optimistic of us expect that $r\mapsto V(r)$ will realize the $p$-adic Langlands correspondence for $\GL_n(F)$.
At the moment we do not have enough control over the representations $V(r)$ for an arbitrary Galois representation $r$ to say anything definitive in this
direction in general. (Indeed,
{\em a priori}, $V(r)$ depends not just on $r$, but on the choices of $x$
and of $y$; furthermore, it is not evidently non-zero.)
However,
if $r$ satisfies the assumptions of Theorem \ref{main thm on BS,
  abstract intro version} below, namely is potentially crystalline and generic with regular Hodge--Tate weights,
then we can show (Theorem ~\ref{computation of locally algebraic vectors}) that
the subspace of locally algebraic vectors in $V(r)$  is isomorphic to
the locally algebraic representation
$\BS(r)$ associated to $r$ by Breuil and Schneider \cite{MR2359853}.
This result is very much in the spirit of a Langlands correspondence,
as $\BS(r)$
is a tensor product of an algebraic representation, which encodes the information about the Hodge--Tate weights of $r$, and a smooth representation, which corresponds to the Weil--Deligne representation of $r$ by the classical Langlands correspondence.

\subsection{The Breuil--Schneider conjecture}\label{subsec: intro BS}
Before giving more details of our construction, we discuss an
application of it to the Breuil--Schneider
conjecture~\cite{MR2359853}; this conjecture predicts that locally algebraic
representations of $\GL_n(F)$ admit invariant norms (and thus nonzero
completions to unitary Banach representations) if and only if they
arise from regular de Rham Galois representations by applying (a
generic version of) the classical local Langlands correspondence to
the corresponding Weil--Deligne representations. (This conjecture is
motivated by the case of $\GL_2(\Qp)$, where it is an immediate
consequence of known properties of the $p$-adic local Langlands
correspondence.) In one direction, Hu~\cite{MR2560407} showed that if
such a norm exists, the locally algebraic representation necessarily comes from a
regular de Rham representation.

The converse direction is largely open. We recall the conjecture in
more detail in Section~\ref{sec:breuilschneider} below, to which the
reader should refer for any unfamiliar notation or terminology. As we
remarked above, given
a de Rham representation $r:G_F\to\GL_n(\Qpbar)$ of regular weight,
in \cite{MR2359853} there is associated to $r$
a locally algebraic $\Qpbar$-representation
$\BS(r)$ of $\GL_n(F)$. The following is \cite[Conjecture
4.3]{MR2359853} (in the open direction).

\begin{iconj}\label{introconj: BS} If $r:G_F\to\GL_n(\Qpbar)$ is de
  Rham and has regular weight, then $\BS(r)$ admits a
  nonzero unitary Banach completion.
  \end{iconj}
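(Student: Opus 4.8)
The plan is to establish this conjecture --- for all $r$ whose residual representation admits a sufficiently controlled automorphic globalisation, and hence, granting standard automorphy lifting conjectures, in general --- from the construction and basic properties of a patched module carrying a $\GL_n(F)$-action. Fix a finite extension $E/\Qp$, with ring of integers $\cO_E$ and residue field $k$, over which $r$ and the objects below are defined; write $\bar r\colon G_F\to\GL_n(k)$ for the reduction of $r$, and let $\tau$ and $\lambda$ be the inertial type and the (regular, by hypothesis) Hodge type of $r$, read off from $\WD(r)$ and $\HT(r)$. The first step is to globalise $\bar r$: choose an imaginary CM field $\widetilde F$ with maximal totally real subfield $\widetilde F^+$, a place $\widetilde v\mid p$ of $\widetilde F$ that is split over $\widetilde F^+$ and satisfies $\widetilde F_{\widetilde v}\cong F$, and a continuous polarizable automorphic representation $\bar\rho\colon G_{\widetilde F}\to\GL_n(\Fpbar)$ with $\bar\rho|_{G_{\widetilde F_{\widetilde v}}}\cong\bar r$, arranged so that $\bar\rho$ satisfies the Taylor--Wiles hypotheses (adequate image, automorphy realised on a definite unitary group, controllable ramification at auxiliary places). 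Working with a definite unitary group guarantees that the relevant spaces of automorphic forms are functions on finite sets; in particular $\ell_0=0$, and the patching argument runs in its simplest form.

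Next, apply the Taylor--Wiles--Kisin method to the completed homology of the tower attached to $\bar\rho$, with infinite level at $\widetilde v$ and fixed tame level elsewhere. This produces a module $M_\infty$ over a ring $R_\infty=R_\infty^{\mathrm{loc}}[[x_1,\dots,x_g]]$, equipped with a commuting action of $\GL_n(F)$, where $R_\infty^{\mathrm{loc}}$ is a completed tensor product of framed local lifting rings, one factor of which is the unrestricted framed lifting ring $R_{\bar r}^{\square}$ of $\bar r$. The construction yields that $M_\infty$ is finitely generated over $R_\infty[[\GL_n(\cO_F)]]$, projective over $\cO[[\GL_n(\cO_F)]]$, $\cO$-flat, and supported on all of $\Spec R_\infty$; the last property, which is the incarnation of ``$R=\mathbb{T}$'', rests on the automorphy of $\bar\rho$ together with the numerical coincidence at the heart of the patching argument. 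Now let $\sigma(\tau)\otimes W(\lambda)$ be the locally algebraic representation of $\GL_n(\cO_F)$ attached to $(\tau,\lambda)$ by the inertial local Langlands correspondence, choose an $\cO$-lattice $\sigma^{\circ}$ inside it, and set $M_\infty(\sigma):=M_\infty\otimes_{\cO[[\GL_n(\cO_F)]]}\sigma^{\circ}$. This is a finite, $\cO$-torsion-free module over $R_\infty(\sigma):=R_\infty^{\mathrm{loc},\tau,\lambda}[[x_1,\dots,x_g]]$, where $R_\infty^{\mathrm{loc},\tau,\lambda}$ has the framed potentially crystalline lifting ring $R_{\bar r}^{\square,\tau,\lambda}$ of type $(\tau,\lambda)$ as a factor; by a Breuil--M\'ezard-type input --- maximal Cohen--Macaulayness of $M_\infty(\sigma)$ over $R_\infty(\sigma)$ together with knowledge of the irreducible components of $R_{\bar r}^{\square,\tau,\lambda}$ --- $M_\infty(\sigma)$ is supported on all of $R_\infty(\sigma)$, or at least on a component containing the point cut out by $r$.

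To conclude, note that $r$, being de Rham of type $(\tau,\lambda)$ with residual representation $\bar r$, determines a closed point of $\Spec R_{\bar r}^{\square,\tau,\lambda}[1/p]$; extending it arbitrarily over the remaining framing and formal variables yields an $\cO_E$-point $x\colon R_\infty\to\cO_E$ lying in the support of $M_\infty(\sigma)[1/p]$. Put $\Pi(r):=\Hom_{\cO_E}^{\cont}\!\big(M_\infty\,\widehat{\otimes}_{R_\infty,x}\,\cO_E,\,E\big)$ with its induced $\GL_n(F)$-action; this is an admissible unitary $E$-Banach space representation of $\GL_n(F)$, and it is nonzero precisely because $x\in\Supp_{R_\infty}M_\infty(\sigma)$. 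By local-global compatibility at $\ell=p$ for the patched construction (reducing to the classical statement), the nonzero space of locally algebraic vectors $\Pi(r)^{\mathrm{l.alg}}$ realises the smooth representation of $\GL_n(F)$ attached to $\WD(r)$ under the generic local Langlands correspondence, tensored with $W(\lambda)$; in particular there is a nonzero $\GL_n(F)$-equivariant map $\BS(r)\to\Pi(r)$. Pulling back the norm of $\Pi(r)$ along this map gives a nonzero $\GL_n(F)$-invariant seminorm on $\BS(r)$, and completing $\BS(r)$ with respect to it produces the required nonzero unitary Banach completion.

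The principal obstacle is the globalisation step: one cannot in general produce an automorphic $\bar\rho$ over a fixed CM field with $\bar\rho|_{G_{\widetilde F_{\widetilde v}}}\cong\bar r$ and the necessary control on its image, which is exactly why the method yields ``many new cases'' rather than the full conjecture, and why the general case reduces only conditionally --- on standard automorphy lifting conjectures --- to the patching machinery; the available partial results (via potential automorphy together with solvable descent arguments) must moreover be arranged so as to preserve the regularity of $\lambda$ and the shape of $\BS(r)$. Two further inputs, which are nonetheless available throughout the range of cases treated, are the Breuil--M\'ezard-type statement that $M_\infty(\sigma)$ has full support on the relevant component of $R_{\bar r}^{\square,\tau,\lambda}$ (requiring the geometry of these local lifting rings, e.g.\ in the Fontaine--Laffaille or otherwise generic situation) and local-global compatibility at $\ell=p$ for $\GL_n$ over CM fields, which pins down $\Pi(r)^{\mathrm{l.alg}}$ and is where the classical local Langlands correspondence enters. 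I expect the globalisation, and the descent problem it entails, to be the crux.
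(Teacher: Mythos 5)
Your proposal follows the same broad strategy as the paper --- globalise $\bar r$, patch a $\GL_n(F)$-representation $M_\infty$, specialise at $r$, and use local--global compatibility to identify the locally algebraic vectors --- but there are several substantial gaps and one outright error in the key steps.

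First, the claim that $M_\infty$ is ``supported on all of $\Spec R_\infty$'' as a consequence of ``$R=\mathbb T$'' plus the numerical coincidence is not correct and is not proven in the paper. The Taylor--Wiles--Kisin depth argument shows only that the support of $M_\infty(\sigma^\circ)$ is a \emph{union of irreducible components} of $\Spec R_\infty(\sigma)'$ (Lemma~\ref{lem:basic facts about def rings acting on patched modules}); it does not give full support, and the paper goes out of its way to emphasize that full support is unknown (Remark~\ref{rem: definition of an automorphic component}). To get $r$ into the support, the paper does not use a Breuil--M\'ezard-type analysis of components; rather, it invokes potential diagonalizability and the automorphy lifting theorems of \cite{BLGGT,blggU2} (Corollary~\ref{cor: our result on BS}) to place a global automorphic point on the component of $R_{\tp}^\square(\sigma)[1/p]$ containing $r$. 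This is why the theorems are stated for potentially diagonalizable, potentially crystalline, generic $r$, rather than for all de Rham $r$ of regular weight, and why the proposal's vague appeal to a ``Breuil--M\'ezard-type input'' and its omission of the genericity and potential-crystallinity hypotheses are real gaps.

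Second, the step you describe as ``local-global compatibility at $\ell=p$ for the patched construction (reducing to the classical statement)'' is where the technical heart of the paper actually lies, and it does not reduce formally to the classical statement. Because the patched module is assembled from $p$-power torsion modules, one must establish \emph{integral} control on the Hecke action; the paper proves that the local Langlands correspondence interpolates over the potentially crystalline deformation ring, i.e.\ that there is an $E$-algebra map $\eta\colon\cH(\sigma)\to R_{\tp}^\square(\sigma)[1/p]$ with the right specialisations (Theorem~\ref{thm: algebraic local Langlands}), and the boundedness needed to descend from $\cO_{\cX^\rig}$ to $R_{\tp}^\square(\sigma)[1/p]$ requires the estimates of Hu \cite{MR2560407} via Emerton's Jacquet module theory, plus Chenevier's pseudo-representation over the Bernstein centre. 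Only then does one prove that the tautological patched Hecke action agrees with this interpolated action (Theorem~\ref{thm:alpha equals beta}), which is exactly what is needed in the proof of Theorem~\ref{thm: BS for points in support of patched modules} to conclude that the $\cH(\sigma^\circ)$-action on $M_\infty(\sigma^\circ)\otimes_{R_\infty,y}\cO$ factors through $\chi_{\pi_\sm(r)}$. Your proposal presents this as a formality, which it is not.
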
In fact, the known properties of the $p$-adic local Langlands correspondence
  for $\GL_2(\Qp)$ suggest that there should even be a nonzero admissible
  completion. Conjecture~\ref{introconj: BS} was proved in the case that
  $\pi_\sm(r)$ is supercuspidal in~\cite[Theorem 5.2]{MR2359853}, and in the more general case that $\WD(r)$ is indecomposable in~\cite{sorensenBS}. (The notation is defined in \S \ref{subsec:notation}). The
  argument of~\cite{sorensenBS} is global. It makes use of a strategy of one of
  us (M.E.) who observed that if $r$ arises as the local Galois representation
  coming from an automorphic representation, then one can obtain an admissible
  completion from the completed cohomology of~\cite{MR2207783}, \emph{cf.}\
  Proposition 4.6 of~\cite{MR2181093} (and also~\cite{sorensenBS2}). However, as
  there are only countably many automorphic representations, it is not possible
  to say anything about most principal series representations in this way; indeed,
  as was already remarked in~\cite{MR2359853} (see the discussion before Remark
  5.7), the principal series case seems to be the deepest case of the conjecture.

  Other than for $\GL_2(\Qp)$, the only previous results in the
  general principal series case that we are aware of are those
  of~\cite{1302.3060,1207.4517,MR2872444,MR2441702}, which prove the conjecture for certain
  principal series cases for $\GL_2(F)$, under additional restrictions
  on the Hodge filtration of $r$. The methods of these papers do not
  seem to shed any light on the stronger question of the existence of
  admissible completions. Under the assumption that $p\nmid 2n$, which we make from now on, we
have associated an admissible unitary Banach
representation $V(r)$ to any continuous representation
$r:G_F\to\GL_n(\Qpbar)$. In order to prove Conjecture~\ref{introconj: BS},
it would be enough to establish that $V(r)$ contains a copy of
$\BS(r)$ when $r$ is de Rham of regular weight. We expect this to be true in general, and we are able to show that it is equivalent to proving a certain
automorphy lifting theorem.  The following is our main result in this direction. (See
sections~\ref{sec:patching} and~\ref{sec:breuilschneider} for any
unfamiliar terminology; note in particular that the hypothesis that $r$ lies on an
automorphic component does not imply that $r$ arises from the Galois
representation associated to an automorphic representation, but is
rather the much weaker condition that it lies on the same component
of a local deformation ring as some such representation. It is a
folklore conjecture (closely related to the problem of deducing the
Fontaine--Mazur conjecture from generalisations of Serre's conjecture
via automorphy lifting theorems) that every de Rham representation of
regular weight satisfies this condition. Note also that we call a potentially crystalline representation $r:G_F\to\GL_n(\Qpbar)$ \emph{generic} if
the smooth representation of $\GL_n(F)$ corresponding
via the classical local Langlands correspondence
to the Weil--Deligne representation underlying $r$
is  generic, i.e.\ admits a Whittaker model; see Section 2.3 of~\cite{kudla} for more details on this notion.)

\begin{ithm}[Theorem~\ref{thm: BS for points in support of patched
    modules} and Remark~\ref{rem: definition of an automorphic component}]
  \label{main thm on BS, abstract intro version}Suppose that $p\nmid
  2n$, that $r:G_F\to\GL_n(\Qpbar)$ is potentially crystalline of
  regular weight, and that $r$ is generic. Suppose further that $r$ lies on an
  automorphic component of the corresponding potentially crystalline
  deformation ring. Then $\BS(r)$ admits a nonzero unitary admissible
  Banach completion.
\end{ithm}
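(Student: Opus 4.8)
\medskip
\noindent\emph{Proof strategy.} The plan is to use the patched module $M_\infty$ and the patched ring $\Rinfty$ produced by the Taylor--Wiles--Kisin method in Section~\ref{sec:patching}: $M_\infty$ is finitely generated over $\cO[[\GL_n(\cO_F)]]$ and carries a commuting $\Rinfty$-action, and to a point $x\in\Spec\Rinfty$ one attaches the $\GL_n(F)$-Banach representation $V(x):=\Hom^{\cts}_{\cO}(M_\infty\hotimes_{\Rinfty,x}\cO,E)$, which is unitary, and admissible because $M_\infty$ is finitely generated over $\cO[[\GL_n(\cO_F)]]$. Given $r$ as in the statement, write $\rbar$ for its reduction, let $(\lambda,\tau)$ be its Hodge type and inertial type, and let $\sigma=\sigma(\lambda,\tau)=W_\lambda\otimes\sigmasm(\tau)$ be the associated locally algebraic type of $K:=\GL_n(\cO_F)$, where $W_\lambda$ is the algebraic representation of highest weight $\lambda$ (irreducible since the weight is regular) and $\sigmasm(\tau)$ is the smooth $K$-type attached to $\tau$ by the inertial local Langlands correspondence. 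First I would choose a point $x_r\in\Spec\Rinfty$ lying above the point of the framed potentially crystalline deformation ring $R^{\square,\lambda,\tau}_{\rbar}$ cut out by $r$ and lying on an automorphic component --- this is possible precisely because $r$ lies on an automorphic component of $R^{\square,\lambda,\tau}_{\rbar}$ --- and set $V(r):=V(x_r)$. It then suffices to construct a continuous $\GL_n(F)$-equivariant injection $\BS(r)\hookrightarrow V(r)$, as the closure of its image is then a nonzero unitary admissible Banach completion of $\BS(r)$.

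The key input is the ``$\sigma$-part'' $M_\infty(\sigma):=\bigl(\Hom^{\cts}_{\cO[[K]]}(M_\infty,(\sigma^\circ)^\vee)\bigr)^\vee$, a finitely generated module over the quotient $\Rinfty(\sigma)$ of $\Rinfty$ imposing the potentially crystalline condition of Hodge type $\lambda$ and inertial type $\tau$ at the place above $p$. The central output of the patching construction is that $M_\infty(\sigma)$, when nonzero, is maximal Cohen--Macaulay over $\Rinfty(\sigma)$; since $\Rinfty(\sigma)$ is equidimensional this forces the support of $M_\infty(\sigma)$ to be a union of irreducible components of $\Spec\Rinfty(\sigma)$. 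The genericity of $r$ makes $x_r$ a regular point of $\Spec\Rinfty(\sigma)[1/p]$, lying on a unique irreducible component, so that $M_\infty(\sigma)[1/p]$, being maximal Cohen--Macaulay over a regular local ring, is free of some constant rank $d$ in a neighbourhood of $x_r$. An automorphic component is, by definition, one on which this rank is positive, and Taylor's Ihara--avoidance argument ensures that the support of $M_\infty(\sigma)$ contains every component meeting a classical automorphic point, so that this notion matches the hypothesis of the theorem. Hence $d\geq 1$, i.e.\ $M_\infty(\sigma)\otimes_{\Rinfty(\sigma)}\kappa(x_r)$ is a nonzero $\kappa(x_r)$-vector space.

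To conclude I would transport this across the patching construction. There is a natural isomorphism $\Hom_K(\sigma,V(r))\cong\bigl(M_\infty(\sigma)\otimes_{\Rinfty(\sigma)}\kappa(x_r)\bigr)^\vee$, so $V(r)$ contains nonzero locally algebraic vectors on which $K$ acts through $\sigma$. Local--global compatibility at the place above $p$ in the patched picture says that the $\Rinfty$-action, together with the commuting Hecke action, on $M_\infty(\sigma)/x_r$ is governed by $\WD(r)$ via the generic version of the classical local Langlands correspondence; combining this with the theory of types --- $\sigmasm(\tau)$ detects exactly the smooth irreducible $\pi$ with $\rec(\pi)|_{I_F}\cong\tau$, with multiplicity one, and $\pi_\sm(r)$ is the unique generic such constituent because $r$ is generic --- one identifies the locally algebraic part of the $\GL_n(F)$-subrepresentation of $V(r)$ generated by the $\sigma$-vectors and extracts the desired embedding $\BS(r)=\pi_\sm(r)\otimes W_\lambda\hookrightarrow V(r)$.

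The hard part will be the nonvanishing $M_\infty(\sigma)\otimes_{\Rinfty(\sigma)}\kappa(x_r)\neq 0$, which involves two genuinely substantial ingredients. The first is the control of the support of $M_\infty(\sigma)$: that it is a union of irreducible components of $\Spec\Rinfty(\sigma)$ and contains every automorphic component. This is where the Taylor--Wiles--Kisin patching, Taylor's Ihara--avoidance trick at the places ramified away from $p$, and the necessary global inputs --- existence of Galois representations, classical local--global compatibility away from $p$, solvable base change to a CM field equipped with a suitable definite unitary group --- all enter, and it is exactly the mechanism that makes the ``automorphic component'' hypothesis usable. The second is enough local--global compatibility \emph{at} $p$ to be certain that the locally algebraic vectors of $V(r)$ at an automorphic point are governed by $\WD(r)$ through the generic local Langlands recipe and not by some smaller smooth representation. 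The genericity hypothesis is precisely what keeps both under control: it makes the potentially crystalline local deformation ring regular at $x_r$ (so that maximal Cohen--Macaulay becomes locally free and the automorphic components carry positive rank), and it makes $\pi_\sm(r)$ generic, which supplies the multiplicity-one and uniqueness statements used above.
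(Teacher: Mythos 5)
Your proposal follows the paper's argument in structure and in all the essential steps: construct $V(r)$ from the patched module $M_\infty$, show $M_\infty(\sigma^\circ)\otimes_{R_\infty,y}E\neq 0$ via the support-of-components lemma (Lemma~\ref{lem:basic facts about def rings acting on patched modules}) and the definition of automorphic component, use Frobenius reciprocity to get a nonzero $G$-map out of $\cInd_K^G\sigma$, invoke the interpolated local Langlands compatibility (Theorem~\ref{thm:alpha equals beta}) to show the Hecke action on the fiber is given by $\chi_{\pi_\sm(r)}$, and conclude via the genericity-based identification $\BS(r)\cong (\cInd_K^G\sigma)\otimes_{\cH(\sigma),\chi_{\pi_\sm(r)}}E$ of Corollary~\ref{cor: generic implies hecke iso}.

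Two points of your exposition are misattributed, though neither breaks the argument. First, you assert that genericity of $r$ is what makes $x_r$ a regular point of $\Spec R_\infty(\sigma)[1/p]$ and ``makes the potentially crystalline local deformation ring regular at $x_r$.'' In fact the generic fiber $R_{\tp}^\square(\sigma)[1/p]$ is regular for \emph{every} potentially crystalline Hodge type and inertial type, by Kisin (\cite[Thm.~3.3.8]{kisindefrings}); no genericity hypothesis is needed there. What genericity actually buys is (a) the irreducibility of $\BS(r)$, so that a nonzero $G$-map $\BS(r)\to V(r)$ is automatically injective, and (b) the isomorphism $\BS(r)\cong (\cInd_K^G\sigma)\otimes_{\cH(\sigma),\chi_{\pi_\sm(r)}}E$ via Proposition~\ref{stronger_dat} and Corollaries~\ref{generic_sub}--\ref{cor: generic implies hecke iso}, which is what lets you pass from $\Hom_G(\cInd_K^G\sigma,V(r))\ne 0$ and the Hecke-eigenvalue computation to $\Hom_G(\BS(r),V(r))\ne 0$. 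Second, Ihara avoidance plays no role in this theorem: the hypothesis is that $r$ lies on an automorphic component, and ``automorphic component'' is \emph{defined} (Remark~\ref{rem: definition of an automorphic component}) as a component in the support of $M_\infty(\sigma^\circ)[1/p]$, so the nonvanishing of the fiber is built directly into the hypothesis. Relating automorphic components to classical automorphic points, and hence the potentially-diagonalizable and automorphy-lifting input, is the content of Corollary~\ref{cor: our result on BS}, not of this theorem.
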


By taking known automorphy lifting theorems, in particular those proved in
~\cite{BLGGT}, we are able to deduce new cases of
Conjecture~\ref{introconj: BS}. In particular, we deduce the following
result (Corollary~\ref{cor: specifc cases of BS}).

\begin{ithm}\label{main thm on BS, explicit intro version}Suppose that $p>2$, that
    $r:G_F\to\GL_n(\Qpbar)$ is de Rham of regular weight, and that $r$ is
    generic. Suppose further that either
    \begin{enumerate}
    \item $n=2$, and $r$ is potentially Barsotti--Tate, or
    \item $F/\Qp$ is unramified and $r$ is crystalline with
      Hodge--Tate weights in the extended Fontaine--Laffaille
      range, and $n\ne p$.\end{enumerate}
Then  $\BS(r)$ admits a nonzero unitary admissible
    Banach completion.
\end{ithm}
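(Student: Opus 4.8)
The plan is to deduce Theorem~\ref{main thm on BS, explicit intro version} from Theorem~\ref{main thm on BS, abstract intro version}, the entire content being to check, in each of the two cases, that $r$ lies on an automorphic component of the relevant deformation ring. In both cases the hypotheses of Theorem~\ref{main thm on BS, abstract intro version} hold: $r$ is potentially crystalline of regular weight (in case~(1) a potentially Barsotti--Tate representation is potentially crystalline with Hodge--Tate weights $\{0,1\}$, which we may take regular after a twist; in case~(2) $r$ is crystalline), $r$ is generic by hypothesis, and $p\nmid 2n$ (in case~(2) we will in addition use the hypothesis $n\ne p$). Write $(\tau,\lambda)$ for the inertial and Hodge types of $r$ (so $\tau$ is trivial in case~(2)), put $\rbar := r\bmod\varpi$, and let $R_{\rbar}^{\square}(\tau,\lambda)$ be the corresponding framed potentially crystalline deformation ring of $\rbar$, normalised as in Section~\ref{sec:patching}. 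By Theorem~\ref{main thm on BS, abstract intro version} it is enough to show that $r$ defines a point of an automorphic component of $\Spec R_{\rbar}^{\square}(\tau,\lambda)$, that is, of a component meeting the support of the patched module.

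First I would globalise $\rbar$. One chooses an imaginary CM field $\widetilde F$, split at $p$, with a place $\widetilde w\mid p$ such that $\widetilde F_{\widetilde w}\cong F$, and -- using the known potential automorphy theorems (Khare--Wintenberger, Barnet-Lamb--Gee--Geraghty, \emph{cf.}\ \cite{BLGGT}), together with the standard device of adding auxiliary Taylor--Wiles places -- produces a polarisable automorphic Galois representation over $\widetilde F$ whose reduction $\rbar_{\mathrm{gl}}$ restricts (up to twist) to $\rbar$ at $\widetilde w$, which has large enough image (possible because $r$, hence $\rbar$, is generic), and which satisfies the running hypotheses of the patching construction of Section~\ref{sec:patching}. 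This puts us in a position to apply automorphy lifting theorems at $\widetilde w$ for the types we need.

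Now the case analysis. In case~(2), since $F/\Qp$ is unramified, $\tau$ is trivial, and the Hodge--Tate weights of $r$ lie in the extended Fontaine--Laffaille range with $n\ne p$, the crystalline deformation ring $R_{\rbar}^{\square}(\lambda)$ is formally smooth over $\Zpbar$ by (the relevant extension of) Fontaine--Laffaille theory; in particular $\Spec R_{\rbar}^{\square}(\lambda)$ is irreducible, so it suffices to exhibit a single automorphic point of it. As $R_{\rbar}^{\square}(\lambda)\ne 0$ (it contains $r$), some Jordan--H\"older factor of the reduction of the algebraic representation of highest weight $\lambda$ is a Serre weight of $\rbar$; by the known cases of the weight part of Serre's conjecture in the Fontaine--Laffaille range, $\rbar_{\mathrm{gl}}$ is then automorphic of a compatible Serre weight, which produces an automorphic lift of $\rbar_{\mathrm{gl}}$ that is crystalline of Hodge type $\lambda$ at $\widetilde w$; its restriction to $G_{\widetilde F_{\widetilde w}}$ is an automorphic point of the irreducible ring $\Spec R_{\rbar}^{\square}(\lambda)$, so $r$ lies on an automorphic component. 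In case~(1), with $n=2$ and $r$ potentially Barsotti--Tate, the irreducible components of $R_{\rbar}^{\square}(\tau,\lambda)$ are explicitly known (Kisin, Gee, Gee--Kisin, Sander), and one shows that every one of them is automorphic by combining the Breuil--M\'ezard conjecture for $\GL_2$ (known in the cases at hand by work of Kisin, Pa\v{s}k\=unas, Hu--Tan, Gee--Kisin and Sander) -- which describes the cycles in the special fibre in terms of Serre-weight deformation rings, each automorphic when non-zero -- with known results on the existence of automorphic lifts of prescribed inertial type (Gee, Barnet-Lamb--Gee--Geraghty, Gee--Kisin). Since $r$ lies on some component of $R_{\rbar}^{\square}(\tau,\lambda)$, it lies on an automorphic one. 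In either case, Theorem~\ref{main thm on BS, abstract intro version} now shows that $\BS(r)$ admits a nonzero unitary admissible Banach completion.

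The hard part is the case analysis of the previous paragraph, namely identifying the automorphic components of the local potentially crystalline deformation ring. In case~(2) this comes down to the formal smoothness furnished by Fontaine--Laffaille theory together with the production of a global crystalline lift of the correct weight via known cases of Serre's conjecture and an automorphy lifting theorem; in case~(1) it rests on the full strength of the (known) Breuil--M\'ezard conjecture for $\GL_2$ and of the classification of the components of potentially Barsotti--Tate deformation rings. A subsidiary difficulty is to arrange the globalisation so that $\rbar_{\mathrm{gl}}$ simultaneously has large image, the prescribed local behaviour at $\widetilde w$ and at the auxiliary places, and all the properties needed to run the patching argument; this is where the genericity of $r$ and the assumption $p>2$ are used.
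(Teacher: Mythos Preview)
Your overall strategy—reducing to Theorem~\ref{main thm on BS, abstract intro version} by showing that $r$ lies on an automorphic component—is the same as the paper's, but the paper executes this reduction very differently and much more cleanly. Rather than analysing the components of the local deformation ring directly in each case, the paper proves an intermediate result (Corollary~\ref{cor: our result on BS}): any \emph{potentially diagonalizable} $r$ lies on an automorphic component, because Theorem~A.4.1 of \cite{blggU2} produces a global automorphic lift landing on the same component. With this in hand, both cases collapse to a one-line citation: in case~(1), potentially Barsotti--Tate two-dimensional representations are potentially diagonalizable by Lemma~4.4.1 of \cite{geekisin}; in case~(2), crystalline representations in the extended Fontaine--Laffaille range are potentially diagonalizable by the main result of \cite{GaoLiu12}. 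There is no need to invoke Breuil--M\'ezard, component classifications, or Serre-weight arguments.

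Your argument in case~(2) has a genuine gap. You claim that $R_{\rbar}^{\square}(\lambda)$ is formally smooth ``by (the relevant extension of) Fontaine--Laffaille theory'', but the statement concerns the \emph{extended} Fontaine--Laffaille range, where the Hodge--Tate weights may differ by exactly $p-1$. Classical Fontaine--Laffaille theory, and the smoothness it provides, applies only when the gap is at most $p-2$; in the extended range the functor from Fontaine--Laffaille modules to Galois representations is no longer fully faithful, and smoothness of the deformation ring is not available by these methods. This is precisely why the paper cites \cite{GaoLiu12}: that paper establishes potential diagonalizability (not smoothness) in the extended range, which is what one actually needs. A second, smaller issue: you say genericity of $r$ is used to arrange large image in the globalisation, but genericity of $r$ is a condition on $\WD(r)$ and has nothing to do with the image of $\rbar$; the large-image property is arranged separately (see Section~\ref{subsec:globalization}).
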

Actually, we prove a more general result (Corollary~\ref{cor: our result
  on BS}) which establishes the conjecture for potentially diagonalisable
representations; conjecturally, every potentially crystalline
representation is potentially diagonalisable. We remark that while we expect
these results to extend to potentially
semistable (rather than just potentially crystalline) representations, and to
non-generic representations,
we have restricted to the potentially crystalline case for
two reasons: we can use the main theorems
of~\cite{BLGGT} without modification, and we do not have to consider issues
related to the possible reducibility of $\BS(r)$.

\subsection{The patching construction}
\label{subsec:intro overview}
In the proof of the classical local Langlands
correspondence~\cite{ht,MR1738446}, the globalisation argument
uses a reduction to the supercuspidal case (via the classification of irreducible
smooth representations of $\GL_n(F)$ given
in~\cite{BZ77ENS,Zel80}), and then uses
trace formula methods to realise supercuspidal representations as the
local components of cuspidal automorphic representations. No such
argument is possible in our setting; there are only countably many
automorphic representations, but already for $\GL_2(\Qp)$ there are
uncountably many irreducible $p$-adic Galois representations (even up to twist).

It is natural to hope that in the $p$-adic setting, one could carry
out an analogous globalisation using ``$p$-adic automorphic
representations'', such as those arising from the completed cohomology
of~\cite{MR2207783}. However, since the locally algebraic vectors in
completed cohomology are computed by classical automorphic
representations, one cannot expect to see any regular de Rham Galois
representations in completed cohomology other than those arising from
classical automorphic representations.

The globalisation argument in
the proof of classical local Langlands is effectively a result showing
the Zariski-density of automorphic points in the Bernstein spectrum;
the analogous result for $p$-adic local Langlands (or rather, for the
part of it pertaining to regular de Rham representations) would be a
Zariski-density result for automorphic points in the corresponding
local Galois deformation rings. This is not known in general, but
strong results in this direction follow from the Taylor--Wiles--Kisin
patching method, which provides a Zariski-density result for a
non-empty collection of components of a local deformation ring (and in
general shows that each component either contains no automorphic
points, or a Zariski-dense set of points;
as mentioned above, the
problem of showing that each component contains an automorphic point
is closely related to the problem of deducing the Fontaine--Mazur
conjecture from generalisations of Serre's conjecture, \emph{cf.}
Remark~5.5.3 of~\cite{emertongeerefinedBM}).

The Taylor--Wiles--Kisin method patches together spaces of
automorphic forms with varying tame level. Traditionally, the weight and the
$p$-part of the level of these forms is fixed, and one obtains a
patched module for a certain universal local deformation ring corresponding
to de Rham representations of fixed Hodge--Tate weights, and a fixed
inertial type. In the present paper, we instead vary over all weights and levels at $p$,
obtaining a module $M_\infty$ over the unrestricted local deformation ring
(with some power series variables adjoined). By
construction, $M_\infty$ naturally has an action of $\GL_n(\cO_F)$;
by keeping track of the action of the Hecke operators at $p$, we are
able to promote this to an action of $\GL_n(F)$. Dualising the fibre of
this patched module at the point corresponding to a particular Galois
representation $r$, and inverting $p$, gives the unitary admissible
Banach representation $V(r)$ that we seek. The condition that $p\nmid 2n$ is
needed to employ the Taylor--Wiles--Kisin method (for example, this condition is
necessary in order to be able to appeal to various results from~\cite{BLGGT}), but
we suspect that it is not ultimately needed to carry out variants of these
constructions.

We do not know whether it is reasonable to expect that our construction is purely local, and thus defines
a $p$-adic local Langlands correspondence; this amounts to the problem of
showing  that the
patched modules that we construct are purely local objects. For
some weak evidence in this direction, see~\cite{emertongeesavitt}, which proves a related result for lattices
corresponding to certain 2-dimensional tamely potentially Barsotti--Tate
representations. It can also be shown that our construction recovers the
known correspondence for $\GL_2(\Qp)$, without needing to use the full strength of the $p$-adic local Langlands correspondence. We are currently writing a paper which will explain this and for which the main steps are: showing that our module $M_\infty$ is projective as a $\GL_2(\mathbb{Q}_p)$-representation, computing its cosocle via the weight part of Serre's conjecture, and exploiting the density of crystabelline points in local deformation rings.
Finally, we refer to Section~\ref{sec:LL} below for a slightly more detailed discussion of how the construction
of this paper might relate to a hypothetical $p$-adic local Langlands correspondence in the general case of $\GL_n(F)$.

\subsection{Inertial local Langlands, the Bernstein centre, and local-global
compatibility}
In Sections~\ref{sec:types}
and~\ref{sec:local-global compatibility}
we relate the theory of the Bernstein centre and the so-called inertial local Langlands correspondence to the theory of potentiallly crystalline deformation
rings, and the Taylor--Wiles--Kisin patched modules which lie over them.

More precisely, in Section~\ref{sec:types} we synthesise and expand on results
of Bernstein and Bernstein--Zelevinsky, Bushnell--Kutzko, Schneider--Zink, and Dat, to draw the following conclusions: for any {\em inertial type} $\tau$
(i.e.\ a representation $\tau: I_F \to \GL_n(\Qbar_p)$ with open kernel
which extends to the Weil group of $F$),
there is an associated {\em smooth type} $\sigma(\tau)$,
which is a smooth $\Qpbar$-representation of $\GL_n(\cO_F)$,
with the following properties:

\begin{itemize}
\item[(i)] The Hecke algebra $\cH\bigl(\sigma(\tau)\bigr):=
\End\bigr(\cInd_{\GL_n(\cO_F)}
^{\GL_n(F)} \sigma(\tau)\bigr)$ is commutative.
\item[(ii)] $\sigma(\tau)$ appears as a $\GL_n(\cO_F)$-subrepresentation
of an irreducible smooth $\GL_n(F)$-representation $\pi$ if and only
if the Weil--Deligne representation attached to $\pi$ via the local
Langlands correspondence is isomorphic to $\tau$ when restricted
to~$I_F$, and in addition satisfies $N = 0$.  Furthermore, for such $\pi$,
the representation $\sigma(\tau)$ appears in $\pi$ with multiplicity
one.
\end{itemize}

Suppose that
$\pi$ is an irreducible smooth $\GL_n(F)$-representation whose
associated Weil--Deligne representation is isomorphic to $\tau$ when restricted
to~$I_F$, and in addition satisfies $N = 0$, so that $\sigma(\tau)$
appears with multiplicity one in $\pi$, by (ii) above.
By Frobenius reciprocity, the Hecke algebra $\cH\bigl(\sigma(\tau)\bigr)$
then acts on the associated one-dimensional multiplicity space via
a character $\chi_{\pi}: \cH\big(\sigma(\tau)\bigr) \to \Qbar_p,$
and hence there is an induced surjection
$\cInd_{\GL_n(\cO_F)}^{\GL_n(F)} \sigma(\tau) \otimes_{\cH(\sigma(\tau)
),\chi_{\pi}} \Qbar_p \to \pi.$   In this context, we establish one
further result.

\begin{itemize}
\item[(iii)]
If $\pi$ is generic then
the preceding surjection is an isomorphism.
\end{itemize}

From these results, we deduce that
the connected components of the $\Spec$ of the Bernstein centre for $\GL_n(F)$
are identified with the various $\Spec \cH\bigl(\sigma(\tau)\bigr)$,
as $\tau$ ranges over all (isomorphism classes of) inertial
types. Over any such component we have the universal $\GL_n(F)$-representation
$\cInd_{\GL_n(\cO_F)}^{\GL_n(F)}\sigma(\tau)$,
whose fibre over the point $\chi_{\pi}$
arising from a generic $\pi$ whose associated Weil--Deligne representation
satisfies $N = 0$ is isomorphic to the representation~$\pi$.

For both the comparison with Galois deformation rings that we make in
Section~\ref{sec:local-global compatibility}, and for the connections
that we draw with the theory of Banach space representations of $\GL_n(F)$,
it is technically important to work over a finite extension of $\Q_p$
rather than over $\Qbar_p$, so we also explain how to descend the preceding
results from $\Qbar_p$ to such finite extensions.

In Section~\ref{sec:local-global compatibility},
we consider so-called {\em locally algebraic types},
which are representations of $\GL_n(\cO_F)$, defined over some finite
extension $E$ of $\Qbar_p$,  of the form $\sigma_{\sm}\otimes\sigma_{\alg}$,
where $\sigma_{\sm}$ is the smooth type attached to some inertial type $\tau$,
and $\sigma_{\alg}$ is an irreducible algebraic representation of
$\Res_{F/\Q_p}\GL_n$.
Attached to $\sigma$ we have a Hecke algebra
$\cH(\sigma) := \End(\cInd_{\GL_n(\cO_F)}^{\GL_n(F)} \sigma);$
it is isomorphic to $\cH(\sigma_{\sm}).$
Attached to $\sigma$ and any continuous representation
$\rbar:G_F \to \GL_n(\F)$ as above, there is a universal lifting ring
$R_{\rbar}^{\square}(\sigma)$, parameterising potentially crystalline lifts of
$\rbar$ whose associated inertial type coincides with $\tau$,
and whose Hodge--Tate weights match with the highest weight of
$\sigma_{\alg}$ after applying the usual $\rho$-shift.

One of the main results of
Section~\ref{sec:local-global compatibility},
which may be of independent interest,
is the existence of a homomorphism
$\eta:\cH(\sigma) \to R_{\rbar}(\sigma)^{\square}[1/p]$
which interpolates the local Langlands correspondence. (This gives an
algebraic extension of an analogous rigid-analytic result proved in~\cite{chenevier}).  Namely,
if $x: R_{\rbar}(\sigma)^{\square}[1/p] \to \Qbar_p$ corresponds
to a crystalline lift $r_x$ of $\rbar$,
if $\pi$ is the irreducible smooth representation of $\GL_n(F)$ associated
to the Weil--Deligne representation underlying $r_x$ via the local Langlands
correspondence, and if $\chi_{\pi}: \cH(\sigma) \cong \cH(\sigma_{\sm})
\to \Qbar_p$ is the character of $\cH(\sigma)$ associated to $\pi$,
then we have the equality $\chi_{\pi} = x\circ \eta.$

The second main result of this section is a key reciprocity law
related to the Hecke action on locally algebraic vectors in $M_{\infty}$,
which we refer to as {\em local-global compatibility} (in analogy
with the classical local-global compatibility results for cohomology of
Shimura varieties~\cite{1202.4683}).
Given a locally algebraic type $\sigma$, we may form the $R_{\infty}$-module \[M_{\infty}(\sigma) :=
\Hom^{\cont}_{\GL_n(\cO_F)}(M_{\infty}, \sigma^*)^*,\] where $^*$ denotes continuous dual.
We show that the
$R^{\square}_{\rbar}$-action on this module
factors through $R^{\square}_{\rbar}(\sigma)$.
It is thus equipped with two natural $\cH(\sigma)$-actions: one via its
very definition together with Frobenius reciprocity (and so related
to the structure of $M_{\infty}$ as a $\GL_n(F)$-representation),
and one via the homomorphism $\eta$ (and so related to the structure
of $M_{\infty}$ as an $R^{\square}_{\rbar}$-module);
local-global compatibility is the statement that these two actions coincide.

This reciprocity law is crucial to our analysis of the locally algebraic
vectors in the representations $V(r)$, and to our study of the Breuil--Schneider
conjecture.

\subsection{The relationship with \cite{scholze}}
We briefly discuss how our work relates to some other recent progress
in the field.
In~\cite{scholze}, Scholze provides more evidence for the existence of a purely local $p$-adic local Langlands correspondence for $\GL_n(F)$, by studying the cohomology of the Lubin--Tate tower. In the classical case~\cite{ht}, when $l\not =p$, it is known that the Lubin--Tate tower simultaneously realizes the local Langlands correspondence and the Jacquet-Langlands correspondence, between representations of $\GL_n(F)$ and representations of $D^\times$, where $D/F$ is the central division algebra of invariant $1/n$. Scholze uses the Lubin--Tate tower to construct a purely local functor \[\pi\mapsto F(\pi)\] from admissible, smooth $\mathbb{F}_p$-representations of $\GL_n(F)$ to admissible representations of $D^\times$ equipped with an action of $G_F$. This functor goes in the opposite direction from our construction.

However, when $n=2$, Scholze proves that it is compatible with our
patching construction (Corollary 9.3 of~\cite{scholze}), in the
following sense. As in the case of unitary groups and $\GL_n(F)$, one
can patch the cohomology of locally symmetric spaces coming from a
quaternion algebra (which is split at $p$ and ramified at all infinite
places) to obtain a representation $\pi_\infty$ of $\GL_2(F)$. One can
also patch the cohomology of certain Shimura curves (corresponding to
a quaternion algebra which is ramified at $p$, but split at one
infinite place) to get a representation $\rho_\infty$ of
$D^\times\times G_F$. Then Scholze shows
that \[F(\pi_\infty)=\rho_\infty.\]
(In fact, Scholze employs a variant of the patching construction used in
this paper, making use of ultrafilters to reduce the amount of
bookkeeping needed to obtain the action of Hecke operators at~$p$.) We
remark that it should be possible to adapt his strategy to $p$-adic
representations and to general $n$ (using Shimura varieties of
Harris--Taylor type for the latter step). It seems reasonable to expect
that this will lead to a proof that one can recover the
$G_F$-representation $r$ from the Banach space $V(r)$ that we
associate to it (at least in cases where it can be shown that $V(r)$
is nonzero; for example, this will be the case for the representations
considered in Theorems~\ref{main thm on BS, abstract intro version}
and~\ref{main thm on BS, explicit intro version}, where we even prove
that the locally algebraic vectors in $V(r)$ are nonzero.)

\subsection{Outline of the paper}In Section~\ref{sec:patching}, we
carry out our patching construction. Section~\ref{sec:types} contains
an introduction to the results of Bernstein--Zelevinsky,
Bushnell--Kutzko, Schneider--Zink and Dat on types and the local
Langlands correspondence for~$\GL_n$. We then refine
some of these results, as described above,
and explain how to descend them from
algebraically closed coefficient fields to finite extensions of~$\Qp$.
In Section~\ref{sec:local-global compatibility} we begin by
establishing our interpolation of
the classical local Langlands correspondence
over a (potentially crystalline) local deformation
ring, and then apply this to establish local-global compatibility for our
patched modules.
Finally, in Section~\ref{sec:breuilschneider} we combine our local-global compatibility result with
automorphy lifting theorems to prove our results on the
Breuil--Schneider conjecture.

\subsection{Acknowledgements}\label{subsec:acknowledgements}
The constructions made in this paper originate in work
carried out at a focused research group on ``The $p$-adic Langlands program for
non-split groups'' at the Banff Centre in August 2012; we would like to thank
BIRS for providing an excellent working atmosphere, and for its financial
support. We would also like to thank AIM for providing financial support and an
excellent working atmosphere towards the end of this project. The idea that
patching could be used to prove the Breuil--Schneider conjecture goes back to
conversations between two of us (M.E.\ and T.G.) at the Harvard Eigen-semester
in 2006, and we would like to thank the mathematics department of Harvard
University for its hospitality. Finally, we thank Brian Conrad and
Florian Herzig for their helpful remarks on an earlier draft of this paper,
and the anonymous referees for their many helpful comments,
corrections, and questions.
\subsection{Notation}\label{subsec:notation}We fix a prime $p$, and an
algebraic closure $\Qpbar$ of $\Qp$. Throughout the paper we work with a
finite extension $E/\Qp$ in $\Qpbar$, which will be our coefficient field.
We write $\cO=\cO_E$ for the ring of integers in $E$,
$\varpi=\varpi_E$ for a uniformiser, and $\F:=\cO/\varpi$ for the
residue field.
At any
particular moment $E$ is fixed, but we allow ourselves to modify $E$
(typically via an extension of scalars) during the course of our arguments.
Furthermore, we will often assume without further comment
that $E$ and $\F$ are sufficiently large, and in particular that if we
are working with representations of the absolute Galois group of a $p$-adic
field $F$, then the images of all embeddings $F\into \overline{\Q}_p$ are
contained in $E$.

If $F$ is a field, we let $G_F$ denote its absolute Galois group. Let
$\varepsilon$ denote the $p$-adic cyclotomic character, and
$\varepsilonbar$ the mod $p$ cyclotomic character. If $F$ is a finite
extension of $\bb{Q}_p$ for some $p$, we write $I_F$ for the inertia
subgroup of $G_F$, and $\varpi_F$ for a uniformiser of the ring of
integers $\cO_F$ of $F$. If $\tF$ is a number field and $v$ is a
finite place of $\tF$ then we let $\Frob_v$ denote a geometric Frobenius
element of $G_{\tF_v}$.

If $F$ is a $p$-adic field, $W$ is a de Rham
representation of $G_F$ over $E$, and  $\kappa:F \into E$,
then we will write $\HT_\kappa(W)$ for the multiset of Hodge--Tate
weights of $W$ with respect to $\kappa$.  By definition, the multiset $\HT_\kappa(W)$ contains $i$ with multiplicity
$\dim_{E} (W \otimes_{\kappa,F} \widehat{\overline{F}}(i))^{G_F}
$. Thus for example $\HT_\kappa(\varepsilon)=\{ -1\}$.

We say that $W$ has \emph{regular} Hodge--Tate weights if for each
$\kappa$, the elements of $\HT_\kappa(W)$ are pairwise distinct. Let $\Z^n_+$
denote the set of tuples $(\xi_1,\dots,\xi_n)$ of integers
with $\xi_1\ge \xi_2\ge\dots\ge \xi_n$. Then if $W$ has
regular Hodge--Tate weights, there is a
$\xi=(\xi_{\kappa,i})\in(\Z^n_+)^{\Hom_\Qp(F,E)}$ such that for each $\kappa:F\into
E$, \[\HT_{\kappa}(W)=\{\xi_{\kappa,1}+n-1,\xi_{\kappa,2}+n-2,\dots,\xi_{\kappa,n}\},\]
and we say that \emph{$W$ is regular of weight $\xi$}. For any
$\xi\in\Z^n_+$, view $\xi$ as a dominant weight (with respect to the
upper triangular Borel subgroup) of the
algebraic group $\GL_{n}$ in the usual way, and let $M'_\xi$
be the algebraic $\cO_F$-representation of $\GL_n$ given
by \[M'_\xi:=\Ind_{B_n}^{\GL_n}(w_0\xi)_{/\cO_F}\] where $B_n$
is the Borel subgroup of upper-triangular matrices of $\GL_n$, and
$w_0$ is the longest element of the Weyl group (see \cite{MR2015057}
for more details of these notions, and note that $M'_\xi$ has
highest weight $\xi$). Write $M_\xi$ for the
$\cO_F$-representation of $\GL_n(\cO_F)$ obtained by evaluating
$M'_\xi$ on $\cO_F$. For any $\xi\in(\Z^n_+)^{\Hom_\Qp(F,E)}$
we write $L_\xi$ for the $\cO$-representation of $\GL_n(\cO_F)$
defined by \[L_{\xi}:=\otimes_{\kappa:F\into
  E}M_{\xi_\kappa}\otimes_{\cO_F,\kappa}\cO.\]

If $F$ is a $p$-adic field, then an \emph{inertial type} is   a representation
$\tau:I_F\to\GL_n(\Qpbar)$ with open kernel  which extends
to the Weil group $W_F$.

  Then we say that a de Rham representation
$\rho:G_F\to\GL_n(E)$ has inertial type $\tau$ if the
restriction to $I_F$ of the Weil--Deligne representation $\WD(\rho)$ associated to
$\rho$ is equivalent to $\tau$. Given an inertial type $\tau$, there
is a finite-dimensional smooth irreducible $\Qpbar$-representation
$\sigma(\tau)$ of $\GL_n(\cO_F)$ associated to $\tau$ by the
``inertial local Langlands correspondence''; see Theorem~\ref{thm:
  inertial local Langlands, N=0} below. (Note that by the results of
Section~\ref{god_save_the_queen} below, we will be able to replace
$\sigma(\tau)$ by a model defined over a finite extension of $E$ in
our main arguments.)

Let $F$ be a finite extension of $\Qp$, and let $\rec$ denote the local
Langlands correspondence from isomorphism classes of irreducible
smooth representations of $\GL_n(F)$ over $\C$ to isomorphism classes
of $n$-dimensional Frobenius semisimple Weil--Deligne representations
of $W_F$ as in the introduction to \cite{ht}. Fix once and for all an isomorphism $\imath:\Qpbar\xrightarrow{\sim}\C$. We define the
local Langlands correspondence $\rec_p$ over $\Qpbar$ by $\imath \circ
\rec_p = \rec \circ \imath$. This depends only on
$\imath^{-1}(\sqrt{p})$, and if we define
$r_p(\pi):=\rec_p(\pi\otimes|\det|^{(1-n)/2})$, then $r_p$ is
independent of the choice of $\imath$. Furthermore, if $V$ is a
Frobenius semisimple
Weil--Deligne representation of $W_F$ over $E$, then $r_p^{-1}(V)$ is
also defined over $E$ by \cite[Prop 3.2]{Clo90} and the fact that $r_p$ commutes
with automorphisms of $\C$. (The claims about the dependence of
$\rec_p$ and $r_p$ on the choice of $\imath$ follow from the main
theorem of~\cite{MR1228128}, together with a study of the behaviour of
 $\varepsilon$-factors under automorphisms of~$\C$. In the case $n=2$,
 this is explained in~\cite[\S35]{MR2234120}, and the same argument
 goes through in general, with the required input on
 $\varepsilon$-factors being provided by~\cite[Thm.\ 3.2]{MR1739725}.\footnote{Alternatively, perhaps more conceptually, the claims are also implied by the geometric realisation of $r_p$ (up to dualising $\pi$) for supercuspidal representations in the cohomology of the Lubin-Tate tower. See Lemma VII.1.6 and the definition of $\rec_l$ on page 237 of \cite{ht}.}) 

Recall that a linear-topological $\cO$-module is a topological
$\cO$-module (that is, it has a topology for which both addition and the action of $\cO$ are continuous) which also has a fundamental system of open neighborhoods of the identity which are $\cO$-submodules. If $A$ is a linear-topological $\cO$-module, we write $A^\vee$
for its Pontrjagin dual $\Hom_\cO^\cont(A,E/\cO)$, where $E/\cO$ has
the discrete topology, and we give $A^\vee$ the compact open topology.

By the proof of Theorem 1.2 of~\cite{MR1900706}, the functor given by $A\mapsto
A^d:=\Hom_\cO^\cont(A,\cO)$ induces an anti-equivalence of categories
between the category of compact, $\cO$-torsion-free linear-topological
$\cO$-modules $A$ and the category of $\varpi$-adically complete and
separated $\cO$-torsion-free $\cO$-modules. A quasi-inverse is given
by $B\mapsto B^d:=\Hom_\cO(B,\cO)$, where the target is given the weak
topology of pointwise convergence. We refer to this duality as
Schikhof duality. Note that if $A$ is an $\cO$-torsion free profinite linear-topological $\cO$-module, then $A^d$ is the unit ball in the $E$-Banach space $\Hom_\cO(A,E)$.

If $r:G_F\to\GL_n(E)$ is de Rham of regular weight $a$, then we
write $\pi_\alg(r):=L_a^d \otimes_{\cO}E$, and
$\pi_\sm(r):=r_p^{-1}(\WD(r)^{F-\semis})$, both of which are
$E$-representations of $\GL_n(F)$. (The $\GL_n(\cO_F)$-action on $L_a^d$ extends linearly to a $\GL_n(F)$-action on $\pi_\alg(r)$.)
As the names suggest, $\pi_\alg(r)$
is an algebraic representation, and $\pi_\sm(r)$ is a smooth
representation. Note that $\pi_\alg(r)=L_\xi\otimes_{\cO} E$ for $\xi_{\kappa,i}:=-a_{\kappa,n+1-i}$.

We let $\Art_F:F^\times\xrightarrow{\sim} W_F^{\ab}$ be the isomorphism
provided by local class field theory, which we normalise so that
uniformisers correspond to geometric Frobenius elements.

 We
write all matrix transposes on the left; so ${}^tg$ is the transpose
of $g$. We let $\cG_n$ denote the group scheme over $\Z$ defined to be the semidirect
product of $\GL_n\times\GL_1$ by the group $\{1,j\}$, which acts on
$\GL_n\times\GL_1$ by \[j(g,\mu)j^{-1}=(\mu\cdot{}^tg^{-1},\mu).\]We have a
homomorphism $\nu:\cG_n \to \GL_1,$ sending $(g,\mu)$ to $\mu$ and $j$
to $-1$.

Further notation is introduced in the course of our arguments; we mention
just some of it here, for the reader's convenience.

From Subsection~\ref{subsec:patching} on, we will have fixed a particular
finite extension $F$ of $\Q_p$, with ring of integers $\cO_F$ and uniformiser
$\unif_F$. To ease notation we will typically write
$G := \GL_n(F)$, $K := \GL_n(\cO_F)$,  and $Z := Z(G)$. For each $m \geq 0$,
we write $\Gamma_m = \GL_n(\cO_F/\varpi_F^m)$
and $K_m :=\ker\bigr(\GL_n(\cO_F)\to\GL_n(\cO_F/\varpi_F^m)\bigl)$,
so that $K/K_m\iso \Gamma_m$.

Furthermore, throughout Section~\ref{sec:patching}, a large amount of notation is introduced
related to automorphic forms on a definite unitary group, and Taylor--Wiles--Kisin
patching.  Here we merely signal that the main construction of this section,
and our major object of study in the paper, is a patched $G$-representation that we
denote by $M_{\infty}$.    Beginning in Section~\ref{sec:local-global compatibility},
we will also write
\[M_\infty(\sigma^\circ):=\left(\Hom^{\mathrm{cont}}_{\cO[[K]]}(M_\infty,(\sigma^\circ)^d)\right)^d,\]
when $\sigma^{\circ}$ is a $K$-invariant $\cO_E$-lattice in a
$K$-representation $\sigma$ of finite dimension over $E$.

We use $\Ind$ to denote induction, and $\cInd$ to denote induction
with compact supports.
In Sections~\ref{sec:types} and~\ref{sec:local-global compatibility},
we use $\nind_P^G$ to denote normalised parabolic induction.

If $\sigma$ is a representation of $K$, then we will write
$\cH(\sigma) := \End_G(\cInd_K^G \sigma)$ to denote the Hecke algebra of $G$ with respect
to $\sigma$. Sometimes, when it is helpful to emphasise the role of $G$,
we will write $\cH(G,\sigma)$ instead.  We also use obvious variants
with $G$ and $K$ replaced by another $p$-adic group and compact open subgroup.

\section{The patching argument}\label{sec:patching}

In this section we will carry out our patching argument on definite
unitary groups.  The key difference between the construction presented
here and previous patching constructions is that the object we end up
with is not simply a module over a certain Galois deformation ring,
but rather a $\GL_n(F)$-representation over that ring;
we refer to the discussion at the
beginning of Subsection~\ref{subsec:patching} below for a more detailed
account of this difference.

Our construction uses the
same general framework as that used
in section 5 of~\cite{emertongeerefinedBM} (which in
turn is based on the approach of \cite{cht}, \cite{blgg}
and~\cite{jack}); we recall the key elements of this framework
in the first several subsections that follow.
We follow the notation of~\cite{emertongeerefinedBM}
as closely as possible, and we indicate explicitly where we deviate
from it.

The construction itself is the subject of Subsection~\ref{subsec:patching},
and the key fact that it actually produces a $\GL_n(F)$-representation is
verified in Proposition~\ref{prop:Minfty is projective and has a G-action}.
In Subsection~\ref{subsec:admissible unitary Banach} we explain how
our patched representation of $\GL_n(F)$ gives rise to admissible unitary
Banach representations attached to local Galois representations.

\subsection{Globalisation}\label{subsec:globalization}Let $F/\Qp$ be a
finite extension, and fix a continuous representation
$\rbar:G_F\to\GL_n(\F)$.  Our goal in this subsection is to give
a criterion for $\rbar$ to be obtained as the restriction of
a global Galois representation that is automorphic, in a
suitable sense (and satisfies some additional convenient properties).

We will assume that the following
hypotheses are satisfied:
\begin{itemize}
\item $p\nmid 2n$, and
\item $\rbar$ admits a potentially crystalline lift of regular
  weight, which is potentially diagonalisable.\footnote{Recall that, as in~\cite{BLGGT}, a potentially crystalline representation $r$ of $G_F$ is \emph{potentially diagonalisable} if there exists a finite extension $F'/F$ such that $r|_{G_{F'}}$ is crystalline and lies on the same irreducible component of the universal crystalline lifting ring of $\rbar|_{G_{F'}}$ (with fixed Hodge--Tate weights) as a sum of characters lifting $\rbar|_{G_{F'}}$. } \end{itemize}
Conjecturally, the second hypothesis is always satisfied; this is
Conjecture A.3 of~\cite{emertongeerefinedBM}. In this direction, we
note the following result.
\begin{lem}
  \label{lem: existence of pot diagonal lift}After possibly making a
  finite extension of scalars, the second hypothesis is
  satisfied if either $n=2$ or $\rbar$ is semisimple.
\end{lem}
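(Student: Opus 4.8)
The plan is to treat the two cases separately, reducing each to known results in the literature. For the case $n=2$: I would invoke the results on the existence of potentially Barsotti--Tate lifts of prescribed type. Specifically, given $\rbar : G_F \to \GL_2(\F)$, one can always find a potentially Barsotti--Tate lift of $\rbar$ after a finite extension of scalars — this follows from the explicit analysis of two-dimensional mod $p$ representations and the constructions of crystalline (or potentially crystalline) lifts due to Berger--Li--Zhu, Gee, Kisin, and others, together with the density of potentially Barsotti--Tate points on the generic fibre of the appropriate deformation ring. The key point is then that any potentially Barsotti--Tate representation is potentially diagonalizable: after restricting to a finite extension $L/F$ over which it becomes Barsotti--Tate (i.e.\ crystalline with Hodge--Tate weights in $\{0,1\}$), one uses that such crystalline representations are ordinary up to semisimplification of the residual representation or can be connected to a direct sum of characters within the Barsotti--Tate deformation ring — this is exactly the content of the potential diagonalizability results for low weight in \cite{BLGGT} (see their discussion of Barsotti--Tate and Fontaine--Laffaille representations). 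So in the $n=2$ case, the lift we produce is automatically potentially diagonalizable.

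For the case where $\rbar$ is semisimple (of arbitrary dimension $n$): I would write $\rbar \cong \oplus_i \rbar_i$ as a direct sum of irreducible representations. For each irreducible constituent $\rbar_i : G_F \to \GL_{n_i}(\F)$, one knows — by inducing a character from an unramified-up-to-twist extension, or by the standard construction of crystalline lifts of irreducible mod $p$ representations (as in work of Muller, or the appendix constructions in \cite{BLGGT}, or \cite{emertongeerefinedBM} itself) — that $\rbar_i$ admits a potentially crystalline lift $r_i$ of some regular weight which is induced from a crystalline character of a finite extension, and such induced representations are potentially diagonalizable essentially by definition (after restricting to a suitable extension they become a sum of crystalline characters, which is a diagonalizable situation, being a point on the ordinary/diagonal component). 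One then needs to arrange that the weights match up so that the direct sum $r := \oplus_i r_i$ is \emph{regular} of some weight $\xi \in (\Z^n_+)^{\Hom_{\Qp}(F,E)}$; this is achieved by twisting each $r_i$ by an appropriate crystalline character (e.g.\ a power of the cyclotomic character, or an unramified twist) to shift its Hodge--Tate weights into disjoint ranges, which is possible after a further finite extension of scalars to ensure all the relevant characters are defined over $E$. Since a direct sum of potentially diagonalizable representations is potentially diagonalizable (one can simultaneously restrict to a common finite extension and take the direct sum of the connecting families), the representation $r$ is a potentially crystalline, potentially diagonalizable lift of $\rbar$ of regular weight.

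The main obstacle I anticipate is the bookkeeping in the semisimple case: ensuring that the twists chosen to separate the Hodge--Tate weights of the distinct irreducible constituents are (a) crystalline, so that the twisted lifts remain potentially crystalline and potentially diagonalizable, and (b) compatible across all embeddings $\kappa : F \into E$ so that the resulting weight genuinely lies in $(\Z^n_+)^{\Hom_{\Qp}(F,E)}$ with all the $\HT_\kappa$ multisets pairwise distinct. This is routine but requires care — one typically uses that unramified twists change nothing about Hodge--Tate weights while allowing one to adjust Frobenius eigenvalues, and powers of the cyclotomic character uniformly shift all Hodge--Tate weights, so a combination suffices; a clean way to organize it is to order the constituents and shift the $k$-th one by a large enough power of $\varepsilon$ to lie entirely above the previous ones. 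The $n=2$ case is comparatively immediate once one cites the existence of potentially Barsotti--Tate lifts and their potential diagonalizability from \cite{BLGGT}.
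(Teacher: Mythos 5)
Your proposal is correct and follows essentially the same route as the paper: for $n=2$ the paper simply cites Remark A.4 of~\cite{emertongeerefinedBM}, which encodes exactly the potentially Barsotti--Tate argument you spell out, and for $\rbar$ semisimple the paper writes $\rbar$ (after extending scalars) as a sum of inductions of characters, lifts these to crystalline characters to get a regular-weight lift, and notes that such a lift is potentially diagonalizable because it becomes a sum of crystalline characters after restriction to a finite extension. Your extra remarks on separating Hodge--Tate weights via twists by powers of the cyclotomic character and on closure of potential diagonalizability under direct sums (cf.\ Lemma 1.4.3 of~\cite{BLGGT}) are precisely the bookkeeping that the paper compresses into ``it is easy to see.''
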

\begin{proof}
  If $n=2$, this is Remark A.4 of~\cite{emertongeerefinedBM}. If
  $\rbar$ is semisimple, then after extending scalars, we may write it as a sum of inductions
  of characters, and it is easy to see that by lifting these
  characters to crystalline characters, we can find a potentially
  crystalline lift which has regular weight, and is a sum of
  inductions of characters. Such a lift is obviously potentially
  diagonalisable (indeed, after restriction to some finite extension, it
  is a sum of crystalline characters).
\end{proof}
Having assumed these hypotheses, Corollary A.7
of~\cite{emertongeerefinedBM} (with $K$ our $F$, and $F$ our $\tF$) provides us with an imaginary CM field
$\tF$ with maximal totally real subfield $\tF^+$, and a continuous irreducible representation
$\rhobar:G_{\tF^+}\to\cG_n(\F)$ such that $\rhobar$ is a \emph{suitable
globalisation} of $\rbar$ in the sense of Section 5.1
of~\cite{emertongeerefinedBM}. Here we say that $\rhobar$ is irreducible if $\rhobar |_{G_{\tF}}$, which is
regarded as a representation valued in $\GL_n(\F)$, is irreducible.
 We recall the properties that $(\tF,\tF^+,\rhobar)$ need to satisfy for this definition:
\begin{itemize}
 \item each place $v\mid p$ of $\tF^+$ splits in $\tF$, and has $\tF^+_v\cong
    F$; we fix a choice of such isomorphisms.
 \end{itemize}
and
  \begin{itemize}
  \item $\rhobar$ is automorphic (see, for example, Definition 5.3.1 of~\cite{emertongeerefinedBM}) and unramified at primes $v\nmid p$.
  \item the inverse image of $\GL_n(\F)\times \GL_1(\F)$ under $\rhobar$ is $G_{\tF}$.
  \item $\rhobar(G_{\tF(\zeta_p)})$ is adequate in the sense of Definition 2.3 of~\cite{jack}.\footnote{We will not need the precise definition of an \emph{adequate subgroup} of $\GL_n(\mathbb{\bar F}_p)$; we will only need to know that this property is satisfied in order to apply the machinery developed in~\cite{jack}. See Section~\ref{subsec:aux primes} for more details.}
  \item For each place $v\mid p$ of $\tF^+$, there is a place $\tv$ of $\tF$
    lying over $v$ with $\rhobar|_{G_{\tF_\tv}}$ isomorphic to $\rbar$.
  \item $\overline{\tF}^{\ker\ad\rhobar|_{G_{\tF}}}$ does not contain
  $\tF(\zeta_p)$.
\end{itemize}
Corollary A.7 of~\cite{emertongeerefinedBM} guarantees that all these properties can be satisfied simultaneously. (The only difference is that the last property is replaced by the fact that $\overline{\tF}^{\ker \rhobar}$ does not contain $\tF(\zeta_p)$, which is stronger, since $\ker \rhobar\subset G_{\tF}$ by the second property above.) In order to arrange that our patched modules have a certain
multiplicity one property, we will also demand that:
\begin{itemize}
\item $\rhobar(G_{\tF})=\GL_n(\F')$ for some subfield $\F'\subseteq \F$
  with $\#\F'>3n$.
\end{itemize}
To see that we can arrange this, note that the proof of Proposition A.2
of~\cite{emertongeerefinedBM} (which is the main input to Corollary
A.7 of \emph{op.\ cit.}, together with the potential automorphy results of~\cite{BLGGT}) allows us to arrange that $\rhobar(G_{\tF})=\GL_n(\F_{p^m})$ for any sufficiently large $m$.

Finally, after making a solvable base change, we can and do assume that $\tF/\tF^+$ is
unramified at all finite places.

\subsection{Unitary groups}\label{subsec:unitary groups}
We now use the globalisation $\rhobar$ of our local Galois
representation $\rbar$ to carry out the
Taylor--Wiles--Kisin patching argument as in Section~5
of~\cite{emertongeerefinedBM}. The definitions of Hecke algebras, the
choices of auxiliary primes and so on are essentially identical to the
arguments made in \cite{emertongeerefinedBM}, and rather than
repeating them verbatim, we often refer the reader to
\cite{emertongeerefinedBM} for the details of these definitions,
indicating only the differences in our
construction. 

  As in Sections 5.2 and 5.3 of~\cite{emertongeerefinedBM}, we fix a certain
  definite unitary group $\tG/\tF^+$ together with a model (which we will also
  denote by $\tG$) over $\cO_{\tF^+}$. (The group $\tG$ is denoted $G$ in~\cite{emertongeerefinedBM}, but
  we will later use $G$ to denote $\GL_n(F)$.) This model has the property that for each place $v$ of $\tF^+$ which
  splits as $ww^c$ in $\tF$, there is an isomorphism
$\iota_w:\tG(\cO_{\tF^+_v})\isoto\GL_n(\cO_{\tF_w})$; we fix a choice of such
isomorphisms.  We also choose a finite
place $v_1$ of $\tF^+$ which is prime to $p$, with the properties that
  \begin{itemize}
  \item $v_1$ splits in $\tF$, say as $v_1=\tv_1\tv_1^c$,
  \item $v_1$ does not split completely in $\tF(\zeta_p)$, and
  \item $\rhobar(\Frob_{\tF_{\tv_1}})$ has distinct $\F$-rational eigenvalues, no two of
    which have ratio $(\mathbf{N}v_1)^{\pm 1}$.
  \end{itemize}
  (It is possible to find
  such a place $v_1$ by the Cebotarev density theorem, and our assumptions that
  $\rhobar(G_{\tF})=\GL_n(\F')$ with $\#\F'>3n$, and that
  $\overline{\tF}^{\ker\ad\rhobar|_{G_{\tF}}}$ does not contain
  $\tF(\zeta_p)$. Indeed, choosing a conjugacy class
  in~$\Gal(\tF(\zeta_p)/\tF)$, we have a positive density set of
  places~$v_1$ satisfying the first two conditions, and with
  $\mathbf{N}v_1$ taking a fixed value $\lambda$ modulo $p$; if we then choose
  a diagonal matrix in $\GL_n(\F')$ with distinct diagonal entries,
  none of whose ratios are $\lambda^{\pm 1}$, then another
  application of the Cebotarev density theorem produces the required
  place~$v_1$.

 Note that this differs slightly from the choice of place $v_1$ in
  the first paragraph of Section 5.3 of~\cite{emertongeerefinedBM}, where the third condition is replaced by the requirement that $\mathrm{ad}\ \rhobar|(\Frob_{\tF_{\tv_1}})=1$.
  However, it is still the case that any deformation of
  $\rhobar|_{G_{\tF_{\tv_1}}}$ is unramified (see Lemma~\ref{lem: at v1, we have smooth unramified
    lifts} below).  We have made this choice in order to be able to
  arrange that our patched modules satisfy multiplicity one.

Let $S_p$ denote the set of primes of $\tF^+$ dividing $p$. We now fix a place $\p\mid p$ of $\tF^+$,
and for each
  integer $m\ge 0$ we consider the compact open subgroup
  $U_m=\prod_vU_{m,v}$ of $\tG(\A^\infty_{\tF^+})$, where
\begin{itemize}
\item $U_{m,v}= \tG(\bigO_{\tF^+_v})$ for all $v$ which split in $\tF$ other
  than $v_1$ and $\p$;

   \item  $U_{m,v_1}$ is  the preimage of the upper triangular  matrices under
\[\tG(\cO_{\tF^+_{v_1}})\to \tG(k_{v_1}) \underset{\bar{\iota}_{\tv_1}}\iso \GL_n(k_{v_1})\]
\item $U_{m,\p}$ is  the kernel of the map
$\tG\bigl(\cO_{\tF^+_{\p}})\to \tG(\cO_{\tF^+_\p}/\varpi_{\tF^+_\p}^m\bigr); $
  \item $U_{m,v}$ is a hyperspecial maximal compact subgroup of $\tG(\tF_v^+)$
    if $v$ is inert in~$\tF$.
  \end{itemize}By the choice of $v_1$ and $U_{m,v_1}$ we see that
  $U_m$ is sufficiently small (in the sense of Section 5.2 of~\cite{emertongeerefinedBM}). Write $U:=U_0$. In order to make the patching
  argument, we will need to consider certain compact open subgroups of
  the $U_m$ corresponding to choices of sets of auxiliary primes
  $Q_N$ that will be introduced in Section~\ref{subsec:aux primes}. Specifically, for each integer
  $N\geq 1$, we will have a finite set of primes $Q_N$ of $\tF^+$
  disjoint from $S_p \cup \{v_1\}$ as
  well as open compact subgroups $U_i(Q_N)_v$ of $\tG(\cO_{\tF^+_v})$
  for each $v \in Q_N$ and $i=0,1$. We then define subgroups $U_i(Q_N)_m =
  \prod_v U_i(Q_N)_{m,v}\subset
  U_m$, for $i=0,1$ by setting $U_0(Q_N)_{m,v}=U_1(Q_N)_{m,v}=U_{m,v}$
  for $v\not \in Q_N$, and $U_i(Q_N)_{m,v}=U_i(Q_N)_v$ for $v\in
  Q_N$. 

  By assumption, $\rbar$ has a potentially diagonalisable lift of
  regular weight, say $r_{\textrm{pot.diag}}:G_F\to\GL_n(\cO)$. Suppose that $r_{\textrm{pot.diag}}$ has
  weight $\xi$ and inertial type $\tau$ (in the sense of
  Section~\ref{subsec:notation}). Extending $E$ if necessary, we may
  assume that the $\GL_n(\cO_F)$-representation $\sigma(\tau)$ is defined over $E$. Then we have two
  representations $L_\xi$ and $L_{\tau^\vee}$ of $\GL_n(\cO_F)$ on finite
  free $\cO$-modules in the following way: the representation
  $L_\xi$ is the one defined in the notation section, and $L_{\tau^\vee}$
  is a choice of $\GL_n(\cO_F)$-stable lattice in
  $\sigma(\tau)^\vee$. Set $L_{\xi,\tau}:=L_{\tau^\vee}\otimes_\cO
  L_\xi$, a finite free $\cO$-module with an action of
  $\GL_n(\cO_F)$.

  Returning to our
  global situation, let $W_{\xi,\tau}$ denote the finite free $\cO$-module with an
  action of $\prod_{v\in S_p\setminus\{\p\}}U_{m,v}$ given by
  $W_{\xi,\tau}=\otimes_{v\in S_p\setminus\{\p\},\cO} L_{\xi,\tau}$
  where $U_{m,v}$ acts on the factor corresponding to $v$ via
  $U_{m,v}=\tG(\cO_{\tF_v^+})\underset{\iota_{\tv}}\iso\GL_n(\cO_{\tF_{\tv}})\isoto\GL_n(\cO_F)$. In
  order to avoid duplication of definition, we allow $Q_N=\emptyset$
  in the definitions we now make. For any finite $\cO$-module $V$ with
  a continuous action of $U_{m,\p}$, we have spaces of algebraic
  modular forms $S_{\xi,\tau}(U_i(Q_N)_m,V)$; these are just the
  functions \[f:\tG(\tF^+)\backslash \tG(\A_{\tF^+}^\infty)\to
  W_{\xi,\tau}\otimes_{\cO} V\]with the property that if $g\in
  \tG(\A_{\tF^+}^\infty)$ and $u\in U_i(Q_N)_m$ then
  $f(gu)=u^{-1}f(g)$, where $U_i(Q_N)_m$ acts on $W_{\xi,\tau}\otimes_{\cO}
  V$ via projection to $\prod_{v\in S_p}U_{m,v}$. (For example: when $V=\cO$ is the
  trivial representation, then after extending scalars from $\cO$ to
  $\C$ via $\cO \subset \Qpbar \underset{\imath}{\iso} \C$, this space corresponds to
  classical automorphic forms of fixed type
  $\sigma(\tau)$ at the places in $S_p\setminus\{\p\}$, full level
  $\p^m$ at $\p$, and whose weight (via our fixed isomorphism
  $\imath:\Qpbar\to\C$) is $0$ at places above $\p$, and given by
  $\xi$ at each of the places in $S_p\setminus\{\p\}$.)

  We let
  $\mathbb{T}^{S_p\cup Q_N,\univ}$ be the commutative
  $\bigO$-polynomial algebra generated by formal variables $T_w^{(j)}$
  for all $1\le j\le n$, $w$ a place of $\tF$ lying over a place $v$
  of $\tF^+$ which splits in $\tF$ and is not contained in $S_p\cup
  Q_N\cup\{v_1\}$, together with formal variables $T_{\tv_1}^{(j)}$
  for $1\le j\le n$. The algebra $\mathbb{T}^{S_p\cup Q_N,\univ}$ acts on
  $S_{\xi,\tau}(U_i(Q_N)_m,V)$ via the Hecke operators
  \[ T_{w}^{(j)}:=   \left[ U_{m,w}\ \iota_{w}^{-1}\left( \begin{matrix}
      \varpi_{w}1_j & 0 \cr 0 & 1_{n-j} \end{matrix} \right)
U_{m,w} \right]
\] where $\varpi_w$ is a fixed uniformiser in
$\mc{O}_{\tF_w}$. 

Choose an ordering
$\delta_1,\dots,\delta_n$ of the (distinct) eigenvalues of
$\rhobar(\Frob_{\tv_1})$. Since $\rhobar$ is a suitable globalisation of $\rbar$, it is in
particular automorphic in the sense of Definition 5.3.1
of~\cite{emertongeerefinedBM}, and we let $\m_{Q_N}$ be the maximal
ideal of $\mathbb{T}^{S_p\cup Q_N,\univ}$ corresponding to $\rhobar$,
and containing each of the elements
$$\bigl(T_{{\tv_1}}^{(j)}-(\mathbf{N}v_1)^{j(1-j)/2}(\delta_1\cdots\delta_j)\bigr),$$ for $1\le j\le n$. We will write $\m$ for $\m_\emptyset$.

\subsection{Galois deformations}\label{subsec:galois deformations}Let $S$ be a set of places of $\tF^+$ which split in $\tF$, with
 $S_p\subseteq S$. As in
\cite{cht}, we will write $\tF(S)$ for the maximal extension of $\tF$
unramified outside $S$, and from now on we will write
$G_{\tF^+,S}$ for $\Gal(\tF(S)/\tF^+)$. We will freely make use of the terminology (of
liftings, framed liftings etc.) of Section 2 of
\cite{cht}.

Let $T=S_p\cup \{v_1\}$. For each $v\in S_p$, we let $\tv$ be a choice of a place of $\tF$ lying
over $v$, with the property that
$\rhobar|_{G_{\tF_\tv}}\cong\rbar$. (Such a choice is possible by our
assumption that $\rhobar$ is a suitable globalisation of $\rbar$.) We let $\tT$ denote the set of places $\tv$, $v\in T$. For
each $v\in T$, we let $R_\tv^\square$ denote the maximal reduced and
$p$-torsion free quotient of the universal $\cO$-lifting ring of
$\rhobar|_{G_{\tF_\tv}}$. For each $v\in S_p\setminus\{\p\}$, we write
$R_\tv^{\square,\xi,\tau}$ for the reduced and $p$-torsion free
quotient of $R_\tv^\square$ corresponding to potentially crystalline
lifts of weight $\xi$ and inertial type $\tau$. (Such a quotient of $R_\tv^{\square}$ exists by Corollary 2.7.7 of~\cite{kisindefrings}. It has the property that for any $E$-algebra $A$, an $E$-algebra map $R_\tv^{\square}[1/p]\to A$ factors through $R_\tv^{\square,\xi,\tau}$ if and only if the pullback of the universal lifting along this map is potentially crystalline of weight $\xi$ and inertial type $\tau$.)

Consider (in the terminology of \cite{cht}) the deformation problem
\[\cS:=\big(\tF/\tF^+,T,\tT,\cO,\rhobar,\varepsilon^{1-n}\delta_{\tF/\tF^+}^n,\{R_{\tv_1}^\square\}\cup\{R_{\tp}^\square\}\cup\{R_\tv^{\square,\xi,\tau}\}_{v\in
  S_p\setminus\{\p\}}\bigr).\]
There is a corresponding universal
deformation $\rho_\cS^{\univ}:G_{\tF^+,T}\to\cG_n(R_\cS^{\univ})$ of~$\rhobar$. In addition, there is a universal $T$-framed deformation ring
$R_\cS^{\square_T}$ in the sense of Proposition 2.2.9 of~\cite{cht},
which parameterises deformations of $\rhobar$ of type $\cS$ together
with particular local liftings for each $\tv\in\tT$.
\begin{lem}
  \label{lem: at v1, we have smooth unramified
    lifts}$R_{\tv_1}^\square$ is formally smooth over~$\cO$, and all
  of the corresponding Galois representations are unramified.
\end{lem}
\begin{proof}
  By our assumptions on~$v_1$, this is immediate from Lemma 2.4.9 and Corollary 2.4.21 of~\cite{cht}.
\end{proof}
\subsection{Auxiliary primes}\label{subsec:aux primes}
Recall that the globalisation $\rhobar$ constructed in Section~\ref{subsec:globalization} satisfies the property that $\rhobar(G_{\tF(\zeta_p)})$ is adequate. This property is needed in order to apply
the version of the Taylor--Wiles patching argument given in~\cite{jack} (see also Section 5.5 of~\cite{emertongeerefinedBM}). More precisely, Proposition 4.4 of~\cite{jack} allows us to  choose an integer $q\ge[\tF^+:\Q]n(n-1)/2$ and for each $N\ge 1$ sets
  of primes $Q_N,\tQ_N$ with the following properties (as well as a crucial property about the
  generation of global Galois deformation rings over local ones that
  we will recall below): \begin{itemize}
\item $Q_N$ is a finite set of finite places of $\tF^+$ of cardinality $q$ which is disjoint
  from $T$ and consists of places which split in $\tF$;
\item $\tQ_N$ consists of a single place $\tv$ of $\tF$ above each place
  $v$ of $Q_N$;
\item $\mathbf{N}v \equiv 1 \mod p^N$ for $v \in Q_N$;
 \item for each $v\in Q_N$,
   $\rhobar|_{G_{\tF_\tv}}\cong\sbar_\tv\oplus\psibar_\tv$ where $\psibar_\tv$ is
   an eigenspace of Frobenius
on which Frobenius acts semisimply.
\end{itemize}
We remark that, for each $v$, any (generalized) eigenspace $\psibar_{\tv}$ of Frobenius on which the action is in fact semisimple can be chosen. Part of the content of Proposition 4.4 of~\cite{jack} is that the adequacy of $\rhobar(G_{\tF(\zeta_p)})$ implies the existence of such an eigenspace for appropriately chosen primes $v$.

For each $N$, $v\in Q_N$ and $i=0,1$, we let $U_i(Q_N)_v\subset
\tG(\cO_{\tF^+_v})$ denote the parahoric open
compact subgroups defined in Section 5.5
of~\cite{emertongeerefinedBM}, following~\cite{jack}. (We briefly recall their definition here: they are the inverse images under $\iota_{\tv}$ of certain parahoric subgroups $\mathfrak{p}^{\tv}_N,\mathfrak{p}^{\tv}_{N,1}$ of $\GL_n(\cO_{\tF_{\tv}})$. These parahoric subgroups correspond to the partition $n=(n-d^{\tv}_N)+d^{\tv}_N$, where $d^{\tv}_N$ is the dimension of the eigenspace $\psibar_{\tv}$: $\mathfrak{p}^{\tv}_N$ is the standard parahoric and $\mathfrak{p}^{\tv}_{N,1}$ is the kernel of the map \[\mathfrak{p}^{\tv}_N\to \GL_{d^{\tv}_N}(k_{\tv})\to k_{\tv}^\times\to k^\times_{\tv}(p).\] The first map in the sequence is given by projection to the $d^\tv_N$-block and reduction to $k_{\tv}$, the second map is taking the determinant and the last one is projection onto the maximal $p$-power order quotient of $k_{\tv}^\times$. These parahoric subgroups are roughly supposed to be analogous to levels $\Gamma_0(\tv),\Gamma_1(\tv)$ in the case of modular curves and modular forms.)

For each $v\in Q_N$, a quotient $R^{\psibar_\tv}_{\tv}$ of $R^\square_{\tv}$
is defined in Section 5.5 of~\cite{emertongeerefinedBM}
(following~\cite{jack}). We let $\cS_{Q_N}$ denote the
deformation problem
\begin{multline*}
\cS_{Q_N} := \big(\tF/\tF^+,\, T\cup Q_N,\, \tT\cup
\tQ_N,\, \cO,\, \rhobar,\, \varepsilon^{1-n}\delta_{\tF/\tF^+}^n, \\
\{R_{\tv_1}^\square\}\cup\{R_{\tp}^\square\}\cup\{R_\tv^{\square,\xi,\tau}\}_{v\in S_p\setminus\{\p\}}
 \cup\{R_{\tv}^{\psibar_\tv}\}_{v\in Q_N}\bigr).
\end{multline*}
We let $R_{\cS_{Q_N}}^{\univ}$
denote the corresponding universal deformation ring, and we let
$R^{\square_T}_{\cS_{Q_N}}$ denote the corresponding universal $T$-framed deformation
ring.
We define \[ R^{\loc} := R^\square_{\tp}\widehat{\otimes}\left(\widehat{\otimes}_{v \in
  S_p\setminus\{\p\}}R_\tv^{\square,\xi,\tau}\right)\widehat{\otimes}{R}^{\square}_{\tv_1} \]
where all completed tensor products are taken over $\mc{O}$. By the choice of the sets of primes $Q_N$, we also know that
\begin{itemize}
\item the ring $R^{\square_T}_{\mc{S}_{Q_N}}$ can be topologically
  generated over $R^{\loc}$ by\\ $q-[\tF^+:\Q]n(n-1)/2$ elements.
\end{itemize}

For each $v\in Q_N$ we choose a uniformiser
$\varpi_{\tv}\in\cO_{\tF_{\tv}}$, so that we have the projection operator
$\pr_{\varpi_{\tv}}\in\End_{\cO}(S_{\xi,\tau}(U_i(Q_N)_m,\cO/\varpi^r)_{\m_{Q_N}})$
defined as in Proposition 5.9
of~\cite{jack}. We briefly recall that, at level $U_0(Q_N)_v$,
$\pr_{\varpi_{\tv}}$ is defined as (the pullback along $\iota_{\tv}$
of) a polynomial in the Hecke operators corresponding to the block
$\GL_{d^\tv_N}$ inside the parahoric subgroup
$\mathfrak{p}^{\tv}_{N}\subset \GL_n(\cO_{\tF_\tv})$. The same formula
also gives rise to an element of the Hecke algebra for
$\mathfrak{p}^\tv_{N,1}\subset \GL_n(\cO_{\tF_\tv})$. This gives a
compatible projection operator at level $U_1(Q_N)_v$, which we will
also call $\pr_{\varpi_{\tv}}$ by abuse of notation. In the case of
level $U_0(Q_N)_v$, $\pr_{\varpi_{\tv}}$ is the projection onto a
one-dimensional subspace, which is identified with the spherical
vector, according to Proposition 5.9 of \emph{op.\ cit.} At level
$U_1(Q_N)_v$, $\pr_{\varpi_\tv}$ is best understood in terms of the
associated Galois representation: it only allows tamely ramified
deformations of the subrepresentation of $\rhobar$ corresponding to
$\psibar_\tv$. See Proposition 5.12 of \emph{op.\ cit.} for more details.

We define $\pr$ to be the composite of the projections $\pr_{\varpi_{\tv}}$.
(These projections commute among themselves, and so it doesn't matter in
which order we compose them. Whenever we use $\pr$ it will be clear
from the context what the underlying set $Q_N$ is.) Then, as in
Section 5.5 of \cite{emertongeerefinedBM}:
\begin{enumerate}
\item\label{gamma-to-gamma0} The map
\[ \pr:  S_{\xi,\tau}(U_m, \cO/\varpi^r)_\m \to
\pr\left(S_{\xi,\tau}(U_0(Q_N)_m,\cO/\varpi^r)_{\m_{Q_N}}\right) \]
is an isomorphism. Moreover, since $\pr$ is defined using Hecke operators at
places in $Q_N$, it commutes with the action of $\tG(\tF^+_{\gp})$ on the spaces
of algebraic automorphic forms. More precisely, if $g\in \tG(\tF^+_{\gp})$
satisfies $g^{-1}U_{m',\gp}g\subseteq U_{m,\gp}$ for some positive integers
$m\leq m'$, then we have a commutative diagram
\[
\begin{CD}
 S_{\xi,\tau}(U_m, \cO/\varpi^r)_\m
  @>\pr >>
\pr\left(S_{\xi,\tau}(U_0(Q_N)_m,\cO/\varpi^r)_{\m_{Q_N}}\right)
  \\
@VV g V  @VV g V \\
S_{\xi,\tau}(U_{m'}, \cO/\varpi^r)_\m
  @>\pr >>\pr\left(S_{\xi,\tau}(U_0(Q_N)_{m'},\cO/\varpi^r)_{\m_{Q_N}}\right)
\end{CD}
\]
where both vertical arrows are induced by the action of $g\in
\tG(\tF^+_{\gp})$ (namely: $(gf)(g') = f(g'g)$).
\item\label{projectivity} Let
$$\Gamma_m = \GL_n(\cO_F/\varpi_F^m) \cong U/U_m$$ and
  $$\Delta_{Q_N} = \prod_{v\in Q_N}U_0(Q_N)_v/U_1(Q_N)_v.$$ Then
  $U_0(Q_N)_0$
  acts on
  $S_{\xi,\tau}(U_1(Q_N)_m,\cO/\varpi^r)$ via
  $(gf)(g')=g_pf(g'g)$, and this action factors through $\Delta_{Q_N}\times \Gamma_m$.
  With respect to this action,
  $ \pr \left(S_{\xi,\tau}(U_1(Q_N)_m,\cO/\varpi^r)_{\m_{Q_N}}\right)$
  is
  a projective $(\cO/\varpi^r)[\Delta_{Q_N}][\Gamma_m]$-module,
  and there is a natural $\Gamma_m$-equivariant isomorphism
\[
\pr\left(S_{\xi,\tau}(U_1(Q_N)_m,\cO/\varpi^r)_{\m_{Q_N}}\right)^{\Delta_{Q_N}} \iso S_{\xi,\tau}(U_m,\cO/\varpi^r)_{\m}.\]
(The projectivity follows from the proof of Lemma 3.3.1
of~\cite{cht}. The fact that there is a $\Gamma_m$-equivariant isomorphism follows immediately from
point~\eqref{gamma-to-gamma0} and the definitions. We shall not need
the analogous statement about coinvariants which is recalled in
\cite{emertongeerefinedBM} and proved in \cite{jack}; see Remark~\ref{rem:duality explanation} below
for an indication as to why not. We remark that, by the explicit construction of $U_i(Q_N)_v$ above, $\Delta_{Q_N}$ is a finite abelian group of $p$-power order.)
  \item\label{gal-reps} Let $\mathbb{T}^{S_p\cup
    Q_N}_{\xi,\tau}(U_i(Q_N)_m,\cO/\varpi^r)$ be the image of
$\T^{S_p\cup Q_N,\univ}$ in the ring \\ $\End_\cO\left(\pr\left(
S_{\xi,\tau}(U_i(Q_N)_m,\cO/\varpi^r)\right)\right)$. As in Proposition 5.3.2 of~\cite{emertongeerefinedBM}, there exists a
deformation
\[ G_{\tF^+,T\cup Q_N} \to \cG_n\left(\mathbb{T}^{S_p\cup
    Q_N}_{\xi,\tau}(U_1(Q_N)_m,\cO/\varpi^r)_{\m_{Q_N}}\right) \]
of $\rhobar$ which is of type $\cS_{Q_N}$. In particular, $\pr
\left(S_{\xi,\tau}(U_i(Q_N)_m,\cO/\varpi^r)_{\m_{Q_N}}\right)$ is a
finite $R^{\univ}_{\cS_{Q_N}}$-module.
\end{enumerate}

\begin{remark}
The construction of the above deformation of $\rhobar$ follows the outline of the proof of Proposition 3.4.4 of~\cite{cht}, but one can appeal to Corollaire 5.3 of~\cite{labesse} for the necessary base change results and to the main results of~\cite{caraiani, 1202.4683} for local-global compatibility in the conjugate-self-dual case, which will show that the deformation is of type $\cS_{Q_N}$. (For the places in $S_p\setminus\{\mathfrak{p}\}$, the argument is similar to the one in the proof of Lemma~\ref{lem:classical lg compatibility} (1), which works at the place $\mathfrak{p}$. In particular, the argument relies on Theorem~\ref{thm: inertial local Langlands, N=0}.)
\end{remark}

As in Section 5.5 of~\cite{emertongeerefinedBM}, there is a
homomorphism $\Delta_{Q_N}\to(R_{\cS_{Q_N}}^{\univ})^\times$ obtained
by identifying $\Delta_{Q_N}$ with the product of the inertia
subgroups in the maximal abelian $p$-power order quotient of
$\prod_{v\in Q_N}G_{\tF_{\tv}}$, and thus a homomorphism
$\cO[\Delta_{Q_N}]\to R_{\cS_{Q_N}}^{\univ}$. The
$R_{\cS_{Q_N}}^{\univ}$-module structure coming from the existence of Galois representations thus induces an action of $\cO[\Delta_{Q_N}]$
on $\pr
\left(S_{\xi,\tau}(U_1(Q_N)_m,\cO/\varpi^r)_{\m_{Q_N}}\right)$,
which agrees with
the one in~\eqref{projectivity} above.

\subsection{Patching}\label{subsec:patching}
We now make our patching construction, by applying the Taylor--Wiles--Kisin method.
Before doing so, we provide a brief comparison and contrast with the patching
constructions in some previous papers, such as \cite{emertongeerefinedBM}
and~\cite{emertongeesavitt}.  In the latter paper, we employ Taylor--Wiles--Kisin
patching to construct what we call {\em patching functors}, which are (essentially)
certain exact functors
from the category of continuous $\GL_n(\cO_F)$-representations on finitely generated $\cO$-modules
to the category of coherent sheaves on an appropriate deformation space
of local Galois representations (perhaps with some auxiliary patching variables added).
Although this is not discussed in~\cite{emertongeesavitt},
such a functor can be (pro-)represented by an object $M_{\infty}$, which is
a continuous $\GL_n(\cO_F)$-representation over the local deformation ring
(again, perhaps with patching variables added).  More precisely, in terms
of such a $\GL_n(\cO_F)$-representation $M_{\infty}$, the patching functor can
be defined as $\Hom^{\cont}_{\cO[[\GL_n(\cO_F)]]}(M_{\infty},V^{\vee})^{\vee},$
if $V$ is a continuous representation of $\GL_n(\cO_F)$ on a finitely generated
$\cO$-module.  The exactness of the patching functor can be encoded in the requirement
that $M_{\infty}$ be a projective $\cO[[\GL_n(\cO_F)]]$-module.

In this paper our approach is to construct the representing object $M_{\infty}$ directly,
and (most importantly) to promote it from being merely a $\GL_n(\cO_F)$-representation
to being a representation of the full $p$-adic group $\GL_n(F)$.  (In terms of patching
functors, one can somewhat loosely
think of this as extending the patching functor from the category
of $\GL_n(\cO_F)$-representations to a category that we might call the {\em Hecke category},
whose objects are the same, but in which the morphisms between
any two $\GL_n(\cO_F)$-representations $U$ and $V$ are defined to be
$\Hom_{\GL_n(F)}\bigl(\cInd^{\GL_n(F)}_{\GL_n(\cO_F)} U, \cInd_{\GL_n(\cO_F)}^{\GL_n(F)} V\bigr).$)

Obtaining this additional structure on $M_{\infty}$ requires us to
keep track of additional data (``partial actions'' of the non-compact
directions in $G$) in the course of the patching process.
Before presenting the details of our construction,
we remark that Scholze
has simplified this aspect of our construction
by a reinterpretation of patching in terms of ultraproducts
\cite[\S\S 8, 9]{scholze},
which obviates the need for keeping track of this extra data.
We have chosen to keep
the original form of our argument here, however.

From now on, to ease notation we write $K=\GL_n(\cO_{\tF_{\tp}})=\GL_n(\cO_F)$,
$G=\GL_n(\tF_{\tp})=\GL_n(F)$, and $Z=Z(G)$. For each integer $N\ge 0$, we set
$K_N :=\ker\bigl(\GL_n(\cO_F)\to\GL_n(\cO_F/\varpi_F^N)\bigr)$, so that
$K/K_N\iso \Gamma_N$. We have the Cartan decomposition $G=KAK$, where $A$
is the set of diagonal matrices whose diagonal entries are powers of
the uniformiser $\varpi_F$, and we let $A_N$ be the subset of $A$
consisting of matrices with the property that the ratio of any two
diagonal entries is of the form $\varpi_F^r$ with $|r|\le N$,
and set $G_N=KA_NK$.  Note that $G_N$ is not a subgroup of $G$ unless
$N=0$, but that each $K\backslash G_N/KZ$ is finite, and $G=\cup_{N\ge
  0}G_N$.

If $(\sigma,W)$ is a representation of $KZ$, then we write
$\Ind_{KZ}^{G_N}\sigma$ for the space of functions $f:G_N\to W$ with
$f(kg)=\sigma(k)f(g)$ for all $g\in G_N$, $k\in KZ$; this is naturally
a $KZ$-representation via $(kf)(g):=f(gk)$. We define
$\Ind_{KZ}^G\sigma$ in the same way; then $\Ind_{KZ}^G\sigma$ is a representation
of $G$ via $(gf)(g'):=f(g'g)$.

 For each $N$, we set \[M_{i,Q_N}:= \pr\bigl( S_{\xi,\tau}(U_i(Q_N)_{2N},\cO/\varpi^N)_{\m_{Q_N}}\bigr)^\vee.\] Note that $M_{i,Q_N}$ depends on the integer $N$ as well as on the set of primes $Q_N$ (it could happen that $Q_M=Q_N$ for $M\neq N$), but we will only include $Q_N$ in the notation for the sake of simplicity. Note also that we could have equivalently defined \[M_{i,Q_N}:= \pr^\vee\bigl( S_{\xi,\tau}(U_i(Q_N)_{2N},\cO/\varpi^N)^\vee_{\m_{Q_N}}\bigr),\] since $\pr$ is an endomorphism of $S_{\xi,\tau}(U_i(Q_N)_{2N},\cO/\varpi^N)_{\m_{Q_N}}$ and Pontrjagin duality is an exact contravariant functor.

Let $\Delta_{Q_N}$ be as above; it is of $p$-power order by the
definitions of the $U_i(Q_N)_{2N}$.
It follows from point~\eqref{projectivity} in the previous section that $M_{1,Q_N}$
is a finite projective
$(\cO/\varpi^N)[\Delta_{Q_N}][\Gamma_{2N}]$-module.
Since
$Z$ centralises $U_1(Q_N)_{2N}$, there is also a natural action of $Z$
on $M_{1,Q_N}$.

\begin{remark}
\label{rem:duality explanation}
The reason for including a Pontrjagin dual in the definition
of $M_{i,Q_N}$ is that
$S_{\xi,\tau}(U_i(Q_N)_{2N},\cO/\varpi^N)$
is a space of automorphic forms,
and so is most naturally thought of as being contravariant
in the level, while patching is a process that involves passing to
a projective limit over the level (rather than a direct limit).
Now since
$S_{\xi,\tau}(U_i(Q_N)_{2N},\cO/\varpi^N)$
is a space of automorphic forms on the definite unitary group $\tG$,
it is a space of functions on a finite set, and so has a natural self-duality.
Thus, by exploiting this self-duality to convert its contravariant functoriality
into a covariant functoriality,
we could omit the Pontrjagin dual in the preceding definition,
and indeed it {\em is} traditionally omitted (see e.g.\
\cite{cht}, \cite{blgg}, \cite{jack}, and~\cite{emertongeerefinedBM}).
However, we have found it conceptually clearer to include this duality
in our definitions and constructions.
\end{remark}

We now define a $KZ$-equivariant map \[\alpha_N:
M_{1,Q_N}\to\Ind_{KZ}^{G_N}\left( (M_{1,Q_N})_{K_N}\right)\](where $(M_{1,Q_N})_{K_N}$
denotes the $K_N$-coinvariants in $M_{1,Q_N}$) in the following
way. Note firstly that there is a natural
identification \[(M_{1,Q_N})_{K_N}=\pr^\vee
\bigl(S_{\xi,\tau}(U_1(Q_N)_{N},\cO/\varpi^N)^\vee_{\m_{Q_N}}\bigr),\]so
it suffices to define a $KZ$-equivariant
map \[\alpha_N:S_{\xi,\tau}(U_1(Q_N)_{2N},\cO/\varpi^N)^\vee\to
\Ind_{KZ}^{G_N}S_{\xi,\tau}(U_1(Q_N)_{N},\cO/\varpi^N)^\vee.\] Now,
given $g\in G_N$, we have $g^{-1}K_{2N}g\subseteq K_N$, so that there
is a natural map \[g^* : S_{\xi,\tau}(U_1(Q_N)_{N},\cO/\varpi^N)\to
S_{\xi,\tau}(U_1(Q_N)_{2N},\cO/\varpi^N)\]given by
$(g^*\cdot f)(x):=f(xg)$, and a map \[ g_* := \bigl((g^{-1})^*\bigr)^{\vee} :
S_{\xi,\tau}(U_1(Q_N)_{2N},\cO/\varpi^N)^\vee\to
S_{\xi,\tau}(U_1(Q_N)_{N},\cO/\varpi^N)^\vee.\]
(The latter is well defined since $G_N$ is stable under taking
inverses. We note that $g^*$ (resp.\  $g_*$) may be interpreted as the
natural pullback (resp.\ pushforward) map on cohomology (resp.\ homology) under the natural
right (resp.\ left) action of $\tG(\A^{\infty})$ on the tower of
arithmetic quotients of $\tG$.)
We have $(gh)^* = g^* \circ h^*$ and hence
$(gh)_* = g_* \circ h_*$, whenever all are defined. We also remark that we have the relation \[\alpha_N(\pr^\vee(f))(g)=\pr^\vee(\alpha_N(f)(g)),\] which follows from the fact that $\pr$ and $\pr^\vee$ are defined using Hecke operators away from $p$ and hence commute with $g_*$ and, respectively, $g^*$.
Then we define
$\alpha_N$ by \[\bigl(\alpha_N(x)\bigr)(g):=g_*(x).\]In order to check that
this is $KZ$-equivariant, we must check that for all $k\in KZ$ we have
$\bigl(\alpha_N(kx)\bigr)(g)=(k\alpha_N(x))(g)$; this is equivalent to checking
that $g_*(kx) = (gk)_*(x)$, which is immediate from the definition.

Set
$M_{1,Q_N}^\square:=M_{1,Q_N}\otimes_{R_{\cS_{Q_N}}^{\univ}}R_{\cS_{Q_N}}^{\square_T}$.
We have an induced $KZ$-equivariant map $\alpha_N:
M_{1,Q_N}^\square\to\Ind_{KZ}^{G_N} (M_{1,Q_N}^\square)_{K_N}$. Define
\[\Rinfty:=R^{\loc}[[x_1,\dots,x_{q-[\tilde F^+:\Q]n(n-1)/2}]],\]
$$S_\infty := \cO[[z_1,\dots,z_{n^2\#T},y_1,\dots,y_q]],$$ for formal
variables $x_1,\dots,x_{q-[\tilde F^+:\Q]n(n-1)/2}$, $y_1,\dots,y_q$ and
$z_1,\dots,z_{n^2\#T}$. For each $N$, we fix a surjection $\Rinfty
\onto R_{\cS_{Q_N}}^{\square_T}$ of $R^{\loc}$-algebras (which, as
recalled in Section~\ref{subsec:aux primes}, is
possible by the choice of the sets $Q_N$). These choices allow us to
regard each $M^{\square}_{1,Q_N}$ as an $\Rinfty$-module. Also, we fix choices
of lifts representing the universal deformations over
$R^{\univ}_{\cS}$ and each $R^{\univ}_{\cS_{Q_N}}$ such
that our chosen lift over each $R^{\univ}_{\cS_{Q_N}}$ reduces to our
chosen lift over $R^{\univ}_{\cS}$. These choices give rise to
isomorphisms $R_{\cS_{Q_N}}^{\square_T}\isoto R_{\cS_{Q_N}}^\univ\cotimes_\cO\cO[[z_1,\dots,z_{n^2\#T}]]$
compatible with a fixed isomorphism $R_{\cS}^{\square_T}\isoto
R_{\cS}^{\univ}\cotimes_\cO\cO[[z_1,\dots,z_{n^2\#T}]]$; they also
allow us to regard each $M_{1,Q_N}^{\square}$ as an
$\cO[[z_1,\dots,z_{n^2\#T}]]$-module. Finally, for each $N$, choose a
surjection $\cO[[y_1,\dots,y_q]] \onto \cO[\Delta_{Q_N}]$ with kernel contained in the ideal generated by $(1+y_i)^{p^N}-1$ for $i=1,\dots,q$. This gives
each $M^{\square}_{1,Q_N}$ the structure of an $S_\infty$-module and
hence the structure of an $S_\infty[[K]]$-module
(where the action of $K$ factors through $\Gamma_{2N}$). We note that the action of $S_\infty$ on $M_{1,Q_N}^{\square}$ factors through that of $R_{\cS_{Q_N}}^{\square_T}$.

We now apply the Taylor--Wiles method in the usual way to pass to a
subsequence, and patch the modules $M_{1,Q_N}^\square$ together with the maps
$\alpha_N: M_{1,Q_N}^\square\to\Ind_{KZ}^{G_N}
(M_{1,Q_N}^\square)_{K_N}$. More precisely, for each $N\geq 1$, let
$\gb_N$ denote the ideal of $S_\infty$ generated by $\varpi^N$,
$z_i^N$ and $(1+y_i)^{p^N}-1$.
Let $\ga$ denote the ideal of $S_\infty$ generated by the $z_i$ and the $y_i$. Fix a sequence
$(\gd_N)_{N\geq 1}$ of open ideals of $R^{\univ}_{\cS}$ such that
\begin{itemize}
\item $ \varpi^N R^{\univ}_{\cS} \subset \gd_N \subset
\Ann_{R^{\univ}_{\cS}}( S_{\xi,\tau}(U_{2N},\cO/\varpi^N)^\vee_{\m})$;
\item $\gd_N \supset \gd_{N+1}$ for all $N$;
\item $\cap_{N\geq 1} \gd_N = (0)$.
\end{itemize}
(For example, we may take $\gd_N = \m_{R^{\univ}_{\cS}}^N \cap \Ann_{R^{\univ}_{\cS}}( S_{\xi,\tau}(U_{2N},\cO/\varpi^N)^\vee_{\m})$.)

At level $N$, we consider
tuples consisting of
\begin{itemize}
\item a surjective homomorphism of $R^{\loc}$-algebras $\phi :
  R_\infty \onto R^{\univ}_{\cS}/\gd_N$;
\item a finite projective $(S_\infty/\gb_N)[\Gamma_{2N}]$-module $M^{\square}$
  which carries a commuting action of $R_\infty\otimes_{\cO}\cO[Z]$
  such that the action of $S_\infty$ can be factored through that of $R_\infty$;
  \item the action of $Z\cap K$ should factor through that of $Z_{2N}:=(Z\cap K)/Z\cap K_{2N}$ and be compatible with the action of $\Gamma_{2N}$;
\item an isomorphism $\psi : (M^{\square}/\ga M^{\square}) \isoto
  S_{\xi,\tau}(U_{2N},\cO/\varpi^N)^\vee_{\m}$ that is both $\Gamma_{2N}$-
and $Z$-equivariant, and is compatible with $\phi$;
\item a $KZ$-equivariant and $R_{\infty}\otimes_{\cO}S_\infty$-linear map
$$\alpha : M^{\square} \to \Ind_{KZ}^{G_N}[(M^{\square})_{K_N}],$$ whose reduction modulo $\ga$ is compatible with the map \[ S_{\xi,\tau}(U_{2N},\cO/\varpi^N)^\vee_{\m}\to \Ind_{KZ}^{G_N}[S_{\xi,\tau}(U_{N},\cO/\varpi^N)^\vee_{\m}]\] induced by the action of $G_N$ on the spaces of algebraic modular forms.
\end{itemize}
We consider two such tuples $(\phi,M^{\square},\psi,\alpha)$ and
$(\phi',M^{\square'},\psi',\alpha')$ to be equivalent if $\phi=\phi'$ and if there
is an isomorphism of
$(S_\infty/\gb_N)[\Gamma_{2N}]\otimes_{\cO}R_\infty\otimes_{\cO}\cO[Z]$-modules
$M^{\square}\isoto M^{\square '}$ which identifies $\psi$ with $\psi'$ and $\alpha$ with
$\alpha'$. Note that there are at most finitely many equivalence
classes of such tuples. (Even though the algebra $\cO[Z]$ is not of finite type over $\cO$, the compatibility between the $Z$-action and the $\Gamma_{2N}$-action gives only finitely many possibilities for the $\cO[Z]$-action. Also, for a given $M^{\square}$ there are only finitely many
$KZ$-equivariant homomorphisms $\alpha : M^{\square} \to
\Ind_{KZ}^{G_N}[(M^{\square})_{K_N}]$, because $M^\square$ and $K\backslash G_N /KZ$ are
finite.) Note also that a tuple $(\phi,M^{\square},\psi,\alpha)$  of level
$N$ gives rise to a tuple $(\phi',M^{\square '},\psi',\alpha')$ of
level $(N-1)$ by setting $\phi':=\phi \mod \gd_{N-1}$, $M^{\square '} :=
(M^{\square}/\gb_{N-1})_{K_{2(N-1)}}$ and $\psi' := \psi \mod \varpi^{N-1}$. The
map $\alpha'$ is defined by the formula $\alpha'(\overline m)(g) =
\overline{\alpha(m)(g)}$. Here $\overline{m}$ denotes the image of $m
\in M^{\square}$ in $M^{\square '}$ and $\overline{\alpha(m)(g)}$ denotes the image of
$\alpha(m)(g)$ in $(M^{\square '})_{K_{N-1}}$. Note that for any $m \in M^{\square}$, $\gamma \in K_{2(N-1)}$ and
$g\in G_{N-1}$, we have
\begin{multline*}
 \alpha\bigl((\gamma-1)m\bigr)(g) = \alpha(m)\bigl(g(\gamma-1)\bigr) \\
 = \alpha(m)\bigl((g\gamma g^{-1}-1)g\bigr) 
 = (g\gamma g^{-1}-1) \alpha(m)(g), 
\end{multline*}
by the $K$-equivariance of $\alpha$. Using the fact that $g\gamma
g^{-1} \in K_{N-1}$ it is straightforward to see that $\alpha'$ is
well-defined.

For each pair of integers $N'\geq N \geq 1$, we define a tuple
$(\phi,M^{\square},\psi,\alpha)$ of level $N$ as follows: we set $\phi$ equal to $R_\infty \onto
R^{\square_T}_{\cS_{Q_{N'}}} \onto R^{\univ}_{\cS}/\gd_N$ and we set
$M^{\square}=(M_{1,Q_{N'}}^\square\otimes_{S_\infty} S_\infty/\gb_N)_{K_{2N}}$.
The map $\psi$ comes from points~\eqref{gamma-to-gamma0}
and~\eqref{projectivity} in the previous section. The map
 $\alpha$ comes from $\alpha_{N'}$ defined above (in the same
 way that $\alpha'$ is defined in terms of $\alpha$ in the previous paragraph) and the compatibility it is required to satisfy comes from the commutative diagram in point~\eqref{gamma-to-gamma0} of the previous section.
Since there are only finitely many isomorphism classes of tuples at
 each level $N$, but $N'$ is allowed to be arbitrarily large, we can
 apply a diagonal argument to find a subsequence of pairs
 $(N'(N),N)_{N\geq1}$ indexed by $N$ such that for each $N\geq 2$, the
 tuple indexed by $N$ reduced to level $(N-1)$ is isomorphic to the
 tuple indexed by $(N-1)$. For each $N\geq 2$, we fix a choice of such
 an isomorphism.

We now define
\[ M_\infty := \varprojlim_N
(M^{\square}_{1,Q_{N'(N)}}\otimes_{S_{\infty}}S_{\infty}/\mf b_N)_{K_{2N}},
\]
where the transition maps are induced by the isomorphisms fixed in the
previous paragraph. (We drop the square from the notation here in
order to avoid notational overload in later sections.)
Each of the terms in the projective limit is a (literally) finite $\mathcal O$-module,
endowed with commuting actions of $S_{\infty}[[K]]$ and
$R_\infty\otimes_{\cO}\cO[Z]$, and by construction the transition maps
in the projective limit respect these actions.  Thus
$M_{\infty}$ is naturally a profinite topological $S_{\infty}[[K]]$-module which carries a
commuting action of $R_\infty\otimes_{\cO}\cO[Z]$,
the topology on $M_{\infty}$ being the projective limit topology (where each of
the terms in the projective limit is endowed with the discrete topology).
Moreover, the action of $S_\infty$ on $M_\infty$ can be factored through some map
$S_\infty \to R_\infty$. This follows from the fact that the image of $S_\infty$ in $\mathrm{End}_{S_\infty}(M_\infty)$ is closed and the analogous statement holds
at each finite level $N$. (Recall that the action of $S_\infty$ on $M_{1,Q_N}^{\square}$ factors through that of $R_{\cS_{Q_N}}^{\square_T}$.) We remark that $M_\infty$ and its extra structures depend on the many choices we have made, in particular on the subsequence of pairs $(N'(N),N)$ and on the choice of isomorphisms between different levels for $N\geq 2$.

The module $M_{\infty}$ is the key construction of the paper; the remainder
of this section is devoted to recording some additional properties that
it enjoys.  Firstly, since the transition maps in the projective limit
are given simply by reducing from level $N$ to level $N-1$, it is easily
verified that the natural map induces an isomorphism
$$(M_{\infty}/\gb_N)_{K_{2N}}\iso (M_{1,Q_{N'(N)}}^{\square}/\gb_N)_{K_{2N}}.$$
Next, from this, it follows from the topological form of Nakayama's lemma that
$M_\infty$ is in fact a finite $S_{\infty}[[K]]$-module.
 It follows that the topology on $M_{\infty}$ coincides with the quotient
topology obtained by writing it as a quotient of $S_{\infty}[[K]]^r$,
where $S_{\infty}[[K]]$ is endowed with its natural profinite
topology.
Crucially, there is a $KZ$-equivariant and
$R_{\infty}$-linear map
\[ \alpha_{\infty} : M_\infty \to \Ind_{KZ}^G
M_\infty\]
by taking the projective limit of the maps
\[ (M^{\square}_{1,Q_{N'(N)}}\otimes_{S_\infty} S_\infty/\mf b_N)_{K_{2N}}
\to \Ind_{KZ}^{G_N}\left(
  (M^{\square}_{1,Q_{N'(N)}}\otimes_{S_\infty}S_\infty/\mf b_N)_{K_N}
\right). \]
induced by $\alpha_{N'(N)}: M_{1,Q_{N'(N)}}\to
\Ind_{KZ}^{G_{N'(N)}}\left( (M_{1,Q_{N'(N)}})_{K_{N'(N)}}\right)$. We
denote this induced map by $\alphabar_{N'(N)}$.

The following proposition establishes the additional key properties of
the patched module~$M_{\infty}$ that we will need.

\begin{prop}
  \label{prop:Minfty is projective and has a G-action}
$M_\infty$ is finitely generated and
  projective over $S_\infty[[K]]$, and consequently is finitely generated
over $R_{\infty}[[K]]$.  Furthermore, $\alpha_\infty$ is
  injective, and its image is $G$-stable,
so that $\alpha_\infty$ induces an action of $G$ on $M_\infty$.
\end{prop}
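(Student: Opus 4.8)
The plan is to prove the three assertions in turn, exploiting the explicit presentation of $M_\infty$ as a projective limit of the modules $(M_{1,Q_{N'(N)}}^\square/\gb_N)_{K_{2N}}$.

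\emph{Projectivity over $S_\infty[[K]]$.} The key input is point~\eqref{projectivity} of Subsection~\ref{subsec:aux primes}: each $M_{1,Q_N}$ is a finite projective $(\cO/\varpi^N)[\Delta_{Q_N}][\Gamma_{2N}]$-module, and hence $M_{1,Q_N}^\square$ is finite projective over $(S_\infty/\gb_N)[\Gamma_{2N}]$ (the $S_\infty$-structure factoring through the chosen surjection $\cO[[y_1,\dots,y_q]]\onto\cO[\Delta_{Q_N}]$, and $\Gamma_{2N}\iso K/K_{2N}$, so $(S_\infty/\gb_N)[\Gamma_{2N}]$ is a quotient of $S_\infty[[K]]/\gb_N$). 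Taking $K_{2N}$-coinvariants of the term at level $N$ and then the projective limit, one sees that $M_\infty/\gb_N M_\infty \iso (M_{1,Q_{N'(N)}}^\square/\gb_N)_{K_{2N}}$ is finite projective over $(S_\infty/\gb_N)[[K]]/(\text{image of }\gb_N)$, i.e.\ over $S_\infty[[K]]/\gb_N S_\infty[[K]]$. Since $M_\infty$ is a finite $S_\infty[[K]]$-module (as noted just before the proposition, via the topological Nakayama lemma) and $\bigcap_N \gb_N S_\infty[[K]] = 0$ with $S_\infty[[K]]$ Noetherian complete local, a standard argument — lift a minimal generating set, get a surjection $S_\infty[[K]]^r \onto M_\infty$, and check the kernel vanishes modulo every $\gb_N$ using projectivity at each finite level to split the reduction — shows $M_\infty$ is projective (indeed free would follow if the residue ring were local, but projective is all we need). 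This step is essentially the usual Taylor--Wiles projectivity argument; I do not expect it to be the obstacle.

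\emph{Injectivity of $\alpha_\infty$.} Here the plan is to pass to the quotient by $\gb_N$ and the maximal patching ideal and compare with the genuinely automorphic situation. Concretely, modulo $\ga$ (recall $\ga=(z_i,y_i)$) the map $\alpha_\infty$ becomes the map on spaces of algebraic modular forms $S_{\xi,\tau}(U_{2N},\cO/\varpi^N)^\vee_\m \to \Ind_{KZ}^{G_N} S_{\xi,\tau}(U_N,\cO/\varpi^N)^\vee_\m$ induced by the $\tG(\A^\infty)$-action; by point~\eqref{gamma-to-gamma0}, $g^*$ for $g$ in a set of coset representatives of $K\backslash G_N/KZ$ realises these automorphic forms on smaller level, and since automorphic forms on a definite unitary group are just functions on a finite set, an element killed by all the $g_*$ must already be supported trivially — so the dual map is injective, at least after localising at $\m$ (the strong approximation / level-raising bookkeeping here is where one has to be slightly careful, but it is standard). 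Then one bootstraps: if $x\in M_\infty$ lies in $\ker\alpha_\infty$, one shows $x\in\gb_N M_\infty$ for all $N$ by a downward induction on the filtration, using that $M_\infty$ is $\gb_\bullet$-adically separated and that $\alpha_\infty$ is compatible with the transition maps; projectivity of $M_\infty$ over $S_\infty[[K]]$ lets one lift and control the reductions. Since $\bigcap_N\gb_N M_\infty = 0$, this forces $x=0$. I expect this to be the main obstacle: making the comparison with the automorphic picture precise after all the localisations and coinvariants, and propagating injectivity from the special fibre to $M_\infty$ itself without losing control, is the delicate part.

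\emph{$G$-stability of the image and the $G$-action.} Granting injectivity, I would argue as follows. The image $\alpha_\infty(M_\infty)\subseteq \Ind_{KZ}^G M_\infty$ consists of certain functions $G\to M_\infty$, and $\Ind_{KZ}^G M_\infty$ carries the right-translation $G$-action $(gf)(g'):=f(g'g)$. One must check that for $f=\alpha_\infty(x)$ and $h\in G$, the function $hf$ is again of the form $\alpha_\infty(y)$. At finite level this is the statement that $g_*$ for $g\in G_N$ is "transitive enough": the formula $(\alpha_N(x))(g)=g_*(x)$ together with $(gh)_*=g_*\circ h_*$ shows that $\alpha$ intertwines the action of $h$ (when $G_N$ is enlarged appropriately to $G_{N+|h|}$) with a well-defined operator on the source, and in the limit $G=\bigcup_N G_N$ these assemble to an action of all of $G$. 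The point is that $\alpha_\infty$ identifies $M_\infty$ with a $G$-subrepresentation of $\Ind_{KZ}^G M_\infty$; transporting the action back along the isomorphism $M_\infty\iso\alpha_\infty(M_\infty)$ gives the desired $G$-action, which by construction restricts to the original $KZ$-action (since $\alpha_\infty$ is $KZ$-equivariant and the $KZ$-action on a point of $\Ind_{KZ}^G M_\infty$ in the image agrees with the original one via $f\mapsto f(1)$). This last step is then a formal consequence of the first two plus the inductive-limit structure $G=\bigcup G_N$, and should be routine once injectivity is in hand.
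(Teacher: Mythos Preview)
Your treatment of projectivity and of the $G$-stability of the image is essentially the paper's argument: lift splittings through a cofinal family of quotients for the first, and use the transitivity $(gh)_* = g_* \circ h_*$ at finite level together with $G = \bigcup_N G_N$ for the second. No objection there.

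The injectivity step, however, is where you go astray. You flag it as ``the main obstacle'' and propose a delicate reduction to the automorphic special fibre, strong approximation, and a bootstrap through the $\gb_\bullet$-filtration. None of this is needed. The map $\alpha_\infty$ lands in $\Ind_{KZ}^G M_\infty$, and by construction $(\alpha_N(m))(1) = 1_*(m) = \overline{m}$ at every finite level; passing to the projective limit (where both $\varprojlim_N (M_\infty/\gb_N)_{K_{2N}}$ and $\varprojlim_N (M_\infty/\gb_N)_{K_N}$ recover $M_\infty$), one finds simply that $(\alpha_\infty(m))(1) = m$. So $\alpha_\infty$ has a retraction given by evaluation at the identity, and injectivity is immediate.

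Your proposed route is not only unnecessary but also incomplete as written: at any fixed finite level the map $\alphabar_{N'(N)}$ need not be injective (evaluation at $1$ is only the projection $(M_\infty/\gb_N)_{K_{2N}} \to (M_\infty/\gb_N)_{K_N}$, which has a kernel), so the claim that ``an element killed by all the $g_*$ must already be supported trivially'' would require real work, and the bootstrap from the special fibre to all of $M_\infty$ is not spelled out. The one-line argument above sidesteps all of this.
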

\begin{proof}As we already noted above, $M_\infty$ is finitely generated
over $S_\infty[[K]]$; in particular we may
  choose a surjection $S_\infty[[K]]^r\onto M_\infty$ for some $r\ge
  1$. In order to check that $M_\infty$ is a projective
  $S_\infty[[K]]$-module, it is enough to check that this surjection
  splits.

  Since each $M_{1,Q_N}$ is a projective
  $(\cO/\varpi^N)[\Delta_{Q_N}][\Gamma_{2N}]$-module, we see that
  \[ (M_\infty/\gb_N)_{K_{2N}}\cong
  (M^{\square}_{1,Q_{N'(N)}}/\gb_N)_{K_{2N}}\]
  is a  projective
  $S_\infty/\gb_N[\Gamma_{2N}]$-module,
  so we have a cofinal system of ideals (namely, the ideals generated by
  $\gb_N+\ker(\cO[[K]]\to \cO[\Gamma_{2N}])$) defining the topology of $S_\infty[[K]]$ modulo which the
  surjection splits. The sets of possible splittings at these finite
  levels then give us a projective system of non-empty finite sets,
  and an element of the projective limit of this projective system
  gives the required splitting.

Since, as observed above, the $S_{\infty}$-action on $M_{\infty}$ factors
through the $R_{\infty}$-action, we see that $M_{\infty}$ is also finitely
generated over $R_{\infty}[[K]]$.

We now check that $\alpha_\infty$ is injective. Note that by
definition, for each
$\alpha_N: M_{1,Q_N}\to \Ind^{G_N}_{KZ}\bigl((M_{1,Q_N})_{K_N}\bigr)$ we
have $\bigl(\alpha_N(m)\bigr)(1)=1_*(m)=\overline{m}$, where $\overline{m}$
denotes the image of $m$ in $(M_{1,Q_N})_{K_N}$. From this (with $N$
replaced by $N'(N)$)
we deduce
that $\bigl(\alphabar_{N'(N)})(m)\bigr)(1) = \overline{m}$ where $\overline{m}$ is the image
of $m\in (M^{\square}_{1,Q_{N'(N)}}/\gb_N)_{K_{2N}}$ in
$(M^{\square}_{1,Q_{N'(N)}}/\gb_N)_{K_{N}}$.  This then implies that $\bigl(\alpha_\infty(m)\bigr)(1)=m$ for each $m\in M_{\infty}$, and $\alpha_\infty$ is certainly
injective.

In order to show that the image of $\alpha_\infty$ is
$G$-stable, we will show
that for all $g\in G$, $m\in M_\infty$ we have\[g\bigl(\alpha_\infty(m)\bigr)=\alpha_\infty\bigl(\bigl(\alpha_\infty(m)\bigr)(g)\bigr).\]
In other words, we will show that for all $g,h\in G$ and
$m\in M_\infty$, we have  \[\bigl(\alpha_\infty(m)\bigr)(hg)=\bigl(\alpha_\infty\bigl((\alpha_\infty(m))(g)\bigr)\bigr)(h).\]

Let $m$ be an element of $M_\infty$ and let $N$ be any
integer large enough so that $g,h,gh\in G_N$. This certainly means
that $g,h,gh\in G_{2N}$ as well. Since $N$ can be arbitrarily large, it is enough to show that both
sides of the equation above become equal in
$(M_{\infty}/\gb_N)_{K_N}$ and we do this by explicit computation.

We let $\pi_N : M_\infty \to (M_\infty/\gb_N)_{K_{2N}}$ and $\sigma_N
: M_\infty \to (M_\infty/\gb_N)_{K_N}$ denote the projection
maps. Then by definition, we have
\[ \sigma_N\bigl(\alpha_{\infty}(m)(hg)\bigr) = \alphabar_{N'(N)}\bigl(\pi_N(m)\bigr)(hg) \]
and
\begin{align*}
 \sigma_N\bigl(\alpha_\infty(\alpha_\infty(m)(g))(h)\bigr) & = \alphabar_{N'(N)}\bigl(
 \pi_N(\alpha_\infty(m)(g))\bigr)(h) \\
& = \alphabar_{N'(N)}\bigl( \sigma_{2N}(\alpha_\infty(m)(g)) \bmod \gb_N\bigr)(h)  \\
& = \alphabar_{N'(N)}\bigl( \alphabar_{N'(2N)}(\pi_{2N}(m)) (g) \bmod \gb_N\bigr)(h).
\end{align*}
Now, for integers $N,\wt N,N''\geq 1$ with $\wt N\leq N'(N)$, we let
$U_1(Q_{N'(N)},\wt N)_{N''}$ be the
open compact subgroup lying between $U_1(Q_{N'(N)})_{N''}$ and
$U_0(Q_{N'(N)})_{N''}$ for which $U_0(Q_{N'(N)})_{N''}/U_1(Q_{N'(N)},\wt N)_{N''}
\cong (\Z/p^{\wt N}\Z)^q$. Then we have a commutative diagram

\[
\begin{CD}
  S_{\xi,\tau}\bigl(U_1(Q_{N'(2N)},2N)_{4N},\cO/\varpi^{2N}\bigr)^{\vee}
  @>g_* >>
  S_{\xi,\tau}\bigl(U_1(Q_{N'(2N)},2N)_{2N},\cO/\varpi^{2N}\bigr)^{\vee}
  \\
@|  @VV\text{nat}V \\
  S_{\xi,\tau}\bigl(U_1(Q_{N'(2N)},2N)_{4N},\cO/\varpi^{2N}\bigr)^{\vee}
  @.
  S_{\xi,\tau}\bigl(U_1(Q_{N'(2N)},N)_{2N},\cO/\varpi^{N}\bigr)^{\vee}
  \\
@VV\text{nat}V  @VVh_*V \\
  S_{\xi,\tau}\bigl(U_1(Q_{N'(2N)},N)_{2N},\cO/\varpi^{N}\bigr)^{\vee}
  @>(hg)_* >>
  S_{\xi,\tau}\bigl(U_1(Q_{N'(2N)},N)_{N},\cO/\varpi^{N}\bigr)^{\vee}.
\end{CD}
\]

First localising at $\m_{Q_{N'(2N)}}$, then applying the projector $\pr^\vee$,
then tensoring over
$R^{\univ}_{\cS_{Q_{N'(N)}}}$ with $R^{\square_T}_{\cS_{Q_{N'(N)}}}$,
and finally
reducing modulo $\gb_{2N}$ or $\gb_N$ as appropriate, we obtain a
commutative diagram:
\[
\begin{CD}
  (M_{\infty}/\gb_{2N})_{K_{4N}}
  @>g_*>>
  (M_{\infty}/\gb_{2N})_{K_{2N}}
  \\
@|  @VV\text{nat}V \\
  (M_{\infty}/\gb_{2N})_{K_{4N}}
  @.
   (M_{\infty}/\gb_{N})_{K_{2N}}
  \\
@VV\text{nat}V  @VVh_*V \\
  (M_{\infty}/\gb_{N})_{K_{2N}}
  @>(hg)_*>>
    (M_{\infty}/\gb_{N})_{K_{N}}.
\end{CD}
\]

Now, on the one hand, the element $\pi_{2N}(m)$ lies in the space in the upper left corner
of this diagram. By definition, we have
\[  \alphabar_{N'(N)}\bigl( \alphabar_{N'(2N)}(\pi_{2N}(m)) (g) \bmod \gb_N\bigr)(h)
= h_* \circ \text{nat} \circ g_* \bigl(\pi_{2N}(m)\bigr) \]
On the other hand, $\pi_{N}(m)$ lies in the
space in the lower left corner of the diagram, and we have
\[  \alphabar_{N'(N)}\bigl(\pi_N(m)\bigr)(hg)  = (hg)_*\bigl(\pi_N(m)\bigr) = (hg)_*\circ
\text{nat} \bigl(\pi_{2N}(m)\bigr). \]
The desired equality now follows from the commutativity of the above diagrams.
\end{proof}

Note that for each positive integer $m$, the compact open subgroups $U_m$ have
the same level away from $\gp$. Let $U^\p \subset \tG(\mathbb{A}_{\tF^+}^{\infty,
  \gp})$ denote that common level. Define the $\varpi$-adically completed cohomology space \[\tilde S_{\xi, \tau}(U^\p,
\cO)_{\m}:=\varprojlim_s\bigl(\varinjlim_m
S_{\xi,\tau}(U_m,\cO/\varpi^s)_{\m}\bigr).\] The space $\tilde
S_{\xi,\tau}(U^\p,\cO)_{\m}$ is equipped with a natural $G$-action, induced from
the action of $G$ on algebraic automorphic forms. Moreover,
$\tilde S_{\xi,\tau}(U^\p,\cO)_{\m}$ is equipped with a natural action of the Hecke
algebra \[\mathbb{T}^{S_p}_{\xi,\tau}(U^\p,\cO)_{\m}:=\varprojlim_{m}\mathbb{T}^{S_p}_{\xi,\tau}(U_m,\cO)_{\m}.\]
By taking the inverse limit of the
$\mathbb{T}^{S_p}_{\xi,\tau}(U_m,\cO)_{\m}$-valued deformations of $\bar\rho$ of
type $\cS$ we obtain a map $R^{\univ}_{\cS}\to
\mathbb{T}^{S_p}_{\xi,\tau}(U^\p,\cO)_{\m}$. Therefore, $\tilde
S_{\xi,\tau}(U^\p,\cO)_{\m}$ is also equipped with an action of the local
deformation ring $R_{\tp}^\square$ via the composition $R_{\tp}^\square \to
R^{\univ}_{\cS}\to \mathbb{T}^{S_p}_{\xi,\tau}(U^\p,\cO)_{\m}$.

\begin{cor}\label{cor: comparison with completed cohomology} There is a $G$-equivariant isomorphism \[(M_\infty/\ga M_\infty) \isoto \tilde S_{\xi,\tau}(U^\p,\cO)_{\m}^{d},\] which is compatible with the $R_{\tp}^\square$-action on both sides.
\end{cor}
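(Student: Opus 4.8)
The plan is to construct the isomorphism of the corollary level by level and pass to a projective limit, deducing the $G$- and $R^\square_{\tp}$-equivariance at the end by tracing through how $M_\infty$ and $\alpha_\infty$ were built. Since $M_\infty$ is finite over $S_\infty[[K]]$ and $S_\infty/\ga=\cO$, the quotient $M_\infty/\ga M_\infty$ is finite over the profinite ring $\cO[[K]]$, hence pseudocompact, hence equal to the inverse limit of its quotients by the submodules $(\varpi^n+\mathfrak{k}_m)M_\infty$, where $\mathfrak{k}_m:=\ker(\cO[[K]]\to\cO[K/K_m])$; the subfamily indexed by $(n,m)=(N,2N)$ is cofinal. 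Modulo $\ga$ one has $\gb_N\equiv(\varpi^N)$, and dividing by $\mathfrak{k}_{2N}$ is passing to $K_{2N}$-coinvariants, so the $N$-th such quotient is $\bigl(M_\infty/(\ga+\gb_N)M_\infty\bigr)_{K_{2N}}$. By the isomorphism $(M_\infty/\gb_NM_\infty)_{K_{2N}}\iso (M^\square_{1,Q_{N'(N)}}/\gb_N)_{K_{2N}}$ recorded just before Proposition~\ref{prop:Minfty is projective and has a G-action}, this is $\bigl(M^\square_{1,Q_{N'(N)}}\otimes_{S_\infty}S_\infty/\gb_N\bigr)_{K_{2N}}/\ga$, which the isomorphism $\psi$ of the level-$N$ patching tuple (resting on points~\eqref{gamma-to-gamma0} and~\eqref{projectivity} of Section~\ref{subsec:aux primes}) identifies with $S_{\xi,\tau}(U_{2N},\cO/\varpi^N)^\vee_\m$. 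These identifications are compatible as $N$ varies, since the patching tuples were chosen compatibly and by the commutative diagram in~\eqref{gamma-to-gamma0}; passing to the limit gives $M_\infty/\ga M_\infty\iso\varprojlim_N S_{\xi,\tau}(U_{2N},\cO/\varpi^N)^\vee_\m$, with transition maps dual to the evident level-raising inclusions composed with coefficient reduction.

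Next I would identify this limit with $\tilde S_{\xi,\tau}(U^\p,\cO)_\m^d$. Because the $U_m$ are sufficiently small, each $S_{\xi,\tau}(U_m,\cO)_\m$ is a finite free $\cO$-module, so the coefficient-reduction maps are surjective and (using exactness of $\varinjlim_m$ and vanishing of $\varprojlim^1$ on the resulting surjective system over $s$) $\tilde S_{\xi,\tau}(U^\p,\cO)_\m$ is $\varpi$-adically separated and complete with $\tilde S_{\xi,\tau}(U^\p,\cO)_\m/\varpi^s\iso\varinjlim_m S_{\xi,\tau}(U_m,\cO/\varpi^s)_\m$. Since a continuous $\cO$-linear map into $\cO/\varpi^s$ automatically kills $\varpi^s$, Schikhof duality gives $\tilde S_{\xi,\tau}(U^\p,\cO)_\m^d=\varprojlim_s\Hom_\cO\bigl(\tilde S_{\xi,\tau}(U^\p,\cO)_\m/\varpi^s,\cO/\varpi^s\bigr)=\varprojlim_{s,m}S_{\xi,\tau}(U_m,\cO/\varpi^s)^\vee_\m$, and cofinality of the diagonal $(m,s)=(2N,N)$ makes this $\varprojlim_N S_{\xi,\tau}(U_{2N},\cO/\varpi^N)^\vee_\m$ with exactly the transition maps found above. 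Composing with the previous paragraph yields an $\cO$-linear isomorphism $\bar\Theta\colon M_\infty/\ga M_\infty\iso\tilde S_{\xi,\tau}(U^\p,\cO)_\m^d$.

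It remains to check equivariance. The $G$-action on $M_\infty$ of Proposition~\ref{prop:Minfty is projective and has a G-action} is the one encoded by $\alpha_\infty$, which is assembled from the pushforward maps $g_*$ on the $\cO$-duals of the spaces of algebraic modular forms; these same $g_*$-maps, after $\varinjlim_m$ and dualizing, define the $G$-action on $\tilde S_{\xi,\tau}(U^\p,\cO)_\m^d$ coming from the right-translation action on completed cohomology. Hence at each finite level the square relating $\overline{\alpha}_{N'(N)}$ to the $N$-th component of $\bar\Theta$ commutes by construction — the verification is the same bookkeeping with the commutative diagrams of spaces $S_{\xi,\tau}(U_1(Q_{N'})_m,\cO/\varpi^r)^\vee$ appearing in the proof of Proposition~\ref{prop:Minfty is projective and has a G-action} — so $\bar\Theta$ is $G$-equivariant. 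For the $R^\square_{\tp}$-actions, at level $N$ the $R_\infty$-action on $M^\square_{1,Q_{N'(N)}}/\ga M^\square_{1,Q_{N'(N)}}$ factors through $R^{\univ}_{\cS}/\gd_N$ compatibly with $\psi$ (part of the definition of a patching tuple), and as all maps involved are $R^{\loc}$-algebra homomorphisms the composite $R^\square_{\tp}\to R_\infty\to R^{\univ}_{\cS}/\gd_N$ coincides with $R^\square_{\tp}\to R^{\univ}_{\cS}\to R^{\univ}_{\cS}/\gd_N$, which underlies the $R^\square_{\tp}$-action on $S_{\xi,\tau}(U_{2N},\cO/\varpi^N)^\vee_\m$; passing to the limit gives the stated compatibility.

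I expect the main obstacle to be not any single deep point but the limit bookkeeping: verifying carefully that $M_\infty/\ga M_\infty$ really is computed by the quotients indexed by $(N,2N)$ (i.e. that these are cofinal and that the $K_{2N}$-coinvariants interact with $\ga$ and $\gb_N$ as claimed), and — for $G$-equivariance — matching the patching tower (coefficients $\cO/\varpi^N$ at level $K_{2N}$, the two parameters growing together) with the completed-cohomology tower (which decouples the coefficient and level parameters), so that the $g_*$-maps on the two sides are literally identified. No idea beyond the construction already in hand should be needed, but this matching is where I would spend the most care.
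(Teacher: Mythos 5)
Your proof is correct and takes essentially the same approach as the paper's: the paper also constructs $\psi_\infty$ by passing to the projective limit of the level-$N$ isomorphisms $\psi$, then checks $R^\square_{\tp}$-compatibility via $\phi$ and $\cap_N \gd_N = 0$, and deduces $G$-equivariance from the commutative diagram with $\Ind_{KZ}^G$ together with Proposition~\ref{prop:Minfty is projective and has a G-action}. The difference is only one of detail: you spell out the pseudocompactness and Schikhof-duality bookkeeping needed to identify both sides of $\psi_\infty$ with $\varprojlim_N S_{\xi,\tau}(U_{2N},\cO/\varpi^N)^\vee_\m$, which the paper asserts implicitly.
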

\begin{proof}  We have a $KZ$-equivariant isomorphism \[\psi_\infty: (M_\infty/\ga M_\infty) \isoto \tilde S_{\xi,\tau}(U^\p,\cO)_{\m}^{d}\] obtained by taking the projective limit of the $\Gamma_{2N}$- and $Z$-equivariant isomorphisms $\psi$ at level $N$.

We first show that $\psi_\infty$ is compatible with the $R_{\tp}^\square$-action on both sides. The $R_{\tp}^\square$-action on the left hand side is via the map $R_{\tp}^\square\to R_\infty$ and each $\psi$ at level $N$ is compatible with $\phi :
  R_\infty \onto R^{\univ}_{\cS}/\gd_N$. On the other hand, the action of $R_{\tp}^\square$ on the right hand side is via the map $R_{\tp}^\square \to R^{\univ}_{\cS}$ and $\cap_{N\geq1}\gd_N = (0)$. The desired compatibility follows.

  Moreover, $\psi_\infty$ fits into a commutative diagram
\[
\begin{CD}
 \bigl(M_\infty/\ga M_\infty \bigr)
  @>\psi_\infty>>
\tilde S_{\xi,\tau}(U^\p,\cO)_{\m}^{d}
  \\
@VV g V  @VV g V \\
\Ind_{KZ}^G\bigl(M_\infty/\ga M_\infty\bigr)
  @>\psi_\infty>>\Ind_{KZ}^G\bigl(\tilde S_{\xi,\tau}(U^\p,\cO)_{\m}^{d}\bigr)\end{CD}
\]
where both vertical maps are induced by the action of $g\in G$. (The fact that the right vertical map has $G$-stable image follows from the analogue of Proposition~\ref{prop:Minfty is projective and has a G-action} for the completed cohomology space $\tilde S_{\xi,\tau}(U^\p,\cO)_{\m}^{d}$. Since $M_\infty$ can be thought of as a patched version of completed cohomology, the arguments for $\tilde S_{\xi,\tau}(U^\p,\cO)_{\m}^{d}$ are analogous, but easier.)
 The $G$-equivariance of $\psi_\infty$ now follows from Proposition~\ref{prop:Minfty is projective and has a G-action}.
\end{proof}

\subsection{Admissible unitary Banach
  representations}\label{subsec:admissible unitary Banach}
An \emph{$E$-Banach space representation} $V$ of $G$ is an $E$-Banach space $V$ together with
a $G$-action by continuous linear automorphisms such that the corresponding map $G\times V\to V$
is continuous. A Banach space representation $V$ is called \emph{unitary} if there exists a $G$-invariant
norm defining the topology on $V$. The existence of such a norm is equivalent to the existence of
an open, bounded $G$-invariant $\cO$-lattice $\Theta\subset V$. A unitary $L$-Banach space representation
is \emph{admissible}  if $\Theta \otimes_\cO \F$ is an admissible (smooth) representation of $G$.
(This means that the space of invariants $(\Theta\otimes_\cO \F)^H$ is finite-dimensional for every open
subgroup $H\subset G$.) This definition of admissibility  is equivalent to that of~\cite{MR2290601} by Proposition 6.5.7 of~\cite{emerton.locally.analytic.vectors}. (The latter definition requires $\Theta^d$ to be
a finitely generated module over $\cO[[H]]$.)

Fix a
lifting $r:G_F\to\GL_n(\cO)$ of $\rbar$. We now explain how
$M_\infty$ allows us to associate to $r$ an admissible unitary Banach
representation $V(r)$ of $\GL_n(F)$.

As above, we identify $F$ with $\tF_{\tp}$. By definition, $r$ comes
from a homomorphism of $\cO$-algebras $x:R_{\tp}^\square\to\cO$. We
extend this to a homomorphism of $\cO$-algebras \[x':R^\square_{\tp}\widehat{\otimes}\left(\widehat{\otimes}_{v \in  S_p\setminus\{\p\}}R_\tv^{\square,\xi,\tau}\right)\to\cO\] by
using the homomorphisms $R_\tv^{\square,\xi,\tau}\to\cO$
corresponding to our given potentially diagonalisable representation
$r_{\textrm{pot.diag}}$, and then extend $x'$  arbitrarily to a homomorphism of $\cO$-algebras
$y:R_\infty\to\cO$. We set
$V(r):=(M_\infty\otimes_{R_\infty,y}\cO)^d[1/p]$.\begin{prop}
  \label{prop: V(r) is admissible Banach}The representation $V(r)$ is
  an admissible unitary Banach representation of $\GL_n(F)$.
\end{prop}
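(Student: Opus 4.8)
The plan is to deduce the two required properties of $V(r)$ — that it is a Banach space with a unitary $G$-action, and that this action is admissible — from the structure of $M_\infty$ established in Proposition~\ref{prop:Minfty is projective and has a G-action}, namely that $M_\infty$ is a projective $S_\infty[[K]]$-module carrying an action of $G$ extending the $K$-action. First I would observe that, since $y:R_\infty\to\cO$ is a homomorphism of $\cO$-algebras and $S_\infty$ acts on $M_\infty$ through $R_\infty$, the fiber $M:=M_\infty\otimes_{R_\infty,y}\cO$ is a quotient of $M_\infty$ by a closed $K$-stable (indeed $G$-stable, since the $G$-action is $R_\infty$-linear) submodule; as $M_\infty$ is a finitely generated projective $S_\infty[[K]]$-module, $M$ is a finitely generated $\cO[[K]]$-module, and in fact projective over $\cO[[K]]$ because $\cO = S_\infty\otimes_{S_\infty,\mathrm{aug}}\cO$ and projectivity is preserved under base change along $S_\infty\to\cO$ (here one uses that $M_\infty$ is projective over $S_\infty[[K]]$, so $M_\infty\otimes_{S_\infty}\cO$ is projective over $\cO[[K]]$, and $M$ is a further quotient — more care is needed here, see below). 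Dualizing, $M^d = \Hom_\cO(M,\cO)$ is the unit ball in the $E$-Banach space $\Hom_\cO(M,E) = M^d[1/p]$ by Schikhof duality as recalled in the notation section, so $V(r) = M^d[1/p]$ is genuinely an $E$-Banach space with unit ball $M^d$.

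Next I would transport the $G$-action. The $G$-action on $M_\infty$ is $R_\infty$-linear, hence descends to a $G$-action on $M = M_\infty\otimes_{R_\infty,y}\cO$; dualizing gives a $G$-action on $M^d$ and on $V(r)=M^d[1/p]$. Unitarity is then the statement that $G$ preserves the unit ball $M^d$, which is immediate since the $G$-action on $M^d$ is by duality from an $\cO$-linear (indeed $\cO[[K]]$-... more precisely $\cO$-module) automorphism action on $M$; one must check that this $G$-action on $M^d$ is by continuous (equivalently, bounded) operators and that the orbit maps are continuous, so that $V(r)$ is a unitary Banach \emph{representation} in the sense required. Continuity of the $G$-action should follow from the fact that $M$ is a smooth (in the sense of being a union over $N$ of its $K_N$-fixed quotients? — actually $M$ is a module not a smooth representation, but its dual $M^\vee = \Hom_\cO^{\cont}(M,E/\cO)$ is a smooth $G$-representation) and the standard dictionary between profinite $\cO[[G]]$-modules and smooth $G$-representations: the topological $\cO[[K]]$-module $M$, being finitely generated and hence compact, has $M^\vee$ smooth, and the $G$-action on $M$ corresponds to a $G$-action on the smooth representation $M^\vee$; then $V(r)$ is the Banach representation dual to $M^\vee$ in Emerton's framework, which is automatically unitary and continuous.

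For admissibility, I would invoke the characterization (Emerton, \emph{Ordinary parts} / \cite{MR2667891}) that a unitary Banach $G$-representation $V$ with unit ball $\Theta$ is admissible if and only if $\Theta/\varpi\Theta$ is an admissible smooth $G$-representation over $\F$, equivalently (its Pontrjagin dual being) finitely generated over $\F[[K]]$. Here $\Theta = M^d$ and $\Theta/\varpi\Theta = M^d/\varpi M^d = (M/\varpi M)^d$ (using that $M$ is $\cO$-torsion-free, being a fiber of a projective $S_\infty[[K]]$-module along $y$ — though torsion-freeness of the fiber is itself a point to verify), whose Pontrjagin dual is $(M/\varpi M)$ up to the usual identifications, and this is finitely generated over $\F[[K]]$ since $M$ is finitely generated over $\cO[[K]]$. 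Thus admissibility reduces to the finite generation of $M$ over $\cO[[K]]$, which follows from $M_\infty$ being finitely generated over $S_\infty[[K]]$ together with base change along the augmentation-type map $S_\infty\to R_\infty\xrightarrow{y}\cO$.

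\textbf{The main obstacle.} The delicate point is the transition from the projectivity of $M_\infty$ over $S_\infty[[K]]$ to good finiteness/projectivity properties of the fiber $M = M_\infty\otimes_{R_\infty,y}\cO$: the map $S_\infty\to\cO$ factors through $R_\infty$ but need not be the augmentation of $S_\infty$, so one must argue that $M_\infty$, viewed as an $S_\infty[[K]]$-module via the \emph{given} $S_\infty$-structure, still has $M_\infty\otimes_{R_\infty,y}\cO$ finitely generated over $\cO[[K]]$ and $\cO$-torsion-free. Finite generation is fine (base change of finitely generated is finitely generated), but $\cO$-torsion-freeness of the fiber — needed for the clean identification $\Theta/\varpi\Theta = (M/\varpi M)^d$ and hence for applying Emerton's admissibility criterion in its simplest form — is where I expect the real work to lie; if it fails one can instead argue more carefully that $\Theta/\varpi\Theta$ is a subrepresentation of the admissible $(M/\varpi M)^\vee$-type object, or replace $\cO$-torsion-freeness by the observation that $M^d$ is by construction $\varpi$-adically separated and that $M^d/\varpi M^d$ injects into a finitely-$\F[[K]]$-generated module, which still yields admissibility since submodules of admissible smooth representations over a field are admissible (as $\F[[K]]$ is Noetherian).
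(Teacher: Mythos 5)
Your argument is correct and follows essentially the same route as the paper's proof, but two of your worries deserve comment. First, your passing claim that $M := M_\infty\otimes_{R_\infty,y}\cO$ is projective over $\cO[[K]]$ is false as stated: as you observe yourself, the map $S_\infty\to\cO$ in play is the composite $S_\infty\to R_\infty\xrightarrow{y}\cO$, not the augmentation of $S_\infty$, and in any case $M$ is the fiber of $M_\infty$ over a point of $\Spec R_\infty$ rather than $\Spec S_\infty$, so it is obtained from a projective $\cO[[K]]$-module by a further quotient. Fortunately, projectivity of $M$ is never used; only finite generation over $\cO[[K]]$ is needed, and that holds because $M_\infty$ is finite over $S_\infty[[K]]$. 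Second, your ``main obstacle'' ($\cO$-torsion-freeness of $M$) dissolves exactly as you suggest at the end. The paper's proof reduces admissibility of $V(r)$ to checking that $(M_\infty\otimes_{R_\infty,\ybar}\F)_{K_N}$ is finite-dimensional for each $N$, where $\ybar$ is the reduction of $y$ mod~$\varpi$; this follows at once from the finiteness of $M_\infty$ over $S_\infty[[K]]$ together with the fact that the composite $S_\infty\to R_\infty\xrightarrow{\ybar}\F$ has open kernel, so that $M\otimes_\cO\F$ is a finitely generated $\F[[K]]$-module. Torsion-freeness of $M$ is never invoked; one only uses the natural injection $M^d\otimes\F\hookrightarrow(M\otimes_\cO\F)^\vee$, which is the point of your fallback argument. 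So you may delete the detour through projectivity and promote the fallback to the main line of your admissibility argument, at which point your proof and the paper's coincide.
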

\begin{rem}Note that we do not know if $V(r)$ is independent of
  either the global setting or the choice of $y$. We also do not
  know that $V(r)$ is necessarily nonzero, although we will prove that
  $V(r)\ne 0$ for many regular de Rham representations $r$ in Section~\ref{sec:breuilschneider} below, as a consequence of the stronger result that (for the
particular choice of $r$ under consideration) the
subspace of locally algebraic vectors in $V(r)$ is nonzero.

\end{rem}
\begin{proof}[Proof of Proposition~\ref{prop: V(r) is admissible
    Banach}] The image of $(M_\infty\otimes_{R_\infty,y}\cO)^d$ in
  $V(r)$ is a unit ball stable under $\GL_n(F)$,
 and so $V(r)$ is a unitary representation.

 In order to see that $V(r)$ is admissible, we must
  show that for each $N\ge 0$, the $\F$-vector space
  $\bigl((M_\infty\otimes_{R_\infty,y}\cO)^d\otimes\F\bigr)^{K_N}$ is
  finite-dimensional. Writing $\ybar$ for the composite
  $R_\infty\stackrel{y}{\to}\cO\to\F$, we must check that
  $(M_\infty\otimes_{R_\infty,\ybar}\F)_{K_N}$ is
  finite-dimensional. Since $M_\infty$ is a finite
  $S_\infty[[K]]$-module, and $\ker\ybar$ induces an open ideal of
  $S_\infty$, this is immediate.
\end{proof}

\begin{rem}
  \label{rem: getting a Banach space from a general r without having
    to worry about a pot diag lift of rbar}While we have assumed
  throughout this section that $\rbar$ has a potentially
  diagonalisable lift with regular Hodge--Tate weights, this
  hypothesis is not needed for our main results, which concern
representations $r:G_F\to \GL_n(E)$. Indeed, possibly after making a
  ramified extension of $E$, it is easy to see that any such
  representation can be conjugated to a representation $r'$ which factors
through $\GL_n(\cO)$ and whose reduction
  $\rbar'$ is semisimple, so that $\rbar'$ has a
  lift of the required kind (possibly after further extending $E$) by Lemma~\ref{lem: existence of pot
    diagonal lift}.
\end{rem}

\section{Hecke algebras and types}\label{sec:types}In the following
two sections we will use the local Langlands correspondence and the
theory of types to establish local-global compatibility results for
our patched modules. In particular, we will make use of the results of
\cite{MR1711578} and \cite{MR1728541} in order to study spaces of
automorphic forms which correspond to fixed inertial types. As
explained in the introduction, in some of our results we will for
simplicity restrict ourselves to the case of Weil--Deligne
representations with $N=0$; this means that we will limit ourselves
to considering potentially crystalline Galois representations. However, some of
our results are more naturally expressed in the more general context
of representations with arbitrary monodromy, so we will make it clear
when we impose this restriction. We begin by collecting and
explaining various results from the literature that we will need.

Let $F/\Qp$ be a finite extension. Recall that we write
$K=\GL_n(\cO_F)$, $G=\GL_n(F)$, and that $W_F$ is the Weil group of
$F$. Although several of our references work over $\C$, we work consistently over $\Qbar_p$ (except in
Subsection~\ref{god_save_the_queen}, where we fix a single Bernstein component,
and work over a finite extension of $\Q_p$).
The various results over $\C$ are transferred to
our context over $\Qbar_p$ via our fixed choice of $\imath:\Qpbar\stackrel{\sim}{\to}\C$. 

As recalled in Section~\ref{subsec:notation}, the
local Langlands correspondence $\rec_p$ gives a bijection between the
isomorphism classes of irreducible smooth representations of $G$ over~$\Qpbar$,
and the $n$-dimensional Frobenius semisimple Weil--Deligne
representations of~$W_F$, independently of the choice of $\imath$;
thus none of our results below depend on the choice of~$\imath$.

\subsection{Bernstein--Zelevinsky theory}\label{subsec BZ theory}We
now recall some details of the local Langlands correspondence and its
relationship to Bernstein--Zelevinsky theory, following~\cite{rodier}.

Given a partition $n=n_1+\dots+n_r$, let $P=MN$ be the corresponding
standard parabolic subgroup of $G$ with Levi subgroup $M$ and
unipotent radical $N$ (standard with respect to the Borel subgroup of upper triangular matrices), so that
$M\cong\GL_{n_1}(F)\times\cdots\times\GL_{n_r}(F)$. For any smooth
representations $\pi_i$ of $\GL_{n_i}(F)$, the tensor product
$\pi_1\otimes\cdots\otimes\pi_r$ is naturally a representation of $M$
and thus of $P$, and we denote by $\pi_1\times\cdots\times\pi_r$ the
normalised induction
$\nind_P^G (\pi_1\otimes\cdots\otimes\pi_r)$.
If $\pi_1,\dots,\pi_r$ are irreducible
supercuspidal representations then $\pi_1\times\dots\times\pi_r$ has
finite length by~\cite[Proposition 4]{rodier}.

Any smooth irreducible representation $\pi$ of $G$ is necessarily a
subrepresentation of some $\pi_1\times\cdots\times\pi_r$ for some $P$
and some irreducible supercuspidal representations
$\pi_1,\dots,\pi_r$. By~\cite[Proposition 5]{rodier}, $\pi$ determines
the multiset $\{\pi_1,\dots,\pi_r\}$ (and thus the corresponding
partition of $n$, up to reordering) uniquely, and we refer to it as
the \emph{cuspidal support} of $\pi$. For any representation $\pi_i$
of $\GL_{n_i}(F)$, and any integer $s$, we write
$\pi_i(s):=\pi_i\otimes|\det|^s$. Then by~\cite[Th\'eor\`eme
1]{rodier} (and the remark which follows it),
$\pi_1\times\cdots\times\pi_r$ is reducible if and only if there are
some $i,j$ with $n_i=n_j$ and $\pi_j\cong\pi_i(1)$.

We define a \emph{segment} to be a set  of isomorphism classes
of irreducible cuspidal representations $\GL_{n_i}(F)$ of the form
$\Delta=\{\pi_i,\pi_i(1),\dots,\pi_i(r-1)\}$ for some $r\ge 1$, and we
write $\Delta=[\pi_i,\pi_i(r-1)]$. We say that two segments
$\Delta_1,\Delta_2$ are \emph{linked} if neither contains the other,
and $\Delta_1\cup\Delta_2$ is also a segment. If $\Delta_1=[\pi_i,\pi_i']$ and
$\Delta_2=[\pi_i'',\pi_i''']$ are linked, we say that $\Delta_1$
\emph{precedes} $\Delta_2$ if $\pi_i''=\pi_i(r)$ for some $r\ge 0$.

If $\Delta=[\pi_i,\pi_i(r-1)]$ is a segment, then
$\pi_i\times\cdots\times\pi_i(r-1)$ has a unique irreducible
subrepresentation $Z(\Delta)$ and a unique irreducible quotient
$L(\Delta)$. By~\cite[Th\'eo\-r\`eme 2]{rodier}, if
$\Delta_1,\dots,\Delta_r$ are segments such that if $i<j$, then
$\Delta_i$ does not precede $\Delta_j$, then the representation
$Z(\Delta_1)\times\cdots\times Z(\Delta_r)$ has a unique irreducible
subrepresentation, which we denote
$Z(\Delta_1\times\cdots\times\Delta_r)$. Every irreducible smooth
representation of $G$ is of the form
$Z(\Delta_1\times\cdots\times\Delta_r)$ for some segments
$\Delta_1,\dots,\Delta_r$, which are uniquely determined up to
reordering.  By~\cite[Th\'eor\`eme 3]{rodier}, the analogous statement
also holds for the $L(\Delta_i)$. By~\cite[Th\'eor\`eme 5]{rodier},
$Z(\Delta_1,\dots,\Delta_r)$ occurs with multiplicity one as a
subquotient of $Z(\Delta_1)\times\cdots\times Z(\Delta_r)$.

The link with the local Langlands correspondence is as
follows~\cite[\S 4.4]{rodier}. The representation $\rec_p(\pi)$ is
irreducible if and only if $\pi$ is supercuspidal. For each integer
$r\ge 1$ there is an explicitly defined $r$-dimensional Weil--Deligne
representation $\Sp(r)$ whose restriction to the Weil group is
unramified and whose monodromy operator has rank $r-1$ (see page 213 of~\cite{rodier}). Then if
$\Delta=[\pi_i,\pi_i(r-1)]$ is a segment, we have
$\rec_p\bigl(L(\Delta)\bigr)=\rec_p(\pi_i)\otimes\Sp(r)$, and more generally we
have
$\rec_p\bigl(L(\Delta_1,\dots,\Delta_r)\bigr)=\oplus_{i=1}^r\rec_p\bigl(L(\Delta_i)\bigr)$.

\subsection{The Bernstein Centre}\label{subsec:bernstein centre}We now
briefly recall some of the results of~\cite{MR771671} (in the special case that
the reductive group under consideration there is $\GL_n$) in a fashion adapted
to our needs. The \emph{Bernstein spectrum} is the set of $G$-orbits of pairs
$(M,\omega)$, where $M$ is a Levi subgroup of $G$, $\omega$ is an irreducible
supercuspidal representation of $M$, and the action of $G$ is via conjugation;
note that up to conjugacy, $(M,\omega)$ is of the form
$(\GL_{n_1}(F)\times\cdots\times\GL_{n_r}(F),\pi_1\otimes\cdots\otimes\pi_r)$,
as in the preceding section. As we will explain below, the Bernstein spectrum
naturally has the structure of an algebraic variety over $\Qpbar$ (with
infinitely many connected components), the \emph{Bernstein variety}. Given an
irreducible smooth $G$-representation~$\pi$, we obtain a point of the Bernstein
spectrum by passing to the cuspidal support of $\pi$.
This map is surjective; any point $(M,\omega)$ of the Bernstein spectrum
is equal to the cuspidal support of $\pi$ for any Jordan--H\"older factor
$\pi$ of $\nind_P^G \omega$ (and indeed these are precisely the $\pi$ for
which $(M,\omega)$ arises as the cuspidal support; here $P$ is any
parabolic subgroup of $G$ admitting $M$ as a Levi quotient ---
the collection of Jordan--H\"older factors of $\nind_P^G \omega$
is independent of the choice of $P$).

The connected components of the Bernstein variety are as follows. Fix
a pair $(M,\omega)$ as above; then the component of the Bernstein
variety containing $(M,\omega)$ is the union of the $G$-orbits of the
pairs $(M,\alpha\omega)$, where $\alpha$ is an unramified
quasicharacter of $M$. We say that two pairs $(M,\omega)$ and
$(M',\omega')$ are \emph{inertially equivalent} if they are in the
same Bernstein component, and write $[M,\omega]$ for the equivalence class. Fixing one such pair $(M,\omega)$, it is
easy to see that there is a natural algebraic structure on the
inertial equivalence class, because the set of unramified
quasicharacters of $M$ has a natural algebraic structure, and thus so
does any quotient of it by a finite group; this gives the structure of
the Bernstein variety. Given an irreducible smooth $G$-representation $\pi$, we will
refer to the inertial equivalence class of its cuspidal support as the
\emph{inertial support} of $\pi$.

For any connected component $\Omega$ of the Bernstein variety, we have
a corresponding full subcategory of the category of smooth
$G$-representations, whose objects are the smooth representations all
of whose irreducible subquotients have cuspidal support in $\Omega$.
Such a subcategory is called a \emph{Bernstein component} of the
category of smooth $G$-representations, and in fact the category of
smooth $G$-representations is a direct product of the Bernstein
components. Given a Bernstein component $\Omega$, the \emph{centre}
$\mathfrak{Z}_\Omega$ of $\Omega$ is the centre of the corresponding
Bernstein component (that is, the endomorphism ring of the identity
functor), so that $\mathfrak{Z}_\Omega$ acts naturally on each
irreducible smooth representation $\pi\in\Omega$. Since $\pi$ is irreducible,
each element of $\mathfrak{Z}_\Omega$ will act on $\pi$ through a scalar.
In fact this scalar depends only on the cuspidal support of $\pi$,
and in this way $\mathfrak{Z}_{\Omega}$ is identified
with the ring of regular functions on
the connected component $\Omega$ of the Bernstein variety.

The above notions extend in an obvious way to products of groups
$\GL_{n_i}(F)$, and we will make use of this extension below without
further comment (in order to compare representations of $G$ with
representations of a Levi subgroup).

Finally, we note that from the link between the local Langlands correspondence and
Bernstein--Zelevinsky theory explained in Section~\ref{subsec BZ theory}, it
is immediate that two irreducible smooth representations $\pi, \pi'$
of $G$ lie in the same Bernstein component if and only if
$\rec_p(\pi)|_{I_F}\cong\rec_p(\pi')|_{I_F}$ (where we ignore the
monodromy operators).

\subsection{Bushnell--Kutzko theory}\label{subsec:BK theory}Fix a
Bernstein component $\Omega$. In~\cite{MR1711578}, there is the
definition of a \emph{semisimple Bushnell--Kutzko type} $(J,\lambda)$
for $\Omega$, where $J\subseteq K$ is a compact open subgroup, and
$\lambda$ is a smooth irreducible $\Qpbar$-representation of $J$. The
pair $(J,\lambda)$ has the property that if $\pi$ is an irreducible
smooth representation of $G$, then $\Hom_J(\lambda,\pi)\ne 0$ if and
only if $\pi\in\Omega$, and in fact the functor $\Hom_J(\lambda,
\ast)$ induces an equivalence of categories between $\Omega$ and the
category of left modules of the Hecke algebra $\cH(G,\lambda):=\mathcal H(G, J;
\lambda):=\End_G(\cInd_{J}^{G} \lambda)$, with the inverse functor
being given by $\cInd_J^G \lambda \otimes_{\mathcal H(G,
  \lambda)}(*)$.

Let $\pi$ be an irreducible representation in $\Omega$, and let $(M,\omega)$
be a representative for the inertial support of $\pi$.  We can and do suppose that $M$ is a
standard Levi subgroup $\prod_{i=1}^r\prod_{j=1}^{d_i}\GL_{e_i}(F)$,
and that $\omega=\otimes_{i=1}^r\pi_{i}^{\otimes d_i}$, where
$\pi_i$ and $\pi_{i'}$ are not inertially equivalent (that is, do not
differ by a twist by an unramified quasicharacter) if $i\ne i'$. Then
by the construction of $(J,\lambda)$ in~\cite{MR1711578}, there is a
pair $(J\cap M,\lambda_M)$ which is a semisimple Bushnell--Kutzko type
for the Bernstein component $\Omega_M$ of $M$ determined by $\omega$, in the
sense that if $\pi_M$ is an irreducible smooth representation of $M$,
then $\Hom_{J\cap M}(\lambda_M,\pi_M)\ne 0$ if and only if
$\pi_M\in\Omega_M$, and the functor $\Hom_{J\cap M}(\lambda_M,
\ast)$ induces an equivalence of categories between $\Omega_M$ and the
category of left modules of $\cH(M,\lambda_M)$ in the same way
as above.

Let $P$ be a parabolic subgroup of $G$ with Levi factor $M$. Then the
normalised induction $\nind_P^G$ restricts to a functor from $\Omega_M$ to
$\Omega$, and there is a unique injective algebra homomorphism
$t_P:\cH(M,\lambda_M)\to\cH(G,\lambda)$ (which is denoted
$j_{\cQ}$ on page 55 of~\cite{MR1711578}) such that under the
equivalences of categories explained above, $\nind_P^G$ corresponds to
the pushforward along $t_P$ (given by $\Hom_{\cH(M,\lambda_M)}(\cH(G,\lambda),*)$).

It will be useful to us to have a somewhat more explicit description
of the pair $(J\cap M,\lambda_M)$. Recall that we may write
$M=\prod_{i=1}^r\prod_{j=1}^{d_i}\GL_{e_i}(F)$, and that
$\omega=\otimes_{i=1}^r\pi_{i}^{\otimes d_i}$. Then as is explained
in~\cite[\S 1.4-5]{MR1711578} (see also the paragraph after Lemma
7.6.3 of~\cite{MR1204652}), we may write $J\cap
M=\prod_{i=1}^r\prod_{j=1}^{d_i}J_i$,
$\lambda_M=\bigotimes_{i=1}^r\lambda_i^{\otimes d_i}$, where
$(J_i,\lambda_i)$ is a maximal simple type occurring in $\pi_i$ in the
sense of~\cite{MR1643417}. There is a corresponding isomorphism
$\cH(M,\lambda_M)\cong\otimes_{i=1}^r\cH\bigl(\GL_{e_i}(F),\lambda_i\bigr)^{\otimes
  d_i}$, and each $\cH\bigl(\GL_{e_i}(F),\lambda_i\bigr)$ is commutative, so
that $\cH(M,\lambda_M)$ is commutative. The following somewhat
technical lemma will be useful to us in Section~\ref{sec:local-global
 compatibility}. We remind the reader that the algebra
$\cH(M,\lambda_M)$ is naturally isomorphic to the convolution algebra of
compactly supported
functions $f: M \to \End_{\Qpbar}(\lambda_M)$ such that
$f(jmj')=j\circ f(m) \circ j'$ for all $m\in M,j,j'\in J\cap
M$ (see~\cite[\S 2.2]{barthel-livne}).

\begin{lem}
  \label{lem:H(M,lambda_M) has a basis which is finite over Z(M)}There
  is an integer $e\ge 1$ and a $\Qpbar$-basis for $\cH(M,\lambda_M)$
  with the property that if $\nu$ is an element of this basis, then the
  $e$-fold convolution of $\nu$ with itself is supported on $t_\nu(J\cap M)$
  for some $t_\nu\in Z(M)$.
\end{lem}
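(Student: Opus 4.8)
The plan is to reduce to a single general linear factor and there to read the statement off from the structure of the Hecke algebra of a maximal simple type. First, recall the isomorphism $\cH(M,\lambda_M)\cong\bigotimes_{i=1}^r\cH(\GL_{e_i}(F),\lambda_i)^{\otimes d_i}$, compatible with the decompositions $M=\prod_{i,j}\GL_{e_i}(F)$ and $J\cap M=\prod_{i,j}J_i$. Realising each Hecke algebra as a convolution algebra of $\Hom$-valued functions as in the paragraph preceding the lemma, one checks immediately that the external tensor product $\boxtimes_k f_k$ of functions on factors $M_k$ has support $\prod_k\operatorname{supp}(f_k)$ and that $(\boxtimes_k f_k)\ast(\boxtimes_k g_k)=\boxtimes_k(f_k\ast g_k)$. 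Hence it suffices to treat one factor: for each maximal simple type $(J_i,\lambda_i)$ in $\GL_{e_i}(F)$ I will produce an integer $e_i\ge 1$ and a $\Qpbar$-basis of $\cH(\GL_{e_i}(F),\lambda_i)$ each of whose elements $\nu$ has $e_i$-fold self-convolution $\nu^{\ast e_i}$ supported on $t_\nu J_i$ with $t_\nu\in Z(\GL_{e_i}(F))$. Granting this, I set $e:=\lcm(e_1,\dots,e_r)$ and take as basis of $\cH(M,\lambda_M)$ all external tensor products of the chosen bases; since $t$ central together with $\nu^{\ast e_i}$ supported on $tJ_i$ forces $\nu^{\ast e}$ to be supported on $t^{e/e_i}J_i$, and products over the factors of central elements are central, the lemma follows.

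For the single factor I would argue as follows. Abbreviate $G':=\GL_{e_i}(F)$, $(J,\lambda):=(J_i,\lambda_i)$, $Z:=Z(G')$. The structure theory of Hecke algebras of maximal simple types (Bushnell--Kutzko; see~\cite{MR1204652}, and~\cite{MR1643417}) provides an element $\zeta\in G'$ normalising $J$, with $\widetilde J/J\cong\Z$ generated by the image of $\zeta$ (where $\widetilde J:=\langle J,\zeta\rangle$), such that $\cH(G',\lambda)$ is free over $\Qpbar$ with basis $\{\Phi^n\}_{n\in\Z}$, where $\Phi\in\cH(G',\lambda)$ is invertible and $\Phi^n$ is supported on the single double coset $J\zeta^n J=\zeta^n J$; in particular $\cH(G',\lambda)\cong\Qpbar[\Phi,\Phi^{-1}]$, and $\{\Phi^n\}_{n\in\Z}$ is the basis I take. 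The group $\widetilde J$ contains the centre $Z$ (it contains $F^\times\subseteq E_i^\times$, $E_i=F[\beta_i]$ being the field underlying the simple type) but not the non-compact element $\varpi_F\cdot\Id\in Z$; so $ZJ/J$ is a nonzero subgroup of $\widetilde J/J\cong\Z$, say $e_i\Z$, and hence $\zeta^{e_i}\in Z\cdot J$, say $\zeta^{e_i}=zj_0$ with $z\in Z$ and $j_0\in J$. Now fix $n\in\Z$ and put $\nu:=\Phi^n$, supported on $\zeta^n J$. Then $\nu^{\ast e_i}=\Phi^{e_i n}$ is supported on $\zeta^{e_i n}J$; since $z$ is central, $\zeta^{e_i n}=(zj_0)^n=z^n j_0^n$ with $j_0^n\in J$, so $\zeta^{e_i n}J=z^n J$. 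Thus $\nu^{\ast e_i}$ is supported on $t_\nu J$ with $t_\nu:=z^n\in Z$, which is exactly what the single-factor reduction requires.

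The only ingredient beyond elementary manipulation is the cited structure theorem for the Hecke algebra of a maximal simple type — namely that it is $\Qpbar[\Phi^{\pm 1}]$ with $\Phi$ supported on a single double coset by a generator of $\widetilde J/J$, together with the fact that $\widetilde J$ contains the centre of $\GL_{e_i}(F)$. The point in the argument proper that requires care is that $\zeta^{e_i}$ is \emph{not} in general central; only its class modulo $J$ lies in the image of $Z$. One must therefore invoke the centrality of $z$ precisely at the step $\zeta^{e_i n}J=z^n J$ (and again when passing to the $\lcm$ and multiplying over the factors of $M$), since it is this that makes the whole reduction go through.
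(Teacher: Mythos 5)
Your proof is correct and follows essentially the same route as the paper: reduce to a single maximal simple type via the tensor factorization of $\cH(M,\lambda_M)$, identify the basis of $\cH(\GL_{e_i}(F),\lambda_i)$ with operators supported on the cyclic family of cosets $\zeta^n J_i$ (with $\zeta$ a uniformiser of the field $E_i$ normalising $J_i$), and observe that a suitable power of $\zeta$ lies in $Z\cdot J_i$, forcing the corresponding power of the Hecke operator onto a central coset. One small wording issue: where you write that ``$\widetilde J$ contains the centre $Z$ \dots but not the non-compact element $\varpi_F\cdot\Id$'' the intended (and correct) claim is that \emph{$J$} does not contain $\varpi_F\cdot\Id$; as phrased it reads as a contradiction, so you should make the change of subject explicit.
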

\begin{proof}
  By the above remarks, we need only prove the corresponding result
  for the Hecke algebra of a maximal simple type
  $\cH\bigl(\GL_{e_i}(F),\lambda_i\bigr)$. In this case, the proof of Theorem
  7.6.1 of~\cite{MR1204652} shows that we can take a basis given by
  Hecke operators supported on cosets of the form $tJ_i$ where
  $t\in\bm{D}(\mathfrak{B})$ in the notation of~\cite{MR1204652}. By
  Lemma 7.6.3 of~\cite{MR1204652}, it suffices to show that there is a
  positive integer $e$ such that if $t\in\bm{D}(\mathfrak{B})$ then the $e$-fold composition of a Hecke operator $\psi_t$ supported on $tJ_i$ with itself is supported on $sJ_i$, where $s$ is a scalar matrix.
  But $\bm{D}(\mathfrak{B})$ is a cyclic
  group, generated by a uniformiser $\varpi_E$ of some finite extension
  of fields $E/F$ inside $\GL_{e_i}(F)$, so we can just take $e$ to be
  the ramification degree $e(E:F)$. In that case, the $e$-fold composition of a Hecke operator supported on $\varpi_EJ_i$ with itself is supported on $\varpi_F J_i$, as remarked on the bottom of page 201 of~\cite{MR1204652}.
\end{proof}

\begin{rem}\label{rem:ramification index} If $\lambda_i$ is a maximal
  simple type for $\GL_{n_i}(F)$, then the ramification degree
  $e(E:F)$ is equal to $n_i/f_i$, where $f_i$ is the number of
  unramified characters of $F^\times$ which preserve a supercuspidal
  $\GL_{n_i}(F)$-representation containing $\lambda_i$. This follows
  from Lemma 6.2.5 of~\cite{MR1204652}. Therefore, the element
  $\det(\varpi_{E})$ of $F^\times$ has valuation $f_i$.
\end{rem}

\subsection{Results of Schneider--Zink and Dat}\label{subsec:SZ} We will need a
slight refinement of the Bernstein centre and of the theory of
Bushnell--Kutzko, which is constructed in the
paper~\cite{MR1728541}. Note that there is not quite a bijection
between irreducible smooth representations of $G$ and characters of
the Bernstein centre; as explained in Section~\ref{subsec:bernstein
  centre}, any two Jordan--H\"older constituents of a parabolic
induction from an irreducible cuspidal representation correspond to
the same character of the centre, so that for example the trivial
representation and the Steinberg representation correspond to the same
character. Furthermore, as recalled in Section~\ref{subsec:bernstein
  centre}, two irreducible representations lie in the same Bernstein
component if and only if the corresponding Weil--Deligne
representations agree on inertia, but have possibly differing
monodromy operators, and it can be useful to have a finer
decomposition. In particular, we wish to be able to consider only the
representations with $N=0$.  

Let $\Omega$ be a Bernstein component, with $(J,\lambda)$ being the corresponding semisimple Bushnell--Kutzko type. Let $\Irr(\Omega)_0$ denote the
irreducible elements $\pi$ of $\Omega$ with the property that $N=0$ on
$\rec_p(\pi)$. In~\cite[\S 2]{MR1728541}, the material recalled in
Section~\ref{subsec BZ theory} above is recast in terms of certain
partition valued functions, which depend on the inertial support $\Omega$. In~\cite[\S 3]{MR1728541}, a partial ordering is defined on
these functions, and there is a unique maximal element for this
ordering, which we will denote by $\cP$ from now on. In~\cite[\S
6]{MR1728541} an irreducible direct summand $\sigma_{\cP}(\lambda)$ of
$\Ind_J^K\lambda$ is constructed, with the property that if $\pi$ is an
irreducible smooth $G$-representation, then
$\Hom_K(\sigma_\cP(\lambda),\pi)\ne 0$ if and only if
$\pi\in\Irr(\Omega)_0$, in which case $\sigma_\cP(\lambda)$ occurs in
$\pi|_K$ with multiplicity one. (This follows immediately from
Proposition 6.2 of~\cite{MR1728541}, using the relationship between
the partial ordering on partition valued functions in~\cite{MR1728541} and the monodromy operators,
which is explained in the proof of Proposition 6.5.3
of~\cite{MR2656025}.)

If $\tau$ is the inertial type corresponding to $\Omega$, then we
write $\sigma(\tau)$ for $\sigma_\cP(\lambda)$. As remarked on page
201 of~\cite{MR1728541}, since $\mathcal P$ is maximal, $\sigma(\tau)$
coincides with the representation
$\rho_{\mathfrak{s}}=e_K\Ind_J^K\lambda$ constructed in Theorem 4.1 of
~\cite{dat}. Here $e_K$ is a certain idempotent in $\cH(G,\lambda)$
which is constructed in~\cite[\S 4.2]{dat}.

Let $\mathfrak Z_{\Omega}$ be the centre of $\Omega$.  By Theorem 4.1
of~\cite{dat}, the action of $\mathfrak Z_{\Omega}$ on
$\cInd_{K}^{G}\sigma(\tau)$ induces an isomorphism $$\mathfrak
Z_{\Omega}\cong\End_G\bigl(\cInd_{K}^{G}\sigma(\tau)\bigr) =: \cH\bigl(G,\sigma(\tau)\bigr),$$
so in particular
$\cH\bigl(G,\sigma(\tau)\bigr)$ is commutative.

Let $W_{[M,\omega]}:=N_G([M,\omega])/M$ be the relative normaliser of the
inertial equivalence class of $(M,\omega)$, with $(M,\omega)$ as in
Section~\ref{subsec:bernstein centre}. Then by Proposition 2.1 of~\cite{dat},
the natural algebra homomorphism $t_P:\cH(M,\lambda_M)\to\cH(G,\lambda)$ induces
an isomorphism $t_P:\cH(M,\lambda_M)^{W_{[M,\omega]}}\isoto
Z\bigl(\cH(G,\lambda)\bigr)$. It follows from \cite[Theorem 4.1]{dat} that
there is an algebra homomorphism $s_P:\cH(G,\lambda)\to \cH\bigl(G,\sigma(\tau)\bigr)$,
which makes $\cH\bigl(G,\sigma(\tau)\bigr)$ a direct summand of $\cH(G,\lambda)$, and the
composite $s_P\circ t_P$ induces an isomorphism
$$\cH(M,\lambda_M)^{W_{[M,\omega]}}\isoto \cH\bigl(G,\sigma(\tau)\bigr).$$
(The map $s_P$ is
just given by $h\mapsto e_K * h * e_K$, where $e_K$ is the idempotent mentioned
above.)

We summarise much of the preceding discussion as the following theorem.
\begin{thm}\label{thm: inertial local Langlands, N=0}If $\tau$ is an
  inertial type, then there is a finite-dimensional smooth irreducible
  $\Qpbar$-representation $\sigma(\tau)$ of $K$ such that
  if $\pi$ is any irreducible smooth
  $\Qpbar$-representation of $G$, then the restriction of $\pi$ to ${K}$ contains {\em (}an isomorphic
  copy of{\em )} $\sigma(\tau)$ as a subrepresentation if and only if
  $\rec_p(\pi)|_{I_F}\sim\tau$ and $N=0$ on
  $\rec_p(\pi)$. Furthermore, in this case the restriction of $\pi$ to
  ${K}$ contains a unique copy of $\sigma(\tau)$. The ring
  $\cH\bigl(G,\sigma(\tau)\bigr):=\End_G\bigl(\cInd_{K}^{G}\sigma(\tau)\bigr)$ is commutative.\end{thm}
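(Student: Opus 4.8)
The plan is simply to assemble the pieces recalled in Sections~\ref{subsec BZ theory}--\ref{subsec:SZ}. First I would let $\Omega$ be the Bernstein component determined by $\tau$ --- that is, the component consisting of those irreducible smooth $\Qpbar$-representations $\pi$ of $G$ with $\rec_p(\pi)|_{I_F}\sim\tau$, which is well-defined by the last paragraph of Section~\ref{subsec:bernstein centre} --- and fix a semisimple Bushnell--Kutzko type $(J,\lambda)$ for $\Omega$ as provided by the theory recalled in Section~\ref{subsec:BK theory}. I would then define $\sigma(\tau):=\sigma_\cP(\lambda)$, the irreducible direct summand of $\Ind_J^K\lambda$ attached to the maximal relevant partition-valued function $\cP$ in~\cite{MR1728541}, noting as in Section~\ref{subsec:SZ} that this coincides with Dat's representation $\rho_{\mathfrak s}=e_K\Ind_J^K\lambda$.

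Next I would verify the characterising property. By Proposition~6.2 of~\cite{MR1728541}, for an irreducible smooth $\Qpbar$-representation $\pi$ of $G$ one has $\Hom_K(\sigma_\cP(\lambda),\pi)\ne 0$ if and only if $\pi\in\Irr(\Omega)_0$, and in that case $\sigma_\cP(\lambda)$ occurs in $\pi|_K$ with multiplicity one; the translation of the maximality of $\cP$ into the vanishing of the monodromy operator $N$ on $\rec_p(\pi)$ is precisely the content of the proof of Proposition~6.5.3 of~\cite{MR2656025}. Unwinding the definition of $\Irr(\Omega)_0$, the condition $\pi\in\Irr(\Omega)_0$ is exactly that $\pi\in\Omega$ (equivalently $\rec_p(\pi)|_{I_F}\sim\tau$) together with $N=0$ on $\rec_p(\pi)$. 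Finally, since $K$ is compact the $K$-representation $\pi|_K$ is semisimple, so the assertion that $\sigma(\tau)$ is a subrepresentation of $\pi|_K$ is equivalent to $\Hom_K(\sigma(\tau),\pi)\ne 0$; this gives the statement in the form asserted, with the uniqueness clause following from the multiplicity one statement above.

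For the commutativity of $\cH\bigl(G,\sigma(\tau)\bigr)=\End_G\bigl(\cInd_K^G\sigma(\tau)\bigr)$, I would invoke Theorem~4.1 of~\cite{dat}: the action of the Bernstein centre $\mathfrak{Z}_\Omega$ on $\cInd_K^G\sigma(\tau)$ induces an isomorphism $\mathfrak{Z}_\Omega\iso\cH\bigl(G,\sigma(\tau)\bigr)$, and $\mathfrak{Z}_\Omega$ is the ring of regular functions on a connected component of the Bernstein variety (Section~\ref{subsec:bernstein centre}), hence commutative. Alternatively one can use the isomorphism $\cH(M,\lambda_M)^{W_{[M,\omega]}}\iso\cH\bigl(G,\sigma(\tau)\bigr)$ recalled in Section~\ref{subsec:SZ} together with the commutativity of $\cH(M,\lambda_M)$ noted there. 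There is no serious obstacle, since the theorem is a repackaging of the cited results; the only point requiring care is the bookkeeping that identifies the three descriptions of $\sigma(\tau)$ (as $\sigma_\cP(\lambda)$, as $\rho_{\mathfrak s}$, and as $e_K\Ind_J^K\lambda$) and confirms that the $\Hom_K$-characterisation is exactly the one established in~\cite{MR1728541,dat,MR2656025}, which I would settle by quoting those references as above.
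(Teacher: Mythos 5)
Your proposal is correct and is precisely the route the paper takes: the theorem is explicitly presented as a summary of the preceding discussion in Sections~\ref{subsec BZ theory}--\ref{subsec:SZ}, and you assemble exactly the same ingredients (the Schneider--Zink construction $\sigma(\tau)=\sigma_{\cP}(\lambda)$ together with its Dat description $\rho_{\mathfrak{s}}=e_K\Ind_J^K\lambda$, the characterisation of $\Irr(\Omega)_0$ via Proposition~6.2 of~\cite{MR1728541} and the proof of Proposition~6.5.3 of~\cite{MR2656025}, and the commutativity of $\cH\bigl(G,\sigma(\tau)\bigr)$ from Theorem~4.1 of~\cite{dat}). Your added remark that $\pi|_K$ is semisimple (as a smooth representation of a compact group in characteristic zero), so that ``contains $\sigma(\tau)$ as a subrepresentation'' is equivalent to $\Hom_K\bigl(\sigma(\tau),\pi\bigr)\neq 0$, is a correct and harmless clarification that the paper leaves implicit.
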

 \begin{remark} We may think of this result as an \emph{inertial local Langlands correspondence} for $\GL_n$, in the potentially crystalline case. For $\GL_2/F$, the correspondence between $\tau$ and $\sigma(\tau)$ matches the \emph{inertial local Langlands correspondence} of Henniart (see the appendix to~\cite{breuil-mezard}), restricted to the potentially crystalline case.
 \end{remark}

\begin{prop}\label{BZ} Let $\pi_i$ be an irreducible supercuspidal
  representation of $\GL_{n_i}(F)$ for $1\le i\le r$, and
  $n=n_1+\cdots+ n_r$. If  $\pi_{j}\not\cong \pi_i(1)$  {\em (}the condition
  being empty if $n_i\neq n_j${\em )} for every $i<j$ then  the $G$-socle
  and the $G$-cosocle of $\pi_1\times \cdots\times \pi_r$ are
  irreducible.
  Moreover, the socle occurs as a subquotient with multiplicity one
  and is the only generic subquotient of $\pi_1\times \cdots\times
  \pi_r$.
\end{prop}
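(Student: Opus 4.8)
The plan is to read everything off the Bernstein--Zelevinsky structure theory recalled above, together with the hereditary behaviour of Whittaker models under parabolic induction. Since each $\pi_i$ is supercuspidal, its associated segment $\Delta_i:=[\pi_i,\pi_i]$ is a singleton, and for singleton segments $\Delta_i$ precedes $\Delta_j$ precisely when $\pi_j\cong\pi_i(1)$; thus the hypothesis of the proposition says exactly that $\Delta_i$ does not precede $\Delta_j$ whenever $i<j$. First I would invoke Th\'eor\`eme~2 of~\cite{rodier}: it gives that $\pi_1\times\cdots\times\pi_r=Z(\Delta_1)\times\cdots\times Z(\Delta_r)$ has a unique irreducible subrepresentation $S:=Z(\Delta_1,\dots,\Delta_r)$, which is the assertion that the $G$-socle is irreducible; and Th\'eor\`eme~5 of~\cite{rodier} gives that $S$ occurs with multiplicity one as a subquotient. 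This settles the statements about the socle and about multiplicity one.

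For the cosocle I would pass to smooth contragredients. Normalised parabolic induction commutes with the (smooth) contragredient up to replacing $P$ by its opposite, which reverses the order of the Levi blocks, so the contragredient of $\pi_1\times\cdots\times\pi_r$ is $\widetilde\pi_r\times\cdots\times\widetilde\pi_1$. The $\widetilde\pi_i$ are again irreducible supercuspidal, and a short bookkeeping check shows the reversed tuple $(\widetilde\pi_r,\dots,\widetilde\pi_1)$ still satisfies the hypothesis of the proposition: for positions $a<b$ one needs $\widetilde\pi_{r+1-b}\not\cong\widetilde\pi_{r+1-a}(1)$, and twisting back this reads $\pi_{r+1-a}\not\cong\pi_{r+1-b}(1)$, i.e.\ the original hypothesis for the indices $r+1-b<r+1-a$. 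Hence by the previous paragraph $\widetilde\pi_r\times\cdots\times\widetilde\pi_1$ has irreducible socle, and applying the (exact, contravariant) contragredient functor once more shows that $\pi_1\times\cdots\times\pi_r$ has irreducible cosocle.

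It remains to handle genericity, and this is where the real content lies. Each $\pi_i$ is generic, since irreducible supercuspidals of $\GL_{n_i}(F)$ are. I would use the hereditary property of Whittaker models for $\GL_n$ (due to Rodier; see also~\cite{BZ77ENS}): the induced representation $\nind_P^G(\pi_1\otimes\cdots\otimes\pi_r)=\pi_1\times\cdots\times\pi_r$ carries a one-dimensional space of Whittaker functionals for a fixed nondegenerate character $\psi$ of the maximal unipotent subgroup $U$; equivalently the twisted Jacquet module $(\pi_1\times\cdots\times\pi_r)_{U,\psi}$ is one-dimensional. Since $V\mapsto V_{U,\psi}$ is exact and $\dim\tau_{U,\psi}\le 1$ for irreducible $\tau$, with equality exactly when $\tau$ is generic (uniqueness of Whittaker models), it follows that $\pi_1\times\cdots\times\pi_r$ has exactly one generic Jordan--H\"older constituent, and it occurs with multiplicity one. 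Applying exactness of $(-)_{U,\psi}$ to the embedding $S\hookrightarrow\pi_1\times\cdots\times\pi_r$ gives $\dim S_{U,\psi}\le 1$, so it remains only to show that $S$ is itself generic; once that is known, $S$ is automatically the unique generic constituent. For this I would appeal to Zelevinsky's classification of generic (``non-degenerate'') irreducible representations of $\GL_n(F)$, according to which $Z(\mathfrak m)$ is generic whenever every segment of the multisegment $\mathfrak m$ is a singleton --- and $S=Z(\Delta_1,\dots,\Delta_r)$ is of exactly this form. (Alternatively, one can note that for cuspidal inducing data the isomorphism class of $\pi_1\times\cdots\times\pi_r$ depends only on the relative order of the linked pairs among the $\pi_i$, which the hypothesis pins down to be that of a Langlands standard module induced from essentially square-integrable data, whose socle is known to be its generic constituent.)

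The steps concerning the socle, the cosocle, and the multiplicity-one and uniqueness statements are essentially formal consequences of the structure theory recalled above. The one genuinely delicate point I expect to require care with is the last one: identifying the irreducible socle $S$ with the (a priori possibly interior) generic constituent of $\pi_1\times\cdots\times\pi_r$.
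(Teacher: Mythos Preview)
Your arguments for the socle, for multiplicity one, and for genericity are essentially the paper's proof: the paper phrases the Whittaker step in the language of Bernstein--Zelevinsky derivatives (the $n$-th derivative $\pi^{(n)}$ is exactly your twisted Jacquet module), computes $(\pi_1\times\cdots\times\pi_r)^{(n)}\cong\pi_1^{(n_1)}\otimes\cdots\otimes\pi_r^{(n_r)}$ via \cite[Cor.~4.6]{BZ77ENS} to see it is one-dimensional, and then invokes \cite[Thm.~6.2]{Zel80} for $Z(\Delta_1,\dots,\Delta_r)^{(n)}\neq0$, which is your ``$Z$ of singleton segments is generic''.

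There is, however, a genuine gap in your cosocle step. The claimed reversal under contragredient is incorrect: normalised parabolic induction commutes with contragredient for the \emph{same} parabolic, so $(\pi_1\times\cdots\times\pi_r)^\sim\cong\widetilde\pi_1\times\cdots\times\widetilde\pi_r$, not $\widetilde\pi_r\times\cdots\times\widetilde\pi_1$. For the correctly ordered contragredient the condition you need is $\widetilde\pi_j\not\cong\widetilde\pi_i(1)$ for $i<j$, equivalently $\pi_i\not\cong\pi_j(1)$, which is \emph{not} the original assumption. Concretely, take $n_1=n_2=1$, $\pi_1=1$, $\pi_2=|\cdot|^{-1}$: the original hypothesis holds, but $\widetilde\pi_1\times\widetilde\pi_2=1\times|\cdot|$ has $\widetilde\pi_2=\widetilde\pi_1(1)$; and $1\times|\cdot|$ and $|\cdot|\times1$ are genuinely non-isomorphic (their socle and cosocle are interchanged), so you cannot reach the reversed product by dualising.

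The paper avoids this detour entirely. Since each $\Delta_i$ is a singleton one has $L(\Delta_i)=Z(\Delta_i)=\pi_i$, so \cite[Th\'eor\`eme~3]{rodier} (the $L$-classification, recalled in \S\ref{subsec BZ theory}) applies to the \emph{same} product $L(\Delta_1)\times\cdots\times L(\Delta_r)=\pi_1\times\cdots\times\pi_r$ under the \emph{same} non-precedence condition and yields a unique irreducible quotient $L(\Delta_1,\dots,\Delta_r)$. No dualising is needed.
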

\begin{proof} This is an exercise in Bernstein--Zelevinsky theory
  \cite{BZ77ENS,Zel80}. We will make use of the material recalled
  from~\cite{rodier} at the beginning of this section. If we let
  $\Delta_1=\{\pi_1\}, \ldots, \Delta_r=\{\pi_r\}$, then by assumption
  the $\Delta_i$ are segments such that $\Delta_i$ does not precede
  $\Delta_j$ for $i< j$. Since the segments are of length one, we have
  $L(\Delta_i)\cong Z(\Delta_i)\cong \pi_i$ by definition, so that as
  recalled above $\pi_1\times\dots\times\pi_r$ has a unique
  irreducible subrepresentation $Z(\Delta_1, \ldots, \Delta_r)$ and a
  unique irreducible quotient $L(\Delta_1, \ldots, \Delta_r)$. In
  addition, $Z(\Delta_1, \ldots, \Delta_r)$ occurs as a subquotient
  with multiplicity one, so we only need to show that it is the unique
  generic subquotient. 

Let $U_n \subset \GL_n(F)$ be the subgroup of 
unipotent upper-triangular matrices,
and let $\theta_n: U_n\rightarrow \Qpbar^{\times}$ be the
character $(u_{ij})\mapsto \psi(\sum_{i=1}^{n-1} u_{i, i+1})$, where
$\psi: F\rightarrow \Qpbartimes$ is a fixed smooth non-trivial
character. If $\pi$ is a representation of $G$, we let
$\pi_{\theta_n}$ be the largest quotient of $\pi$ on which $U_n$ acts
by $\theta_n$.  If $\pi$ is an irreducible representation of $G$ then
the dimension of $\pi_{\theta_n}$ is at most one, and is equal to one
if and only if $\pi$ is generic. Since $U_n$ is equal to the union of
its compact open subgroups, the functor $\pi\mapsto \pi_{\theta_n}$ is
exact. Thus it is enough to show that $(\pi_1\times \cdots\times
\pi_r)_{\theta_n}$ is one dimensional and $Z(\Delta_1, \ldots,
\Delta_r)_{\theta_n}$ is non-zero.

In \cite{BZ77ENS,Zel80} the authors define a family of exact functors
$\pi\mapsto \pi^{(i)}$ for $0\le i\le n$ from the category of smooth
representations of $\GL_n(F)$ to the category of smooth
representations of $\GL_{n-i}(F)$. The representation $\pi^{(i)}$ is
called the $i$-th derivative of $\pi$. We have $\pi^{(0)}=\pi$ and
$\pi^{(n)}=\pi_{\theta_n}$. If $\pi$ is irreducible and supercuspidal
then $\pi^{(i)}=0$ for $0<i<n$ and $\pi^{(n)}$ is one dimensional,
by~\cite[Theorem 4.4]{BZ77ENS}. By~\cite[Corollary 4.6]{BZ77ENS} we have
that $(\pi_1\times\cdots\times\pi_r)^{(n)}\cong (\pi_1)^{(n_1)}\otimes
\cdots \otimes (\pi_{r})^{(n_r)}$, which is one dimensional, since the
$\pi_i$ are supercuspidal representations of $\GL_{n_i}(F)$. It
follows from~\cite[Theorem 6.2]{Zel80} that $Z(\Delta_1, \ldots,
\Delta_r)^{(n)}\neq 0$, as required.
\end{proof}

If $\pi$ in $\Omega$ is  irreducible, then the action of $\mathfrak
Z_{\Omega}$ on $\pi$ defines a $\Qpbar$-algebra morphism $\chi_{\pi}: \mathfrak
Z_{\Omega}\rightarrow \End_G(\pi)\cong \Qpbar$.  The following is a
strengthening of Theorem 4.1 of \cite{dat}.

\begin{prop}\label{stronger_dat} Let $\pi$ be an irreducible smooth
  $\Qpbar$-representation of $G$, such that $\rec_p(\pi)|_{I_F}\sim
  \tau$ and $N=0$ on $\rec_p(\pi)$. Then
$$\cInd_{K}^{G}\sigma(\tau)\otimes_{\mathfrak Z_{\Omega}, \chi_{\pi}} \Qpbar\cong \pi_1\times \cdots\times \pi_r,$$
where $\pi_i$ is a supercuspidal representation of $\GL_{n_i}(F)$,
such that if $i<j$ then $\pi_{j}\not\cong \pi_i(1)$ {\em (}the condition
being empty if $n_i\neq n_j${\em )}, and $\rec_p(\pi)\cong \oplus_{i=1}^r
\rec_p(\pi_i)$. Moreover, the representation $\pi$ is the unique irreducible quotient of
$\cInd_{K}^{G}\sigma(\tau)\otimes_{\mathfrak Z_{\Omega}, \chi_{\pi}} \Qpbar$.
\end{prop}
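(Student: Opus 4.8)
The plan is to reduce everything to the module theory over the commutative Hecke algebra $\cH\bigl(G,\sigma(\tau)\bigr)$ and its identification with $\mathfrak{Z}_\Omega$, and then to unwind the isomorphism using the functor $\Hom_K(\sigma(\tau),-)$, together with the Bernstein--Zelevinsky description of parabolic inductions from supercuspidals. First I would recall, via Theorem~\ref{thm: inertial local Langlands, N=0}, that $\mathfrak{Z}_\Omega \cong \cH\bigl(G,\sigma(\tau)\bigr)$ acts on $\cInd_K^G\sigma(\tau)$, and that $\chi_\pi$ corresponds to a maximal ideal; so the object $X := \cInd_K^G\sigma(\tau)\otimes_{\mathfrak{Z}_\Omega,\chi_\pi}\Qpbar$ makes sense. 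By the equivalence of categories between $\Omega$ and $\cH(G,\lambda)$-modules from \cite{MR1711578}, together with the fact (from the discussion of Dat's results in Section~\ref{subsec:SZ}) that $\cH\bigl(G,\sigma(\tau)\bigr) = e_K\cH(G,\lambda)e_K$ is a direct summand and $s_P\circ t_P$ identifies $\cH(M,\lambda_M)^{W_{[M,\omega]}}$ with $\cH\bigl(G,\sigma(\tau)\bigr)$, the maximal ideal $\chi_\pi$ corresponds to a $G$-orbit of pairs $(M,\alpha\omega)$, i.e.\ to a point of $\Omega$, which is precisely the cuspidal support $(M,\omega)$ of $\pi$. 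Writing $M = \prod_i\GL_{n_i}(F)$ and $\omega = \pi_1\otimes\cdots\otimes\pi_r$ with the $\pi_i$ supercuspidal, and reordering so that $\pi_j\not\cong\pi_i(1)$ for $i<j$ (this is possible: if $\pi_j\cong\pi_i(1)$ for some $i<j$, swap them; the resulting partial order has no cycles since one cannot have $\pi_i(1)\cong\pi_j$ and $\pi_j(1)\cong\pi_i$ simultaneously for $n_i=n_j$), the claim is that $X \cong \pi_1\times\cdots\times\pi_r$.

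The key computational step is to show that applying $\Hom_K(\sigma(\tau),-)$ to both sides gives the same $\cH\bigl(G,\sigma(\tau)\bigr)/\chi_\pi$-module, namely $\Qpbar$, and then to invoke the equivalence of categories to conclude. On the left, $\Hom_K\bigl(\sigma(\tau),\cInd_K^G\sigma(\tau)\otimes_{\mathfrak{Z}_\Omega,\chi_\pi}\Qpbar\bigr) = \cH\bigl(G,\sigma(\tau)\bigr)\otimes_{\mathfrak{Z}_\Omega,\chi_\pi}\Qpbar = \Qpbar$ by flatness considerations (the tensor product is over the commutative ring $\mathfrak{Z}_\Omega = \cH\bigl(G,\sigma(\tau)\bigr)$ acting on a free rank-one module in the appropriate sense). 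On the right, I would use that $\nind_P^G\omega$ has all of $\pi_1\times\cdots\times\pi_r$ as a subquotient-complete description, and that by Theorem~\ref{thm: inertial local Langlands, N=0} (applied to each Jordan--Hölder constituent, all of which have $N=0$ since the $\Delta_i=\{\pi_i\}$ are length-one segments) $\sigma(\tau)$ appears in $(\pi_1\times\cdots\times\pi_r)|_K$ with total multiplicity equal to the number of Jordan--Hölder constituents $\pi'$ with $\sigma(\tau)\hookrightarrow\pi'|_K$, namely with multiplicity one after passing to $\chi_\pi$-isotypic parts — more precisely, $\Hom_K(\sigma(\tau),\pi_1\times\cdots\times\pi_r)$ is one-dimensional over $\Qpbar$ because $\pi_1\times\cdots\times\pi_r$ lies in $\Omega$ entirely, has finite length, and by Dat's multiplicity-one statement exactly one of its constituents contains $\sigma(\tau)$ (the generic one, by Proposition~\ref{BZ}), again with multiplicity one. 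So $\Hom_K(\sigma(\tau),-)$ carries $\pi_1\times\cdots\times\pi_r$ to a one-dimensional space on which $\cH\bigl(G,\sigma(\tau)\bigr)$ acts through $\chi_\pi$ (since all constituents have the same cuspidal support, hence the same central character for $\mathfrak{Z}_\Omega$), i.e.\ to $\Qpbar$ with the $\chi_\pi$-action. By the equivalence of categories between $\Omega$ and $\cH(G,\lambda)$-modules (restricted to the summand $\cH\bigl(G,\sigma(\tau)\bigr)$-modules), a $G$-representation in $\Omega$ mapping to the same module as $X$ must be isomorphic to $X$; hence $X\cong\pi_1\times\cdots\times\pi_r$.

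Finally, for the uniqueness of the irreducible quotient: the inverse equivalence sends the $\cH\bigl(G,\sigma(\tau)\bigr)$-module $\Qpbar$ (with $\chi_\pi$-action) to $X$, and since the functor $\Hom_K(\sigma(\tau),-)$ is exact and faithful on $\Omega$, the irreducible quotients of $X$ correspond to the simple quotients of $\Qpbar$ as an $\cH\bigl(G,\sigma(\tau)\bigr)/\chi_\pi = \Qpbar$-module, of which there is exactly one. It remains to check this unique irreducible quotient is $\pi$ itself: but $\pi\in\Omega$ has $\sigma(\tau)$ in its restriction to $K$ with multiplicity one (Theorem~\ref{thm: inertial local Langlands, N=0}), so $\Hom_K(\sigma(\tau),\pi)$ is one-dimensional with $\mathfrak{Z}_\Omega$ acting by $\chi_\pi$, hence $\pi$ is a quotient of $X$; being the unique one, the statement follows. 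The main obstacle I anticipate is bookkeeping around the passage between $\cH(G,\lambda)$ and its summand $\cH\bigl(G,\sigma(\tau)\bigr)$ and ensuring that $\Hom_K(\sigma(\tau),-)$ really is (naturally isomorphic to) the composite of $\Hom_J(\lambda,-)$ with the idempotent $e_K$, so that the equivalence of categories genuinely transports along; this is implicit in Section~\ref{subsec:SZ} but needs to be stated carefully. The identification of the cosocle as $\pi$ also uses Proposition~\ref{BZ} to know the cosocle of $\pi_1\times\cdots\times\pi_r$ is irreducible, which pins it down.
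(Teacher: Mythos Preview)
Your argument has a real gap at the step where you invoke an ``equivalence of categories'' to conclude $X\cong\pi_1\times\cdots\times\pi_r$ from the fact that $\Hom_K\bigl(\sigma(\tau),-\bigr)$ sends both to $\Qpbar$. The functor $\Hom_K\bigl(\sigma(\tau),-\bigr)$ is exact on $\Omega$ (since $\cInd_K^G\sigma(\tau)$ is projective), but it is \emph{not} faithful: by Theorem~\ref{thm: inertial local Langlands, N=0} it annihilates every irreducible $\pi'\in\Omega$ with $N\neq 0$ on $\rec_p(\pi')$. Your parenthetical claim that all Jordan--H\"older constituents of $\pi_1\times\cdots\times\pi_r$ have $N=0$ ``since the $\Delta_i$ are length-one segments'' is false: the constituents correspond to coarser multisegments, which generally contain longer segments. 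Concretely, if $n_1=n_2$ and $\pi_2=\pi_1(-1)$ (which your ordering allows), then the socle of $\pi_1\times\pi_2$ is a generalised Steinberg with $N\neq 0$, and $\Hom_K\bigl(\sigma(\tau),-\bigr)$ kills it. So both the irreducible $\pi$ and the full induction $\pi_1\times\cdots\times\pi_r$ are sent to $\Qpbar$, yet they are not isomorphic; your functor cannot distinguish them. The obstacle you flag at the end --- whether the equivalence ``genuinely transports along'' through the idempotent $e_K$ --- is precisely where the argument fails, and it is not just bookkeeping.

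The paper's proof circumvents this by working with lengths rather than a categorical equivalence. It first constructs an explicit surjection $\bar\varphi:X\twoheadrightarrow\pi_1\times\cdots\times\pi_r$ (lift a surjection $\cInd_K^G\sigma(\tau)\to\pi$ through the cosocle of $\pi_1\times\cdots\times\pi_r$ using projectivity, check it is surjective because the target has irreducible cosocle, then pass to the $\chi_\pi$-fibre). It then invokes \cite[Theorem~4.1]{dat}, which identifies the semisimplification of $X$ with that of a product of supercuspidals; since $\pi$ is a subquotient of both this product and $\pi_1\times\cdots\times\pi_r$, \cite[Theorem~2.9]{BZ77ENS} forces the two semisimplifications to coincide, and hence $\bar\varphi$ is an isomorphism. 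To salvage your approach you would need exactly this kind of length or semisimplification input; the Hecke-module computation alone does not suffice.
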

\begin{proof} Since $N=0$ on $\rec_p(\pi)$, we may write it as a
  direct sum of irreducible representations of the Weil group $W_F$,
  and there is a partition $n=n_1+\dots+n_r$ and supercuspidal
  representations $\pi_i$ of $\GL_{n_i}(F)$ such that
  $\rec_p(\pi)\cong \oplus_{i=1}^r \rec_p(\pi_i)$. After reordering we
  may assume that if $i<j$ then $\pi_{j}\not\cong \pi_i(1)$. It
  follows from Proposition \ref{BZ} that $\pi_1\times \cdots\times
  \pi_r$ has a unique irreducible quotient $\pi'$.

  Then $\rec_p(\pi')\cong \oplus_{i=1}^r \rec_p(\pi_i)$ as
  representations of $W_F$, and $N=0$ on $\rec_p(\pi')$ since all the
  segments have length one, as in the proof of Proposition
  \ref{BZ}. Thus $\rec_p(\pi')\cong \rec_p(\pi)$, which implies that
  $\pi\cong \pi'$. Since the socle of $\pi_1\times\cdots\times\pi_r$
  is irreducible and occurs as a subquotient with multiplicity one,
  the action of $\mathfrak Z_{\Omega}$ on
  $\pi_1\times\cdots\times\pi_r$ factors through an algebra morphism
  $\chi: \mathfrak Z_{\Omega}\rightarrow \Qpbar$. Since the cosocle is
  isomorphic to $\pi$, we deduce that $\chi=\chi_{\pi}$.

  Theorem \ref{thm: inertial local Langlands, N=0} implies that $\pi$
  is a quotient of $\cInd_K^G \sigma(\tau)$. Since $\cInd_K^G
  \sigma(\tau)$ is projective there exists a $G$-equivariant map
  $\varphi: \cInd_K^G \sigma(\tau)\rightarrow
  \pi_1\times\cdots\times\pi_r$ such that the composition with
  $\pi_1\times\cdots\times\pi_r\twoheadrightarrow \pi$ is
  surjective. The $G$-cosocle of the cokernel of $\varphi$ is zero.
  Since $\pi_1\times\cdots\times \pi_r$ is of finite length, so is the
  cokernel of $\varphi$, and we deduce that $\varphi$ is
  surjective. Since $\mathfrak Z_{\Omega}$ acts naturally on
  everything, $\varphi$ induces a surjection $\bar{\varphi}:
  \cInd_{K}^{G}\sigma(\tau)\otimes_{\mathfrak Z_{\Omega}, \chi_{\pi}}
  \Qpbar\twoheadrightarrow \pi_1\times \cdots\times \pi_r$.

  Furthermore, \cite[Theorem 4.1]{dat} implies that the semisimplification of
  $\cInd_{K}^{G}\sigma(\tau)\otimes_{\mathfrak Z_{\Omega}} \chi_{\pi}$
  is isomorphic to the semisimplification of $\pi_1'\times\cdots\times
  \pi_s'$, where $\pi'_i$ are supercuspidal representations of
  $\GL_{n_i'}(F)$ for some integers $n_i'$, such that
  $n=n_1'+\cdots+n_s'$. Since $\pi$ is an irreducible subquotient of
  both $\pi_1'\times\ldots \times \pi_s'$ and
  $\pi_1\times\cdots\times\pi_r$, \cite[Theorem 2.9]{BZ77ENS} implies
  that $\pi_1'\times\cdots \times \pi_s'$ and
  $\pi_1\times\cdots\times\pi_r$ have the same
  semisimplification. Thus
  $\cInd_{K}^{G}\sigma(\tau)\otimes_{\mathfrak Z_{\Omega}, \chi_{\pi}} \Qpbar$
  and $\pi_1\times \cdots\times \pi_r$ have the same
  semisimplification, which implies that $\bar{\varphi}$ is an
  isomorphism, as required.
 \end{proof}

\begin{cor}\label{generic_sub} Let $\Omega$ be a Bernstein component
  corresponding to an inertial type $\tau$ and let $\mathfrak
  Z_{\Omega}$ be the centre of $\Omega$. Let $\chi: \mathfrak
  Z_{\Omega}\rightarrow \Qpbar$ be a $\Qpbar$-algebra morphism. Then the $G$-socle of
  $\cInd_{K}^{G}\sigma(\tau)\otimes_{\mathfrak Z_{\Omega}, \chi} \Qpbar$ is
  irreducible and generic, and all the other irreducible subquotients
  are not
  generic.

  Conversely, if an irreducible representation $\pi$ in $\Omega$ is
  generic then $\pi$ is isomorphic to the $G$-socle of
  $\cInd_{K}^{G}\sigma(\tau)\otimes_{\mathfrak Z_{\Omega}, \chi_{\pi}}
  \Qpbar$.
 \end{cor}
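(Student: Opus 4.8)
The plan is to bootstrap from Propositions~\ref{BZ} and~\ref{stronger_dat}, using the description of $\mathfrak Z_\Omega$ as the ring of functions on the connected component $\Omega$ of the Bernstein variety (Section~\ref{subsec:bernstein centre}) and the Bernstein--Zelevinsky formalism of Section~\ref{subsec BZ theory}.

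\emph{First assertion.} A maximal ideal $\chi \colon \mathfrak Z_\Omega \to \Qpbar$ is a $\Qpbar$-point of $\Omega$, hence corresponds to a conjugacy class of pairs $(M,\omega)$ with $M = \prod_{i=1}^r \GL_{n_i}(F)$, $\sum_i n_i = n$, and $\omega = \otimes_{i=1}^r \pi_i$ for irreducible supercuspidal representations $\pi_i$ of $\GL_{n_i}(F)$. I would first reorder the $\pi_i$ so that $\pi_j \not\cong \pi_i(1)$ whenever $i < j$; this is possible because the relation ``$\pi_j \cong \pi_i(1)$'' is acyclic --- a cycle would force $\pi_i \cong \pi_i \otimes |\det|^k$ for some $k \ge 1$, which is impossible since $|\det|^k$ has infinite order in the group of unramified twists whereas the twisting-stabiliser of $\pi_i$ is finite. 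Set $\pi' := L(\{\pi_1\},\dots,\{\pi_r\})$, the irreducible representation attached in Section~\ref{subsec BZ theory} to the (length-one) segments $\Delta_i = \{\pi_i\}$ in this good order; by the formula recalled there, $\rec_p(\pi') \cong \bigoplus_i \rec_p(\pi_i)$, which is Frobenius-semisimple with $N = 0$. Since $\pi'$ is a Jordan--H\"older constituent of $\pi_1 \times \cdots \times \pi_r = \nind_P^G \omega$, it has cuspidal support $(M,\omega)$; hence $\pi' \in \Omega$, $\rec_p(\pi')|_{I_F} \sim \tau$, and $\chi_{\pi'} = \chi$. Applying Proposition~\ref{stronger_dat} to $\pi'$ now gives an isomorphism $\cInd_K^G \sigma(\tau) \otimes_{\mathfrak Z_\Omega, \chi} \Qpbar \cong \pi_1'' \times \cdots \times \pi_r''$ with the $\pi_i''$ irreducible supercuspidal and $\pi_j'' \not\cong \pi_i''(1)$ for $i < j$; moreover $\bigoplus_i \rec_p(\pi_i'') = \rec_p(\pi') = \bigoplus_i \rec_p(\pi_i)$, and since $\rec_p$ restricts to a bijection on irreducibles, sending supercuspidals to irreducible Weil--Deligne representations, uniqueness of the decomposition into irreducibles forces $\{\pi_i''\} = \{\pi_i\}$ as multisets. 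Proposition~\ref{BZ}, applied to $\pi_1'' \times \cdots \times \pi_r''$, then says exactly that its $G$-socle is irreducible and generic, occurs with multiplicity one, and is the unique generic subquotient --- so every other irreducible subquotient is non-generic --- and, being a Jordan--H\"older constituent, this socle has cuspidal support $(M,\omega)$.

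\emph{Converse.} Let $\pi \in \Omega$ be irreducible and generic, with cuspidal support $(M,\omega)$ and corresponding maximal ideal $\chi_\pi$. By Section~\ref{subsec:bernstein centre}, the irreducible subquotients of $\nind_P^G \omega = \pi_1 \times \cdots \times \pi_r$ are exactly the irreducible $G$-representations with cuspidal support $(M,\omega)$; combining this with Proposition~\ref{BZ} (in a good order) shows there is a \emph{unique} generic such representation. By the first assertion applied with $\chi = \chi_\pi$, the representation $\soc\bigl(\cInd_K^G \sigma(\tau) \otimes_{\mathfrak Z_\Omega, \chi_\pi} \Qpbar\bigr)$ is generic with cuspidal support $(M,\omega)$, so it must be isomorphic to $\pi$.

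The step I expect to demand the most care is the bookkeeping around the supercuspidal data --- matching, via $\rec_p$, the $\pi_i''$ produced abstractly by Proposition~\ref{stronger_dat} with the cuspidal support encoded by $\chi$, and using in both directions the standard fact (recalled in Section~\ref{subsec:bernstein centre}) that the Jordan--H\"older constituents of a parabolic induction from a supercuspidal representation of a Levi subgroup are precisely the irreducible representations with that cuspidal support. The elementary reordering step also needs to be stated cleanly, but presents no real difficulty.
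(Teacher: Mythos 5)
Your proof is correct and takes essentially the same route as the paper, deducing both directions from Propositions~\ref{BZ} and~\ref{stronger_dat} after reordering the supercuspidal data so that no $\pi_i$ precedes $\pi_j$ for $i<j$. You supply somewhat more detail than the paper does (the paper's first assertion is simply asserted to follow, and its converse is phrased in terms of $\chi_{\pi'}=\chi_\pi$ for the cosocle rather than via uniqueness of the generic constituent in a cuspidal support class), but the substance is identical.
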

 \begin{proof} The first part follows from Propositions \ref{BZ} and
   \ref{stronger_dat}. The converse may be seen as follows. There
   exist supercuspidal representations $\pi_i$ of $\GL_{n_i}(F)$ such
   that $n=n_1+\cdots+ n_r$ and $\pi$ is a subquotient of $\pi_1\times
   \cdots\times \pi_r$. If $w$ is a permutation of $\{1, \ldots, r\}$
   then $\pi_1\times \cdots\times \pi_r$ and $\pi_{w(1)}\times
   \cdots\times \pi_{w(r)}$ have the same semisimplification by
    \cite[Theorem 2.9]{BZ77ENS}, so we may assume that the $\pi_i$
   satisfy the conditions of Proposition \ref{BZ}. Since the socle of
   $\pi_1\times\cdots\times\pi_r$ is irreducible and occurs as a
   subquotient with multiplicity one, the action of $\mathfrak
   Z_{\Omega}$ on $\pi_1\times\cdots\times\pi_r$ factors through a
   maximal ideal, which is equal to $\chi_{\pi}$, as $\pi$ occurs as a
   subquotient. If we let $\pi'$ be the $G$-cosocle of
   $\pi_1\times\cdots\times\pi_r$ then $\pi'$ satisfies the conditions
   of Proposition \ref{stronger_dat}, and we have
   $\chi_{\pi'}=\chi_{\pi}$. The assertion follows from Propositions
   \ref{BZ} and \ref{stronger_dat}.
 \end{proof}

 \begin{cor}\label{cor: generic implies hecke iso} Let $\pi$ be an
   irreducible smooth generic $\Qpbar$-representation of $G$, such
   that $\rec_p(\pi)|_{I_F}\sim \tau$ and $N=0$ on $\rec_p(\pi)$. Then
   we have a natural isomorphism
   $\cInd_{K}^{G}\sigma(\tau)\otimes_{\mathfrak Z_{\Omega}, \chi_{\pi}}
   \Qpbar \cong \pi$.
\end{cor}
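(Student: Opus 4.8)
The plan is to combine Proposition~\ref{stronger_dat} with the socle/cosocle information of Proposition~\ref{BZ} (conveniently packaged in Corollary~\ref{generic_sub}), and then run a short module-theoretic argument. Write $M := \cInd_{K}^{G}\sigma(\tau)\otimes_{\mathfrak Z_{\Omega},\chi_\pi}\Qpbar$. First I would apply Proposition~\ref{stronger_dat}: since $\rec_p(\pi)|_{I_F}\sim\tau$ and $N=0$ on $\rec_p(\pi)$, there is an isomorphism $M\cong\pi_1\times\cdots\times\pi_r$ with the $\pi_i$ supercuspidal representations of $\GL_{n_i}(F)$ satisfying $\pi_j\not\cong\pi_i(1)$ for $i<j$, $\rec_p(\pi)\cong\bigoplus_{i=1}^r\rec_p(\pi_i)$, and moreover $\pi$ is the unique irreducible quotient of $M$. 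By Theorem~\ref{thm: inertial local Langlands, N=0}, $\pi$ is a quotient of $\cInd_K^G\sigma(\tau)$, and since $\mathfrak Z_\Omega$ acts on $\pi$ through $\chi_\pi$ this surjection factors through the natural map $q : M\twoheadrightarrow\pi$; the content of the corollary is precisely that $q$ is an isomorphism.

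Next I would identify $\pi$ with the $G$-socle of $M$. By Proposition~\ref{BZ} the $G$-socle of $\pi_1\times\cdots\times\pi_r$ is irreducible, occurs as a subquotient with multiplicity one, and is the \emph{unique} generic subquotient of $\pi_1\times\cdots\times\pi_r$. Since $\pi$ is generic and is a subquotient of $M$ (being a quotient), it must be isomorphic to $\soc_G(M)$; this is exactly the converse direction of Corollary~\ref{generic_sub}. In particular, $\pi$ occurs in $M$ with multiplicity one as a Jordan--H\"older factor, and it occurs both as a subrepresentation and as a quotient of $M$.

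Finally I would conclude $M\cong\pi$ by the standard socle--cosocle argument. Let $R=\ker q$, so $M/R\cong\pi$. If $R\neq 0$, then $\soc_G(R)$ is a nonzero subrepresentation of $\soc_G(M)\cong\pi$, hence $\soc_G(R)\cong\pi$; thus $\pi$ is a Jordan--H\"older factor of $R$. Together with $\pi\cong M/R$, this forces $\pi$ to appear in the Jordan--H\"older series of $M$ with multiplicity at least two, contradicting the multiplicity-one statement of Proposition~\ref{BZ}. Therefore $R=0$ and $q : M\xrightarrow{\sim}\pi$ is the asserted natural isomorphism.

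I do not anticipate a serious obstacle: the only step requiring genuine care is the simultaneous identification of $\pi$ with both the $G$-socle and the $G$-cosocle of $M$, which rests on combining the genericity of $\pi$ with the ``unique generic subquotient'' assertion of Proposition~\ref{BZ} and the ``unique irreducible quotient'' assertion of Proposition~\ref{stronger_dat}; once this is in hand, everything reduces to formal properties of finite-length modules.
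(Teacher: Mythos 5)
Your proof is correct and takes essentially the same approach as the paper: identify $\pi$ with the $G$-socle via Corollary~\ref{generic_sub} (which rests on Propositions~\ref{BZ} and~\ref{stronger_dat}), note it occurs with multiplicity one, and exhibit $\pi$ as a quotient via Theorem~\ref{thm: inertial local Langlands, N=0}. The paper compresses the final socle--cosocle--multiplicity-one step into ``This implies the assertion''; your proposal spells out that last step correctly.
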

\begin{proof} By
  Corollary~\ref{generic_sub}, we see that the $G$-socle of
  $\cInd_{K}^{G}\sigma(\tau)\otimes_{\mathfrak Z_{\Omega}, \chi_{\pi}}\Qpbar$
  is isomorphic to $\pi$ and occurs with multiplicity one as a
  subquotient. Theorem \ref{thm: inertial local Langlands, N=0}
  implies that $\pi$ is a quotient of
  $\cInd_{K}^{G}\sigma(\tau)\otimes_{\mathfrak Z_{\Omega},
  \chi_{\pi}}\Qpbar$. This implies the assertion.
\end{proof}

\subsection{Rationality}\label{god_save_the_queen} As a preparation for the next section we explain in this subsection how the
results above remain true with a finite extension $E$ of $\Q_p$ as coefficient field,
as long as $E$ is sufficiently large (depending on the  Bernstein component
$\Omega$). We do this by following various parts of the construction of the
Bernstein centre in~\cite{renard}, working over $E$ rather than over $\Qpbar$,
and we then deduce the results from those over $\Qpbar$ by faithfully flat
descent. Let $(M, \omega)$ be the supercuspidal support of some irreducible representation in $\Omega$, let $\mathcal X(M)$ be the group of unramified
characters $\chi: M\rightarrow \Qpbartimes$, and let
$$ \mathcal X(M)(\omega):=\{ \chi\in \mathcal X(M): \omega \cong \omega \otimes \chi\}.$$
Let $M^0$ be the intersection of the kernels of the characters
$\chi\in\cX(M)$, and let $T$ be the intersection of the kernels of the
$\chi\in \mathcal X(M)(\omega)$.
The restriction of $\omega$ to
$M^0$ is a finite direct sum of irreducible representations, see
\cite[VI.3.2]{renard}. We fix one irreducible summand $\rho$. It
follows from Lemme VI.4.4 of \cite{renard} and its proof that $\rho$ extends to  a representation $\rho_T$ of $T$, such that  $\Ind_T^M \rho_T$
is isomorphic to a finite direct sum of copies of $\omega$. Thus $\chi\in \mathcal X(M)$ lies in $\mathcal X(M)(\omega)$ if and only if the restriction of $\chi$ to $T$ is trivial.
Thus the restriction to $T$ induces a bijection
\numequation\label{restrict_to_T}
 \mathcal X(M)/\mathcal X(M)(\omega)\overset{\cong}{\rightarrow} \mathcal X(T),
 \end{equation}
where $\mathcal X(T)$ is the group of characters from $T$ to $\Qpbartimes$, which are trivial on $M^0$.

Let $D$ be the Bernstein component (for $M$) containing $\omega$,  let $\mathfrak Z_D$ be the centre of $D$ and let $\Pi(D):= \cInd_{M^0}^M \rho$. It is shown in \cite[VI.4.1]{renard}
that $\Pi(D)$ is a projective generator for $D$.   Thus we may identify $\mathfrak Z_D$ with the centre of the ring
$\End_M\bigl(\Pi(D)\bigr)$. Since $\rho$ is irreducible, $\Pi(D)$  is a finitely generated $\Qpbar[M]$-module.

Let $\mathfrak Z_{\Omega}$ be the Bernstein centre of $\Omega$ and let $\Pi(\Omega):= \nind_P^G \Pi(D)$, where $P$ is any parabolic subgroup with Levi subgroup $M$.
It is shown in \cite[Thm.VI.10.1]{renard} that $\Pi(\Omega)$ is a projective generator of $\Omega$, which is a finitely generated $\Qpbar[G]$-module.  Thus we may identify $\mathfrak Z_{\Omega}$ with the centre of the ring $\End_G\bigl(\Pi(\Omega)\bigr)$.

It follows from Bushnell--Kutzko theory that $\omega\cong \cInd^M_{\mathbf{J}}{\Lambda}$, and  $\rho\cong \cInd_J^{M^0} \lambda$, where ${\mathbf{J}}$ is an open compact-mod-centre
subgroup of $M$, $J$ is an open compact subgroup of $M$, and
$\Lambda$, $\lambda$ are (necessarily) finite-dimensional irreducible representations.  We may realise both $\Lambda$ and $\lambda$ over a finite extension $E$ of $\Qp$. By compactly inducing these realisations,
we deduce that both $\omega$ and $\rho$ can be realised over $E$. We denote these representations by $\omega_E$ and $\rho_E$,  respectively.
It is shown in~\cite[Lemme V.2.7]{renard} that $\mathcal X(M)(\omega)$ is a
finite group. Let $W(D)$ be the subgroup of $N_G(M)/M$
stabilising $D$. For each $w\in W(D)$ there are precisely $|\mathcal X(M)(\omega)|$ unramified characters $\xi$ such that $\omega^{w}\cong  \omega \otimes \xi$. Since the group
$M/M^0$ is finitely generated, by replacing $E$ with a finite extension, we may assume that all the characters $\xi$ are $E$-valued. By further
enlarging $E$ we may assume that  $\sqrt{q}$, where $q$ is the number of elements in the residue field of $F$, is contained in $E$. Then the modulus character of $P$ is defined over $E$.

Let $\Pi(D)_E:=\cInd_{M^0}^M \rho_E,$
and let $\mathfrak Z_{D, E}$ denote the centre of
its endomorphism ring $\End_M\bigl(\Pi(D)_E\bigr)$.
Since $\rho_E\otimes_E\Qpbar \cong \rho$, we have
$\Pi(D)_E\otimes_E \Qpbar\cong \Pi(D)$. We may express the generators of $\Pi(D)$ (as a $\Qpbar[M]$-module) as a finite $\Qpbar$-linear combination of elements of $\Pi(D)_E$.
The $E[M]$-submodule of $\Pi(D)_E$ generated by these elements has to equal $\Pi(D)_E$, as the quotient is zero once we extend the scalars to $\Qpbar$. In particular, $\Pi(D)_E$
is a finitely generated $E[M]$-module.

Let $\Pi(\Omega)_E:= \nind_P^G \Pi(D)_E$ and let $\mathfrak Z_{\Omega, E}$ be the centre of $\End_G(\Pi(\Omega)_E)$. The smooth parabolic induction commutes with
$\otimes_E \Qpbar$, as the set $P\backslash G/ H$ is finite for every
open  subgroup $H$ of $G$ and tensor products commute with inductive
limits, so $\Pi(\Omega)_E\otimes_E \Qpbar\cong\Pi(\Omega)$. Since $\Pi(\Omega)$ is a finitely generated $\Qpbar[G]$-module, arguing as in the previous paragraph we deduce that $\Pi(\Omega)_E$ is  a finitely generated $E[G]$-module.

The following observation (see for example  Lemma~5.1
of~\cite{paskunasimage}) is very useful. If $\pi$ and $\pi'$ are representations of some group $\mathrm G$ on $E$-vector spaces, such that $\pi$ is a finitely generated $E[\mathrm G]$-module, then
\numequation \label{base_change_homs}
\Hom_{E[\mathrm G]}(\pi, \pi')\otimes_E \Qpbar  \cong \Hom_{\Qpbar[\mathrm G]}(\pi\otimes_E \Qpbar, \pi'\otimes_E \Qpbar).
\end{equation}
It follows from \eqref{base_change_homs} that
\numequation\label{base_change_D}
\End_M(\Pi(D)_E)\otimes_E \Qpbar\cong \End_M\bigl(\Pi(D)\bigr),
\end{equation}
\numequation\label{base_change_Omega}
\End_G(\Pi(\Omega)_E)\otimes_E \Qpbar \cong \End_G\bigl(\Pi(\Omega)\bigr).
\end{equation}

Let $D_E$ be the full subcategory of smooth representation $\omega'$ of $M$ on $E$-vector spaces, such that $\omega'\otimes_E \Qpbar$ is in $D$. It follows from
\eqref{base_change_homs} that $\Hom_M(\Pi(D)_E, \omega')\otimes_E \Qpbar\cong \Hom_M(\Pi(D), \omega'\otimes_E\Qpbar)$. Since $\Pi(D)$ is a projective generator of $D$, we
deduce that $\Pi(D)_E$ is a projective generator of $D_E$. (This follows from
the fact that $\Qpbar$ is
  faithfully flat over $E$; we will repeatedly use this fact below without
  further comment.)
   In particular, $\mathfrak Z_{D, E}$ is naturally isomorphic to the centre of the category~$D_E$.

Similarly we let $\Omega_E$  be the full subcategory of smooth representations $\pi'$ of $G$ on $E$-vector spaces, such that $\pi'\otimes_E \Qpbar$ is in $\Omega$.
The same argument as above gives that $\Pi(\Omega)_E$ is a projective generator of $\Omega_E$ and $\mathfrak Z_{\Omega, E}$ is naturally isomorphic to the centre
of $\Omega_E$.

\begin{lem}\label{central} Let $\mathcal A$ be an $E$-algebra and let $\mathcal Z$ be an $E$-subalgebra of $\mathcal A$. If $\mathcal Z\otimes_E \Qpbar$ is the
centre of $\mathcal A\otimes_E\Qpbar$ then $\mathcal Z$ is the centre of $\mathcal A$.
\end{lem}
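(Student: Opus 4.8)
The plan is to prove this by faithfully flat descent, using that $\Qpbar$ is a free --- hence faithfully flat --- $E$-module since $E$ is a field. The argument splits into two ingredients: an identification of the centre after base change, and a descent statement for $E$-submodules of $\mathcal A$.

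First I would show that $Z(\mathcal A\otimes_E\Qpbar)=Z(\mathcal A)\otimes_E\Qpbar$ as subsets of $\mathcal A\otimes_E\Qpbar$. Fix an $E$-basis $(e_i)_{i\in I}$ of $\Qpbar$ with $1$ among the $e_i$, and write a general element of $\mathcal A\otimes_E\Qpbar$ uniquely as a finite sum $\sum_i a_i\otimes e_i$ with $a_i\in\mathcal A$. Such an element is central iff it commutes with every $a\otimes 1$ (it automatically commutes with the scalars $1\otimes\lambda$), and $[\sum_i a_i\otimes e_i,\,a\otimes 1]=\sum_i[a_i,a]\otimes e_i$ vanishes, by linear independence of the $e_i$, exactly when $[a_i,a]=0$ for all $i$; so centrality is equivalent to each $a_i$ lying in $Z(\mathcal A)$. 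This is the only place a small computation is needed, and it is completely routine.

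Next I would record the descent statement: for any $E$-submodule $V\subseteq\mathcal A$ one has $V=(V\otimes_E\Qpbar)\cap\mathcal A$, the intersection being formed inside $\mathcal A\otimes_E\Qpbar$ along $a\mapsto a\otimes 1$. This follows by applying the exact functor $-\otimes_E\Qpbar$ to $0\to V\to\mathcal A\to\mathcal A/V\to 0$, which identifies $(\mathcal A\otimes_E\Qpbar)/(V\otimes_E\Qpbar)$ with $(\mathcal A/V)\otimes_E\Qpbar$; since $\mathcal A/V$ embeds into its base change by faithful flatness, an element $a\in\mathcal A$ with $a\otimes 1\in V\otimes_E\Qpbar$ must already lie in $V$.

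To conclude, I would combine the two: the hypothesis says $\mathcal Z\otimes_E\Qpbar=Z(\mathcal A\otimes_E\Qpbar)$, which by the first step equals $Z(\mathcal A)\otimes_E\Qpbar$, as submodules of $\mathcal A\otimes_E\Qpbar$. Applying the descent statement to $V=\mathcal Z$ and to $V=Z(\mathcal A)$ then gives $\mathcal Z=(\mathcal Z\otimes_E\Qpbar)\cap\mathcal A=(Z(\mathcal A)\otimes_E\Qpbar)\cap\mathcal A=Z(\mathcal A)$, as desired. I do not anticipate any genuine obstacle here; the only point requiring a little care is that the identification of centres with base change, while immediate over a field, would fail for a non-flat ring extension --- over $E$ a field it drops out of the basis argument above. (One could equally argue by two inclusions directly without choosing a basis, using injectivity of $\mathcal A\hookrightarrow\mathcal A\otimes_E\Qpbar$ for one inclusion and the descent statement for the other, but the basis computation is cleaner.)
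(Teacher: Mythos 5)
Your proof is correct and rests on the same underlying mechanism as the paper's, namely the faithful flatness of $\Qpbar$ over $E$. The paper argues the two inclusions $\mathcal Z\subseteq Z(\mathcal A)$ and $Z(\mathcal A)\subseteq\mathcal Z$ directly (via the commutator map $\phi_z$ and the vanishing of $(Z(\mathcal A)/\mathcal Z)\otimes_E\Qpbar$), whereas you first isolate the general fact that the centre commutes with free base change and then deduce the result by descent of submodules — an essentially equivalent packaging of the same ideas.
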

\begin{proof}  For each $z\in \mathcal Z$, we define an $E$-linear map $\phi_z:\mathcal A\rightarrow \mathcal A$,
$a\mapsto a z-za$. Since $z\otimes 1$ is central in $\mathcal A\otimes_E {\Qpbar}$ by assumption, we deduce that
$(\Im \phi_z)\otimes_E \Qpbar=0$, which implies that $\Im \phi_z=0$. We deduce that $\mathcal Z$ is contained in $\mathcal Z(\mathcal A)$, the centre
of $\mathcal A$. If $z\in \mathcal Z(\mathcal A)$ then $z\otimes 1$ is contained in the centre of $\mathcal A\otimes_E {\Qpbar}$ and thus in $\mathcal Z\otimes_E \Qpbar$ by assumption.
Hence $(\mathcal Z(\mathcal A)/\mathcal Z) \otimes_E \Qpbar = 0$, which implies that $\mathcal Z=\mathcal Z(\mathcal A)$.
\end{proof}

\begin{lem}\label{extend_scalars_1}The isomorphism \eqref{base_change_D} induces an isomorphism $\mathfrak Z_{D,E}\otimes_E \Qpbar \cong \mathfrak Z_D$.
\end{lem}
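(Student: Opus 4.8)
The plan is to reduce the statement to the purely algebraic fact that, for an arbitrary $E$-algebra $\mathcal A$, the formation of the centre commutes with the base change $-\otimes_E\Qpbar$; this is precisely the converse of Lemma~\ref{central}, and it is proved by essentially the same kind of argument. Concretely, I would set $\mathcal A:=\End_M(\Pi(D)_E)$, so that \eqref{base_change_D} provides an identification $\mathcal A\otimes_E\Qpbar\cong\End_M(\Pi(D))$, while by definition $\mathfrak Z_{D,E}=Z(\mathcal A)$ and $\mathfrak Z_D=Z(\mathcal A\otimes_E\Qpbar)$. It then suffices to show that the natural inclusion $Z(\mathcal A)\otimes_E\Qpbar\hookrightarrow\mathcal A\otimes_E\Qpbar$ (which is injective since $\Qpbar$ is flat over $E$) has image exactly $Z(\mathcal A\otimes_E\Qpbar)$, and that this identification is the one induced by \eqref{base_change_D}.

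The key step is as follows. Consider the $E$-linear commutator map $\delta\colon\mathcal A\to\Hom_E(\mathcal A,\mathcal A)$, $a\mapsto(b\mapsto ab-ba)$, whose kernel is $Z(\mathcal A)$. Since $\Qpbar$ is flat over $E$, tensoring the exact sequence $0\to Z(\mathcal A)\to\mathcal A\xrightarrow{\delta}\Hom_E(\mathcal A,\mathcal A)$ with $\Qpbar$ identifies $Z(\mathcal A)\otimes_E\Qpbar$ with $\ker(\delta\otimes\mathrm{id}_{\Qpbar})$. On the other hand $Z(\mathcal A\otimes_E\Qpbar)$ is the kernel of the analogous commutator map $\delta_{\Qpbar}\colon\mathcal A\otimes_E\Qpbar\to\Hom_{\Qpbar}(\mathcal A\otimes_E\Qpbar,\mathcal A\otimes_E\Qpbar)$, and one checks directly that $\delta_{\Qpbar}=j\circ(\delta\otimes\mathrm{id}_{\Qpbar})$, where $j\colon\Hom_E(\mathcal A,\mathcal A)\otimes_E\Qpbar\to\Hom_{\Qpbar}(\mathcal A\otimes_E\Qpbar,\mathcal A\otimes_E\Qpbar)$ is the canonical comparison map. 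Hence, provided $j$ is injective, $\ker\delta_{\Qpbar}=\ker(\delta\otimes\mathrm{id}_{\Qpbar})=Z(\mathcal A)\otimes_E\Qpbar$, which is exactly what is needed.

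So the only genuine point left to verify is the injectivity of $j$; note that one cannot simply invoke a finiteness hypothesis here, since $\End_M(\Pi(D))$ is in general infinite-dimensional over $\Qpbar$. I would handle this by choosing an $E$-basis $\{c_\lambda\}$ of $\Qpbar$ and an $E$-basis $\{e_i\}$ of $\mathcal A$: the former identifies $j$ with the canonical map $\bigoplus_\lambda\Hom_E(\mathcal A,\mathcal A)\to\Hom_E(\mathcal A,\bigoplus_\lambda\mathcal A)$, and the latter identifies this in turn with the map $\bigoplus_\lambda\prod_i\mathcal A\to\prod_i\bigoplus_\lambda\mathcal A$ sending $((m^{(\lambda)}_i)_i)_\lambda$ to $((m^{(\lambda)}_i)_\lambda)_i$, which is manifestly injective. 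All of these identifications are compatible with \eqref{base_change_D}, so the induced map $\mathfrak Z_{D,E}\otimes_E\Qpbar\to\mathfrak Z_D$ is an isomorphism, as asserted. I do not anticipate any real obstacle beyond routine bookkeeping; the injectivity of $j$ in the absence of finiteness hypotheses is the one spot that requires a short argument rather than a citation.
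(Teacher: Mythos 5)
Your proof is correct, and it takes a genuinely different route from the paper's. The paper does not prove the general fact that formation of the centre commutes with field extension; instead it invokes \cite[Thm.~VI.4.4]{renard} to identify an \emph{explicit} $E$-subalgebra $\End_T(\cInd_{M^0}^T\rho_E)\subset\End_M(\Pi(D)_E)$ whose base change to $\Qpbar$ is $\mathfrak Z_D$, and then applies the (easy) Lemma~\ref{central} to conclude that this subalgebra equals $\mathfrak Z_{D,E}$. That argument is really about recognising a concretely-described subalgebra as the centre, and it has the side benefit of pinning down $\mathfrak Z_{D,E}$ as $\End_T(\cInd_{M^0}^T\rho_E)\cong E[T/M^0]$, a description the paper uses again in Lemmas~\ref{preserve_act} and~\ref{wow_yeah_chenevier_yeah}. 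Your argument is instead purely algebraic and makes no appeal to the structure theory: you establish directly that $Z(\mathcal A)\otimes_E\Qpbar\to Z(\mathcal A\otimes_E\Qpbar)$ is an isomorphism for an arbitrary $E$-algebra $\mathcal A$, via flatness of $\Qpbar$ over $E$ together with an explicit verification (choosing bases, unwinding $j$ into the swap map $\bigoplus_\lambda\prod_i\mathcal A\to\prod_i\bigoplus_\lambda\mathcal A$) that the comparison map $j\colon\Hom_E(\mathcal A,\mathcal A)\otimes_E\Qpbar\to\Hom_{\Qpbar}(\mathcal A\otimes_E\Qpbar,\mathcal A\otimes_E\Qpbar)$ is injective. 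You correctly flag that this injectivity is the one substantive point (it cannot be dismissed by a finiteness hypothesis, since $\End_M(\Pi(D)_E)$ is infinite-dimensional), and your basis argument handles it. In effect you prove the converse of Lemma~\ref{central} in full generality, which the paper never states. Both routes are valid; the paper's is shorter given the Renard reference and meshes naturally with the subsequent lemmas, while yours is self-contained and strictly more general.
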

\begin{proof} Since $\Pi(D)\cong \Ind_T^M (\cInd^T_{M^0} \rho)$ and induction is a functor, we have an inclusion
$\End_T(\cInd^T_{M^0} \rho)\subset \End_M\bigl(\Pi(D)\bigr)$. Now~\cite[Thm.VI.4.4]{renard} implies that this inclusion identifies
$\End_T(\cInd^T_{M^0} \rho)$ with the centre of $\End_M\bigl(\Pi(D)\bigr)$. The assertion follows from Lemma \ref{central} applied to
$\mathcal Z=\End_T(\cInd^T_{M^0} \rho_E)$ and $\mathcal A=\End_M\bigl(\Pi(D)_E\bigr)$.
\end{proof}

Let $\Irr(D)$ be the set of irreducible representations in $D$. Every such irreducible representation is of the form $\omega\otimes \chi$ for some
$\chi\in \mathcal X(M)$. We thus have a bijection $\mathcal X(M)/\mathcal X(M)(\omega)\overset{\cong}{\rightarrow} \Irr(D)$, $\chi \mapsto \omega \otimes \chi$. Composing this bijection with
\eqref{restrict_to_T} we obtain a bijection
\numequation\label{biject_yeah}
\Irr(D)\overset{\cong}{\rightarrow} \mathcal X(T).
\end{equation}
Now $\mathcal X(T)$ is naturally isomorphic to the set of homomorphisms of $\Qpbar$-algebras from $\Qpbar[T/M^0]$ to $\Qpbar$.  It is explained in  \cite[VI.4.4]{renard} that we have identifications
$$\mathfrak Z_D\cong \End_T(\Ind_{M^0}^T \rho)\cong \Qpbar[T/M^0];$$
see Th\'eor\`eme VI.4.4 for the first isomorphism and Proposition VI.4.4 for the second, so that \eqref{biject_yeah} induces  a natural bijection between $\Irr (D)$ and $\MaxSpec \mathfrak Z_D$.

The group $W(D)$ acts on $\Irr(D)$ by conjugation. For each $w\in
W(D)$ let $\xi\in \mathcal X(M)$ be any character such that $\omega^{w}\cong \omega\otimes\xi$, and let $\xi_w$ be the
restriction of $\xi$ to $T$. If $\xi_1$ and $\xi_2$ are two such characters then $\omega \otimes \xi_1\cong \omega^w\cong \omega\otimes\xi_2$, and
hence $\xi_1\xi_2^{-1}$ lies in $\mathcal X(M)(\omega)$. It follows from the definition of $T$ that the restriction of $\xi_1 \xi_2^{-1}$ to $T$ is trivial.
Thus $\xi_w$ depends only on  $w$ and not
on the choice of $\xi$. If $\chi\in \mathcal X(M)$ then $(\omega\otimes \chi)^w\cong \omega \otimes \chi^{w}\xi$. Thus the action of $W(D)$ on $\mathcal X(T)$ via \eqref{biject_yeah} is given by
$w. \chi= \chi^w \xi_w$. If we identify $\mathcal X(T)$ with the maximal spectrum of  $\Qpbar[T/M^0]$ then
this action is induced by the action of $W(D)$ on $\Qpbar[T/M^0]$ by $\Qpbar$-linear automorphisms given on the basis elements by $w. (t M^0)= \xi_w^{-1}(t) t^w M^0$:
if $\chi: \Qpbar[T/M^0]\rightarrow \Qpbar$ is a morphism of $\Qpbar$-algebras then $(w.\chi)(tM^0)=\chi( w^{-1} .(t M^0))= \chi( \xi_{w^{-1}}^{-1}(t) w^{-1}t w M^0)=
\xi_{w}(t) \chi^w(tM^0)$.

\begin{lem}\label{preserve_act} The action of $W(D)$ on $\mathfrak Z_D$ preserves $\mathfrak Z_{D,E}$.
\end{lem}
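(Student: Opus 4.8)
The plan is to make the inclusion $\mathfrak Z_{D,E}\subseteq\mathfrak Z_D$ completely explicit under the identification $\mathfrak Z_D\cong\Qpbar[T/M^0]$ recalled above, and then to read off the assertion from the formula for the $W(D)$-action recorded just before the statement of the lemma.

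First I would check that, after enlarging $E$ once more if necessary (which is harmless), the $T$-invariant absolutely irreducible $E$-representation $\rho_E$ of $M^0$ extends to a smooth representation $\rho_{T,E}$ of $T$ over $E$. The obstruction to such an extension is a class in $H^2(T/M^0,E^\times)$ with trivial coefficient action (here one uses that $\rho_E$ is $T$-invariant and absolutely irreducible, so that $\End_{M^0}(\rho_E)=E$); the extension $\rho_T$ over $\Qpbar$ supplied by \cite[VI.4.4]{renard} shows that this class dies after $-\otimes_E\Qpbar$, and since $T/M^0$ is a finitely generated free abelian group and the cohomology of such a group commutes with the filtered colimit $\Qpbartimes=\varinjlim_{E'}(E')^\times$, the obstruction already vanishes over some finite extension $E'$ of $E$; we then replace $E$ by $E'$. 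With $\rho_{T,E}$ in hand, the tensor identity gives $\cInd_{M^0}^T\rho_E\cong\rho_{T,E}\otimes_E E[T/M^0]$, and running the computation of \cite[VI.4.4]{renard} verbatim over $E$ produces a canonical isomorphism $\mathfrak Z_{D,E}=\End_T(\cInd_{M^0}^T\rho_E)\cong E[T/M^0]$, compatible via $-\otimes_E\Qpbar$ with the isomorphism $\mathfrak Z_D\cong\Qpbar[T/M^0]$. Thus, inside $\mathfrak Z_D=\Qpbar[T/M^0]$, the subalgebra $\mathfrak Z_{D,E}$ is precisely the $E$-span of the group elements $tM^0$.

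Next I would invoke the formula recorded immediately before the lemma: for $w\in W(D)$ the action on $\Qpbar[T/M^0]=\mathfrak Z_D$ sends a group element $tM^0$ to $\xi_w^{-1}(t)\,t^wM^0$, where $\xi_w$ is the restriction to $T$ of any $\xi\in\mathcal X(M)$ with $\omega^w\cong\omega\otimes\xi$. By the way $E$ has been chosen, every such $\xi$ is $E$-valued, hence so is $\xi_w$, so $\xi_w^{-1}(t)\in E^\times$; since moreover $t^wM^0\in T/M^0$, the element $\xi_w^{-1}(t)\,t^wM^0$ lies in $E[T/M^0]$. As $w$ acts by a $\Qpbar$-algebra automorphism and these group elements span $\mathfrak Z_{D,E}$ over $E$ (and $\mathfrak Z_D$ over $\Qpbar$), it follows that $w(\mathfrak Z_{D,E})\subseteq\mathfrak Z_{D,E}$; applying this to $w^{-1}$ as well yields equality, which is the lemma.

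I expect the genuine content to be concentrated in the first step, namely identifying $\mathfrak Z_{D,E}$ with the \emph{untwisted} group algebra $E[T/M^0]$ rather than with some $E$-form of $\Qpbar[T/M^0]$ that could a priori be a nontrivial twisted group algebra; this is exactly where the freedom to enlarge $E$, together with the finite generation of $T/M^0$, enters. An alternative that sidesteps this point is to observe that the $W(D)$-action on $\mathfrak Z_D$ is induced by the conjugation auto-equivalences $\pi\mapsto\pi^{n_w}$ of the category $D$, for lifts $n_w\in N_G(M)$ of $w$; these are defined over an arbitrary coefficient ring and commute with $-\otimes_E\Qpbar$, so they restrict to auto-equivalences of $D_E$, and the induced automorphism of $\mathfrak Z_D$ therefore carries the centre $\mathfrak Z_{D,E}$ of $D_E$ into itself — at the cost of matching this categorical description with the concrete one used above.
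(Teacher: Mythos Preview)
Your proof is correct and follows the same overall strategy as the paper: identify $\mathfrak Z_{D,E}$ with $E[T/M^0]$ inside $\mathfrak Z_D\cong\Qpbar[T/M^0]$, then use the explicit formula $w\cdot(tM^0)=\xi_w^{-1}(t)\,t^wM^0$ together with the fact that the $\xi_w$ are $E$-valued.

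The one place where you diverge from the paper is in producing the extension $\rho_{T,E}$. You argue abstractly via the obstruction class in $H^2(T/M^0,E^\times)$, using that it dies over $\Qpbar$ and that cohomology of the finitely generated abelian group $T/M^0$ commutes with the filtered colimit $\Qpbartimes=\varinjlim_{E'}(E')^\times$, at the cost of possibly enlarging $E$. The paper instead observes that Renard's construction in \cite[VI.4.4]{renard} actually builds an explicit extension $\rho_H$ of $\rho$ (to a group containing $T$) using only $\omega$ and $\rho$ as input; since both of these are already defined over $E$, so is $\rho_H$, and hence so is its restriction $\rho_T$ --- no further enlargement of $E$ and no cohomological bookkeeping needed. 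Your alternative categorical argument at the end (that conjugation auto-equivalences of $D$ are defined over any coefficient ring and hence preserve the centre of $D_E$) is also valid and arguably the cleanest conceptual route, though the paper does not take it.
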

\begin{proof} Since  $\omega$ and $\rho$  can both be defined over $E$,
  so can the representation $\rho_H$ defined at the beginning of
  \cite[VI.4.4]{renard}, and in particular so can $\rho_T$,
its restriction to $T$. Hence,
if we identify $\mathfrak Z_D$ with $\Qpbar[T/M^0]$ as in  \cite[Prop.VI.4.4]{renard} then $\mathfrak Z_{D,E}$ is identified with $E[T/M^0]$. Since the characters $\xi_w$ are $E$-valued by
the choice of $E$, we get the assertion.
\end{proof}

\begin{lem}\label{invariants_W(D)} $\mathfrak Z_{\Omega, E}= \mathfrak Z_{D, E}^{W(D)}$.
\end{lem}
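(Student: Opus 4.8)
The plan is to obtain the identity over $E$ from the corresponding identity over $\Qpbar$ by faithfully flat descent, in the same spirit as Lemmas~\ref{central}, \ref{extend_scalars_1} and~\ref{preserve_act}. The input over $\Qpbar$ is the classical structural fact that the Bernstein centre of $\Omega$ is the $W(D)$-invariants of the Bernstein centre of $D$, i.e. $\mathfrak Z_\Omega = \mathfrak Z_D^{W(D)}$; concretely, $\End_G\bigl(\Pi(\Omega)\bigr)$ is a crossed product of $\End_M\bigl(\Pi(D)\bigr)$ by the finite group $W(D)$, and taking centres gives $\mathfrak Z(\End_M(\Pi(D)))^{W(D)} = \mathfrak Z_D^{W(D)}$ (see Section VI.10 of~\cite{renard}). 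Lemma~\ref{central} will be the descent mechanism.

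\textbf{The embedding and the descent.} First I would realise $\mathfrak Z_{D,E}^{W(D)}$ as an $E$-subalgebra of $\mathcal A_E := \End_G\bigl(\Pi(\Omega)_E\bigr)$. Since $\Pi(\Omega)_E = \nind_P^G \Pi(D)_E$ and $\nind_P^G$ is exact and faithful, it induces an injective $E$-algebra homomorphism $\End_M\bigl(\Pi(D)_E\bigr)\hookrightarrow \mathcal A_E$ (injectivity on $\Hom$-spaces can be checked directly, or deduced from the fact that after the faithfully flat base change $\otimes_E\Qpbar$ it becomes the corresponding injection $\End_M(\Pi(D))\hookrightarrow \End_G(\Pi(\Omega))$). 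Composing with $\mathfrak Z_{D,E} = \mathfrak Z(\End_M(\Pi(D)_E))\subseteq \End_M(\Pi(D)_E)$ and restricting to $W(D)$-invariants — legitimate by Lemma~\ref{preserve_act} — puts a copy of $\mathfrak Z_{D,E}^{W(D)}$ inside $\mathcal A_E$, compatibly with the analogous embedding over $\Qpbar$. Now apply Lemma~\ref{central} with $\mathcal A = \mathcal A_E$ and $\mathcal Z = \mathfrak Z_{D,E}^{W(D)}$: it suffices to show that $\mathfrak Z_{D,E}^{W(D)}\otimes_E\Qpbar$ is the centre of $\mathcal A_E\otimes_E\Qpbar$. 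By~\eqref{base_change_Omega} the latter algebra is $\End_G(\Pi(\Omega))$, whose centre is $\mathfrak Z_\Omega$ because $\Pi(\Omega)$ is a projective generator of $\Omega$. For the former, since $W(D)$ is finite and $\Qpbar$ is flat over $E$, taking $W(D)$-invariants commutes with $\otimes_E\Qpbar$; combined with Lemma~\ref{extend_scalars_1} and the fact (Lemma~\ref{preserve_act}) that the $W(D)$-action on $\mathfrak Z_{D,E}\otimes_E\Qpbar = \mathfrak Z_D$ is the given one, this yields $\mathfrak Z_{D,E}^{W(D)}\otimes_E\Qpbar = (\mathfrak Z_{D,E}\otimes_E\Qpbar)^{W(D)} = \mathfrak Z_D^{W(D)}$. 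So the required statement reduces exactly to $\mathfrak Z_D^{W(D)} = \mathfrak Z_\Omega$ over $\Qpbar$, and Lemma~\ref{central} then gives $\mathfrak Z_{\Omega,E} = \mathfrak Z_{D,E}^{W(D)}$.

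\textbf{Main obstacle.} The delicate point is not the descent, which is routine, but the bookkeeping needed to see that the $E$-embedding $\mathfrak Z_{D,E}^{W(D)}\hookrightarrow \mathcal A_E$ constructed above is the one whose base change to $\Qpbar$ matches the identification $\mathfrak Z_\Omega\cong\mathfrak Z_D^{W(D)}$ of~\cite{renard}; equivalently, that the action of the Bernstein centre $\mathfrak Z_\Omega$ on $\Pi(\Omega) = \nind_P^G\Pi(D)$ is the image under $\nind_P^G$ of the $\mathfrak Z_D^{W(D)}$-action on $\Pi(D)$. This is precisely the content of the crossed-product description of $\End_G(\Pi(\Omega))$ in Section VI.10 of~\cite{renard}, so the task is to quote that construction in the appropriate form rather than to prove anything genuinely new.
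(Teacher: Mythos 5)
Your proof is correct and follows essentially the same route as the paper: embed $\mathfrak Z_{D,E}^{W(D)}$ into $\End_G(\Pi(\Omega)_E)$ via functoriality of parabolic induction, invoke the identification $\mathfrak Z_D^{W(D)} \cong Z\bigl(\End_G(\Pi(\Omega))\bigr)$ over $\Qpbar$ from Section VI.10 of Renard, and then descend via Lemma~\ref{central}. The paper's own proof is more compressed (it cites the discussion preceding Th\'eor\`eme VI.10.4 of Renard and applies Lemma~\ref{central} in one step), whereas you spell out the intermediate steps — that $W(D)$-invariants commute with $\otimes_E\Qpbar$, and that the embedding is compatible with the one over $\Qpbar$ — but these are details the paper leaves implicit rather than a genuine difference of method.
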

\begin{proof} Since $\Pi(\Omega)=\nind_P^G \Pi(D)$ and parabolic induction is a functor,  we have an inclusion $\mathfrak Z_D \subset \End_G\bigl(\Pi(\Omega)\bigr)$.
It follows from the discussion immediately preceding the proof of
Theorem VI.10.4 of~\cite{renard} that
this inclusion identifies $\mathfrak Z_{D}^{W(D)}$ with the centre of $\End_G\bigl(\Pi(\Omega)\bigr)$. The assertion follows from Lemma \ref{central} applied to
$\mathcal Z=\mathfrak Z_{D, E}^{W(D)}$ and $\mathcal A= \End_G\bigl(\Pi(\Omega)_E\bigr)$.
\end{proof}

\begin{prop}\label{extend_scalars_2} The isomorphism
  \eqref{base_change_Omega} induces an isomorphism
  $$\mathfrak Z_{\Omega,E}\otimes_E \Qpbar \cong \mathfrak Z_\Omega.$$
  \end{prop}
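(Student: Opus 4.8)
The plan is to deduce this from the already-established Levi-level isomorphism of Lemma~\ref{extend_scalars_1} together with the descriptions of the two Bernstein centres as $W(D)$-invariants. Concretely, Lemma~\ref{invariants_W(D)} gives $\mathfrak Z_{\Omega,E}=\mathfrak Z_{D,E}^{W(D)}$, and, as recalled in the course of its proof (via Théorème~VI.10.4 of~\cite{renard}), the analogous statement $\mathfrak Z_{\Omega}=\mathfrak Z_D^{W(D)}$ holds with $\Qpbar$-coefficients, the two being compatible with \eqref{base_change_Omega} and with the inclusions $\mathfrak Z_{D,E}\hookrightarrow\End_G(\Pi(\Omega)_E)$ and $\mathfrak Z_D\hookrightarrow\End_G(\Pi(\Omega))$ coming from functoriality of $\nind_P^G$. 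So the whole question is reduced to checking that forming $W(D)$-invariants commutes with the base change $-\otimes_E\Qpbar$ on $\mathfrak Z_{D,E}$.

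First I would record that $W(D)\subseteq N_G(M)/M$ is a \emph{finite} group, so that $(-)^{W(D)}$ is a finite limit (the kernel of $\prod_{w\in W(D)}(w-\id)$). Since $\Qpbar$ is flat over $E$, such a finite limit is preserved under $-\otimes_E\Qpbar$; and by Lemma~\ref{preserve_act} the $W(D)$-action on $\mathfrak Z_D$ restricts to $\mathfrak Z_{D,E}$, so that the isomorphism $\mathfrak Z_{D,E}\otimes_E\Qpbar\cong\mathfrak Z_D$ of Lemma~\ref{extend_scalars_1} is automatically $W(D)$-equivariant (it is $z\otimes\lambda\mapsto\lambda z$, and $W(D)$ acts $E$-linearly on the source). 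Combining these gives a canonical isomorphism
\[ \mathfrak Z_{\Omega,E}\otimes_E\Qpbar \;\cong\; \mathfrak Z_{D,E}^{W(D)}\otimes_E\Qpbar \;\cong\; \bigl(\mathfrak Z_{D,E}\otimes_E\Qpbar\bigr)^{W(D)} \;\cong\; \mathfrak Z_D^{W(D)} \;\cong\; \mathfrak Z_\Omega, \]
where the first isomorphism is Lemma~\ref{invariants_W(D)}, the second is flatness, the third is the $W(D)$-equivariant form of Lemma~\ref{extend_scalars_1}, and the last is the $\Qpbar$-coefficient version of Lemma~\ref{invariants_W(D)}.

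Finally I would check that this composite agrees with the map on centres induced by \eqref{base_change_Omega}: each of the four isomorphisms above is compatible with the relevant inclusions into $\End_G(\Pi(\Omega)_E)\otimes_E\Qpbar\cong\End_G(\Pi(\Omega))$ (for Lemma~\ref{extend_scalars_1} this is how \eqref{base_change_D} sits inside \eqref{base_change_Omega}; for the two invariant-descriptions it is the content cited from~\cite{renard}), so the identification of $\mathfrak Z_{\Omega,E}\otimes_E\Qpbar$ with $\mathfrak Z_\Omega$ is exactly the restriction of \eqref{base_change_Omega}. I do not expect any serious obstacle here: all the substantive representation-theoretic input (the projective generators $\Pi(D)$, $\Pi(\Omega)$ and their $E$-forms, the identification of the centres with $\Qpbar[T/M^0]^{W(D)}$, and the rationality of the characters $\xi_w$) has already been assembled in Lemmas~\ref{central}, \ref{extend_scalars_1}, \ref{preserve_act} and~\ref{invariants_W(D)}; the only point requiring a moment's care is the commutation of finite-group invariants with the flat base change $-\otimes_E\Qpbar$, together with the routine bookkeeping of compatibilities.
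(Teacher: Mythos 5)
Your argument is the paper's own proof, filled out with a few more words of justification: the exact chain $\mathfrak Z_{\Omega,E}\otimes_E\Qpbar \cong \mathfrak Z_{D,E}^{W(D)}\otimes_E\Qpbar \cong (\mathfrak Z_{D,E}\otimes_E\Qpbar)^{W(D)}\cong \mathfrak Z_D^{W(D)}\cong\mathfrak Z_\Omega$, combining Lemmas~\ref{invariants_W(D)}, \ref{extend_scalars_1} and~\cite[VI.10.4]{renard}, is what the paper writes. Your extra remarks (finiteness of $W(D)$ plus flatness of $\Qpbar/E$ to commute invariants with base change, $W(D)$-equivariance of \eqref{base_change_D} via Lemma~\ref{preserve_act}, and the compatibility with \eqref{base_change_Omega}) are exactly the steps the paper leaves implicit, so there is nothing further to note.
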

\begin{proof} Using Lemmas~\ref{extend_scalars_1} and~\ref{invariants_W(D)} we obtain $$\mathfrak Z_{\Omega,E}\otimes_E \Qpbar \cong
\mathfrak Z_{D, E}^{W(D)}\otimes_E \Qpbar\cong (\mathfrak Z_{D, E}\otimes_E \Qpbar)^{W(D)}\cong \mathfrak Z_D^{W(D)}\cong \mathfrak Z_{\Omega},$$
where the last isomorphism follows from \cite[VI.10.4]{renard}, as in the proof of Lemma~\ref{invariants_W(D)}.
\end{proof}

\begin{lem}\label{wow_yeah_chenevier_yeah} $\mathfrak Z_{\Omega, E}$ coincides with the ring $E[\mathcal B]$ constructed in {\em \cite[Prop.~3.11]{chenevier}}.
\end{lem}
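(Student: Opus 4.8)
The plan is to compare both $\mathfrak Z_{\Omega,E}$ and $E[\mathcal B]$ with the explicit ring of invariants $E[T/M^0]^{W(D)}$ (for the twisted $W(D)$-action recorded above), and to observe that this is precisely the description used by Chenevier. First I would recall from \cite[Prop.~3.11]{chenevier} the construction of $E[\mathcal B]$: it is the ring of regular functions on the connected component of the Bernstein variety attached to the inertial class $[M,\omega]$, realised as the invariants of $E[\mathcal X(M)/\mathcal X(M)(\omega)]$ under the relative normaliser $N_G(M)/M$ stabilising the component, the action being twisted by the self-twist characters exactly as in the formula $w.\chi = \chi^w \xi_w$ displayed just before Lemma~\ref{preserve_act}. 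This recipe makes sense over $E$ precisely because $E$ has been chosen (in Subsection~\ref{god_save_the_queen}) large enough that $\omega$, hence $\rho$ and $\rho_T$, is defined over $E$, that the characters $\xi_w$ are $E$-valued, and that $\sqrt q\in E$.

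Next I would assemble the identifications already available in this subsection. By Lemma~\ref{invariants_W(D)}, $\mathfrak Z_{\Omega,E} = \mathfrak Z_{D,E}^{W(D)}$; by the proof of Lemma~\ref{preserve_act} (following \cite[VI.4.4]{renard}), the inclusion $\mathfrak Z_{D,E}\hookrightarrow \mathfrak Z_D \cong \Qpbar[T/M^0]$ identifies $\mathfrak Z_{D,E}$ with $E[T/M^0]$; and by \eqref{restrict_to_T} together with \eqref{biject_yeah}, $\mathcal X(M)/\mathcal X(M)(\omega)\cong \mathcal X(T)$ is exactly the set of $\Qpbar$-algebra homomorphisms $\Qpbar[T/M^0]\to\Qpbar$, with the $W(D)$-action matching $w.(tM^0) = \xi_w^{-1}(t)\,t^w M^0$. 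Combining these, $\mathfrak Z_{\Omega,E}\cong E[T/M^0]^{W(D)}$ as an $E$-algebra, which is the same ring Chenevier's recipe produces, compatibly with the canonical matching of $\Qpbar$-points with $\Irr(\Omega)$ (equivalently $\MaxSpec$). To see the isomorphism is canonical and not merely abstract, I would use that both rings have $\Qpbar$-base change canonically isomorphic to $\mathfrak Z_\Omega$ — for $\mathfrak Z_{\Omega,E}$ by Proposition~\ref{extend_scalars_2}, for $E[\mathcal B]$ by the corresponding base-change statement in \cite{chenevier} — and that inside a fixed $\Qpbar$-algebra an $E$-subalgebra is determined by its image under the faithfully flat extension $E\to\Qpbar$; since both $E$-forms consist of the functions taking values in $E$ on the $E$-rational points of the component, they coincide.

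The main obstacle is purely a matter of dictionary: one must check that Chenevier's framework for $E[\mathcal B]$ agrees with Renard's, namely that the group denoted $W(D)$ here (the stabiliser of $D$ in $N_G(M)/M$) coincides with the relative normaliser Chenevier uses, that the twist $w\mapsto \xi_w$ matches his normalisation, and that our standing largeness hypotheses on $E$ imply his. Once this translation is in place the identification is forced, again because the Bernstein centre acts on an irreducible representation through its cuspidal support alone, so any two $E$-models of $\mathfrak Z_\Omega$ compatible with the parametrisation of $\Qpbar$-points by $\Irr(\Omega)$ must be equal.
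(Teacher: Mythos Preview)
Your approach is essentially the same as the paper's: both arguments reduce to identifying each side with $E[T/M^0]^{W(D)}$, invoking Lemma~\ref{invariants_W(D)} and the proof of Lemma~\ref{preserve_act} for the $\mathfrak Z_{\Omega,E}$ side, and unpacking Chenevier's definition for the other.

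The main difference is that the paper is more precise about Chenevier's actual definition. Rather than describing $E[\mathcal B]$ as invariants of ``$E[\mathcal X(M)/\mathcal X(M)(\omega)]$'' under the relative normaliser, the paper records that Chenevier defines $E[\mathcal B] := E[M/M^0]^{\Delta}$, where $\Delta \subset \mathcal X(M)\rtimes W(D)$ is the subgroup of pairs $(\xi,w)$ with $\omega^w \cong \omega\otimes\xi$. One then observes that $\mathcal X(M)(\omega)$ sits in $\Delta$ as a normal subgroup with quotient $W(D)$, so taking invariants in stages gives $E[M/M^0]^{\Delta} = (E[M/M^0]^{\mathcal X(M)(\omega)})^{W(D)} = E[T/M^0]^{W(D)}$, the last step using \cite[Rem.~VI.4.4]{renard}. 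This makes the ``dictionary'' translation you flag as the main obstacle completely explicit, and in particular shows directly that the two $E$-subalgebras coincide inside $\mathfrak Z_D^{W(D)}$. Your additional base-change-and-faithful-flatness argument for canonicity is therefore not needed: once both rings are literally identified with the same subring $E[T/M^0]^{W(D)}$ of $\mathfrak Z_\Omega$, there is nothing further to check.
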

\begin{proof} Let $\Delta$ be  the subgroup of $\mathcal X(M)\rtimes W(D)$ consisting of pairs $(\xi, w)$, such that $\omega^{w}\cong \omega \otimes \xi$. This subgroup
acts naturally on $E[M/M^0]$.  The map $\xi\mapsto (\xi, 1)$ identifies $\mathcal X(M)(\omega)$ with a normal subgroup of
$\Delta$ and the quotient is isomorphic to $W(D)$. We have
$$\Qpbar[M/M^0]^{\Delta}\cong (\Qpbar[M/M^0]^{\mathcal X(M)(\omega)})^{W(D)}\cong \Qpbar[T/M^0]^{W(D)}\cong \mathfrak Z_D^{W(D)},$$
see \cite[Rem.VI.4.4]{renard} for the second isomorphism. Chenevier defines $E[\mathcal B]$ to be $E[M/M^0]^{\Delta}$. This subring gets identified with
$E[T/M^0]^{W(D)}$ inside $\Qpbar[T/M^0]^{W(D)}$, and with $\mathfrak Z_{D,E}^{W(D)}$ inside $\mathfrak Z_D^{W(D)}$, see the proof of Lemma \ref{preserve_act}.
The assertion follows from Lemma \ref{invariants_W(D)}.
 \end{proof}

 Let $\sigma(\tau)$ be the representation of $K$ given by Theorem \ref{thm: inertial local Langlands, N=0}. After replacing $E$ by a finite extension we may assume that there
 exists a representation $\sigma(\tau)_E$ of $K$ on an $E$-vector space, such that $\sigma(\tau)_E\otimes_E \Qpbar\cong \sigma(\tau)$. Then
 $\cInd_K^G \sigma(\tau)_E$ is an object in $\Omega_E$. Since $\mathfrak Z_{\Omega, E}$ is the centre of $\Omega_E$ it acts on  $\cInd_K^G \sigma(\tau)_E$, thus inducing a
 homomorphism $\mathfrak Z_{\Omega, E}\rightarrow \End_G\bigl(\cInd_K^G \sigma(\tau)_E\bigr)$.
 \begin{lem}\label{lem: rational Bernstein Hecke iso} The map $\mathfrak Z_{\Omega, E}\rightarrow \End_G\bigl(\cInd_K^G \sigma(\tau)_E\bigr)$ is an isomorphism.
 \end{lem}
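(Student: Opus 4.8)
The plan is to deduce the lemma from the analogous statement over $\Qpbar$ --- the isomorphism $\mathfrak Z_\Omega\isoto\End_G\bigl(\cInd_K^G\sigma(\tau)\bigr)$ coming from Theorem~4.1 of~\cite{dat}, recalled in the discussion preceding Theorem~\ref{thm: inertial local Langlands, N=0} --- by faithfully flat descent along $E\to\Qpbar$, exactly in the style of the other rationality statements of this subsection.

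First I would check that $\cInd_K^G\sigma(\tau)_E$ is a finitely generated $E[G]$-module: it is generated over $E[G]$ by the finite-dimensional space of functions supported on the coset $K$, since every function supported on a single coset is a $G$-translate of such a function. Because compact induction commutes with $-\otimes_E\Qpbar$ and $\sigma(\tau)_E\otimes_E\Qpbar\cong\sigma(\tau)$, the base-change isomorphism \eqref{base_change_homs} then gives a natural isomorphism $\End_G\bigl(\cInd_K^G\sigma(\tau)_E\bigr)\otimes_E\Qpbar\isoto\End_G\bigl(\cInd_K^G\sigma(\tau)\bigr)$. On the source side, Proposition~\ref{extend_scalars_2} supplies $\mathfrak Z_{\Omega,E}\otimes_E\Qpbar\cong\mathfrak Z_\Omega$.

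The key point to verify is that the map $\mathfrak Z_{\Omega,E}\to\End_G\bigl(\cInd_K^G\sigma(\tau)_E\bigr)$ in the statement, after applying $-\otimes_E\Qpbar$ and these two identifications, coincides with the map $\mathfrak Z_\Omega\to\End_G\bigl(\cInd_K^G\sigma(\tau)\bigr)$ given by the action of the Bernstein centre. This is a naturality statement, and I would argue it from the way the identifications were constructed: the isomorphism of Proposition~\ref{extend_scalars_2} comes from \eqref{base_change_Omega}, i.e. from base-changing the action of $\mathfrak Z_{\Omega,E}$ on the projective generator $\Pi(\Omega)_E$; since $\cInd_K^G\sigma(\tau)_E$ is an object of $\Omega_E$ --- indeed a quotient of a finite direct sum of copies of $\Pi(\Omega)_E$ --- the central action on it is compatible with all $G$-equivariant maps, and exactness of $-\otimes_E\Qpbar$ propagates the compatibility from $\Pi(\Omega)_E$ to $\cInd_K^G\sigma(\tau)_E$. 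Granting this, the map in the statement becomes an isomorphism after $-\otimes_E\Qpbar$, hence is itself an isomorphism because $\Qpbar$ is faithfully flat over $E$.

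I expect the only genuine subtlety to be precisely this compatibility check in the third paragraph --- matching the \emph{action} of the centre on the one distinguished object across the various base-change isomorphisms --- rather than any hard new input; all the substantial work has already been isolated in Lemmas~\ref{extend_scalars_1}--\ref{wow_yeah_chenevier_yeah} and Proposition~\ref{extend_scalars_2}, so even this check is essentially formal.
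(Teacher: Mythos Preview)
Your proposal is correct and takes essentially the same approach as the paper, which simply asserts that Theorem~4.1 of~\cite{dat} together with Proposition~\ref{extend_scalars_2} shows the map is an isomorphism after extending scalars to~$\Qpbar$, and then invokes faithfully flat descent. You have supplied the details the paper leaves implicit --- the finite generation needed to apply~\eqref{base_change_homs}, and the naturality check matching the two central actions --- but the underlying argument is identical.
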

 \begin{proof} It follows from  Theorem 4.1 of \cite{dat} and
   Proposition \ref{extend_scalars_2} above that the map is an isomorphism once we extend scalars to $\Qpbar$. This implies the assertion.
 \end{proof}

 Let $\mathcal R:=\End_G\bigl(\Pi(\Omega)\bigr)$. Since $\Pi(\Omega)$ is a projective generator the functors $M\mapsto M\otimes_{\mathcal R} \Pi(\Omega)$ and
$\pi\mapsto \Hom_G(\Pi(\Omega), \pi)$ induce an equivalence of categories between the category of right $\mathcal R$-modules and $\Omega$.
If $\pi$ is irreducible, then the action of $\mathfrak Z_{\Omega}$ on $\pi$ factors through $\chi_{\pi}: \mathfrak Z_{\Omega}\rightarrow \Qpbar$.
It follows from \cite[Lem.VI.10.4]{renard} that $\mathcal R$ is a finitely generated $\mathfrak Z_{\Omega}$-module , which implies that the module corresponding to $\pi$ is a finite dimensional
$\Qpbar$-vector space.  Since $\mathfrak Z_{\Omega, E}$ is a finitely generated algebra over $E$, $E(\chi_\pi):=\chi_{\pi}(\mathfrak Z_{\Omega, E})$ is a finite extension of $E$.

In the above $E$ was only required to be sufficiently large. Thus if $E'$ is a subfield of $\Qpbar$ containing $E$, then we let $\Omega_{E'}$, $\Pi(\Omega)_{E'}$
 be the corresponding objects defined over $E'$ instead of $E$. Then $\Pi(\Omega)_{E'}$ is a projective generator of $\Omega_{E'}$
 and the functors $M\mapsto M\otimes_{\mathcal R_{E'}} \Pi(\Omega)_{E'}$ and
$\pi\mapsto \Hom_G(\Pi(\Omega)_{E'}, \pi)$ induce an equivalence of categories between the category of right $\mathcal R_{E'}$-modules and $\Omega_{E'}$, where
$\mathcal R_{E'}:=\End_G(\Pi(\Omega)_{E'})$.

 \begin{lem} Every irreducible generic  $\pi\in \Omega$  can be realised over $E(\chi_{\pi})$.
 \end{lem}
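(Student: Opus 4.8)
The plan is to descend, by faithfully flat base change, the socle description of $\pi$ coming from $\sigma(\tau)$ down to the subfield $E':=E(\chi_\pi)=\chi_\pi(\mathfrak Z_{\Omega,E})$ (a finite field extension of $E$ inside $\Qpbar$, as recorded just above). Let $\chi_\pi^E\colon\mathfrak Z_{\Omega,E}\onto E'$ be the restriction of $\chi_\pi$ along $\mathfrak Z_{\Omega,E}\hookrightarrow\mathfrak Z_{\Omega,E}\otimes_E\Qpbar\cong\mathfrak Z_\Omega$ (Proposition~\ref{extend_scalars_2}). Using the action of $\mathfrak Z_{\Omega,E}$ on $\cInd_K^G\sigma(\tau)_E$ furnished by Lemma~\ref{lem: rational Bernstein Hecke iso}, set $N':=\cInd_K^G\sigma(\tau)_E\otimes_{\mathfrak Z_{\Omega,E},\chi_\pi^E}E'$, a smooth $E'[G]$-module. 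The candidate model for $\pi$ will be $V:=\soc_{E'[G]}(N')$.

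The first step is the base-change identity $N'\otimes_{E'}\Qpbar\cong\cInd_K^G\sigma(\tau)\otimes_{\mathfrak Z_\Omega,\chi_\pi}\Qpbar$. This is formal: compact induction commutes with $-\otimes_E\Qpbar$ (it is a sum of copies of the inducing representation indexed by $K\backslash G$); the $\mathfrak Z_{\Omega,E}$-action on $\cInd_K^G\sigma(\tau)_E$ becomes the $\mathfrak Z_\Omega$-action on $\cInd_K^G\sigma(\tau)$ after $-\otimes_E\Qpbar$, by Proposition~\ref{extend_scalars_2} together with the compatibility of the Bernstein-centre action with extension of scalars built into its construction; and one then applies associativity of tensor products, noting that along $\mathfrak Z_{\Omega,E}\to\mathfrak Z_\Omega$ the maximal ideal of $\chi_\pi^E$ generates that of $\chi_\pi$ (equivalently, $E'\otimes_E\Qpbar$ is a product of copies of $\Qpbar$ and the projection singled out by the inclusion $E'\hookrightarrow\Qpbar$ recovers $\chi_\pi$). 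By Proposition~\ref{stronger_dat}, the right-hand side is isomorphic to $\pi_1\times\cdots\times\pi_r$ for supercuspidals $\pi_i$ with $\pi_j\not\cong\pi_i(1)$ when $i<j$; hence it has finite length, and by Proposition~\ref{BZ} and Corollary~\ref{generic_sub} its $G$-socle is precisely $\pi$, occurring with multiplicity one and being the unique generic Jordan--H\"older constituent.

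Granting this, $N'$ itself has finite length over $E'[G]$ (strictly increasing chains remain strictly increasing after the faithfully flat $-\otimes_{E'}\Qpbar$), so $V=\soc_{E'[G]}(N')$ is a nonzero semisimple module of finite length. Base-changing, $V\otimes_{E'}\Qpbar$ is a nonzero $G$-submodule of $\pi_1\times\cdots\times\pi_r$; since the socle of the latter is the simple module $\pi$ with multiplicity one, every nonzero submodule contains $\pi$ and has socle contained in $\pi$, so $\soc(V\otimes_{E'}\Qpbar)=\pi$. Writing $V=\bigoplus_iV_i$ with each $V_i$ irreducible over $E'$, the nonzero socles of the $V_i\otimes_{E'}\Qpbar$ form an internal direct sum inside the simple module $\pi$, which forces a single summand; thus $V$ is irreducible over $E'$. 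Finally $V$, being a subquotient of the admissible representation $N'$ (admissible because its base change $\pi_1\times\cdots\times\pi_r$ is), is admissible, so $\End_{E'[G]}(V)$ is a finite-dimensional division algebra over the characteristic-zero field $E'$, hence separable; therefore $V\otimes_{E'}\Qpbar$ is a semisimple $\Qpbar[G]$-module, and a semisimple module with simple socle equals its socle, giving $V\otimes_{E'}\Qpbar\cong\pi$. So $V$ realises $\pi$ over $E(\chi_\pi)$.

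The main obstacle is getting the first step exactly right: unwinding Lemma~\ref{lem: rational Bernstein Hecke iso}, Proposition~\ref{extend_scalars_2} and the construction of the Bernstein centre to see that the $\mathfrak Z_{\Omega,E}$-twist defining $N'$ really does base-change to the $\chi_\pi$-twist over $\Qpbar$, and keeping track of the fact that $E'\otimes_E\Qpbar$ is only a product of copies of $\Qpbar$ rather than a field. The absolute-semisimplicity input used at the end is standard for admissible irreducible representations in characteristic zero, but should be invoked explicitly.
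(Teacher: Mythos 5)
Your argument is correct and runs along the same lines as the paper's: define $N' = \cInd_K^G \sigma(\tau)_E \otimes_{\mathfrak Z_{\Omega,E},\chi_\pi^E} E'$, identify $N'\otimes_{E'}\Qpbar$ with $\pi_1\times\cdots\times\pi_r$ via Proposition~\ref{stronger_dat}, and then extract an absolutely irreducible subrepresentation whose base change is $\pi$. The one step you handle differently is the proof that the base change of your irreducible piece is semisimple. The paper transports this question through the equivalence of categories furnished by the projective generator $\Pi(\Omega)_{E'}$: an irreducible subrepresentation $\pi''\subset N'$ corresponds to a simple, finite-dimensional $\mathcal R_{E'}$-module $M''$, and a reference to Bourbaki is used for semisimplicity of $M''\otimes_{E'}\Qpbar$. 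You argue directly on the representation side instead, using admissibility of $N'$ (which, as you note, descends from its base change) to conclude that $\End_{E'[G]}(V)$ is a finite-dimensional division algebra over the characteristic-zero field $E'$, hence separable, whence $V\otimes_{E'}\Qpbar$ is semisimple. Both approaches ultimately rest on the same fact about simple modules in characteristic zero; yours avoids routing through $\Pi(\Omega)_{E'}$ and $\mathcal R_{E'}$ at the cost of invoking admissibility explicitly. One small efficiency the paper uses that you could adopt: rather than defining $V$ as the full socle of $N'$ and then proving it is irreducible (your ``single-summand'' step), just choose any irreducible $E'[G]$-subrepresentation of $N'$; the rest of your argument then applies verbatim and the direct-sum bookkeeping disappears.
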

 \begin{proof} In order to ease the notation,
	 we write $E':=E(\chi_{\pi})$ and 
 $\pi':= \cInd_K^G \sigma(\tau)_E \otimes_{\mathfrak Z_{\Omega, E}} E'$. Then
 $$\pi'\otimes_{E'} \Qpbar\cong \cInd_K^G \sigma(\tau) \otimes_{\mathfrak Z_{\Omega}, \chi_{\pi}}\Qpbar \cong \pi_1\times\ldots\times  \pi_r,$$
 where the last isomorphism is given by Proposition \ref{stronger_dat}. Hence, $\pi'\otimes_{E'} \Qpbar$ is of finite length, which implies that
 $\Hom_G(\Pi(\Omega), \pi'\otimes_{E'}\Qpbar)$ is a finite dimensional $\Qpbar$-vector space, which implies that
 $M':=\Hom_G(\Pi(\Omega)_E, \pi')$ is a finite dimensional $E'$-vector space.

If $\pi''$ is an irreducible $E'$-subrepresentation
 of $\pi'$, and if we define $M'':= \Hom_G(\Pi(\Omega)_{E'}, \pi'')$,
then $M''$  is an irreducible $\mathcal R_{E'}$-module which is finite dimensional over~$E'$.
It follows from \cite[Cor.12.7.1a)]{bourbaki8} that $M''\otimes_{E'}\Qpbar$ is a semi-simple $\mathcal R$-module. Hence,
$\pi''\otimes_{E'} \Qpbar$ is a semi-simple $G$-representation. Proposition \ref{BZ}  implies that the $G$-socle of $\pi''\otimes_{E'} \Qpbar$
 is irreducible and is isomorphic to $\pi$. Thus $\pi''\otimes_{E'} \Qpbar\cong \pi$.
  \end{proof}

Henceforth for each Bernstein component $\Omega$ we will fix a sufficiently
large $E$ as above and work with it. Agreeing on this, we will \emph{omit $E$
  from the notation} when there is no danger of confusion. For instance we will
write $\mathfrak{Z}_{\Omega}$, $\sigma(\tau)$, and so on, in place of
$\mathfrak{Z}_{\Omega,E}$, $\sigma(\tau)_E$ and so on. Note that
we fixed a choice of $E$ in Section~\ref{sec:patching}; however, it is
harmless to replace our patched module $M_\infty$ with its base
extension to the ring of integers in any larger choice of $E$, and we will do so without
further comment.

\section{Local-global compatibility}\label{sec:local-global compatibility}

The goal of this section is to prove that the patched module
$M_\infty$ satisfies local-global compatibility, in the following
sense: the $G$-action on $M_\infty$ (obtained by patching global
objects) will induce a tautological Hecke action on certain patched
modules for particular $K$-types. On the other hand, we will define a
second Hecke action via an interpolation of the classical local Langlands correspondence. We will
then prove that these two Hecke actions coincide. The details are made
explicit below.

Note that it is plausible that $M_\infty$
should satisfy local-global compatibility, since it is patched
together from spaces of algebraic modular forms; the difficulty in
proving this is that the modules at finite level are all $p$-power
torsion, while local-global compatibility is usually defined after
inverting $p$, so that we need to establish some integral control over
the compatibility. Some of our arguments were inspired by the treatment
of the two-dimensional crystalline case in~\cite[\S 3.6]{MR2392362},
and somewhat related considerations in the arguments of~\cite{kisinfmc}.

Let $\sigma$ be a locally algebraic type for $G=\GL_n(F)$ defined over
$E$. Then by definition $\sigma$ is an absolutely irreducible representation of
$K=\GL_n(\cO_F)$ over $E$ of the form $\sigma_\sm \otimes \sigma_\alg$,
where $\sigma_\sm$ is a smooth type for $K$
(i.e.\ $\sigma_\sm=\sigma(\tau)$ for some inertial type $\tau$) and
$\sigma_\alg$ is the restriction to $K$ of an irreducible algebraic
representation of $\Res_{F/\Qp}\GL_n$; we will sometimes also write
$\sigma_\alg$ for the corresponding $G$-representation. (So, all of our locally algebraic types are ``potentially
crystalline'', in the sense that they detect representations for which
$N=0$.) Set $\cH(\sigma):=\End_G(\cInd_K^G\sigma)$.

We say that a continuous representation $r:G_F\to \GL_n(E)$
has Hodge--Tate weights prescribed by $\sigma_\alg$ if $r$ is regular of weight $\xi$ and $\sigma_\alg$ is \emph{dual} to (the restriction to $K$ of) the representation with highest weight vector $\xi$. (Given such an $r$, the representation $\sigma_\alg$ is the restriction to $K$ of the representation $\pi_\alg$ defined in Section~\ref{subsec:notation}). We will say that $r$ is \emph{potentially crystalline of type $\sigma$} if it is potentially
crystalline with inertial type $\tau$ and Hodge--Tate weights prescribed by $\sigma_\alg$. We also say that a global representation
has type $\sigma$ if it restricts to such an~$r$. Let
$R_{\tp}^\square(\sigma)$ be the local universal lifting
ring of type $\sigma$ at $\tp$ (i.e.\ the unique reduced and
$p$-torsion free quotient of $R_{\tp}^\square$ corresponding to
potentially crystalline lifts of type $\sigma$).

Let $\CX = \Spf R_{\tp}^\square(\sigma)$, with ideal of definition
taken to be the maximal ideal, and let $\CX^\rig$ denote its rigid
generic fibre (as constructed in ~\cite[\S 7]{deJ}). Note that
$\CX^\rig =\cup_j U_j$ is an increasing union of affinoids, and in fact is a
quasi-Stein rigid space, since it is a closed subspace of an open polydisc, which is an increasing union of
closed polydiscs. By a standard
abuse of notation, we will write $\cO_{\cX^\rig}$ for  the ring of rigid-analytic functions on $\CX^\rig$. Then $\cO_{\cX^\rig} =
\varprojlim_j \Gamma(U_j,\cO_{U_j})$ and we equip it with the inverse
limit topology.
We note
that by~\cite[Lemma 7.1.9]{deJ}, there is a bijection between the
points of $\CX^\rig$ and the closed points of $\Spec
R_{\tp}^\square(\sigma)[1/p]$. The universal lift over
$R_{\tp}^\square(\sigma)$ gives rise to a  continuous family of
representations $\rho^{\rig}: G_F \to \GL_n(\CO_{\CX^\rig})$. (The
continuity of $\rho^{\rig}$ is equivalent to that of each of the representations
$G_F \to \GL_n\bigl(\Gamma(U_j,\cO_{U_j})\bigr)$ obtained by
restricting elements of $\GL_n(\CO_{\CX^{\rig}})$ to~$U_j$.) If $x$
is a point of $\CX^\rig$ with residue field $E_x$, we denote by
$\rho_x:G_F\to \GL_n(E_x)$ the specialisation of $\rho^{\rig}$ at
$x$. We define the locally algebraic $G$-representation $\pi_{\lalg,x}:=
\pi_{\sm}(\rho_x)\otimes_E
\sigma_{\alg}=\pi_{\sm}(\rho_x)\otimes_{E_x} \pi_{\alg}(\rho_x)$.
 (Recall the notation $\pi_{\sm}(\rho_x)$ and $\pi_{\alg}(\rho_x)$ from
\S\ref{subsec:notation}; in particular, $\pi_\sm(\rho_x)=r_p^{-1}(\WD(\rho_x)^{F-\semis})$.) Note that $\cH(\sigma)$ acts via a character
on the space $\Hom_K(\sigma,\pi_{\lalg,x})$, the latter being
one-dimensional (by Theorem~\ref{thm: inertial local Langlands, N=0}
together with the argument of~\cite[Lemma
1.4]{MR2290601}). 

The following theorem, which may be of independent interest, gives our
interpolation of the local Langlands correspondence. Its proof will occupy much
of this section.
\begin{thm}
  \label{thm: algebraic local Langlands}There is an $E$-algebra homomorphism
 $$ \eta:\cH(\sigma)\to
  R_{\tp}^\square(\sigma)[1/p]$$ which interpolates the local Langlands
  correspondence $r_p$. More precisely, for any closed point $x$ of $\Spec
  R_{\tp}^\square(\sigma)[1/p]$, the $\cH(\sigma)$-action on
  $\Hom_K(\sigma,\pi_{\lalg,x})$ factors as $\eta$ composed with the evaluation
  map $R_{\tp}^\square(\sigma)[1/p]\to E_x$.
\end{thm}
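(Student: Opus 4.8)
The plan is to reduce to the case of a smooth type, and then to build $\eta$ by interpolating the Weil--Deligne representation over the potentially crystalline deformation ring. \emph{Reduction to $\sigma=\sigma(\tau)$:} since $\sigma_{\alg}$ is the restriction to $K$ of a $G$-representation, the projection formula for compact induction gives a $G$-equivariant isomorphism $\cInd_K^G\sigma\cong(\cInd_K^G\sigma_{\sm})\otimes_E\sigma_{\alg}$, hence a canonical isomorphism $\cH(\sigma)\cong\cH(G,\sigma(\tau))$, which by Theorem~\ref{thm: inertial local Langlands, N=0} (that is, by Theorem~4.1 of~\cite{dat}) is identified with the Bernstein centre $\mathfrak{Z}_{\Omega}$, $\Omega$ being the component determined by $\tau$. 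Frobenius reciprocity identifies $\Hom_K(\sigma,\pi_{\lalg,x})$ with $\Hom_K(\sigma_{\sm},\pi_{\sm}(\rho_x))$ compatibly with these Hecke actions, and, by the construction of the identification $\cH(G,\sigma(\tau))\cong\mathfrak{Z}_{\Omega}$, the action of $\mathfrak{Z}_{\Omega}$ on this one-dimensional space is through the character $\chi_{\pi_{\sm}(\rho_x)}$ of Section~\ref{subsec:SZ}. So it suffices to produce an $E$-algebra homomorphism $\eta\colon\mathfrak{Z}_{\Omega}\to R_{\tp}^\square(\sigma)[1/p]$ whose composite with evaluation at each closed point $x$ is $\chi_{\pi_{\sm}(\rho_x)}$; as $R_{\tp}^\square(\sigma)$ is reduced and $\cO$-flat, such an $\eta$ is unique once it exists.

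\emph{A family of Weil--Deligne representations:} fix a finite Galois extension $L/F$ such that $\rho^\rig|_{G_L}$ is crystalline (possible because $\tau$ has finite image and is fixed along $\CX^\rig$). Then $\rho^\rig|_{G_L}$ is an analytic family of crystalline $G_L$-representations of constant Hodge--Tate weights, so $D:=(\rho^\rig\otimes_{\Qp}\Bcris)^{G_L}$ is locally free of rank $n$ over $\cO_{\CX^\rig}\otimes_{\Qp}L_0$ ($L_0$ the maximal unramified subextension of $L$), carrying a bijective $\varphi$-semilinear Frobenius and a commuting semilinear action of $\Gal(L/F)$. Fontaine's recipe then produces from $D$ a family $\mathcal{V}$ of $n$-dimensional Weil--Deligne representations of $W_F$ over $\cO_{\CX^\rig}$ with $N=0$, whose fibre at a point $x$ is $\WD(\rho_x)$; in particular $\mathcal{V}_x^{F-\semis}=r_p(\pi_{\sm}(\rho_x))$, and $\mathcal{V}|_{I_F}$ is, on each connected component of $\CX^\rig$, the constant family $\tau$ (a family of representations of the finite group $\Gal(L/F)$ which is $\cong\tau|_{I_F}$ at every point).

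\emph{Passing to the Bernstein variety:} by Lemma~\ref{wow_yeah_chenevier_yeah}, $\mathfrak{Z}_{\Omega}$ (with the $E$-structure fixed as in Section~\ref{god_save_the_queen}) is the ring $E[\mathcal{B}]$ of~\cite[Prop.~3.11]{chenevier}; concretely $\mathfrak{Z}_{\Omega}$ is the coordinate ring of the coarse moduli of Frobenius-semisimple Weil--Deligne parameters with $N=0$ and inertial restriction $\tau$, generated over $E$ by symmetric functions in their ``Frobenius eigenvalues'' (normalised by the fixed contribution of $\tau$). The family $\mathcal{V}$ therefore determines, through this moduli interpretation, an $E$-algebra homomorphism $\eta^\rig\colon\mathfrak{Z}_{\Omega}\to\cO_{\CX^\rig}$, and by construction the composite of $\eta^\rig$ with evaluation at a closed point $x$ is the character of $\mathfrak{Z}_{\Omega}$ attached to $\mathcal{V}_x^{F-\semis}=r_p(\pi_{\sm}(\rho_x))$, namely $\chi_{\pi_{\sm}(\rho_x)}$.

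\emph{Descent to $R_{\tp}^\square(\sigma)[1/p]$, and the main obstacle:} it remains to check that $\eta^\rig$ factors through $R_{\tp}^\square(\sigma)[1/p]\hookrightarrow\cO_{\CX^\rig}$. Since $R_{\tp}^\square(\sigma)$ is reduced and $\cO$-flat, an element of $\cO_{\CX^\rig}=\varprojlim_j\Gamma(U_j,\cO_{U_j})$ lies in $R_{\tp}^\square(\sigma)[1/p]$ as soon as it is bounded on every $U_j$ (cf.~\cite[\S 7]{deJ}). For a generator $z$ of $\mathfrak{Z}_{\Omega}$, $\eta^\rig(z)$ is a symmetric function in the Frobenius eigenvalues of $\mathcal{V}$, and by weak admissibility these have $p$-adic valuation bounded above and below in terms of the fixed Hodge--Tate weights of type $\sigma$; the maximum modulus principle on each (reduced) affinoid $U_j$ then shows that $\eta^\rig(z)$ and $\eta^\rig(z)^{-1}$ are bounded, whence $\eta^\rig(z)\in R_{\tp}^\square(\sigma)[1/p]^{\times}$. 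Thus $\eta:=\eta^\rig$ has the required target and interpolates $r_p$ as in the statement. I expect the substantive work to be in the middle two steps: extracting an honest analytic family of Weil--Deligne representations over $\CX^\rig$ from the pointwise correspondence, and matching the resulting family of parameters with the algebraic structure on $\mathfrak{Z}_{\Omega}$ — this needs the behaviour of crystalline periods in families (local freeness of $D$, constancy of the Hodge--Tate weights) together with the rational Bernstein-centre computations of Section~\ref{god_save_the_queen} and~\cite{chenevier}; granting those inputs, the reduction and the boundedness descent are comparatively routine.
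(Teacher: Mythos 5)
Your proposal follows the same two-phase strategy as the paper --- first construct a rigid-analytic interpolation $\eta^{\rig}\colon\cH(\sigma)\to\cO_{\cX^{\rig}}$ (Proposition~\ref{prop: rigid analytic local Langlands}), then descend to $R_{\tp}^\square(\sigma)[1/p]$ via a boundedness argument using de Jong's Proposition~7.3.6 together with the regularity of the deformation ring --- and your first two steps (reduction to $\sigma(\tau)$ via the projection formula, extraction of the Weil--Deligne family and the algebra map $\mathfrak{Z}_\Omega\to\cO_{\cX^{\rig}}$) are essentially the paper's construction. In particular your assertion that $\mathfrak{Z}_\Omega$ is ``generated by symmetric functions in the Frobenius eigenvalues'' is exactly Lemma~\ref{lem: image of Chenevier's map}, the technical crux of Proposition~\ref{prop:Bernstein-to-Xrig} (the Bernstein variety being a quotient of a torus by a finite group, one needs the bookkeeping with the integers $f_i$ of Lemma~\ref{lem: structure of irred $W_F$-rep} to see the traces generate and not merely separate points); you correctly flag this as substantive. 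Where you genuinely diverge is the boundedness step: the paper cites Hu's theorem~\cite{MR2560407} to conclude that $\sigma_{\alg}\otimes\tilde{\pi}_x$ satisfies Emerton's condition on Jacquet modules (Condition~\ref{cond:bounds}), and then translates this via Lemma~\ref{lem: inequalities from econ}, Corollaries~\ref{cor: inequalities from econ}--\ref{lem: inequalities for Levi} and the Levi Hecke algebra structure (Lemma~\ref{lem:H(M,lambda_M) has a basis which is finite over Z(M)}, Remark~\ref{rem:ramification index}) into bounds on $\cH(\sigma)$-eigenvalues. You instead propose to argue directly from weak admissibility that the Frobenius eigenvalues of $\WD(\rho_x)$, and hence the Bernstein parameters, have uniformly bounded valuation. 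Since Hu's theorem is itself proved by a weak-admissibility argument, your route essentially inlines its content and is a valid alternative; what it buys is transparency (the bound comes from the Newton-over-Hodge inequality rather than from a black-box statement about locally analytic Jacquet modules). What it costs is that the translation from valuation bounds on crystalline Frobenius (computed over an auxiliary field $L$) to bounds on the Bernstein parameters $X_{ij}$ (normalized relative to a base-point supercuspidal, and spread over the Levi blocks) is precisely the bookkeeping the paper does in the Levi Hecke algebra lemmas, so describing the boundedness descent as ``comparatively routine'' underestimates the amount of work that part of the section actually requires.
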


We begin by proving the following weaker result, showing the existence of a
rigid analytic local Langlands map.

\begin{prop}\label{prop: rigid analytic local Langlands} There is an $E$-algebra
  homomorphism $\eta:\cH(\sigma)\to \CO_{\CX^{\rig}}$ which interpolates the
  local Langlands correspondence $r_p$. More precisely, for any point $x\in
  \CX^{\rig}$, the action of
$\cH(\sigma)$ on  $\Hom_K(\sigma,\pi_{\lalg,x})$ factors as
$\eta$ composed with the evaluation map $\CO_{\CX^\rig}\to E_x$. \end{prop}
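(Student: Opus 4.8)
The plan is to identify $\cH(\sigma)$ with the Bernstein centre $\mathfrak{Z}_{\Omega}$ of the component $\Omega$ attached to $\tau$, to view $\mathfrak{Z}_{\Omega}$ as the ring of functions on the variety of Frobenius-semisimple Weil--Deligne representations of $W_F$ with $N=0$ and inertial restriction $\sim\tau$, and then to define $\eta$ by pulling those functions back along the tautological family of such Weil--Deligne representations carried by $\cX^{\rig}$.

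\emph{Reduction to the Bernstein centre.} Since $\sigma_{\alg}$ is the restriction to $K$ of a $G$-representation, the projection formula gives a $G$-equivariant isomorphism $\cInd_K^G\sigma\cong(\cInd_K^G\sigma_{\sm})\otimes_E\sigma_{\alg}$; as $\sigma_{\alg}$ is absolutely irreducible, the functor $W\mapsto W\otimes_E\sigma_{\alg}$ from smooth $G$-representations to $\sigma_{\alg}$-locally algebraic ones is fully faithful (a standard fact about the relation between smooth and locally algebraic representations), so that $\phi\mapsto\phi\otimes\id_{\sigma_{\alg}}$ identifies $\cH(\sigma)=\End_G(\cInd_K^G\sigma)$ with $\cH(\sigma_{\sm})=\cH(G,\sigma(\tau))$, which by Theorem~\ref{thm: inertial local Langlands, N=0} (i.e.\ Theorem 4.1 of \cite{dat}) is canonically isomorphic to $\mathfrak{Z}_{\Omega}$. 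The same full faithfulness identifies, for each $x\in\cX^{\rig}$, the line $\Hom_K(\sigma,\pi_{\lalg,x})$ with $\Hom_G(\cInd_K^G\sigma_{\sm},\pi_{\sm}(\rho_x))$ compatibly with these Hecke actions; and since the Bernstein centre acts functorially, $\mathfrak{Z}_{\Omega}$ acts on this $\Hom$ space through the character $\chi_{\pi_{\sm}(\rho_x)}$ recording the cuspidal support of $\pi_{\sm}(\rho_x)$, i.e.\ through evaluation at the corresponding point of $\Omega=\MaxSpec\mathfrak{Z}_{\Omega}$. (Here $\pi_{\sm}(\rho_x)=r_p^{-1}(\WD(\rho_x)^{F-\semis})$ is irreducible and lies in $\Omega$ because $\rho_x$ is potentially crystalline of type $\sigma$, so $\WD(\rho_x)$ has $N=0$ and $\WD(\rho_x)|_{I_F}\sim\tau$.) It therefore suffices to produce an $E$-algebra map $\eta\colon\mathfrak{Z}_{\Omega}\to\cO_{\cX^{\rig}}$ with $\mathrm{ev}_x\circ\eta=\chi_{\pi_{\sm}(\rho_x)}$ for every $x$.

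\emph{The Weil--Deligne family.} Via $r_p^{-1}$ and semisimplification, the closed points of $\Omega$ are the isomorphism classes of Frobenius-semisimple Weil--Deligne representations $V$ of $W_F$ with $N=0$ and $V|_{I_F}\sim\tau$, and in this description (using Lemma~\ref{wow_yeah_chenevier_yeah} and Chenevier's explicit construction of $E[\mathcal{B}]$ in \cite{chenevier}) the algebra $\mathfrak{Z}_{\Omega}$ is generated over $E$ by the functions $V\mapsto\tr(V(g))$, $g\in W_F$. On the other side, because $R_{\tp}^\square(\sigma)$ is the potentially crystalline deformation ring of fixed inertial type $\tau$ and fixed Hodge--Tate weights, its construction supplies, over each affinoid $U_j$ in the quasi-Stein cover $\cX^{\rig}=\cup_jU_j$ (after restricting to $G_L$ for a finite extension $L/F$ trivialising $\tau$, over which the $\rho_x$ become crystalline), a $\Gal(L/F)$-equivariant filtered $\varphi$-module, finite projective over $\Gamma(U_j,\cO_{U_j})$, specialising to $\Dcris(\rho_x|_{G_L})$ at each $x\in U_j$; from it one extracts a Weil--Deligne representation $W_F\to\GL_n(\Gamma(U_j,\cO_{U_j}))$ with $N=0$ whose specialisation at $x$ is $\WD(\rho_x)$. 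These glue to a continuous $\WD(\rho^{\rig})\colon W_F\to\GL_n(\cO_{\cX^{\rig}})$. I expect this construction, and the verification of its pointwise specialisation property, to be the main technical obstacle; it is modelled on the two-dimensional crystalline discussion in \cite[\S 3.6]{MR2392362} and on \cite{kisinfmc}.

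\emph{Definition of $\eta$ and the interpolation property.} Define $\eta$ on generators by $\eta(V\mapsto\tr V(g)):=\tr\WD(\rho^{\rig})(g)\in\cO_{\cX^{\rig}}$. This is a well-defined $E$-algebra homomorphism: any polynomial relation among the trace generators valid in $\mathfrak{Z}_{\Omega}$ pulls back to a rigid-analytic function on $\cX^{\rig}$ whose value at each $x$ equals the value of that relation at the point $[\WD(\rho_x)^{F-\semis}]\in\Omega$, hence is zero, and $\cX^{\rig}$ is reduced (as $R_{\tp}^\square(\sigma)$ is reduced and $p$-torsion free), so the function vanishes; equivalently, $\eta$ is $f^*$ for the morphism $f$ classifying the family $\WD(\rho^{\rig})$. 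Finally, $\mathrm{ev}_x\circ\eta$ sends $V\mapsto\tr V(g)$ to $\tr\WD(\rho_x)(g)=\tr\WD(\rho_x)^{F-\semis}(g)$, so it is exactly evaluation at the point of $\Omega$ determined by $\WD(\rho_x)^{F-\semis}$, i.e.\ by the cuspidal support of $\pi_{\sm}(\rho_x)$; that is, $\mathrm{ev}_x\circ\eta=\chi_{\pi_{\sm}(\rho_x)}$. Combined with the reduction step this yields the asserted factorisation and completes the proof.
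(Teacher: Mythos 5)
Your high-level architecture matches the paper's: identify $\cH(\sigma)$ with $\cH(\sigma_{\sm})\cong\mathfrak{Z}_\Omega$, construct a family of Frobenius-semisimple Weil--Deligne representations over $\cX^{\rig}$ specializing to $\WD(\rho_x)$, and use the resulting traces to define $\eta$, with the well-definedness/interpolation argument ultimately resting on reducedness of $\cX^{\rig}$ and the fact that a rigid function vanishing at all points vanishes. (You fold the $\rec_p$ versus $r_p$ twist into your choice of parametrization rather than separating it out as the paper does with its explicit map $\mathrm{tw}$; that is harmless.)

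However, there are two substantial gaps, and the second is a genuine omission rather than a stylistic difference.

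First, you sketch the construction of the family $\WD(\rho^{\rig})$ over $\cX^{\rig}$ by descending to a trivializing extension $L/F$, producing a Galois-equivariant filtered $\varphi$-module over each affinoid and extracting Weil--Deligne representations. You yourself flag this as "the main technical obstacle." The paper simply invokes Chenevier's Corollaire 3.19 of~\cite{chenevier} (an extension of Th\'eor\`eme C of~\cite{MR2493221} to arbitrary reduced quasi-compact quasi-separated rigid spaces), applied over each $U_j$. Your proposed route is essentially a description of the internals of the cited result; the paper treats this as a black box. Since you do not carry out your construction, this is a real gap as written, though it is the sort of gap one might reasonably close by citing the literature.

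The second gap is more serious. You assert, citing only Lemma~\ref{wow_yeah_chenevier_yeah} and "Chenevier's explicit construction of $E[\mathcal{B}]$," that $\mathfrak{Z}_\Omega$ is generated over $E$ by the trace functions $V\mapsto\tr V(g)$, $g\in W_F$. This is exactly Lemma~\ref{lem: image of Chenevier's map} of the paper, and it is \emph{not} a consequence of Chenevier's work: Chenevier constructs a pseudo-representation $T:W_F\to\mathfrak{Z}_\Omega$, but does not show its image generates $\mathfrak{Z}_\Omega$. The paper proves this generation statement by a somewhat technical argument (roughly a page, using Lemma~\ref{lem: structure of irred $W_F$-rep} to control which elements of $W_F$ have nonvanishing trace, and reducing to explicit generators of $\mathfrak{Z}_{\Omega_i}$ as power sums), and even goes out of its way to remark that it "was unable to find a completely conceptual proof" of this fact. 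Without this lemma your definition of $\eta$ only makes sense on the $E$-subalgebra generated by traces, not on all of $\mathfrak{Z}_\Omega$, and the interpolation claim for $\cH(\sigma)$ collapses. You need to prove this generation statement (or cite Lemma~\ref{lem: image of Chenevier's map}) to close the argument.
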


Recall that $\CX^\rig =\cup_j U_j$ can be written as an increasing union of rigid spaces associated to reduced affinoid algebras.
Th\'eor\`eme C of~\cite{MR2493221} associates a family of Weil-Deligne representations to a family of Galois representations over the rigid space associated to a reduced affinoid algebra. Applying it to each $U_j$ and to $\rho^{\mathrm{rig}}|_{U_j}$, we obtain a
compatible family of Weil--Deligne representations
$\rho_{\WD}^\rig:W_F \to \GL_n(\Gamma(U_j,\cO_{U_j}))$ and thus a
Weil--Deligne representation $\rho_{\WD}^\rig : W_F \to \GL_n(\cO_{\CX^\rig})$.
 Note that $\rho_{\WD}^{\mathrm{rig}}$ has $N=0$. 

 For a point $x$ of
$\mathcal{X}^{\mathrm{rig}}$, we denote by $\rho_{\WD,x}$ the
specialisation of $\rho_{\WD}^{\mathrm{rig}}$ at $x$. Then $\rho_{\WD,x}|_{I_F}\simeq \tau$ for all points $x$ of $\cX^\mathrm{rig}$. Recall that
$\mathfrak{Z}_{\Omega}$ is the Bernstein centre for the Bernstein
component $\Omega$ corresponding to $\sigma(\tau)$.

\begin{prop}\label{prop:Bernstein-to-Xrig} There exists a unique $E$-algebra map
  $I:\mathfrak{Z}_{\Omega}\to\mathcal{O}_{\mathcal{X}^{\mathrm{rig}}}$
  such that for any point $x$ of $\mathcal{X}^{\mathrm{rig}}$ with
  residue field $E_{x}$, the smooth $G$-representation $\pi_{x}$
  corresponding to $\rho_{\WD,x}$ via the
  local Langlands correspondence $\rec_p$ determines via specialisation the map $x\circ I:\mathfrak Z_\Omega\to E_x$.
\end{prop}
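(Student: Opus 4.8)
The plan is to construct the map $I$ by recognizing that the Bernstein centre $\fkZ_\Omega$ is a finitely generated $E$-algebra whose $\MaxSpec$ is the Bernstein variety component $\Omega$, and that the assignment $x \mapsto [\text{cuspidal support of }\pi_x]$ defines a morphism of rigid spaces $\cX^\rig \to \Omega^\rig$. First I would recall (from Section~\ref{subsec:bernstein centre}) that $\fkZ_\Omega$ is identified with the ring of regular functions on the connected component $\Omega$ of the Bernstein variety, and that for an irreducible smooth $\pi \in \Omega$, the induced character $\chi_\pi: \fkZ_\Omega \to E$ depends only on the cuspidal support of $\pi$, equivalently (via $\rec_p$) only on $\rec_p(\pi)^{\ss}|_{W_F}$ up to the finite group action built into the Bernstein component. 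So it suffices to produce, for each generator $z$ of $\fkZ_\Omega$, a rigid-analytic function on $\cX^\rig$ whose value at $x$ is $\chi_{\pi_x}(z)$, and then check these assemble into an $E$-algebra homomorphism.

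The key point is that the family $\rho_{\WD}^\rig: W_F \to \GL_n(\cO_{\cX^\rig})$ has $N = 0$ everywhere and restricts to $\tau$ on $I_F$ at every point, so each $\rho_{\WD,x}$ is a semisimple representation of $W_F$ with fixed inertial restriction; writing it (non-canonically, fibrewise) as a sum of irreducible pieces, the supercuspidal support of $\pi_x$ is read off from the Frobenius eigenvalues of the unramified twists. Concretely, I would use the structure of $\fkZ_\Omega$ recalled in the Renard discussion: $\fkZ_\Omega \cong (E[M/M^0])^\Delta$ (Lemma~\ref{wow_yeah_chenevier_yeah}), i.e.\ it is the ring of $\Delta$-invariant functions on a torus of unramified twist parameters. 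The tautological family $\rho_{\WD}^\rig$ gives, after passing to each affinoid $U_j$ and decomposing, a canonical (up to the Weyl-type action $\Delta$) collection of unramified characters valued in $\cO(U_j)^\times$; the $\Delta$-invariant functions of these are genuine rigid functions on $U_j$, compatible as $j$ varies, hence elements of $\cO_{\cX^\rig}$. This defines the $E$-algebra map $I: \fkZ_\Omega \to \cO_{\cX^\rig}$, and uniqueness is immediate because $\cX^\rig$ has Zariski-dense points (it is reduced, being a closed subspace of an open polydisc coming from a reduced and $p$-torsion-free ring) and the required specialisation property pins down $x \circ I$ for every $x$, hence pins down $I$.

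The main obstacle is the non-canonicity of the fibrewise decomposition of $\rho_{\WD,x}$ into irreducibles: a priori one cannot choose the summands, or even their "Frobenius eigenvalues", rigid-analytically over $\cX^\rig$, since eigenvalues only vary analytically after a finite base change and branch cuts appear where eigenvalues collide or where the partition type jumps. The standard way around this — which I would follow — is that the relevant symmetric (i.e.\ $\Delta$-invariant) functions of the eigenvalues \emph{are} single-valued: for instance, the characteristic-polynomial coefficients of $\rho_{\WD}^\rig(\Frob)$ (twisted appropriately to land in the right Bernstein coordinate) are honest regular functions on $\cX^\rig$, and by Lemma~\ref{wow_yeah_chenevier_yeah} these generate $\fkZ_\Omega$. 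Thus the map factors through the symmetrised data and no base change is needed. One subtlety to address carefully: the inertial type $\tau$ is fixed, so the "unramified part" of $\rho_{\WD,x}$ is genuinely a character of $M/M^0$-type data, and one must check that the identification of Frobenius-eigenvalue data with points of the torus $T/M^0$ in Renard's description is the compatible one — this is exactly the content of the link between $\rec_p$ and Bernstein--Zelevinsky theory recalled in Section~\ref{subsec BZ theory}, combined with the normalisation of $\rec_p$ fixed in Section~\ref{subsec:notation}. With that bookkeeping in place, the existence and uniqueness of $I$ follow, and the specialisation compatibility at each $x$ holds by construction together with the fact that $\chi_{\pi_x}$ depends only on cuspidal support.
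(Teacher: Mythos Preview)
Your overall strategy matches the paper's: pin down $I$ by specialisation (your uniqueness argument via reducedness of $\cX^\rig$ is correct and is exactly how the paper sets things up, embedding $\cO_{\cX^\rig}$ into $\prod_x E'_x$), and then for existence show that the ``symmetric'' data extracted from the family $\rho_{\WD}^\rig$ are rigid-analytic and hit all of~$\fkZ_\Omega$.

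The gap is in that second step. Your claim that ``the characteristic-polynomial coefficients of $\rho_{\WD}^\rig(\Frob)$ \ldots\ generate $\fkZ_\Omega$ by Lemma~\ref{wow_yeah_chenevier_yeah}'' is not justified by that lemma, which only identifies $\fkZ_\Omega$ with $E[M/M^0]^\Delta$ and says nothing about generation by char-poly coefficients; and in fact the claim is false as stated. Take $n=2$ and $\tau = \chi_1 \oplus \chi_2$ with $\chi_1 \not\cong \chi_2$ as characters of $I_F$: there is no Weyl action mixing the two blocks, so $\fkZ_\Omega \cong E[X_1^{\pm 1}, X_2^{\pm 1}]$, whereas the characteristic polynomial of a single element of $W_F$ only produces functions symmetric in (fixed scalar multiples of) $X_1,X_2$. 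Separating them requires traces of $\rho_{\WD}^\rig(\gamma\phi)$ for $\gamma$ ranging over inertia, and in general one also needs several powers $\phi^t$ because of the vanishing $\tr r_i(\phi^t)=0$ when $f_i \nmid t$. Your alternative route, ``decompose $\rho_{\WD}^\rig$ on each affinoid and then take $\Delta$-invariants'', does not avoid this: the decomposition does not exist over the base, and you have not explained why invariants defined only after an unspecified cover descend to honest functions on~$U_j$.

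The paper handles exactly this point by never decomposing the family. It invokes Chenevier's pseudo-representation $T: W_F \to \fkZ_\Omega$, observes that by construction $T(w)$ and $\tr\rho_{\WD}^\rig(w)$ agree pointwise for every $w \in W_F$, and then proves the separate Lemma~\ref{lem: image of Chenevier's map}: the image of $T$ generates $\fkZ_\Omega$ as an $E$-algebra. That lemma carries the content you are missing; its proof uses the block decomposition $\tau=\bigoplus_i \tau_i^{d_i}$, suitable linear combinations over inertia to project onto a chosen $i$, and the power-sum trick over varying $t$ to generate all symmetric functions in each block. What you have written is essentially the \emph{statement} of that lemma, attributed to the wrong reference and without its proof.
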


\begin{proof} Consider the following map, obtained by specialisation:
  $$\gamma_{G}:\mathfrak{Z}_{\Omega}\to\prod_{x\in\mathcal{X}^{\mathrm{rig}}}E'_{x},$$
  where $\gamma_{G}$ is defined on the factor corresponding to $x$ by evaluating
  $\mathfrak{Z}_{\Omega}$ at the closed point in the Bernstein component
  $\Omega$ determined via local Langlands by $x$, and $E'_x/E_x$ is a
  sufficiently large finite extension. 

  Consider as well the following map, also obtained by
  specialisation: \[\gamma_{\WD}:\mathcal{O}_{\mathcal{X}^{\mathrm{rig}}}\to\prod_{x\in\mathcal{X}^{\mathrm{rig}}}E'_{x}.\]
  This is an injection since $\cX^\rig$ is reduced and since each
  $\Gamma(U_j,\cO_{U_j})$ is Jacobson. (The Jacobson property is true
  of any affinoid algebra. To see that $\cX^\rig$ is reduced, it is
  enough to check it on completed local rings at closed points, but
  these are the same as the completed local rings of
  $R_{\tilde{\mathfrak{p}}}^{\square}(\sigma)\left[1/p\right]$
  by~\cite[Lemma 7.1.9]{deJ}.
  The latter is reduced (being a localisation of $R_{\tilde{\mathfrak{p}}}^{\square}(\sigma)$, which is reduced by definition)
and excellent, since
  $R_{\tilde{\mathfrak{p}}}^{\square}(\sigma)$ is a complete, local,
  noetherian ring (and thus excellent by \cite[Scholie 7.8.3(iii)]{ega-4-ii}). The reducedness of the completed local rings now
  follows from~\cite[Scholie 7.8.3(v)]{ega-4-ii}.)

   In order to define our map
  $I$, it suffices to show that the image of $\mathfrak{Z}_{\Omega}$ under
  $\gamma_{G}$ is contained in the image of $\gamma_{\WD}$. Let
  $T:W_{F}\to\mathfrak{Z}_{\Omega}$ be the pseudo-representation constructed in
  Proposition~3.11 of~\cite{chenevier}. (Note that Chenevier's
  $E[\mathcal{B}]$ is our $\fkZ_{\Omega}$ by Lemma~\ref{wow_yeah_chenevier_yeah}.) By the construction of $T$, we have
  $\gamma_{G}\circ
  T=\gamma_{\WD}\circ\mathrm{tr}(\rho_{\WD}^{\mathrm{rig}})$. Therefore the
  proof of the proposition is reduced to Lemma \ref{lem: image of Chenevier's
    map} below.
\end{proof}

Write $v:W_F\twoheadrightarrow\Z$ for the valuation map assigning $1$ to any lift of the geometric Frobenius. Let $\phi \in W_F$ be an element of valuation $1$. For $w\in W_F$ and any $I_F$-representation $r_0$, let $r_0^{w}$ be the twist $r_0^{w}(\gamma):=r_0(w^{-1}\gamma w)$.

\begin{lem}\label{lem: structure of irred $W_F$-rep} Let
  $r$ be an irreducible continuous representation of $W_{F}$ over $\overline{\mathbb{Q}}_p$.
\begin{enumerate}

  \item The restriction $r|_{I_F}$  decomposes as a direct sum of non-isomorphic
  irreducible $I_F$-representations $\oplus_{i=1}^f r_{1}^{\phi^i}$ for some
  integer $f\geq1$. If $t\in \Z$ then $r(\phi^t)$
    respects the decomposition {\em (}i.e.\ $r(\phi^t)$ sends $r_1^{\phi^i}$
    into itself for $1\le i\le f${\em )} exactly when $f\mid t$. 

  \item We have $\mathrm{tr}\bigl(r(w)\bigr)\not=0$ for some $w\in W_F$ of valuation $t$ if and only if $f\mid t$.

  \item The unramified characters $\chi$ of $W_{F}$ satisfying
  $r\otimes\chi\simeq r$ are exactly the characters of order
  dividing $f$.\end{enumerate}
 \end{lem}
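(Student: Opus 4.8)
The plan is to analyze an irreducible continuous $r:W_F\to\GL(V)$ over $\Qpbar$ by restricting to the inertia subgroup $I_F$, using that $W_F/I_F\cong\Z$ is generated by (the image of) $\phi$. Since $r$ is continuous with open kernel on $I_F$ (inertia acts through a finite quotient, as $\Qpbar$ has no small subgroups), $r|_{I_F}$ is semisimple. First I would decompose $r|_{I_F}=\bigoplus_j r_j^{\oplus m_j}$ into isotypic components for pairwise non-isomorphic irreducible $I_F$-representations $r_j$. Conjugation by $\phi$ permutes the isomorphism classes $\{r_j\}$ (since $r(\phi)$ intertwines $r|_{I_F}$ with $r|_{I_F}\circ\mathrm{Ad}(\phi^{-1})$), and irreducibility of $r$ forces this permutation to act transitively on the set of classes actually occurring, all with the same multiplicity; a further application of irreducibility (a Clifford-theory/Mackey argument) shows the multiplicity is $1$. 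Thus $r|_{I_F}\cong\bigoplus_{i=0}^{f-1}r_1^{\phi^i}$ where $f$ is the size of the $\phi$-orbit of $r_1$, i.e. the smallest $f\ge1$ with $r_1^{\phi^f}\cong r_1$; this is part~(1), first sentence. For the second sentence of~(1): writing $V=\bigoplus_{i=0}^{f-1}V_i$ with $V_i$ the $r_1^{\phi^i}$-isotypic line (a subspace, really a sum of copies — here a single copy), $r(\phi)$ maps $V_i$ to $V_{i+1}$ (indices mod $f$), so $r(\phi^t)$ maps $V_i$ to $V_{i+t\bmod f}$, which lies in $V_i$ for all $i$ exactly when $f\mid t$.

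For~(2): if $w\in W_F$ has valuation $t$, then $w=u\phi^t$ for some $u\in I_F$, and $r(w)=r(u)r(\phi^t)$. If $f\nmid t$, then $r(\phi^t)$ — and hence $r(w)=r(u)r(\phi^t)$, since $r(u)$ preserves each $V_i$ — shifts the grading by $t\not\equiv0$, so its matrix has zero diagonal blocks and $\mathrm{tr}\bigl(r(w)\bigr)=0$. Conversely, if $f\mid t$, I would show $\mathrm{tr}\bigl(r(w)\bigr)\ne0$ for \emph{some} $w$ of valuation $t$: the function $u\mapsto\mathrm{tr}\bigl(r(u\phi^t)\bigr)$ on $I_F$ is, after averaging over the finite group through which the relevant characters factor, a nonzero character computation — concretely, $r(\phi^t)$ restricted to $V_0$ is an isomorphism $V_0\to V_0$ (as $f\mid t$), and the induced representation structure $V\cong\mathrm{Ind}_{W_{F'}}^{W_F}(V_0)$ for the degree-$f$ unramified extension $F'/F$ lets one reduce to the statement that an irreducible $W_{F'}$-representation has a nonzero trace somewhere on each coset of $I_{F'}$, which is standard (e.g. because the character of an irreducible representation is nonzero on the subgroup generated by any single element together with an open subgroup only if... — more cleanly: $\mathrm{Ind}$ and the fact that characters of irreducibles are linearly independent forces non-vanishing). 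I expect the cleanest route is: reduce via $r\cong\mathrm{Ind}_{W_{F'}}^{W_F}\widetilde r$ with $\widetilde r:=r_1$ extended to $W_{F'}$, where $F'/F$ is unramified of degree $f$; then $\mathrm{tr}(r(w))$ for $v(w)=t=fs$ equals $\sum$ over the (single) coset of $\mathrm{tr}(\widetilde r(\cdot))$, which is visibly not identically zero as $s$ ranges, since $\widetilde r$ is a genuine representation.

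For~(3): an unramified character $\chi$ of $W_F$ is determined by $\chi(\phi)=:\zeta\in\Qpbar^\times$, and $\chi|_{I_F}=1$, so $r\otimes\chi\cong r$ iff $\mathrm{tr}\bigl(r(w)\bigr)\chi(w)=\mathrm{tr}\bigl(r(w)\bigr)$ for all $w$, i.e. $(\zeta^{v(w)}-1)\mathrm{tr}(r(w))=0$ for all $w$. By part~(2), $\mathrm{tr}(r(w))\ne0$ for some $w$ of valuation $t$ precisely when $f\mid t$, and the smallest such positive $t$ is $f$; so the condition is exactly $\zeta^f=1$, i.e. $\chi$ has order dividing $f$. (One checks conversely that if $\chi^f=1$ then $r\otimes\chi\cong r$: $r\otimes\chi$ and $r$ agree on $I_F$ and have the same trace on all of $W_F$ by the same valuation-parity argument, hence are isomorphic since both are semisimple — indeed irreducible.)

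The main obstacle is the non-vanishing direction in part~(2): proving $\mathrm{tr}\bigl(r(w)\bigr)\ne0$ for \emph{some} $w$ of each valuation divisible by $f$. The clean way to do it is to exhibit $r$ as induced from the unramified degree-$f$ extension and invoke that an irreducible (or any nonzero) representation of $W_{F'}$ cannot have identically-zero trace on a full coset $I_{F'}\phi'^{\,s}$ — this follows because the characters of the finite quotient through which $\widetilde r\otimes(\text{unramified twists})$ factors are linearly independent, or more elementarily because $\widetilde r(\phi'^{\,s})$ is invertible and one can test against the regular representation of the finite image. I would spell out this induction and the coset-trace non-vanishing carefully, as it is the one genuinely non-formal input; everything else is bookkeeping with the $\Z/f$-grading on $V$ induced by $r|_{I_F}$.
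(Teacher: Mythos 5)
Your plan for parts (1) and (3), and for the ``only if'' direction of (2), matches the paper's argument in substance: Clifford-style analysis of the $\phi$-action on the isotypic decomposition for (1), the block-shift observation for the ``only if'' in (2), and comparison of traces $\chi(w)\mathrm{tr}\bigl(r(w)\bigr)=\mathrm{tr}\bigl(r(w)\bigr)$ together with (2) for (3). All of that is fine.

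The gap is in the non-vanishing (``if'') direction of (2). You propose to reduce via $r\cong\mathrm{Ind}_{W_{F'}}^{W_F}\widetilde{r}$ to the assertion that $\mathrm{tr}(\widetilde{r})$ is non-vanishing on each coset of $I_{F'}$. But this reduction does not go through as stated: writing $w=u\phi^t$ with $u\in I_F$ and $f\mid t$, the induced character formula gives
$$\mathrm{tr}\bigl(r(u\phi^t)\bigr)=\sum_{i=0}^{f-1}\mathrm{tr}\bigl(\widetilde{r}(\phi^{-i}u\phi^i\cdot\phi^t)\bigr),$$
and these $f$ terms can a priori cancel, so knowing $\mathrm{tr}(\widetilde{r})$ is nonzero somewhere on the coset of valuation $t$ does not directly produce a $w$ with $\mathrm{tr}\bigl(r(w)\bigr)\ne 0$. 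To fix this you would end up invoking the same density input that handles $r$ directly, so the induction detour buys nothing. The paper's argument avoids it: since the $r_i=r_1^{\phi^i}$ are pairwise non-isomorphic irreducible representations of the finite group $I_F/H$, the group algebra $\Qpbar[I_F/H]$ surjects onto the semisimple algebra $\oplus_{i}\mathrm{End}_{\Qpbar}(r_i)$; the trace pairing on this algebra is nondegenerate and $r(\phi^t)$ (for $f\mid t$) is a nonzero block-diagonal element of it, so some combination $\sum_{h}c_h\,r(h)$ pairs nontrivially with $r(\phi^t)$, which forces $\mathrm{tr}\bigl(r(h\phi^t)\bigr)\ne 0$ for some $h\in I_F$. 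This is exactly the ``non-formal input'' you flagged, and it is simpler to apply it once, at the level of $r$ itself, rather than after passing to the induced picture.
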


\begin{proof}
  (1) The representation $r|_{I_F}$ factors through a finite quotient
  $I_F/H$, so it decomposes as a direct sum of irreducible
  $I_F$-representations $\oplus_{i=1}^{f}r_{i}$, for some integer
  $f\geq1$. The fact that $r$ is irreducible as a $W_F$-representation
  implies that $r(\phi)$ acts transitively on (the representation
  spaces of) the $r_{i}$. Up to reordering the $r_{i}$, we may assume
  that it sends $r_{i}$ to $r_{i+1}$, where
  $r_{f+1}:=r_{1}$. Moreover, we also deduce that $r_{i+1}\simeq
  r_{i}^\phi$ and that $r_{1}\simeq r_{1}^{\phi^f}$. Finally, all the
  representations $r_{i}$ are non-isomorphic, since if there was an
  isomorphism between them, we could define a proper
  $W_F$-subrepresentation of $r$ and thus contradict the
  irreducibility of $r$. (More precisely, if we had an isomorphism
  $r_1 \simeq r_{1+s}$ for some $1\leq s<f$, then we could assume that
  $f=sf'$ for some integer $f'$ and get
  $I_F$-isomorphisms \[\alpha_{sk}:r_1\oplus\dots\oplus
  r_s\xrightarrow{\sim} r_{1+sk}\oplus \dots \oplus r_{s(1+k)}\] for
  each $1\leq k<f'$. In that case, we could take the
  $I_F$-subrepresentation of $r$ generated by $v+\alpha_s(v)+\dots +
  \alpha_{s(f'-1)}(v)$ with $v\in r_1\oplus\dots\oplus r_{s}$; it is
  easy to check that this space is also stable under $\phi$ if we
  choose the $\alpha_{sk}$ appropriately.) The fact that $r(\phi)$ induces a cyclic permutation of the $f$ irreducible constituents implies the statement about $r(\phi^t)$.

  (2) Since $r(w)$ is not supported on the diagonal unless $f\mid t$ we get
  the only if part. For the if part, assume that $f\mid t$. By part 1, the matrix $r(\phi^t)$ has the same block decomposition as $r|_{I_F}$. Note that the group algebra of $I_F/H$ surjects onto $\oplus_{i=1}^f \mathrm{End}_{\Qpbar}(r_{i})$, since the $r_{i}$ are non-isomorphic irreducible representations of the finite group $I_F/H$. Therefore, there is some linear combination of matrices $\sum_{h\in I_F}c_h\cdot r(h)$ which has non-zero trace against the non-zero matrix $r(\phi^t)$. This implies that $\mathrm{tr}\bigl(r(h\cdot\phi^t)\bigr)\not=0$ for some $h\in I_F$.

  (3) Observe that $r\otimes\chi\simeq r$ if and only if $\chi(w)\tr r(w)=\tr r(w),~\forall w\in W$.
  The latter condition is equivalent via part 2 to the condition that $\chi(w)=1$ for all $w\in W_F$ such that $f|v(w)$, or equivalently that $\chi^f=1$. Hence part 3 is verified.\end{proof}

\begin{lem}\label{lem: image of Chenevier's map} The image of $T$ generates $\mathfrak{Z}_{\Omega}$ as an
  $E$-algebra.
\end{lem}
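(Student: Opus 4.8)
The plan is to work with the explicit description of $\mathfrak{Z}_{\Omega}$ coming from~\cite{renard} recalled above. Write the supercuspidal support of $\Omega$ as $(M,\omega)$ with $M=\prod_{i=1}^{r}\GL_{e_i}(F)^{d_i}$ and $\omega=\bigotimes_{i=1}^{r}\pi_i^{\otimes d_i}$, the $\pi_i$ pairwise non-inertially-equivalent, and let $f_i$ be as in Remark~\ref{rem:ramification index}, so that $\mathcal X(\GL_{e_i})(\pi_i)=\mu_{f_i}$. Using the description of $\mathfrak{Z}_D$ and of $W(D)$ recalled above, one identifies $\mathfrak{Z}_D$ with the Laurent polynomial ring $E[X_{i,j}^{\pm1}:1\le i\le r,\ 1\le j\le d_i]$, where $X_{i,j}$ is the regular function on $\Irr(D)=\mathcal X(M)/\mathcal X(M)(\omega)$ sending $\omega\otimes\chi$, $\chi=(\chi_{i,j})$, to $\chi_{i,j}(\varpi_F)^{f_i}$, and $W(D)=\prod_i S_{d_i}$ acts by permuting $\{X_{i,1},\dots,X_{i,d_i}\}$ for each $i$ (the twisting characters $\xi_w$ being trivial on these invariant coordinates). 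In particular, by Newton's identities in characteristic $0$, $\mathfrak{Z}_{\Omega}=\mathfrak{Z}_D^{W(D)}$ is generated as an $E$-algebra by the power sums $p^{(i)}_k:=\sum_{j=1}^{d_i}X_{i,j}^{\,k}$, $1\le i\le r$, $k\in\Z$ (finitely many suffice). The first concrete task is then to record the formula
\[ T(w)=\sum_{i\,:\,f_i\mid t}\operatorname{tr}\bigl(\rho_i(w)\bigr)\cdot p^{(i)}_{t/f_i}\qquad\text{in }\mathfrak{Z}_D,\ \text{where }t:=v(w),\ \rho_i:=\rec_p(\pi_i), \]
valid for all $w\in W_F$. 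To prove it, I would evaluate both sides at the points of $\Irr(D)=\MaxSpec\mathfrak{Z}_D$: the specialisation of $T$ at $\omega\otimes\chi$ is $w\mapsto\operatorname{tr}\rec_p(\omega\otimes\chi)(w)=\sum_{i,j}\widetilde\chi_{i,j}(w)\operatorname{tr}\rho_i(w)$ by the defining property of $T$ used in the proof of Proposition~\ref{prop:Bernstein-to-Xrig} (here $\widetilde\chi_{i,j}$ is the unramified character of $W_F$ attached to $\chi_{i,j}$), the terms with $f_i\nmid t$ vanish by Lemma~\ref{lem: structure of irred $W_F$-rep}(2), and for $f_i\mid t$ one has $\widetilde\chi_{i,j}(w)=X_{i,j}^{\,t/f_i}$; since $\mathfrak{Z}_D$ is reduced and $\MaxSpec$ is dense, the formula follows.

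The crux — and the step I expect to be the main obstacle — is the following linear independence statement: \emph{for fixed $t\in\Z$, the functions $w\mapsto\operatorname{tr}\rho_i(w)$ on the coset $\{w\in W_F:v(w)=t\}$, ranging over those $i$ with $f_i\mid t$, are linearly independent over $\Qpbar$.} To prove this I would fix $\phi\in W_F$ of valuation $1$, so that every $w$ of valuation $t$ is $h\phi^t$ with $h\in I_F=\ker v$. Each $\rho_i|_{I_F}$ factors through a common finite quotient $\bar I=I_F/H$ and, by Lemma~\ref{lem: structure of irred $W_F$-rep}(1), is a direct sum of $f_i$ pairwise non-isomorphic irreducible $I_F$-representations; moreover, since the $\pi_i$ are pairwise non-inertially-equivalent and an irreducible $W_F$-representation is determined up to unramified twist by any constituent of its restriction to $I_F$ (as in the proof of Lemma~\ref{lem: structure of irred $W_F$-rep}(1)), the $\rho_i|_{I_F}$ have pairwise disjoint sets of irreducible constituents. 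Hence the image of $\Qpbar[\bar I]$ in $\prod_i\End_{\Qpbar}(\rho_i)$ is the product of the algebras of matrices block-diagonal for these decompositions, and the blocks in the different factors may be prescribed independently. A hypothetical relation $\sum_i c_i\operatorname{tr}\bigl(\rho_i(h)\rho_i(\phi^t)\bigr)=0$ for all $h\in I_F$, extended $\Qpbar$-linearly to $\Qpbar[\bar I]$ and evaluated at an element mapping to a block-diagonal matrix $D$ supported in the $i_0$-th factor, gives $c_{i_0}\operatorname{tr}\bigl(D\rho_{i_0}(\phi^t)\bigr)=0$; but $\rho_{i_0}(\phi^t)$ is block-diagonal (Lemma~\ref{lem: structure of irred $W_F$-rep}(1), since $f_{i_0}\mid t$) and invertible, so nondegeneracy of the trace pairing on block-diagonal matrices forces $c_{i_0}=0$. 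This proves the claim.

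Finally I would conclude as follows. Fix $i_0$ and $k\in\Z$ and set $t=kf_{i_0}$. By the independence statement there exist $w_1,\dots,w_N\in W_F$ of valuation $t$ and $a_1,\dots,a_N\in\Qpbar$ with $\sum_l a_l\operatorname{tr}\rho_i(w_l)=\delta_{i,i_0}$ for every $i$ with $f_i\mid t$; plugging into the displayed formula gives $\sum_l a_l T(w_l)=p^{(i_0)}_{k}$ in $\mathfrak{Z}_D\otimes_E\Qpbar$. Thus every power sum $p^{(i_0)}_k$ lies in the $\Qpbar$-subalgebra of $\mathfrak{Z}_{\Omega}\otimes_E\Qpbar$ generated by the image of $T$; since these power sums generate $\mathfrak{Z}_{\Omega}\otimes_E\Qpbar$, that subalgebra is everything, i.e.\ $\bigl(E[\operatorname{im}T]\bigr)\otimes_E\Qpbar=\mathfrak{Z}_{\Omega}\otimes_E\Qpbar$. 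By faithfully flat descent (exactly as used repeatedly in this section), $E[\operatorname{im}T]=\mathfrak{Z}_{\Omega}$, as desired.
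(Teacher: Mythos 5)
Your proposal is correct and follows essentially the same route as the paper's proof: both reduce to $\Qpbar$, use the fact that $\mathrm{tr}\,\rho_i(w)=0$ unless $f_i\mid v(w)$ (Lemma~\ref{lem: structure of irred $W_F$-rep}), isolate each supercuspidal factor via the surjectivity of the group algebra of $I_F/H$ onto the relevant block-diagonal endomorphism algebras (using that the constituents of the $\tau_i$ are pairwise non-isomorphic), and thereby produce all power sums $\sum_j X_{ij}^k$, which generate $\mathfrak Z_\Omega$ by Newton's identities. The only difference is packaging: you first record the explicit formula for $T(w)$ in terms of power sums and then isolate a clean linear-independence lemma for the functions $w\mapsto\mathrm{tr}\,\rho_i(w)$ on a valuation coset, whereas the paper directly chooses coefficients $c_h$ with the required vanishing and non-vanishing properties; the underlying computation is the same.
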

\begin{proof}It suffices (by the faithful flatness of the extension $\overline{\mathbb{Q}}_p/E$) to prove the result after
  replacing $E$ with $\overline{\mathbb{Q}}_p$.  Since the inertial type $\tau$ factors through a finite
  quotient $I_F/H$, it decomposes as a direct sum
  $\oplus_{i=1}^r (\tau_i)^{d_i}$, where the $\tau_i$ are non-isomorphic inertial
  types such that $\sigma(\tau_i)$ is cuspidal. As in the proof of
  Proposition~3.11 of~\cite{chenevier}, the Bernstein component $\Omega$
  decomposes as $\Omega_1\times\dots\times\Omega_r$ and $\mathfrak Z_\Omega =
  \otimes_{i=1}^r\mathfrak Z_{\Omega_i}$, where each $\Omega_i$ corresponds to
  the simple type $\sigma((\tau_i)^{d_i})$. If we let $T_i:W_F\to
  \mathfrak{Z}_{\Omega_i}$ be the pseudo-representation associated to $\Omega_i$
  by Proposition~3.11 of~\cite{chenevier}, then by definition
  $T(g):=\sum_{i=1}^rT_i(g)$. It suffices to show that the image of $T$ in $\mathfrak{Z}_{\Omega}$
  generates each $\mathfrak{Z}_{\Omega}$-subalgebra $\mathfrak{Z}_{\Omega_i}$ for $i=1,\dots,r$.

  Let $r_i$ be an irreducible $W_F$-representation such that
  $r_i|_{I_F}\simeq\tau_i$, and let $f_i$ be the integer associated to $r_i$ by
  Lemma~\ref{lem: structure of irred $W_F$-rep}. By choosing
  $\rec_p^{-1}(r_i)^{\otimes d_i}$ as a base point, each closed point of $\Spec
  \mathfrak{Z}_{\Omega_i}$ may be represented by an unramified character
  $\chi_i=(\chi_{i,1},\dots,\chi_{i,d_i})$ (or more precisely by
  $\otimes_{j=1}^{d_i}( \rec_p^{-1}(r_i)\otimes \chi_{i,j})$ up to a permutation
  of factors), where the $\chi_{i,j}$ are unramified characters of $F^\times$. Then
  each $T_i(g)$ is defined
  by \[T_i(g)(\chi_i):=\mathrm{tr}(r_i)(g)\sum_{j=1}^{d_i}\chi_{i,j}\bigl(\mathrm{Art}_F(g)\bigr).\]
  Consider elements $g\in W_F$ of the form $h\cdot\phi^{t_i}$, with $h\in I_F$ and $t_i \in f_i\mathbb{Z}$. By
  Lemma~\ref{lem: structure of irred $W_F$-rep}(1), the matrix $r_i(\phi^{t_i})$ is non-zero and consists of
  $f_i$ blocks which match the block decomposition of $\tau_i$.  Because the constituents
  of $\tau_i$ are non-isomorphic for different $i$'s, we may choose
  the $c_h$ such that \[\sum_{h\in I_F/H}c_h\cdot\mathrm{tr}\bigl(r_i(h)\cdot
  r_i(\phi^{t_i})\bigr)\not=0\] and $\sum_{h\in I_F/H}c_h\cdot r_{i'}(h)=0$ for
  $i'\not=i$. In particular, this means that $\sum_{h\in I_F/H} c_h
  \cdot T(h\phi^{t_i})\in \mathfrak{Z}_{\Omega_i}$.

  We will now compute $\sum_{h\in I_F/H} c_h\cdot T(h\phi^{t_i})$, as an element of $\mathfrak{Z}_{\Omega_i}$. Since the terms for $i'\not=i$ vanish, we can identify this with $\sum_{h\in I_F/H} c_h\cdot T_i(h\phi^{t_i})$ and work inside the Bernstein centre $\mathfrak{Z}_{\Omega_i}$ for the simple type $\sigma((\tau_i)^{d_i})$. If we factor out the non-zero scalar $\sum_{h\in I_F/H}c_h\cdot\mathrm{tr}\bigl(r_i(h)\cdot r_i(\phi^{t_i})\bigr)$, we are left with $\sum_{j=1}^{d_i}\chi_{i,j}\bigl(\mathrm{Art}_F(\phi^{t_i})\bigr)$. We wish to identify this as a regular function on the Bernstein component $\mathfrak{Z}_{\Omega_i}$. Notice that $\mathrm{Art}_F(\phi^{t_i})\in F^\times$ has valuation $t_i$,
which by Lemma~\ref{lem: structure of irred $W_F$-rep}(3) and Remark~\ref{rem:ramification index} coincides with the valuation of $\det(\pi_{E_i})^{t_i/f_i}$, where $E_i/F$ is the extension in Lemma \ref{lem:H(M,lambda_M) has a basis which is finite over Z(M)} for the cuspidal type $\sigma(\tau_i)$.

By the proof of Lemma~\ref{lem:H(M,lambda_M) has a basis which is finite over Z(M)}, the Hecke algebra $\cH(\sigma(\tau_i))$ is generated by the Hecke operator (well-defined up to a non-zero scalar) supported on $\pi_{E_i}$. By the isomorphism between $\cH(\sigma(\tau_i))$ and the Bernstein centre $\mathfrak{Z}_{\Omega(\sigma(\tau_i))}$ for the type $\sigma(\tau_i)$, the latter is generated by the regular function on unramified characters \[\chi\mapsto \chi(\det(\pi_{E_i})).\] Now, the Bernstein centre $\mathfrak{Z}_{\Omega_i}$ can be identified with the elements in the product $\prod_{j=1}^{d_i}\mathfrak{Z}_{\Omega(\sigma(\tau_i))}$ which are invariant under the action of the symmetric group $S_{d_i}$ (see the proof of Proposition 3.11 of~\cite{chenevier} or use the Satake isomorphism on the level of Hecke algebras). For $j=1,\dots,d_i$, let $X_{ij}\in \prod_{j=1}^{d_i}\mathfrak{Z}_{\Omega(\sigma(\tau_i))}$ corespond to the regular function defined above in the $j$th component. From the observation on the valuation of $\mathrm{Art}_F(\phi^{t_i})$, we see that the function \[(\chi_{i,1},\dots,\chi_{i,d_i})\mapsto \sum_{j=1}^{d_i}\chi_{i,j}\bigl(\mathrm{Art}_F(\phi^{t_i})\bigr)\] matches $\sum_{j=1}^{d_i}X^{t_i/f_i}_{ij}\in \mathfrak{Z}_{\Omega_i}$ up to a non-zero scalar.

  Note that we can ensure that $t_i/f_i$ is any integer. Therefore, we can
  generate all elements in $\mathfrak{Z}_{\Omega_i}$ of the form $\sum_{j=1}^{d_i}X_{ij}^k$ for any $k\in \mathbb{Z}$. Since $\mathfrak{Z}_{\Omega_i}$
  is obtained by taking invariants under $S_{d_i}$ in $\mathbb{\bar Q}_p[X^{\pm 1}_{i1},\dots, X^{\pm 1}_{i{d_i}}]$, it is generated as a  $\overline{\mathbb{Q}}_p$-algebra
  by the elementary symmetric polynomials in $X_{ij}$ together with $\prod_{j=1}^{d_i}X^{-1}_{ij}$.
  Over  $\overline{\mathbb{Q}}_p$, which is a field of
  characteristic $0$, we may take the sums of powers of $d_i$ variables as
  generators for the elementary symmetric polynomials in those variables. We may
  also generate the product of the inverses of the variables from sums of powers
  with negative exponents.
\end{proof}

\begin{remark}
While the proof of Lemma~\ref{lem: image of Chenevier's map} is slightly technical,
the lemma itself is rather natural; it expresses the idea that local Langlands
should make sense in families, and hence that the family of $G$-representations
parameterised by $\mathfrak{Z}_{\Omega}$ --- and thus the parameter
ring $\mathfrak{Z}_{\Omega}$ itself --- should be completely determined by
the corresponding family of Weil group representations, which are encoded
by the $\mathfrak{Z}_{\Omega}$-valued pseudo-representation $T$.

If we let $\mathfrak{A}_{\Omega}$ denote the $E$-subalgebra of $\mathfrak{Z}_{\Omega}$
generated by the image of $T$, then this is a finite type $E$-algebra,
and we have a morphism $\Spec \mathfrak{Z}_{\Omega} \to \Spec \mathfrak{A}_{\Omega}$.
It is not hard to see (e.g.\ by applying local Langlands over the fraction field
of $\mathfrak{A}_{\Omega}$) that this is a birational map, which is in
fact a bijection on points
(as one sees by applying local Langlands at the closed points).
Unfortunately, we were unable to find a completely conceptual proof
in general that this morphism is an isomorphism of varieties over $E$.

In the case when $\mathfrak{Z}_{\Omega}$ parameterises supercuspidal
representations, one can see this as follows: it suffices to check
that one obtains an isomorphism after passing to the
formal completion at each closed point $x \in \Spec \mathfrak{Z}_{\Omega}$.
Let $\pi_x$ be the supercuspidal $G$-representation corresponding
to $x$, and let $T_x: W_F \to E_x$  the specialisation of $T$ to the image of $x$
in $\Spec \mathfrak{A}_{\Omega}$. Let $R_x$ be the universal formal
deformation ring of~$T_x$, so that we have morphisms
$$\Spf \widehat{\mathfrak Z_{\Omega}}_x \to \Spf \widehat{\mathfrak A_{\Omega}}
\to \Spf R_x,$$ the second being induced by $T$.  Let $r_x: W_F \to \GL_n(E'_x)$
denote the (absolutely) irreducible representation attached to $\pi_x$ via
local Langlands, where $E'_x/E_x$ is a finite extension, and let
$T'_x$ denote the composite $T_x:W_F\to E_x\to E'_x$. Then $T'_x$ is the pseudo-representation attached
to $r_x$. Since $r_x$ is irreducible, the universal formal deformation
rings of $r_x$ and $T'_x$ coincide (\cite[Th\'eor\`eme 3]{MR1411348},
\cite[Corollaire 6.2]{MR1378546}), and are thus both given by
$R_x\otimes_{E_x}E'_x$.
A direct analysis,
using that the source and target are both obtained simply by
forming unramified twists, and that local Langlands gives a bijection
on isomorphism classes that is compatible with twisting, shows
that the composite of the base change to $E'_x$ of the above morphisms is an isomorphism. Since the first
of them is dominant, it is also an isomorphism.  Thus the morphism
$\Spec \mathfrak Z_{\Omega} \to \Spec \mathfrak A_{\Omega}$ is a
bijection on closed points and induces isomorphisms after completing
at each closed point. From the latter, we see that it is \'etale and
radiciel, hence an open immersion by~\cite[Th\'eor\`eme
17.9.1]{ega-4-iv} and,
since it is also surjective, we see that it is in fact an isomorphism.

One could use a variant
of the argument in first paragraph of
the proof of Lemma~\ref{lem: image of Chenevier's map}
to reduce the general case of the lemma to the case when $\mathfrak Z_{\Omega}$
parameterises a family of supercuspidal representations, where
the preceding argument then applies.   In this way, one could give
a slightly more conceptual proof of the Lemma.
\end{remark}

\begin{proof}
  [Proof of Proposition~\ref{prop: rigid analytic local Langlands}]

  We adopt the notation of \S\ref{god_save_the_queen}. In particular $M$ is the
  Levi subgroup in the supercuspidal support of some (thus any) irreducible
  representation in $\Omega$, and $\mathcal X(M)$ is the group of unramified
  characters of $M(F)$. The group automorphism $\mathcal X(M)\iso \mathcal X(M)$
  given by $\chi_M\mapsto \chi_M |\det|^{\frac{1-n}{2}}$ gives rise to an
  $E$-isomorphism $\Spec\mathfrak{Z}_D\iso \Spec\mathfrak{Z}_D$. The latter map
   is invariant under the $W(D)$-action (the point is that $|\det|$ is invariant
    under $G$-conjugation) so it descends to an $E$-isomorphism
    $\Spec\mathfrak{Z}_\Omega\iso \Spec\mathfrak{Z}_\Omega$ in view of
    Lemma~\ref{invariants_W(D)}. Let $\mathrm{tw}:\mathfrak{Z}_\Omega\ra \mathfrak{Z}_\Omega$
    denote the induced isomorphism.

  We have a natural isomorphism
$\iota_\sigma:\mathcal{H}(\sigma_{\mathrm{sm}})\stackrel{\sim}{\to}\mathcal{H}(\sigma)$; viewing Hecke
algebras as endomorphism-valued functions on $G$, this is
given by $\psi\mapsto \psi\cdot \sigma_\alg$ . (This is \emph{a priori} only an
injection, but in fact is an isomorphism by the proof of Lemma 1.4 of
\cite{MR2290601}.)
  Now we construct $\eta$ as the following composite map
  $$ \cH(\sigma)\stackrel{\iota^{-1}_\sigma}{\simeq} \cH(\sigma_\sm) \simeq \mathfrak{Z}_\Omega\stackrel{\mathrm{tw}}{\ra} \mathfrak{Z}_\Omega \stackrel{I}{\ra}\cO_{\cX^\rig},$$
  where the second map comes from Lemma~\ref{lem: rational Bernstein
    Hecke iso}. Note that $\eta$ is already defined over $E$.  To
  verify the desired interpolation property of $\eta$, we let $x:\cO_{\cX^\rig}\ra E_x$ be an $E$-algebra map. Then $x\circ I\circ \mathrm{tw}:\mathfrak{Z}_\Omega\ra E_x$ gives the supercuspidal support of $\pi_\sm(\rho_x)=r_p^{-1}(\WD(\rho_x)^{F-\semis})$ by Proposition~\ref{prop:Bernstein-to-Xrig}; indeed,
since $I$ interpolates $\rec_p$ by that proposition,
$I\circ \mathrm{tw}$ interpolates
$r_p$. In order to complete the proof, we can and do base change to
$\Qpbar$. Then Proposition~\ref{stronger_dat}
shows us that $\mathfrak{Z}_\Omega$ acts on
 $\Hom_K\bigl(\sigma_\sm,\pi_\sm(\rho_x)\bigr)$ through $x\circ I\circ
 \mathrm{tw}$.

To conclude, it is enough to observe that the action of $\cH(\sigma_\sm)$ on the space
$\Hom_K\bigl(\sigma_\sm,\pi_\sm(\rho_x)\bigr)$ is compatible with the $\mathfrak{Z}_\Omega$-action on the same space via the isomorphism $\cH(\sigma_\sm) \simeq \mathfrak{Z}_\Omega$, and also with the $\cH(\sigma)$-action on $\Hom_K(\sigma,\pi_{\mathrm{l.alg},x})$ via the canonical isomorphisms between the algebras (via $\iota_\sigma$) and the modules. These are readily checked.
\end{proof}

In order to deduce Theorem~\ref{thm: algebraic local
  Langlands} from Proposition~\ref{prop: rigid analytic local Langlands}, we
will now use the results of~\cite{MR2560407} to show that the image of $\eta$ is bounded, in the sense that for any
$h\in\cH(\sigma)$, the valuation of $\eta(h)$ at each point of
$\mathcal{X}^\rig$ is uniformly bounded.

Recall that $\sigma=\sigma_{\mathrm{alg}}\otimes\sigma_{\mathrm{sm}}$,
where $\sigma_{\mathrm{sm}}=\sigma(\tau)$ for some inertial type
$\tau:I_{F}\to \GL_{n}(E)$. Let $x: R_{\tilde{\mathfrak{p}}}^\square(\sigma)[1/p]\to E_x$ be a
closed point, so that $E_x$ is a finite extension field of $E$. Then $x$ defines a local
Galois representation $\rho_{x}:G_{F}\to \GL_{n}(E_x)$ which is potentially crystalline, and has
Hodge--Tate weights determined by the highest weight $\xi$ of
$\sigma_{\mathrm{alg}}$.
Set $\pi_x:=\pi_{\sm}(\rho_x)$. Recall that $\pi_{\mathrm{l. alg},x}$ is the locally algebraic representation defined over $E_x$ corresponding to $\rho_x$ (see \S\ref{subsec:notation}), whose smooth part is $\pi_x$ and which determines the character $\chi_{\pi_x}\circ\iota_\sigma^{-1}:\mathcal{H}(\sigma)\to E_x$ (via the action of $\cH(\sigma)$ on $\Hom_K(\sigma,\pi_{\mathrm{l. alg},x})$.

Let $P=MN$ be a parabolic subgroup of~$G$, with
Levi $M$ and unipotent radical~$N$. Let $Z(M)$ be the centre of $M$, let
$N_{0}\subset N$ be a compact open subgroup and define $Z(M)^{+}:=\{t\in
Z(M)|tN_{0}t^{-1}\subset N_{0}\}$. When $P$ is a standard (upper) parabolic, the
subgroup $Z(M)^{+}$ of $Z(M)$ consists of elements with non-decreasing $p$-adic
valuations on the diagonal. Then~\cite{MR2292633} defines a Jacquet module
functor $J_{P}$ on locally analytic representations of $G$.

We will consider the following condition on a locally analytic representation $V$ of $G$.

\begin{condition}\label{cond:bounds}
  For every parabolic subgroup $P=MN$ as above, with modulus character $\delta_P$,
  every $\chi:Z(M)\to E^{\times}$ such that
  $\mathrm{Hom}_{Z(M)}\bigl(\chi,J_{P}(V)\bigr)\not=0$, and every $t\in
  Z(M)^{+}$, we have
  $|\chi(t)\delta_{P}(t)^{-1}|_{p}\leq1$. \end{condition}

As above, we write $\tau=\oplus_{i=1}^{r}(\tau_{i})^{d_{i}}$, where the $\tau_{i}$
are pairwise non-isomorphic $I_{F}$-representations corresponding via Theorem~\ref{thm: inertial local Langlands, N=0}
(the inertial local Langlands correspondence) to cuspidal types
$\sigma(\tau_{i})$ of $\GL_{e_i}(\cO_F)$. Let
$M=\prod_{i=1}^r\GL_{e_i}(F)^{d_i}$ be a standard Levi of $G$, with
corresponding standard parabolic $P=MN$. 

From now on until the end of this section, we will replace $\pi_x$ (as well as $\chi_{\pi_x}$, $\sigma_\sm$, and so on) by its base extension to $\Qpbar$. Then as recalled in Section~\ref{subsec BZ theory}, $\pi_{x}$ is the unique irreducible quotient of a normalised parabolic
induction
$\tilde{\pi}_{x}:=\nind_{P}^{G} \pi_{x,M}$,
where $\pi_{x,M}=\otimes_{i=1}^{r}(\otimes_{j=1}^{d_{i}}\pi_{x,i,j})$ such that each $\pi_{x,i,j}$ is a supercuspidal representation of
$\GL_{e_{i}}(F)$ containing the type $\sigma(\tau_{i})$ and where for
each $i$, we have
$\pi_{x,i,j}\not\simeq\pi_{x,i,j'}(1)$ for $1\leq j'<j\leq
d_{i}$. 

Proposition~\ref{stronger_dat} gives a $G$-equivariant
homomorphism
$\varphi:\cInd_{K}^{G}\sigma_{\mathrm{sm}}\to\tilde{\pi}_{x}$, which
identifies $\tilde{\pi}_{x}$ with
$\cInd_{K}^{G}\sigma_{\mathrm{sm}}\otimes_{\mathcal{H}(\sigma_{\mathrm{sm}}),\chi_{\pi_{x}}}\Qpbar$. We
identify $W_{[M,\pi_{x,M}]}$ with $\prod_{i=1}^{r}S_{d_{i}}$ in the obvious way,
where $S_{d_{i}}$ is the symmetric group on $\{1,\dots,d_{i}\}$. Note that $W_{[M,\pi_{x,M}]}$ and the identification are independent of $x$.

\begin{lem}\label{lem: inequalities from econ} Let $\chi(\pi_{x,i,j})$
  denote the central character of $\pi_{x,i,j}$. For an element
  $w=\{w_{i}\}_{i=1}^{r}\in W_{[M,\pi_{x,M}]}$, define characters
  $\chi_{x,w}:Z(M)\to \Qpbartimes$ by
  $\chi_{x,w}=\bigotimes_{i=1}^{r}\bigotimes_{j=1}^{d_{i}}\chi(\pi_{x,i,w_{i}(j)})$.

  For every $t\in Z(M)^{+}$, there exists a constant $C_t$ such that $|\chi_{x,w}(t)|_{p}\leq
  C_t$ for all points $x$ of $\cX^\rig$.
\end{lem}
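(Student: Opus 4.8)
The plan is to obtain the estimate from the Emerton--Hu numerical criterion, using that it carries a \emph{universal} bound. Since $\rho_x$ is potentially crystalline of type $\sigma$, it is de Rham of regular weight, with Hodge--Tate weights prescribed by $\sigma_{\mathrm{alg}}$ and hence independent of $x$; by the results of~\cite{MR2560407} the locally algebraic representation $\pi_{\mathrm{l.alg},x}=\pi_x\otimes\sigma_{\mathrm{alg}}$ therefore satisfies Condition~\ref{cond:bounds}. Because the bound in that condition is the constant $1$, every inequality extracted from it is automatically uniform in $x$. Moreover it suffices to bound $|\chi(\pi_{x,i,j})(\varpi_F)|_p$ both above and below by constants independent of $x$: since $\pi_{x,i,j}$ contains the fixed type $\sigma(\tau_i)$, the restriction of $\chi(\pi_{x,i,j})$ to $\cO_F^\times$ depends only on $\tau_i$, so for any $t=(t_{i,j})\in Z(M)^+$ one has $\chi(\pi_{x,i,j})(t_{i,j})=\chi(\pi_{x,i,j})(\varpi_F)^{v(t_{i,j})}u_{i,j}$ with $u_{i,j}$ a root of unity independent of $x$, whence $|\chi_{x,w}(t)|_p=\prod_{i,j}|\chi(\pi_{x,i,w_i(j)})(\varpi_F)|_p^{v(t_{i,j})}$ is bounded by a constant $C_t$ as soon as the individual factors are.

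To obtain those two-sided bounds I would analyse the Jacquet modules $J_Q(\pi_{\mathrm{l.alg},x})$ for the parabolics $Q$ with Levi $M$, namely $P=MN$ and its opposite $\bar P$. For Emerton's functor on a locally algebraic representation, $J_Q(\pi_x\otimes\sigma_{\mathrm{alg}})\cong r_Q(\pi_x)\otimes\zeta_Q$ with $r_Q$ the normalised smooth Jacquet functor and $\zeta_Q$ a fixed character of $Z(M)$ (an extremal weight of $\sigma_{\mathrm{alg}}$, independent of $x$). Since $\pi_x$ is the unique irreducible quotient of $\tilde\pi_x=\nind_P^G\pi_{x,M}$ (Propositions~\ref{BZ} and~\ref{stronger_dat}), the geometric lemma shows that every constituent of $r_P(\pi_x)$, resp.\ $r_{\bar P}(\pi_x)$, has central character $\delta_P^{1/2}\chi_{x,w}$, resp.\ $\delta_{\bar P}^{1/2}\chi_{x,w}$, for some $w\in W_{[M,\pi_{x,M}]}$, and second adjointness shows $\pi_{x,M}$ (i.e.\ $w=\mathrm{id}$) occurs in $r_{\bar P}(\pi_x)$. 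Applying Condition~\ref{cond:bounds} to $P$ bounds $|\chi_{x,w}(t)|_p$ for those $w$ occurring in $r_P(\pi_x)$ and all $t\in Z(M)^+$, while applying it to $\bar P$ (with $t\in Z(M)^+_{\bar P}$) bounds the remaining ones from below; comparing these, and isolating the individual $|\chi(\pi_{x,i,j})(\varpi_F)|_p$ by running over the valuation profiles $(0,\dots,0,1,\dots,1)$, one obtains uniform two-sided bounds $p^{-b}\le|\chi(\pi_{x,i,j})(\varpi_F)|_p\le p^{-a}$ with $a\le b$ depending only on $\sigma$. Combined with the reduction of the first paragraph this gives $|\chi_{x,w}(t)|_p\le\prod_{i,j}\max(p^{-a},p^{-b})^{|v(t_{i,j})|}=:C_t$, as required.

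The main obstacle is the Jacquet-module bookkeeping in the second step: identifying exactly which $Z(M)$-characters occur in $r_P(\pi_x)$ and $r_{\bar P}(\pi_x)$ (subtle for irreducible representations in general), keeping precise track of the $\delta_Q$- and $\zeta_Q$-twists and of which parabolic orientation yields upper versus lower bounds, and verifying uniformity in $x$. What is really being used from~\cite{MR2560407} is the unconditional implication ``$\rho_x$ de Rham of regular weight $\Rightarrow$ Condition~\ref{cond:bounds}'', a packaging of weak admissibility (equivalently, the existence of admissible filtrations); alternatively one could apply weak admissibility directly, bounding the crystalline Frobenius slopes of $\rho_x|_{G_L}$ — with $L/F$ the fixed extension trivialising $\tau$ — and transporting the bounds through $\rec_p$ and Lemma~\ref{lem: structure of irred $W_F$-rep} to the central characters $\chi(\pi_{x,i,j})$.
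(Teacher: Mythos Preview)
Your overall strategy---using Hu's numerical criterion (Condition~\ref{cond:bounds}) from~\cite{MR2560407}, and noting that the bound there is the constant~$1$ so is automatically uniform in $x$---is exactly right, and matches the paper. However, there is a genuine gap in your Jacquet-module step, which you yourself flag as ``the main obstacle'': by applying Condition~\ref{cond:bounds} to the \emph{irreducible} representation $\pi_x$, you can only guarantee that \emph{some} of the characters $\chi_{x,w}$ occur in $r_P(\pi_x)$ and $r_{\bar P}(\pi_x)$, and which ones depend on~$x$. Your scheme of extracting two-sided bounds on the individual $|\chi(\pi_{x,i,j})(\varpi_F)|_p$ from upper bounds on $\chi_{x,w_1}(t)$ and lower bounds on $\chi_{x,w_2}(t)$ for \emph{different} $w_1,w_2$ does not obviously close: for instance, when $\pi_x$ is not generic (and no genericity is assumed here), $\chi_{x,\mathrm{id}}$ need not appear in $r_P(\pi_x)$---it does so if and only if $\pi_x$ embeds in $\tilde\pi_x$, which is exactly the genericity condition---so your ``isolation by valuation profiles'' is comparing mismatched products.

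The paper's proof sidesteps all of this with one change: it applies Hu's criterion to $\sigma_{\mathrm{alg}}\otimes\tilde\pi_x$ rather than to $\sigma_{\mathrm{alg}}\otimes\pi_x$. The equivalence (ii)$\Leftrightarrow$(iv) of~\cite[Theorem~1.2]{MR2560407} is formulated precisely for representations of the form (algebraic)$\,\otimes\,\nind_P^G(\text{supercuspidal})$, so Condition~\ref{cond:bounds} holds for $\sigma_{\mathrm{alg}}\otimes\tilde\pi_x$ directly. Then the Jacquet module $r_P^G(\tilde\pi_x)$ of the \emph{full} induction is computed completely by the geometric lemma (recorded as~\cite[Proposition~3.2(2)]{MR2560407}), and \emph{every} $\chi_{x,w}$ occurs in it. Combining with Emerton's formula $J_P(\sigma_{\mathrm{alg}}\otimes\tilde\pi_x)\cong\sigma_{\mathrm{alg}}^N\otimes r_P^G(\tilde\pi_x)\,\delta_P^{1/2}$ gives, for each $t\in Z(M)^+$, the single uniform bound
\[
|\chi_{x,w}(t)|_p \le \bigl|\sigma_{\mathrm{alg}}^N(t)\,\delta_P(t)^{-1/2}\bigr|_p^{-1}
\]
for all $w$ at once. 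No opposite parabolic, no two-sided estimates, no isolation of individual central characters is needed; your first-paragraph reduction becomes unnecessary and the second-paragraph complications disappear.
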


\begin{proof} We know that $\sigma_{\mathrm{alg}}\otimes\tilde{\pi}_{x}$, after
  twisting by a unitary character (this twist is discussed
  at the beginning of \S\ref{sec:breuilschneider} below),
  corresponds to the potentially crystalline Galois representation
  $\rho_{x}$ with Hodge--Tate weights determined by
  $\sigma_{\mathrm{alg}}$, in the sense that
  $\tilde{\pi}_{x}|\det|^{1-n}\leftrightarrow \WD(\rho_{x})^{F-ss}$ via the modified
  local Langlands correspondence as in Section 4 of
  \cite{MR2359853}. Moreover, note that by Lemma 4.2 of Section 4 of \cite{MR2359853},
  $\sigma_\mathrm{alg}\otimes{\tilde{\pi}}_{x}$ actually has a model
  over a sufficiently large finite extension of $\mathbb{Q}_p$, so the
  characters $\chi_{x,w}$ then take values in some sufficiently large
  finite extension $E'_x/\Qp$. 

  The equivalence between parts (ii)
  and (iv) of~\cite[Thm.~1.2]{MR2560407} (where our coefficient field is taken to be $E'_x$) shows that
  $\sigma_{\mathrm{alg}}(\det)^{1-n}\otimes\tilde{\pi}_{x}|\det|^{1-n}$ has a unitary central
  character and satisfies Condition~\ref{cond:bounds}~\cite{MR2292633}. Therefore so does $\sigma_{\mathrm{alg}}\otimes\tilde{\pi}_{x}$.

 Note that by Proposition 4.3.6
  of~\cite{MR2292633} we have
  $J_{P}(\sigma_{\mathrm{alg}}\otimes\tilde{\pi}_{x})\stackrel{\sim}{\to}\sigma_{\mathrm{alg}}^{N}\otimes
  r_{P}^{G}(\tilde{\pi}_{x})\delta_{P}^{1/2}$, where $r_{P}^{G}$ is
  the normalised Jacquet functor for smooth
  representations. Putting this formula together with Condition~\ref{cond:bounds}, we see that
  then we have  $|\chi(t)|_{p}\leq |\sigma_{\mathrm{alg}}^{N}(t)\cdot \delta_P(t)^{-1/2}|^{-1}$
  for every $\chi$ occurring in $r_{P}^{G}(\tilde{\pi}_{x})$.

  Now, Proposition 3.2(2) of~\cite{MR2560407} computes
  $r_{P}^{G}(\tilde{\pi}_{x})$ (observe that in the notation of
  \emph{loc.\ cit.}, all $b_i$ are 1 in our case) and shows that the characters
  $\chi_{x,w}=\bigotimes_{i=1}^{r}\bigotimes_{j=1}^{d_{i}}\chi(\pi_{x,i,w_{i}(j)})$
  of $Z(M)$ for all sets $w$ of permutations $w_{i}$ of $\{1,\dots,d_{i}\}$
  occur in $r_{P}^{G}(\tilde{\pi}_{x})$. The result follows.\end{proof}

Let $Z(M)^{++}\subset Z(M)$ be the subgroup generated by elements with
the property that the $p$-adic valuations are non-decreasing on the
diagonal of each block $\GL_{e_i}^{d_i}$.
Clearly, $Z(M)^+ \subset Z(M)^{++}$.

\begin{cor}\label{cor: inequalities from econ} The conclusion of
Lemma~{\em \ref{lem: inequalities from econ}} holds for all $t\in Z(M)^{++}$.
\end{cor}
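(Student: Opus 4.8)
The plan is to rerun the argument of Lemma~\ref{lem: inequalities from econ}, but for a whole family of parabolic subgroups rather than just the standard parabolic $P$ with Levi $M$. The point is that the proof of Lemma~\ref{lem: inequalities from econ} actually establishes that the twisted representation $V_x := \sigma_\alg(\det)^{1-n}\otimes\tilde{\pi}_x|\det|^{1-n}$ has unitary central character and satisfies Condition~\ref{cond:bounds}, and Condition~\ref{cond:bounds} is a statement about \emph{every} parabolic subgroup of $G$. I would first observe that $Z(M)^{++}$ is in fact all of $Z(M)$ (writing $Z(M)=\prod_{i,j}F^\times$ over the blocks of $M$, the valuation vectors non-decreasing within each block $\{(i,1),\dots,(i,d_i)\}$ already generate $\prod_i\Z^{d_i}$, and all units are of this shape), so that it suffices to prove the conclusion of Lemma~\ref{lem: inequalities from econ} for every $t\in Z(M)$.

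Next I would reduce to a pointwise statement about central characters. Label the $N:=\sum_i d_i$ cuspidal factors linearly as $\pi_{x,1},\dots,\pi_{x,N}$ with $\pi_{x,k}$ supercuspidal of $\GL_{f_k}(F)$, write $\chi(\pi_{x,k})$ for its central character, and set $b_k(x):=|\chi(\pi_{x,k})(\varpi_F)|_p>0$. Since a continuous character of the profinite group $\cO_F^\times$ has values of $p$-adic absolute value $1$, for any $t=(t_k)_k\in Z(M)$ and any $w$ one has $|\chi_{x,w}(t)|_p = \prod_k b_k(x)^{m_{\tau(k)}}$, where $m_k:=v(t_k)$ and $\tau\in S_N$ depends only on $w$; so the corollary follows once I show that each $b_k(x)$ lies in a fixed compact subinterval $[c_k,C_k]\subset(0,\infty)$, independent of $x$ (and then, since there are only finitely many $w$, conclude by taking a maximum).

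For the upper bound on $b_k(x)$, I would choose a rearrangement $\rho$ of the blocks $\GL_{e_i}(F)^{d_i}$ of $M$ such that the extreme block of $M_\rho$ (the first or the last, according to whichever sign convention defines $Z(M_\rho)^+$) has size $f_k$, and let $P_\rho=M_\rho N_\rho$ be the corresponding standard parabolic. By the Bernstein--Zelevinsky geometric lemma (\cite{BZ77ENS,Zel80}), $r_{P_\rho}^G(\tilde{\pi}_x)$ has a subquotient, twisted only by an $x$-independent unramified character $\mu_\rho$, on which $\pi_{x,k}$ occupies that extreme slot; using Proposition~4.3.6 of~\cite{MR2292633} to write $J_{P_\rho}(V_x)\cong\kappa_\rho\otimes r_{P_\rho}^G(\tilde{\pi}_x)$ as $M_\rho$-representations for an $x$-independent character $\kappa_\rho$ of $M_\rho$, the character $\chi$ through which $Z(M_\rho)$ acts on this subquotient has $\Hom_{Z(M_\rho)}(\chi,J_{P_\rho}(V_x))\neq 0$. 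Applying Condition~\ref{cond:bounds} to $P_\rho$ and to the element $t'\in Z(M_\rho)^+$ whose diagonal-valuation vector is $1$ on the extreme block and $0$ elsewhere then yields $b_k(x)\le|\kappa_\rho(t')^{-1}\mu_\rho(t')^{-1}\delta_{P_\rho}(t')|_p=:C_k$, a genuine constant. For the lower bound I would use the unitary central character of $V_x$: evaluating it at $\varpi_F\cdot 1_n\in Z(G)$ gives $\prod_k b_k(x)=D$ for a fixed $D>0$ (the inverse $p$-adic absolute value at $\varpi_F\cdot 1_n$ of the product of the twisting character with the central character of $\sigma_\alg$), whence $b_k(x)\ge D/\prod_{k'\neq k}C_{k'}=:c_k>0$.

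The step I expect to be the main obstacle is the middle one: verifying via the geometric lemma that the reordering placing $\pi_{x,k}$ in the extreme slot genuinely contributes a summand to $r_{P_\rho}^G(\tilde{\pi}_x)$ --- i.e.\ that the relevant Weyl-group element carrying the ordered block structure of $M$ onto that of $M_\rho$ with the prescribed factor in the extreme slot has non-vanishing inner Jacquet module on the cuspidal inducing datum --- together with keeping honest track of the various modulus and algebraic twists ($\mu_\rho$, $\kappa_\rho$, $\delta_{P_\rho}$, $(\det|\det|)^{1-n}$) and checking that each is independent of $x$, so that the constants $C_k$ and $D$ above are genuine. The rest is routine bookkeeping.
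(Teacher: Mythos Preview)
Your approach is correct but takes a longer route than the paper. The paper's proof is a one-step reduction: it permutes the $r$ super-blocks $\GL_{e_i}^{d_i}$ of $M$ to form a standard Levi $M'\simeq M$ such that the image of $t$ lies in $Z(M')^+$, observes (via Proposition~6.4 of~\cite{Zel80}) that $\nind_P^G\pi_M$ is independent of the ordering of the $\tau_i$, and then simply applies Lemma~\ref{lem: inequalities from econ} verbatim to $M'$ in place of $M$. No decomposition into the individual $b_k(x)$ is needed, and hence no lower bound either. Your route, by contrast, proves the (equivalent, as you note) statement for all $t\in Z(M)$ by trapping each $b_k(x)$ in a fixed compact interval: the upper-bound half is essentially the paper's reordering trick carried out once per index $k$ rather than once per $t$, while the lower-bound half via the unitary central character is an extra ingredient the paper avoids altogether. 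What you flag as the main obstacle---computing $r_{P_\rho}^G(\tilde\pi_x)$ via the geometric lemma and tracking the various twists---also dissolves in the paper's approach, since Zelevinsky's result identifies $\tilde\pi_x$ with the induction from the reordered parabolic, after which the Jacquet module is computed by the same result of Hu already invoked in the proof of the lemma.
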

\begin{proof} There is a permutation of $\{1,\dots,r\}$ which induces a permutation on the factors $\GL_{e_i}^{d_i}$ of
  $M$ such that the image of $t$ under that permutation has non-decreasing $p$-adic
  valuations. Let $M'$ be the Levi subgroup of $G$ with the permuted
  blocks as factors. Abstractly, $M'\simeq M$ and by Proposition 6.4
  of~\cite{Zel80}, we know that the induction $\nind_{P}^{G} \pi_{M}$
  is independent of the ordering of the $\tau_{i}$. We conclude by
  applying Lemma \ref{lem: inequalities from econ} to the Levi $M'$
  instead of $M$.
\end{proof}

As discussed in Sections~\ref{subsec:BK theory} and~\ref{subsec:SZ}, we have a
semisimple Bushnell--Kutzko type $(J,\lambda)$ such that $\sigma_\mathrm{sm}$ is
a direct summand of $\mathrm{Ind}_J^K(\lambda)$, and the natural map $s_P:\cH(G,\lambda) \to
\cH(\sigma_\mathrm{sm})$ induces an isomorphism
$$Z\bigl(\cH(G,\lambda)\bigr) \iso \cH(\sigma_\mathrm{sm}).$$
In particular, this means that
$\tilde{\pi}_{x}|_J$ contains $\lambda$. Then in the notation of
Section \ref{subsec:BK theory}, $\pi_{x,M}$ contains the type $(J\cap
M,\lambda_{M})$. Let $\chi_{\pi_{x,M}}$ be the character by which
$\mathcal{H}(M,\lambda_{M})$ acts on $\mathrm{Hom}_{J\cap
  M}(\lambda_{M},\pi_{M})$.

\begin{cor}\label{cor: inequalities for centre} Let $t\in Z(M)$ and let
  $\nu_t\in\mathcal{H}(M,\lambda_M)$ be an intertwiner supported on
  $t(J\cap M)$. Then there exists a constant $C_t$ such
  that for all points $x$ of $\mathcal{X}^{\rig}$ we have $| \chi_{\pi_{x,M}}(\nu_t)|_p\leq C_t.$
  \end{cor}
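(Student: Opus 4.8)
The plan is to reduce Corollary~\ref{cor: inequalities for centre} to Corollary~\ref{cor: inequalities from econ} by showing that, up to a scalar independent of $x$, the quantity $\chi_{\pi_{x,M}}(\nu_t)$ is simply a value of the central character of $\pi_{x,M}$, i.e.\ one of the characters $\chi_{x,w}$ already controlled in the previous corollary.

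First I would unwind the definition of $\nu_t$. Regarding $\cH(M,\lambda_M)$ as the convolution algebra of compactly supported functions $f\colon M\to\End_{\Qpbar}(\lambda_M)$ with $f(jmj')=j\circ f(m)\circ j'$ for $j,j'\in J\cap M$ (as recalled in Section~\ref{subsec:BK theory}), an intertwiner $\nu_t$ supported on $t(J\cap M)$ is determined by the single value $\nu_t(t)\in\End_{\Qpbar}(\lambda_M)$. Since $t\in Z(M)$ the coset $t(J\cap M)$ equals $(J\cap M)t$, and the bi-equivariance condition forces $\nu_t(t)$ to commute with $\lambda_M(J\cap M)$; as $\lambda_M$ is irreducible over the algebraically closed field $\Qpbar$, Schur's lemma gives $\nu_t(t)=c_t\cdot\id_{\lambda_M}$ for a scalar $c_t$ depending only on $\nu_t$, not on $x$. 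I would then compute the action of $\nu_t$ on $\Hom_{J\cap M}(\lambda_M,\pi_{x,M})$, on which $\cH(M,\lambda_M)$ acts through $\chi_{\pi_{x,M}}$: because $t$ is central, the convolution sum defining this action collapses to the single term indexed by $t(J\cap M)$, and a direct computation yields
\[ \chi_{\pi_{x,M}}(\nu_t)=c_t\cdot\omega_{x,M}(t)^{\epsilon}, \]
where $\omega_{x,M}\colon Z(M)\to\Qpbartimes$ is the central character of $\pi_{x,M}$ and $\epsilon\in\{\pm1\}$ is fixed once and for all by the normalisation of the Hecke action (in particular, independent of $x$ and of $t$).

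Finally I would observe that $\pi_{x,M}=\bigotimes_{i=1}^r\bigotimes_{j=1}^{d_i}\pi_{x,i,j}$, so its central character is $\omega_{x,M}=\bigotimes_{i=1}^r\bigotimes_{j=1}^{d_i}\chi(\pi_{x,i,j})$, which is precisely the character $\chi_{x,w}$ of Lemma~\ref{lem: inequalities from econ} for $w$ the tuple of identity permutations. It remains to place $t^{\epsilon}$ in the range of Corollary~\ref{cor: inequalities from econ}; but $Z(M)^{++}=Z(M)$ (the elements of $Z(M)$ of constant valuation on each block, together with those of the shape $(1,\dots,1,\varpi_F,\dots,\varpi_F)$ within a single block, lie in $Z(M)^{++}$ and generate $Z(M)$), so in particular $t^{\epsilon}\in Z(M)^{++}$. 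Applying Corollary~\ref{cor: inequalities from econ} to $t^{\epsilon}$ and this $w$ produces a constant $C$ with $|\omega_{x,M}(t)^{\epsilon}|_p=|\chi_{x,w}(t^{\epsilon})|_p\le C$ for every point $x$ of $\cX^{\rig}$, whence $|\chi_{\pi_{x,M}}(\nu_t)|_p\le |c_t|_p\,C$ for all $x$, and we take $C_t:=|c_t|_p\,C$.

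The only real delicacy is in the middle step: one must track carefully the conventions for the Hecke-module structure on $\Hom_{J\cap M}(\lambda_M,\pi_{x,M})$ and for the convolution product on $\cH(M,\lambda_M)$ in order to confirm that a Hecke operator supported on a \emph{central} double coset acts through $c_t$ times a fixed power of the central character. Once that identification is in hand, the statement is immediate from Corollary~\ref{cor: inequalities from econ}.
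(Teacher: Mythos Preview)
Your proposal is correct and follows essentially the same route as the paper. Both arguments rescale $\nu_t$ so that $\nu_t(t)$ is scalar, identify $\chi_{\pi_{x,M}}(\nu_t)$ with a value of the central character $\omega_{x,M}=\chi_{x,\mathrm{id}}$, and then invoke Corollary~\ref{cor: inequalities from econ}. The only cosmetic difference is that the paper conjugates $t$ by an element $w\in W_{[M,\pi_{x,M}]}=\prod_i S_{d_i}$ so that $s={}^{w}t$ has non-decreasing valuations within each block, and then uses the identity $\chi_{\pi_{x,M}}(\nu_t)=\chi_{x,w}(s)$, whereas you keep $t$ fixed, take $w=\mathrm{id}$, and instead appeal to $Z(M)^{++}=Z(M)$. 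Since $\{\chi_{x,w}(t):w\in W\}=\{\chi_{x,\mathrm{id}}({}^{w}t):w\in W\}$, these two manoeuvres are interchangeable; your observation that $Z(M)^{++}=Z(M)$ is correct and makes the reduction slightly more direct.
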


\begin{proof}

  Assume $\nu_t\not=0$. Note that since $t\in Z(M)$, $\nu_t(t)$
  commutes with the action of $J\cap M$ on $\lambda_M$ and since
  $\lambda_M$ is irreducible we deduce that $\nu_t(t)$ is a nonzero scalar. Rescaling, we may assume that
  $\nu_{t}(t):=\mathrm{id}_{\lambda_{M}}$.  Let $s\in Z(M)^{++}$
  be such that $s={}^{w}t$ for some $w\in W_{[M,\pi_{x,M}]}$. It follows from the
  definitions that  $\chi_{\pi_{x,M}}(\nu_{t})=\chi_{x,w}(s)$, and the corollary then follows
  from Corollary \ref{cor: inequalities from econ}.\end{proof}

\begin{cor} \label{lem: inequalities for Levi} Let
  $\nu\in\mathcal{H}(M,\lambda_{M})$. Then there exists a constant
  $C_{\nu}$ such
  that $|\chi_{\pi_{x,M}}(\nu)|_{p}\le C_\nu$ for all points $x$ of $\cX^\rig$.
\end{cor}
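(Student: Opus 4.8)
The plan is to deduce this bound formally from the structural Lemma~\ref{lem:H(M,lambda_M) has a basis which is finite over Z(M)} together with Corollary~\ref{cor: inequalities for centre}, which already handles intertwiners supported on a single $Z(M)$-coset. First I would fix, once and for all, an integer $e\ge 1$ and a $\Qpbar$-basis $\nu_1,\dots,\nu_d$ of $\mathcal{H}(M,\lambda_M)$ as provided by Lemma~\ref{lem:H(M,lambda_M) has a basis which is finite over Z(M)}, so that for each $k$ the $e$-fold convolution $\nu_k^{\ast e}$ (the $e$-fold product of $\nu_k$ with itself in the Hecke algebra) is supported on a single coset $t_k(J\cap M)$ for some $t_k\in Z(M)$. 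Since $\mathcal{H}(M,\lambda_M)$ and the decomposition $\tau=\oplus_i(\tau_i)^{d_i}$ depend only on the fixed inertial type and not on the point $x$, the basis, the exponent $e$, and the elements $t_k$ are all independent of $x$. Writing $\nu=\sum_{k=1}^d c_k\nu_k$ with fixed scalars $c_k\in\Qpbar$, and recalling that $\chi_{\pi_{x,M}}$ is the character (hence a $\Qpbar$-algebra homomorphism) by which $\mathcal{H}(M,\lambda_M)$ acts on $\Hom_{J\cap M}(\lambda_M,\pi_{x,M})$, the ultrametric inequality reduces the problem to bounding $|\chi_{\pi_{x,M}}(\nu_k)|_p$ uniformly in $x$ for each $k$.

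For a fixed $k$, I would proceed as follows. If $\nu_k^{\ast e}=0$ then $\chi_{\pi_{x,M}}(\nu_k)^e=\chi_{\pi_{x,M}}(\nu_k^{\ast e})=0$, so $\chi_{\pi_{x,M}}(\nu_k)=0$ for every $x$ and there is nothing to prove. Otherwise $\nu_k^{\ast e}$ is a nonzero intertwiner supported on the coset $t_k(J\cap M)$, so Corollary~\ref{cor: inequalities for centre} (applied with $t=t_k$ and this particular intertwiner) yields a constant $C_{t_k}$, independent of $x$, with $|\chi_{\pi_{x,M}}(\nu_k^{\ast e})|_p\le C_{t_k}$ for all points $x$ of $\mathcal{X}^{\rig}$. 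Using multiplicativity of $\chi_{\pi_{x,M}}$ once more, this gives $|\chi_{\pi_{x,M}}(\nu_k)|_p=|\chi_{\pi_{x,M}}(\nu_k^{\ast e})|_p^{1/e}\le C_{t_k}^{1/e}$. Taking $C_\nu:=\max_k |c_k|_p\, C_{t_k}^{1/e}$, where the maximum runs over those $k$ with $\nu_k^{\ast e}\neq 0$, then gives the asserted uniform bound $|\chi_{\pi_{x,M}}(\nu)|_p\le C_\nu$.

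I do not expect any genuine obstacle: this is essentially a formal consequence of the two cited results. The only points that require a word of care are that the data $(e, t_k, c_k)$ and the basis expansion of $\nu$ are genuinely independent of $x$ (clear, since they depend only on the fixed type $(J\cap M,\lambda_M)$), and the identification $\chi_{\pi_{x,M}}(\nu_k^{\ast e})=\chi_{\pi_{x,M}}(\nu_k)^e$, which uses that $\chi_{\pi_{x,M}}$ is an algebra homomorphism and that the convolution in $\mathcal{H}(M,\lambda_M)$ recalled in Sections~\ref{subsec:BK theory} and~\ref{subsec:SZ} is its ring multiplication. Both are immediate from the definitions.
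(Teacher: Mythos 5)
Your proof is correct and takes essentially the same route as the paper: reduce by the ultrametric inequality to basis elements, raise each to the $e$-th power using Lemma~\ref{lem:H(M,lambda_M) has a basis which is finite over Z(M)} to land on a single $Z(M)$-coset, apply Corollary~\ref{cor: inequalities for centre}, and finish by multiplicativity of $\chi_{\pi_{x,M}}$. The only addition is your explicit treatment of the case $\nu_k^{\ast e}=0$, which the paper leaves implicit (the bound is trivially satisfied there); this is a harmless, if slightly pedantic, refinement.
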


\begin{proof} Since $\nu$ has compact support and we only need some
  bounded constant $C_{\nu}$, it suffices to prove the claim in the
  case that
  $\nu$ is an element of the basis of $\cH(M,\lambda_M)$ given by
  Lemma~\ref{lem:H(M,lambda_M) has a basis which is finite over
    Z(M)}. Let $\nu'$ be the $e$-fold convolution of $\nu$ with itself,
  where $e$ is as in the statement of Lemma~\ref{lem:H(M,lambda_M) has
    a basis which is finite over Z(M)}; then from Corollary
  \ref{cor: inequalities for centre} applied to $\nu'$, we see that
  there is some constant $C_\nu$  such
  that $|\chi_{\pi_{x,M}}(\nu')|_p\leq C_\nu^e$ for all $x$. This
  implies that $|\chi_{\pi_{x,M}}(\nu)|_p\leq C_\nu$, as required.
\end{proof}

\begin{prop}\label{prop: weaker statement} For any
  $\nu\in\mathcal{H}(\sigma)$, there is a constant $C_\nu$ such
  that \[|\chi_{\pi_{x}}\bigl(\iota_\sigma^{-1}(\nu)\bigr)|_p\leq C_\nu\] for
  all points $x$ of $\mathcal{X}^{\rig}$.
\end{prop}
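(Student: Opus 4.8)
The plan is to transfer the estimate from $\cH(\sigma)$ down to the Hecke algebra $\cH(M,\lambda_M)$ of the Levi subgroup $M$, where Corollary~\ref{lem: inequalities for Levi} already supplies exactly such a uniform bound. Since $\iota_\sigma\colon\cH(\sigma_\sm)\iso\cH(\sigma)$ is an isomorphism, it is equivalent to show that for every $\mu\in\cH(\sigma_\sm)$ there is a constant $C_\mu$ with $|\chi_{\pi_x}(\mu)|_p\le C_\mu$ for all points $x$ of $\cX^\rig$; one then takes $C_\nu:=C_\mu$ for $\mu=\iota_\sigma^{-1}(\nu)$. Recall from Section~\ref{subsec:SZ} that $\cH(\sigma_\sm)=\cH\bigl(G,\sigma(\tau)\bigr)$ and that $s_P\circ t_P$ induces an isomorphism $\cH(M,\lambda_M)^{W_{[M,\omega]}}\iso\cH\bigl(G,\sigma(\tau)\bigr)$, with $\cH(M,\lambda_M)$ commutative. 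So for a given $\mu$ I would fix an element $\mu_M$ of the subalgebra $\cH(M,\lambda_M)^{W_{[M,\omega]}}\subseteq\cH(M,\lambda_M)$ mapping to $\mu$, and reduce to bounding $|\chi_{\pi_{x,M}}(\mu_M)|_p$, with $\chi_{\pi_{x,M}}$ as in the discussion preceding Corollary~\ref{cor: inequalities for centre}.

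The crux is the identity $\chi_{\pi_x}(\mu)=\chi_{\pi_{x,M}}(\mu_M)$ for all $x$. To prove it I would argue as follows. By Proposition~\ref{stronger_dat} --- and as exploited in the proof of Proposition~\ref{prop: rigid analytic local Langlands} --- under the identification $\cH(\sigma_\sm)\cong\mathfrak Z_\Omega$ the character $\chi_{\pi_x}$ is the character of $\mathfrak Z_\Omega$ attached to the cuspidal support of $\pi_x$; and since $\pi_x$ is the unique irreducible quotient of $\tilde\pi_x=\nind_P^G\pi_{x,M}$, that cuspidal support is the one determined by $\pi_{x,M}$. Dually, $\cH(M,\lambda_M)$ --- being commutative, hence its own centre, hence identified with the Bernstein centre $\mathfrak Z_D$ of the component of $M$ containing $\omega$ via the Bushnell--Kutzko equivalence --- acts on $\Hom_{J\cap M}(\lambda_M,\pi_{x,M})$ through the character $\chi_{\pi_{x,M}}$ of $\mathfrak Z_D$ attached to $\pi_{x,M}$. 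Finally, the inclusion $\mathfrak Z_\Omega=\mathfrak Z_D^{W(D)}\hookrightarrow\mathfrak Z_D$, which is precisely $s_P\circ t_P$ up to the identifications of Section~\ref{subsec:SZ} (with $W(D)=W_{[M,\omega]}$, by Lemma~\ref{invariants_W(D)} and Proposition~2.1 of~\cite{dat}), is dual to the finite quotient map $\Spec\mathfrak Z_D\to\Spec\mathfrak Z_\Omega$, along which ``the cuspidal support of $\pi_{x,M}$'' maps to ``the cuspidal support of $\pi_x$''. Combining these gives $\chi_{\pi_x}(\mu)=\chi_{\pi_{x,M}}(\mu_M)$.

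With this identity in hand the proof concludes at once: applying Corollary~\ref{lem: inequalities for Levi} to $\mu_M\in\cH(M,\lambda_M)$ yields a constant $C_{\mu_M}$ with $|\chi_{\pi_{x,M}}(\mu_M)|_p\le C_{\mu_M}$ for all $x\in\cX^\rig$, hence $|\chi_{\pi_x}(\iota_\sigma^{-1}(\nu))|_p\le C_{\mu_M}=:C_\nu$. I expect the only genuinely delicate point to be the middle paragraph: one must carry the Bushnell--Kutzko and Dat type-theoretic dictionaries, together with the Bernstein-centre identifications of Sections~\ref{subsec:BK theory}, \ref{subsec:SZ} and~\ref{god_save_the_queen}, far enough to be certain that the algebra inclusion $\cH(\sigma_\sm)=\cH(M,\lambda_M)^{W_{[M,\omega]}}\hookrightarrow\cH(M,\lambda_M)$ really does intertwine $\chi_{\pi_x}$ with the restriction of $\chi_{\pi_{x,M}}$, uniformly in the point $x$; all the analytic input is already in place, Corollary~\ref{lem: inequalities for Levi} having been reduced, via Corollaries~\ref{cor: inequalities from econ} and~\ref{cor: inequalities for centre}, to the results of~\cite{MR2560407}.
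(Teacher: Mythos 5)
Your proof is correct, and it has the same overall skeleton as the paper's: identify $\cH(\sigma)\cong\cH(\sigma_\sm)$, transport $\iota_\sigma^{-1}(\nu)$ to $\cH(M,\lambda_M)^{W_{[M,\omega]}}$ via $(s_P\circ t_P)^{-1}$, and feed the result into Corollary~\ref{lem: inequalities for Levi}. The genuine difference is in how you establish the pivotal compatibility $\chi_{\pi_x}(\mu)=\chi_{\pi_{x,M}}\bigl((s_P\circ t_P)^{-1}(\mu)\bigr)$. The paper proves this by an explicit Hecke-module computation: it uses the isomorphism $\cN\simeq\Hom_{\cH(M,\lambda_M)}(\cH(G,\lambda),\cM)$ coming from the fact that $\nind_P^G$ corresponds to pushforward along $t_P$, observes that for $z\in Z(\cH(G,\lambda))$ the right action on $\Hom$ is also a left action (so that the eigenvalue of $z$ on $\cN$ matches that of $t_P^{-1}(z)$ on $\cM$), and then passes to $e_K\cN\simeq\Hom_K(\sigma,\pi_x)$ via the idempotent $e_K$. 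You instead argue at the level of the Bernstein centre: you identify $\chi_{\pi_x}$ with the cuspidal-support point of $\pi_x$ in $\Spec\fkZ_\Omega$, identify $\chi_{\pi_{x,M}}$ with the cuspidal-support point of $\pi_{x,M}$ in $\Spec\fkZ_D$, and note that the inclusion $\fkZ_\Omega=\fkZ_D^{W(D)}\hookrightarrow\fkZ_D$ corresponds (after the identifications of Sections~\ref{subsec:SZ} and~\ref{god_save_the_queen}) to the finite quotient map on spectra, which sends $\pi_{x,M}$ to the cuspidal support of $\pi_x$. Both routes are sound: yours is more conceptual and makes the role of the Bernstein variety transparent, while the paper's is more self-contained, proceeding directly from the Hecke-algebra structure without invoking the dictionary between $\cH(M,\lambda_M)$ and $\fkZ_D$ (a dictionary that the paper, as you acknowledge, does not spell out explicitly, though it is standard from the Bushnell--Kutzko equivalence of categories for $M$). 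One small point of care in your middle paragraph: $s_P\circ t_P$ is itself an isomorphism $\cH(M,\lambda_M)^{W_{[M,\omega]}}\isoto\cH(\sigma_\sm)$, not the inclusion $\fkZ_D^{W(D)}\hookrightarrow\fkZ_D$; what you mean, and what works, is that $(s_P\circ t_P)^{-1}$ followed by the tautological inclusion $\cH(M,\lambda_M)^{W_{[M,\omega]}}\subseteq\cH(M,\lambda_M)$ realises $\fkZ_\Omega\hookrightarrow\fkZ_D$ under the identifications.
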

\begin{proof}
  Recall from Section~\ref{subsec:SZ} that we have an isomorphism
  \[(s_P\circ{t_P}):\cH(M,\lambda_M)^{W_{[M,\pi_{x,M}]}}\to\cH(\sigma_{\mathrm
    sm}).\] Set $\nu_t:= (s_P\circ
  t_P)^{-1}\bigl(\iota_\sigma^{-1}(\nu)\bigr)$. Corollary~\ref{lem: inequalities for
    Levi} implies that there is a constant $C_\nu$ such
  that $|\chi_{\pi_{x,M}}(\nu_t)|_p \leq C_\nu$ for all
  $x$. 

  Now, in order to conclude, we just need to relate
  $\chi_{\pi_{x,M}}(\nu_M)$ to $\chi_{\pi_x}\bigl((s_P\circ t_P)(\nu_M)\bigr)$ for any
  $\nu_M\in\cH(M,\lambda_M)^{W_{[M,\pi_{x,M}]}}$. As recalled in Section~\ref{subsec:BK theory}, $\nind_P^G$ corresponds on the
  level of Hecke modules to pushforward along the map $t_P$. More
  precisely, if we let $\cM:=\mathrm{Hom}_{J\cap M}(\lambda_M,\pi_{x,M})$
  and $\cN:=\mathrm{Hom}_J(\lambda,\pi_x)$, then $\cN\simeq
  \mathrm{Hom}_{\cH(M,\lambda_M)}(\cH(G,\lambda),\cM)$. Here we view
  $\cH(G,\lambda)$ as a left $\cH(M, \lambda_M)$-module via $t_P$ and
  the action of $\cH(G,\lambda)$ on the space of homomorphisms is via
  right translation. For $z\in Z\bigl(\cH(G,\lambda)\bigr)$, we note that the
  right action is also a left action, so the eigenvalue of $z$ on
  $\cN$ is the same as the eigenvalue of $(t_P)^{-1}(z)$ on $\cM$. On
  the other hand, $\mathrm{Hom}_K(\sigma,\pi_x)\simeq e_K\cN$ for the
  idempotent $e_K$ in $\cH(G,\lambda)$ which defines
  $\sigma$. Therefore, any eigenvalue of $e_K*z$ on $e_K\cN$ is an
  eigenvalue of $z$ on $\cN$.
  We deduce that
  $\chi_{\pi_{x,M}}(\nu_M)$=$\chi_{\pi_x}\bigl((s_P\circ t_P)(\nu_M)\bigr)$.
\end{proof}

\begin{proof}
    [Proof of Theorem~\ref{thm: algebraic local Langlands}]
    By \cite[Thm 3.3.8]{kisindefrings}
we know that
$R_{\tp}^\square(\sigma)[1/p]$ is a regular ring. Proposition~7.3.6 of~\cite{deJ} (which is
applicable because $R_{\tp}^\square(\sigma)[1/p]$ is in particular normal) then implies that
the ring
of rigid analytic functions on $\cX^{\rig}$ whose absolute value is bounded by $1$ coincides precisely with the
normalisation of $R^\square_{\tp}(\sigma)$ in $R^\square_{\tp}(\sigma)[1/p]$, and so
the ring of bounded rigid analytic functions on $\cX^\rig$ is equal to
$R^\square_{\tp}(\sigma)[1/p]$.

The result then follows immediately from Proposition~\ref{prop: weaker
  statement} and the defining property of $\eta$.
 \end{proof}

\begin{remark}\label{inequalities in the crystalline case} In order to
  deduce Theorem~\ref{thm: algebraic local Langlands} from
  Proposition~\ref{prop: rigid analytic local Langlands} in the
  \emph{crystalline} case (that is, the case that $\sigma_{\mathrm{sm}}$ is the trivial representation), one could appeal directly to the inequalities in Proposition 3.2 of~\cite{MR2359853} (see also~\cite{MR2290601}).  In this case, one can obtain a precise bound in terms of the Hodge--Tate weights (so in terms of $\sigma = \sigma_{\mathrm{alg}}$) on the power of $p$ by which we need to scale the usual generators of the spherical Hecke algebra $\cH(\sigma)$. Therefore, in the crystalline case, one can prove that the integral Hecke algebra $\cH(\sigma^\circ)$ (with $\sigma^\circ$ the algebraic representation of $K$ over $\cO$ satisfying $\sigma^\circ\otimes_{\cO}E\simeq\sigma_{\mathrm{alg}}$) maps to the normalisation of the local deformation ring of type $\sigma$. We expect a statement like this should hold true in the general case as well, see Remark~\ref{rem: bounded Hecke} for more details.
\end{remark}

We now return to the global setting. Let the notation be as in Section~\ref{sec:patching}. Fix a $K$-stable $\cO$-lattice $\sigma^\circ$ in $\sigma$.
Set \[M_\infty(\sigma^\circ):=\left(\Hom^{\mathrm{cont}}_{\cO[[K]]}(M_\infty,(\sigma^\circ)^d)\right)^d,\]
where we are considering homomorphisms that are continuous for the profinite topology on $M_\infty$ and the $p$-adic topology on $(\sigma^\circ)^d$,
and where we equip $\Hom^{\mathrm{cont}}_{\cO[[K]]}(M_\infty,(\sigma^\circ)^d)$
with the $p$-adic topology. Note that $M_\infty(\sigma^\circ)$ is an $\cO$-torsion free, profinite, linear-topological $\cO$-module.

\begin{lem}\label{furniture}There is  a natural isomorphism of topological $\cO$-modules
$$M_\infty(\sigma^\circ)\cong \varprojlim_n \left(\Hom^{\mathrm{cont}}_{\cO[[K]]}(M_\infty,(\sigma^\circ/\varpi^n)^\vee) \right)^{\vee}.$$
\end{lem}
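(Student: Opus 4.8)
The idea is to reduce the statement to a formal manipulation of continuous Hom's and of Schikhof/Pontrjagin duality, the only substantive input being the projectivity of $M_\infty$ from Proposition~\ref{prop:Minfty is projective and has a G-action}. Throughout write $B:=(\sigma^\circ)^d$; since $\sigma^\circ$ is a finite free $\cO$-module, $B=\Hom_\cO(\sigma^\circ,\cO)$ is again finite free over $\cO$, one has natural identifications $B/\varpi^n B\cong(\sigma^\circ/\varpi^n)^\vee$, and $B=\varprojlim_n B/\varpi^n B$ as topological $\cO$-modules (its $\varpi$-adic topology being the profinite one). Set
\[ A:=\Hom^{\mathrm{cont}}_{\cO[[K]]}(M_\infty,B),\qquad A_n:=\Hom^{\mathrm{cont}}_{\cO[[K]]}(M_\infty,(\sigma^\circ/\varpi^n)^\vee), \]
so that $M_\infty(\sigma^\circ)=A^d$ by definition, and the right-hand side of the lemma is $\varprojlim_n A_n^\vee$. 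Since a continuous homomorphism into an inverse limit of (discrete) topological modules is the same thing as a compatible system of continuous homomorphisms, $\Hom^{\mathrm{cont}}_{\cO[[K]]}(M_\infty,-)$ commutes with the limit $B=\varprojlim_n B/\varpi^n B$, giving $A=\varprojlim_n A_n$ as topological $\cO$-modules.

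The one point requiring real input is the identity $A/\varpi^n A\cong A_n$. Here I would argue that $M_\infty$ is projective not merely over $S_\infty[[K]]$ but over $\cO[[K]]$: indeed $S_\infty[[K]]=S_\infty\,\widehat\otimes_\cO\,\cO[[K]]$ is pro-free over $\cO[[K]]$ (as $S_\infty$ is a power series ring over $\cO$), so a finitely generated projective $S_\infty[[K]]$-module is projective over $\cO[[K]]$. Consequently $\Hom^{\mathrm{cont}}_{\cO[[K]]}(M_\infty,-)$ is exact; applying it to the short exact sequence of topological $\cO[[K]]$-modules
\[ 0\longrightarrow B\xrightarrow{\ \varpi^n\ } B\longrightarrow (\sigma^\circ/\varpi^n)^\vee\longrightarrow 0 \]
yields $A/\varpi^n A\xrightarrow{\sim}A_n$, compatibly with the transition maps. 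In particular $A$ is $\cO$-torsion free (being a submodule of $\Hom^{\mathrm{cont}}_\cO(M_\infty,B)$, which is $\cO$-torsion free by Schikhof duality) and, combining with the previous paragraph, $A=\varprojlim_n A/\varpi^n A$, i.e.\ $A$ is $\varpi$-adically complete and separated.

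Finally I would compute, purely formally,
\[ M_\infty(\sigma^\circ)=A^d=\Hom_\cO(A,\cO)=\Hom_\cO\Bigl(A,\varprojlim_n\cO/\varpi^n\Bigr)=\varprojlim_n\Hom_\cO(A,\cO/\varpi^n)=\varprojlim_n\Hom_\cO(A/\varpi^n A,\cO/\varpi^n), \]
using that $\Hom_\cO(A,-)$ preserves inverse limits. As $A/\varpi^n A\cong A_n$ is a $\varpi^n$-torsion $\cO$-module, $\Hom_\cO(A/\varpi^n A,\cO/\varpi^n)=\Hom_\cO(A_n,\varpi^{-n}\cO/\cO)=A_n^\vee$, whence $M_\infty(\sigma^\circ)\cong\varprojlim_n A_n^\vee$. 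The remaining checks are routine: that the transition maps on both sides agree (the dual of the natural map $A_n\hookrightarrow A_{n+1}$ induced by $(\sigma^\circ/\varpi^n)^\vee\hookrightarrow(\sigma^\circ/\varpi^{n+1})^\vee$ matches the transition map coming from the formal computation, which after tracing through the identification $A/\varpi^n A\cong A_n$ amounts to the same multiplication-by-$\varpi$ map), that the isomorphism is topological (both sides carry the inverse limit topology of the profinite modules $A_n^\vee$), and that the whole construction is natural in $\sigma^\circ$. The main obstacle, such as it is, is the passage $A/\varpi^n A\cong A_n$, i.e.\ establishing that $\Hom^{\mathrm{cont}}_{\cO[[K]]}(M_\infty,-)$ is exact via projectivity of $M_\infty$ over $\cO[[K]]$; the rest is bookkeeping with duality and limits, the only delicate aspect being to track topologies carefully enough to conclude an isomorphism of \emph{topological} $\cO$-modules.
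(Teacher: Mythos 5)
The proposal is correct and follows essentially the same route as the paper's proof: use projectivity of $M_\infty$ over $\cO[[K]]$ (to which the paper passes without comment, while you helpfully spell out that it follows from projectivity over $S_\infty[[K]]$ since $S_\infty[[K]]$ is pro-free over $\cO[[K]]$) on the short exact sequence $0\to(\sigma^\circ)^d\xrightarrow{\varpi^n}(\sigma^\circ)^d\to(\sigma^\circ)^d/\varpi^n\to 0$, and then unwind Schikhof duality as $\varprojlim_n(H/\varpi^n)^\vee$. Your extra observation that $A=\varprojlim_n A_n$ and hence that $A$ is $\varpi$-adically complete is harmless but not needed for the computation.
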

\begin{proof} Let $H:=\Hom^{\mathrm{cont}}_{\cO[[K]]}(M_\infty,(\sigma^\circ)^d)$, so that $M_\infty(\sigma^\circ)= H^d$. Then
$H^d\cong \varprojlim_n \Hom_{\cO}( H, \cO/\varpi^n)\cong \varprojlim_n \Hom_{\cO}( H/\varpi^n, \cO/\varpi^n)\cong
\varprojlim_n (H/\varpi^n)^{\vee}.$ Since $M_{\infty}$ is a projective $\cO[[K]]$-module, the short exact sequence
\[0\rightarrow (\sigma^\circ)^d\overset{ \varpi^n \! \cdot}{\rightarrow} (\sigma^\circ)^d \rightarrow (\sigma^\circ)^d/\varpi^n\rightarrow 0\] yields an isomorphism
$H/\varpi^n\cong \Hom^{\mathrm{cont}}_{\cO[[K]]}(M_\infty, (\sigma^\circ)^d/\varpi^n)$. Finally, \[(\sigma^\circ)^d/\varpi^n\cong \Hom_{\cO}(\sigma^{\circ}/\varpi^n, \cO/\varpi^n)\cong (\sigma^\circ/\varpi^n)^{\vee}.\]
\end{proof}

\begin{remark} One may modify the proof of Lemma \ref{furniture} to show that $M_{\infty}(\sigma^\circ)$ is naturally isomorphic to $\left(\Hom^{\mathrm{cont}}_{\cO[[K]]}(M_\infty,(\sigma^\circ)^{\vee})\right)^{\vee}.$
\end{remark}

\begin{rem}
  $M_\infty(\sigma^\circ)$ is essentially the patched module constructed
in Section 5.5 of~\cite{emertongeerefinedBM}, although as the
conventions and constructions of the current paper differ slightly
from those of~\cite{emertongeerefinedBM} (see e.g.\ the difference
in the choices of $v_1$, noted in the discussion of Subsection~\ref{subsec:unitary groups},
as well as Remark~\ref{rem:duality explanation})
we will not make this precise.
\end{rem}

Let
$\cH(\sigma^\circ):=\End_G(\cInd_K^G\sigma^\circ)$; this is an
$\cO_E$-subalgebra of $\cH(\sigma)$. Note that since $\sigma^\circ$ is a free $\cO$-module of finite rank, it follows from the proof of Theorem 1.2 of \cite{MR1900706} that Schikhof duality induces an isomorphism
$$\Hom^{\mathrm{cont}}_{\cO[[K]]}(M_\infty,(\sigma^\circ)^d)\cong \Hom_K(\sigma^\circ, (M_{\infty})^d).$$
Frobenius reciprocity gives  $\Hom_K(\sigma^\circ, (M_{\infty})^d)\cong \Hom_G(\cInd_K^G \sigma^{\circ}, (M_{\infty})^d)$. Thus $M_\infty(\sigma^\circ)$ is equipped
with a tautological Hecke action of $\cH(\sigma^\circ)$, which
commutes with the action of $R_\infty$.

Let $R_\infty(\sigma)$ be the quotient of $R_\infty$ which acts
faithfully on $M_\infty(\sigma^\circ)$. (It follows from Lemma 2.16 of~\cite{paskunasBM} that this is independent
of the choice of lattice $\sigma^\circ\subset \sigma$.)

Set
$R_\infty(\sigma)':=R_\infty\otimes_{R_{\tp}^\square}R_{\tp}^\square(\sigma)$.

\begin{lem}
\label{lem:classical lg compatibility}
\begin{enumerate}
\item $R_\infty(\sigma)$ is a reduced $\cO$-torsion free quotient of $R_\infty(\sigma)'$.
\item  If $h\in\cH(\sigma^\circ)$ is such that $\eta(h)\in
  R_{\tilde{\mathfrak{p}}}^\square(\sigma)$, then the action of $h$ on
  $M_\infty(\sigma^\circ)$ agrees with the action of $\eta(h)$ via the natural
  map $R_{\tilde{\mathfrak{p}}}^\square(\sigma)\to R_\infty(\sigma)'$.
\end{enumerate}
\end{lem}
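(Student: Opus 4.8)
The plan is to prove the two assertions in turn, using the interpolation property of $\eta$ (Theorem~\ref{thm: algebraic local Langlands}) together with the local-global compatibility of the patched module with completed cohomology (Corollary~\ref{cor: comparison with completed cohomology}), and the classical local-global compatibility of Galois representations attached to automorphic forms on the definite unitary group $\tG$.

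First I would address (1). The key input is that $M_\infty(\sigma^\circ)$ is, by its very construction, built from spaces of algebraic modular forms of type $\sigma$ (i.e.\ inertial type $\tau$ and weight $\xi$), so by~\eqref{gal-reps} the action of $R_\infty$ on $M_\infty(\sigma^\circ)$ factors through the quotient $R_\infty(\sigma)'$ obtained by imposing the local potentially crystalline condition of type $\sigma$ at $\tp$; this gives the fact that $R_\infty(\sigma)$ is a quotient of $R_\infty(\sigma)'$. For $\cO$-torsion freeness, one notes that $M_\infty(\sigma^\circ)$ is by definition Schikhof-dual to an $\cO$-torsion free module (a closed $\cO[[K]]$-submodule of the $\cO$-torsion free $(M_\infty)^d$, since $M_\infty$ is projective over $S_\infty[[K]]$ hence $\cO$-flat), so $M_\infty(\sigma^\circ)$ is $\cO$-torsion free, and since $R_\infty$ acts faithfully on it, $R_\infty(\sigma)$ is $\cO$-torsion free; alternatively this is essentially the content of the analogous statement in~\cite{emertongeerefinedBM}, Section~5.5, which applies since $M_\infty(\sigma^\circ)$ agrees with the patched module there. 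For reducedness: $M_\infty(\sigma^\circ)[1/p]$ is supported on a union of irreducible components of $\Spec R_\infty(\sigma)'[1/p]$, and $R_\infty(\sigma)'[1/p]$ is regular (being a power series ring over a completed tensor product of the rings $R_\tv^{\square,\xi,\tau}[1/p]$ and $R_\tp^\square(\sigma)[1/p]$, each regular by~\cite[Thm 3.3.8]{kisindefrings}), so any quotient by an ideal cut out by a union of components is reduced; combined with $\cO$-torsion freeness we conclude $R_\infty(\sigma)$ is reduced.

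For (2), the strategy is to reduce to a density statement: it suffices to check the identity of two elements of $R_\infty(\sigma)$ — namely the image of $h$ under the tautological Hecke action, and the image of $\eta(h)$ under $R_\tp^\square(\sigma)\to R_\infty(\sigma)'\to R_\infty(\sigma)$ — after specialising at a Zariski-dense set of closed points of $\Spec R_\infty(\sigma)[1/p]$, since $R_\infty(\sigma)[1/p]$ is reduced and Jacobson. The automorphic points (those in the support of the patched module coming from genuine classical automorphic forms, which are Zariski-dense in each component by the Taylor--Wiles--Kisin method) are a convenient such set. At such a point $y$, the fibre $M_\infty(\sigma^\circ)\otimes_{R_\infty,y}E_y$ is (up to the usual bookkeeping with $S_\infty$ and $\ga$, via Corollary~\ref{cor: comparison with completed cohomology}) a space of $p$-adic automorphic forms whose locally algebraic vectors of type $\sigma$ are a direct sum of copies of $\Hom_K(\sigma,\pi_{\lalg}(\rho_y)) = \Hom_K(\sigma,\pi_{\lalg,x})$ where $x$ is the corresponding point of $\Spec R_\tp^\square(\sigma)[1/p]$, by classical local-global compatibility (the Galois representation attached to the relevant automorphic representation restricts at $\tp$ to $\rho_y = \rho_x$, and its Weil--Deligne representation matches the local component via $\rec_p$). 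The tautological Hecke action of $h$ on this fibre is therefore the action of $h$ on $\Hom_K(\sigma,\pi_{\lalg,x})$, which by Theorem~\ref{thm: algebraic local Langlands} is exactly $\eta(h)$ evaluated at $x$, i.e.\ the specialisation of $\eta(h)\in R_\tp^\square(\sigma)$ at $y$. Hence the two actions agree at $y$.

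The main obstacle I expect is the last step: carefully matching the tautological Hecke action on the fibre $M_\infty(\sigma^\circ)\otimes_{R_\infty,y}E_y$ with the $\cH(\sigma)$-action on $\Hom_K(\sigma,\pi_{\lalg,x})$. This requires tracking the duality conventions in the definition of $M_\infty(\sigma^\circ)$ (two Schikhof/Pontrjagin duals and a Frobenius reciprocity), verifying that the Hecke action is the expected one at all the auxiliary levels and survives the passage to the limit, and ensuring that the automorphic points at which we test are genuinely Zariski-dense in $\Spec R_\infty(\sigma)[1/p]$ (which is where the input from~\cite{BLGGT} and the hypothesis $p\nmid 2n$ enters, via the Taylor--Wiles patching and the potential diagonalizability of $r_{\textrm{pot.diag}}$). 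Once the identity holds at a Zariski-dense set of points of the reduced ring $R_\infty(\sigma)$, it holds identically.
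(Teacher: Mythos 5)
There is a genuine gap in part (2). You propose to verify the identity $h = \eta(h)$ as endomorphisms of $M_\infty(\sigma^\circ)$ by specializing at a Zariski-dense set of closed points of $\Spec R_\infty(\sigma)[1/p]$, taking as your test set the ``automorphic points.'' But these points are \emph{not} Zariski-dense in $\Spec R_\infty(\sigma)[1/p]$: by construction they lie on the proper closed subscheme cut out by the ideal $\ga$ of the auxiliary patching variables $y_i, z_i$, which has codimension equal to $\dim S_\infty$ in $\Spec R_\infty(\sigma)'$. So the density you need simply fails. (The Taylor--Wiles--Kisin method gives Zariski-density of automorphic points in the \emph{local} deformation ring $\Spec R^\square_{\tp}(\sigma)[1/p]$ --- or rather in the union of its automorphic components --- but not in the full patched ring after adjoining the formal variables of $S_\infty$.) The paper's proof avoids this altogether by working at finite level, before patching and before inverting $p$: it unwinds the definition of the Hecke action on $M_\infty(\sigma^\circ)$ via the projective limit, reduces to the corresponding assertion on the spaces $S_{\xi,\tau}\bigl(U_1(Q_{N'(N)})_{0},(\sigma^\circ)^d\bigr)_{\m_{Q_{N'(N)}}}$, where after inverting $p$ the space decomposes into classical automorphic representations and classical local-global compatibility applies directly, and then upgrades from rational to integral equality using $\cO$-torsion-freeness of that space. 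Your final paragraph correctly anticipates that ``carefully matching'' the Hecke action through the dualities and limits is the real work, but the density shortcut you propose to substitute does not hold.

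A lesser issue appears in part (1). You assert that the action of $R_\infty$ on $M_\infty(\sigma^\circ)$ factors through $R_\infty(\sigma)'$ ``by~\eqref{gal-reps},'' but \eqref{gal-reps} produces a deformation of type $\cS_{Q_N}$, and the deformation problem $\cS_{Q_N}$ uses the \emph{unrestricted} lifting ring $R^\square_{\tp}$ at $\tp$ --- no potentially crystalline condition of type $\sigma$ is imposed there, precisely because $M_\infty$ is patched over all levels and weights at $\p$. The condition at $\tp$ only enters after taking $\Hom$ into $(\sigma^\circ)^d$, which converts the patched module into a limit of classical spaces of algebraic modular forms of weight $(\sigma^\circ)^d$ at $\p$; one must then invoke classical local-global compatibility at the closed points of the (reduced, $\cO$-torsion-free) Hecke algebra acting on those spaces. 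So this step requires the same finite-level unraveling that you elide. Your arguments for $\cO$-torsion-freeness and for reducedness of $R_\infty(\sigma)$ are fine and agree with the paper's.
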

\begin{proof}
(1) That $R_{\infty}(\sigma)$ is $\cO$-torsion free
follows immediately from the fact that by definition it acts faithfully
on the $\cO$-torsion free module $M_{\infty}(\sigma^{\circ}).$
The fact that it is actually a quotient of $R_{\infty}(\sigma)'$
is then essentially an immediate consequence of classical
local-global compatibility at $\tp$, but to see this will require a
little unraveling of the definitions. Note
that if $N$ is sufficiently large, then $K_N$ acts trivially on
$(\sigma^\circ)^d/\varpi^N$. Recall that $\Gamma_{N}$ is defined to be $\GL_n(\cO_F/\varpi_F^N\cO_F)$. Using Lemma \ref{furniture}, we see that
\begin{align*}
  M_\infty(\sigma^\circ)&=\varprojlim_{N}\Hom_{\cO[\Gamma_{2N}]}\left((M_\infty/\gb_N)_{K_{2N}},(\sigma^\circ)^d/\varpi^N\right)^{\vee}\\
  &=\varprojlim_{N}\Hom_{\cO[\Gamma_{2N}]}\left((M^{\square}_{1,Q_{N'(N)}}/\gb_N)_{K_{2N}},(\sigma^\circ)^d/\varpi^N\right)^{\vee},
\end{align*}
so it suffices to show that if $N\gg 0$ then the action of $R_\infty$ on
\[
\Hom_{\cO[[K]]}^\mathrm{cont}\bigl(M^{\square}_{1,Q_{N'(N)}}, (\sigma^\circ)^d/\varpi^N\bigr)
\]
factors through $R_\infty(\sigma)'$. Now, by definition we have
\begin{multline*}
M^\square_{1,Q_{N'(N)}}=  \\ \pr^\vee\bigl(
S_{\xi,\tau}(U_1(Q_{N'(N)})_{2N'(N)},\cO/\varpi^{N'(N)})^\vee_{\m_{Q_{N'(N)}}}\bigr)\otimes_{R_{\cS_{Q_{N'(N)}}}^{\univ}}R_{\cS_{Q_{N'(N)}}}^{\square_T},
\end{multline*}
so it suffices to prove the same result
for \begin{multline*}
\Hom_{\cO[[K]]}^\mathrm{cont}\Bigl(S_{\xi,\tau}\bigl(U_1(Q_{N'(N)})_{2N'(N)},
\cO/\varpi^{N'(N)}\bigr)^\vee_{\m_{Q_{N'(N)}}}\otimes_{R_{\cS_{Q_{N'(N)}}}^{\univ}}R_{\cS_{Q_{N'(N)}}}^{\square_T},
\\
(\sigma^\circ)^d/\varpi^N\Bigr),
\end{multline*}
which is equal to \begin{multline*}
\Hom_{\cO[[K]]}^{\cont}\Bigl(S_{\xi,\tau}\bigl(U_1(Q_{N'(N)})_{2N'(N)},\cO/\varpi^{N'(N)}\bigr)^\vee_{\m_{Q_{N'(N)}}},
\\ (\sigma^\circ)^d/\varpi^N\Bigr)\otimes_{R_{\cS_{Q_{N'(N)}}}^{\univ}}\bigl(R_{\cS_{Q_{N'(N)}}}^{\square_T}\bigr)^\vee,
\end{multline*}
which in turn equals \[S_{\xi,\tau}\bigl(U_1(Q_{N'(N)})_{0},(\sigma^\circ)^d/\varpi^N\bigr)_{\m_{Q_{N'(N)}}}\otimes_{R_{\cS_{Q_{N'(N)}}}^{\univ}}\bigl(R_{\cS_{Q_{N'(N)}}}^{\square_T}\bigr)^\vee.\]
Therefore it would suffice to prove the same result
for \[S_{\xi,\tau}\bigl(U_1(Q_{N'(N)})_{0},(\sigma^\circ)^d\bigr)_{\m_{Q_{N'(N)}}}\otimes_{R_{\cS_{Q_{N'(N)}}}^{\univ}}\bigl(R_{\cS_{Q_{N'(N)}}}^{\square_T}\bigr)^\vee.\]
If $\T$ denotes the image of $\T^{S_p\cup Q_{N'(N)},\univ}$ in
the endomorphism ring
$$\End_\cO\Bigl(S_{\xi,\tau}\bigl(U_1(Q_{N'(N)})_{0},(\sigma^\circ)^d\bigr)_{\m_{Q_{N'(N)}}}\Bigr),$$
then the action of $R_{\cS_{Q_{N'(N)}}}^{\univ}$ on
$S_{\xi,\tau}\bigl(U_1(Q_{N'(N)})_{0},(\sigma^\circ)^d\bigr)_{\m_{Q_{N'(N)}}}$ is given by an
$\cO$-algebra homomorphism $R_{\cS_{Q_{N'(N)}}}^{\univ}\to\T$.
Since the space of automorphic forms
$S_{\xi,\tau}\bigl(U_1(Q_{N'(N)})_{0},(\sigma^\circ)^d\bigr)_{\m_{Q_{N'(N)}}}$
is $\cO$-torsion free (by the choice of $U_{m,v_1}$ in Section~\ref{subsec:unitary groups})
and the algebra $\T$ is reduced (by the usual comparison between algebraic
modular forms and classical automorphic forms, and the semisimplicity of the
space of cuspidal automorphic forms; \emph{cf.}~\cite[Corollary 3.3.3 and
\S3.4]{cht}),
then by the definition of $R_{\tp}^\square(\sigma)$, we need to show that if
$\T\to\Qpbar$ is a closed point, then the restriction to $G_{F_{\tp}}$
of the corresponding Galois representation $G_{\tF^+,T\cup Q_{N'(N)}}\to\cG_n(\Qpbar)$ is
potentially crystalline of type $\sigma$; but this is immediate
from classical local-global compatibility (see e.g.\ Theorem 1.1
of~\cite{1202.4683}).

Finally, to see that $R_{\infty}(\sigma)$ is reduced, we note that
by part~(2) of Lemma~\ref{lem:basic facts about def rings acting on patched modules} below,
the ring $R_{\infty}(\sigma)[1/p]$
is a direct factor of the regular (by \cite[Thm 3.3.8]{kisindefrings})
ring $R_{\infty}(\sigma)'[1/p].$
Thus $R_{\infty}(\sigma)[1/p]$ is regular, and in particular reduced, and hence
so is its subring $R_{\infty}(\sigma).$ (The reader can easily check that
this reducedness is not used in the proof of Lemma~\ref{lem:basic facts about def rings
acting on patched modules}, and hence no circularity is involved in this argument.)

(2) Again, this is
  essentially an immediate consequence of classical local-global
  compatibility at $\tp$, but a little explanation is needed in order to
  make this plain.

  Note first that the natural action of $\cH(\sigma^\circ)$ on
  $M_\infty(\sigma^\circ)$ is induced via Frobenius reciprocity from
  the $G$-action on $M_\infty$, which is patched from the partial
  $G$-actions defined
  by \[\alphabar_{N'(N)}:(M_\infty/\mathfrak{b}_N)_{K_{2N}}\to\cInd_{KZ}^{G_N}\bigl((M_\infty/\mathfrak{b}_N)_{K_N}\bigr)\]
  and these in turn, after taking homomorphisms into
  $(\sigma^\circ)^d/\varpi^N$, induce
  partial $\cH(\sigma^\circ)$-actions on spaces of algebraic modular
  forms of weight $(\sigma^\circ)^d$. (More precisely, the $G$-action
  on $\varpi$-adically completed
  cohomology \[\tilde S_{\xi,\tau}\bigl(U_1^\p(Q_{N'(N)}),\cO\bigr):=
  \varprojlim_s\Bigl(\varinjlim_mS_{\xi,\tau}\bigl(U_1(Q_{N'(N)})_{m},\cO/\varpi^s\bigr)\Bigr)\]
  gives rise, via Frobenius reciprocity and the identification \[S_{\xi,\tau}\bigl(U_1^\p(Q_{N'(N)})_{0},(\sigma^\circ)^d\bigr)\simeq \Hom_K\Bigl(\sigma^\circ, \tilde S_{\xi,\tau}\bigl(U_1^\p(Q_{N'(N)}),\cO\bigr)\Bigr),\] to a natural action of $\cH(\sigma^\circ)$ on $S_{\xi,\tau}\bigl(U_1(Q_{N'(N)})_{0},(\sigma^\circ)^d\bigr)$.) We see therefore, as in part (1),
  that it is enough to consider the natural action of each
  $h\in\cH(\sigma^\circ)$ on the
  spaces \[S_{\xi,\tau}\bigl(U_1(Q_{N'(N)})_{0},(\sigma^\circ)^d\bigr)_{\m_{Q_{N'(N)}}}\otimes_{R_{\cS_{Q_{N'(N)}}}^{\univ}}\bigl(R_{\cS_{Q_{N'(N)}}}^{\square_T}\bigr)^\vee\]
  for $N\gg 0$. In addition to the natural action of
  $\cH(\sigma^\circ)$, this is equipped with an action of
  $R_{\tp}^\square(\sigma)$ via the composite $R_{\tp}^\square\to
  R^\loc\to R_{\cS_{Q_{N'(N)}}}^{\square_T}$, which factors through
  $R_{\tp}^\square(\sigma)$ by part
  (1). By classical local-global
  compatibility and the defining property of the morphism $\eta$ of Theorem~\ref{thm:
    algebraic local Langlands}, we see that, after inverting $p$, the action of $h$ on this
  space
  agrees with the action of $\eta(h)$. The desired result now follows
  from the fact that
  $S_{\xi,\tau}\bigl(U_1(Q_{N'(N)})_{0},(\sigma^\circ)^d\bigr)_{\m_{Q_{N'(N)}}}$
  is $\cO$-torsion free.
\end{proof}

We now use the usual commutative algebra arguments underlying the
Taylor--Wiles--Kisin method to study the support of
$M_\infty(\sigma^\circ)$.\begin{lem}
\label{lem:basic facts about def rings acting on patched modules}
\begin{enumerate}
\item The module $M_{\infty}(\sigma^{\circ})$ is finitely generated
over $R_{\infty}(\sigma)$ and Cohen--Macaulay,
and moreover $M_\infty(\sigma^\circ)[1/p]$ is
locally free of rank one over $R_\infty(\sigma)[1/p]$.
The topology on $M_{\infty}(\sigma^{\circ})$ coincides with
its $\mathfrak m$-adic topology, where $\mathfrak m$ denotes
the maximal ideal of $R_{\infty}(\sigma)$.
\item The support of $M_\infty(\sigma^\circ)$ in $\Spec R_{\infty}(\sigma)'$
is a union of irreducible components of $R_{\infty}(\sigma)'$. 

\item Let $\barR_\infty(\sigma)$ be the normalisation of
  $R_\infty(\sigma)$ inside $R_\infty(\sigma)[1/p]$. Then the action
  of $\cH(\sigma^\circ)$ on $M_\infty(\sigma^\circ)$ induces an $\cO$-algebra map
  $\alpha:\cH(\sigma^\circ) \to \barR_\infty(\sigma)$.
\end{enumerate}
\end{lem}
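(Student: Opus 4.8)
The plan is to run the standard Cohen--Macaulayness/commutative-algebra machinery underlying the Taylor--Wiles--Kisin method for parts~(1) and~(2), and then to deduce part~(3) formally from the rank-one statement in part~(1) together with Lemma~\ref{lem:classical lg compatibility}; I will be brief on the first two parts, which are routine, and say more about part~(3).

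\emph{Parts (1) and (2).} First I would record that $M_\infty(\sigma^\circ)$ is a finite \emph{free} $S_\infty$-module: this follows from Proposition~\ref{prop:Minfty is projective and has a G-action} (projectivity of $M_\infty$ over $S_\infty[[K]]$), the description of $M_\infty(\sigma^\circ)$ in Lemma~\ref{furniture}, and the projectivity of each $(M_\infty/\gb_N)_{K_{2N}}$ over $S_\infty/\gb_N[\Gamma_{2N}]$, exactly as in earlier patching constructions (\emph{cf.}~\cite{emertongeerefinedBM}). Since the $S_\infty$-action factors through $R_\infty$, hence through $R_\infty(\sigma)$, the module $M_\infty(\sigma^\circ)$ is a fortiori finite over $R_\infty(\sigma)$; and because it is $\cO$-torsion free and faithful over $R_\infty(\sigma)$ while being free over $S_\infty$, the inclusion $R_\infty(\sigma)\hookrightarrow\End_{S_\infty}(M_\infty(\sigma^\circ))$ forces $R_\infty(\sigma)$ to be module-finite over $S_\infty$, so the profinite topology on $M_\infty(\sigma^\circ)$ --- which is its $\mathfrak m_{S_\infty}$-adic topology, as always for finite modules over a complete Noetherian local ring --- coincides with its $\mathfrak m_{R_\infty(\sigma)}$-adic topology. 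For part~(2), the essential input is the numerical coincidence $\dim R_\infty(\sigma)'=\dim S_\infty$, which holds by the very choice of the number $q-[\tF^+:\Q]n(n-1)/2$ of patching variables $x_i$, using Kisin's dimension formula for the potentially crystalline deformation ring $R_{\tp}^\square(\sigma)$ of regular weight and the identity $\sum_{v\in S_p}[F:\Qp]=[\tF^+:\Q]$. Freeness over $S_\infty$ makes $M_\infty(\sigma^\circ)$ maximal Cohen--Macaulay over $S_\infty$, so $\operatorname{depth}_{R_\infty(\sigma)'}M_\infty(\sigma^\circ)=\operatorname{depth}_{S_\infty}M_\infty(\sigma^\circ)=\dim S_\infty=\dim R_\infty(\sigma)'$; hence $M_\infty(\sigma^\circ)$ is maximal Cohen--Macaulay over $R_\infty(\sigma)'$, and since $R_\infty(\sigma)'$ is $\cO$-flat and equidimensional (each of $R_{\tp}^\square(\sigma)$, $R_\tv^{\square,\xi,\tau}$, $R_{\tv_1}^\square$ is, and completed tensor products over $\cO$ of such rings remain equidimensional), every associated prime of $M_\infty(\sigma^\circ)$ is a minimal prime of $R_\infty(\sigma)'$, so $\Supp M_\infty(\sigma^\circ)$ is a union of irreducible components.

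\emph{Rank one, and the main obstacle.} Next I would show $R_\infty(\sigma)[1/p]$ is regular: by~\cite[Thm 3.3.8]{kisindefrings} $R_{\tp}^\square(\sigma)[1/p]$ is regular, whence so is $R_\infty(\sigma)'[1/p]$, and by part~(2) with Lemma~\ref{lem:classical lg compatibility}(1) the ring $R_\infty(\sigma)[1/p]$ is a union of irreducible --- hence, by normality, connected --- components of it, so it is a regular direct factor. Then $M_\infty(\sigma^\circ)[1/p]$ is a faithful maximal Cohen--Macaulay module of full support over a regular ring, hence locally free by Auslander--Buchsbaum; it has rank \emph{one} because the automorphic points are Zariski-dense in each component of $\Spec R_\infty(\sigma)[1/p]$ (the density furnished by the patching construction, via the comparison $M_\infty/\ga M_\infty\iso\tilde S_{\xi,\tau}(U^\p,\cO)_{\m}^{d}$ of Corollary~\ref{cor: comparison with completed cohomology} and the classical theory), while at an automorphic point $x$ the fibre of $M_\infty(\sigma^\circ)$ has dimension $\dim_{E_x}\Hom_K(\sigma,\pi_{\lalg,x})=1$ by Theorem~\ref{thm: inertial local Langlands, N=0} and the multiplicity one property of algebraic modular forms on $\tG$ arranged in Subsection~\ref{subsec:unitary groups}. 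This rank-one step is the genuinely substantive point: one must control the generic rank on \emph{every} component of the support, which forces one to import the Zariski-density of automorphic points together with classical multiplicity one; by contrast the Cohen--Macaulay bookkeeping above is routine once the numerical coincidence is in place.

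\emph{Part (3).} This is then formal. Since $M_\infty(\sigma^\circ)[1/p]$ is locally free of rank one over $R_\infty(\sigma)[1/p]$, multiplication identifies $R_\infty(\sigma)[1/p]\iso\End_{R_\infty(\sigma)[1/p]}(M_\infty(\sigma^\circ)[1/p])$. The tautological $\cH(\sigma^\circ)$-action on $M_\infty(\sigma^\circ)$ commutes with $R_\infty$, hence with $R_\infty(\sigma)$, so each $h\in\cH(\sigma^\circ)$ acts on $M_\infty(\sigma^\circ)[1/p]$ through a well-defined element $\alpha(h)\in R_\infty(\sigma)[1/p]$, and $h\mapsto\alpha(h)$ is an $\cO$-algebra map $\cH(\sigma^\circ)\to R_\infty(\sigma)[1/p]$. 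To see $\alpha(h)\in\barR_\infty(\sigma)$ I would use that $h$ already preserves the integral module $M_\infty(\sigma^\circ)$: the commutative $R_\infty(\sigma)$-subalgebra $R_\infty(\sigma)[h]$ of $\End_{R_\infty(\sigma)}(M_\infty(\sigma^\circ))$ sits inside a module that is finite over the Noetherian ring $R_\infty(\sigma)$, hence is itself module-finite over $R_\infty(\sigma)$, so $h$ satisfies a monic polynomial over $R_\infty(\sigma)$; inverting $p$ and using the identification above, $\alpha(h)$ satisfies the same monic polynomial over $R_\infty(\sigma)$ inside $R_\infty(\sigma)[1/p]$, so $\alpha(h)\in\barR_\infty(\sigma)$. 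This produces the desired $\cO$-algebra homomorphism $\alpha:\cH(\sigma^\circ)\to\barR_\infty(\sigma)$; moreover, by Lemma~\ref{lem:classical lg compatibility}(2), $\alpha(h)$ equals the image of $\eta(h)$ (for $\eta$ as in Theorem~\ref{thm: algebraic local Langlands}) whenever $\eta(h)\in R_{\tp}^\square(\sigma)$, so $\alpha$ interpolates the classical local Langlands correspondence, which is the use we will make of it.
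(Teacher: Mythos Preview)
Your proposal is essentially correct and follows the paper's approach closely for all three parts. There is one imprecision worth flagging in the rank-one argument.

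You claim that classical automorphic points are Zariski-dense in each component of $\Spec R_\infty(\sigma)[1/p]$, citing Corollary~\ref{cor: comparison with completed cohomology}. But that corollary only identifies the fibre of $M_\infty$ over $\ga=0$ with (Schikhof-dual of) completed cohomology; the locus $\ga=0$ is a \emph{closed} subscheme of $\Spec R_\infty$ of strictly smaller dimension, so these points are certainly not dense. What you actually need, and what does follow from the commutative algebra you have already set up, is the much weaker statement that each irreducible component of $\Spec R_\infty(\sigma)[1/p]$ \emph{contains} at least one such point: since $R_\infty(\sigma)$ is finite over $S_\infty$ and (by part~(2)) is a union of irreducible components of $R_\infty(\sigma)'$, each equidimensional of dimension $\dim S_\infty$, every component surjects onto $\Spec S_\infty$ and therefore meets the fibre over $\ga=0$. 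At such a point the fibre of $M_\infty(\sigma^\circ)$ is identified with the eigenspace in classical automorphic forms, which is one-dimensional by Theorem~\ref{thm: inertial local Langlands, N=0} together with the choice of $v_1$ and the fixed ordering of the eigenvalues at~$\tv_1$. Since the rank of a locally free sheaf is constant on connected components, one point per component suffices. This is what the paper means by the somewhat cryptic phrase ``checked at finite level''.

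Two minor remarks: your final sentence in part~(3), relating $\alpha$ to $\eta$ via Lemma~\ref{lem:classical lg compatibility}(2), is not part of this lemma but rather the content of Theorem~\ref{thm:alpha equals beta}; and your integrality argument for $\alpha(h)$ (via a monic polynomial satisfied in $\End_{R_\infty(\sigma)}(M_\infty(\sigma^\circ))$) is a slightly more explicit version of the paper's, which simply observes that the $R_\infty(\sigma)$-subalgebra $\cA$ generated by $\cH(\sigma^\circ)$ is finite over $R_\infty(\sigma)$ and satisfies $\cA[1/p]=R_\infty(\sigma)[1/p]$.
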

\begin{proof}
Since $M_\infty$ is a finite projective
$S_\infty[[K]]$-module, the module $M_\infty(\sigma^\circ)$ is finite and projective
(equivalently, free) over $S_\infty$.  Indeed,
we may write $M_{\infty}$ as a direct summand of $S_{\infty}[[K]]^r$ for some $r \geq 0,$
and so the space
$\Hom_{\cO[[K]]}^{\cont}\bigl(M_{\infty},(\sigma^{\circ})^{d}\bigr)^{d}$
is a direct summand of
\[\Hom_{\cO[[K]]}^{\cont}\bigl(S_{\infty}[[K]]^r,(\sigma^{\circ})^{d}\bigr)^{d}
\cong
\Bigl(\Hom_{\cO[[K]]}^{\cont}\bigl(S_{\infty}[[K]],(\sigma^{\circ})^{d}\bigr)^{d}\Bigr)^r.\]
Thus it suffices to note that since  $S_\infty[[K]]\cong S_\infty\widehat{\otimes}_{\cO}\cO[[K]]$ as an $\cO[[K]]$-module,  there is a natural isomorphism
\begin{displaymath}
\begin{split}
\Hom_{\cO[[K]]}^{\cont}\bigl(S_{\infty}[[K]],(\sigma^{\circ})^{d}\bigr)^{d}&\cong
\Hom_{\cO[[K]]}^{\cont}\bigl(\cO[[K]],\Hom_{\cO}^{\mathrm{cont}}(S_{\infty}, (\sigma^{\circ})^{d})\bigr)^d\\ &\cong
\Hom_{\cO}^{\cont}(S_{\infty}\otimes_{\cO} \sigma^{\circ}, \cO)^d
\cong S_{\infty}\otimes_{\cO}\sigma^{\circ}.
\end{split}
\end{displaymath}
Since $M_{\infty}(\sigma^{\circ})$ is free of finite rank over the formal
power series ring $S_{\infty}$, it is Cohen--Macaulay.
Since the $S_{\infty}$-action on $M_{\infty}(\sigma^{\circ})$ factors
through the action of $R_{\infty}$, which in turn factors through
$R_{\infty}(\sigma)$ by definition, we also conclude that $M_{\infty}(\sigma^{\circ})$
is finitely generated over $R_{\infty}(\sigma)$.

Since the identification of $M_{\infty}$ as a direct summand of $S_{\infty}[[K]]$
is compatible with the natural topologies on each of $M_{\infty}$ and $S_{\infty}[[K]],$
one easily verifies that the topology on $M_{\infty}(\sigma^{\circ})$
coincides with its $\mathfrak n$-adic topology, where $\mathfrak n$ denotes
the maximal ideal of $S_{\infty}$.
Furthermore,
since by definition $R_{\infty}(\sigma)$ embeds into
$\End_{S_{\infty}}\bigl(M_{\infty}(\sigma^{\circ})\bigr),$
we find that $R_{\infty}(\sigma)$ is finite as an $S_{\infty}$-algebra,
and so in particular the $\mathfrak n$-adic topology and $\mathfrak m$-adic topology
on $M_{\infty}(\sigma^{\circ})$ coincide (where, as in the statement
of the lemma,  $\mathfrak m$ denotes the maximal ideal of $R_{\infty}(\sigma)$).
Thus the topology on $M_{\infty}(\sigma^{\circ})$ coincides with
its $\mathfrak m$-adic topology.

By Lemma 3.3 of~\cite{blght}, Lemma 2.4.19 of~\cite{cht}, and Theorem
3.3.8 of~\cite{kisindefrings}, we see that
the ring $R_\infty(\sigma)'$ is equidimensional of the same Krull dimension as~$S_\infty$. Since $M_\infty(\sigma^\circ)$ is free of finite rank over
$S_\infty$, and the image of $R_\infty(\sigma)'$ in
$\End(M_\infty(\sigma^\circ))$ is an $S_\infty$-algebra, we see that
the depth of $M_\infty(\sigma^\circ)$ as an $R_\infty(\sigma)'$-module
is at least the Krull dimension of $S_\infty$. Since this is equal to
the Krull dimension of $R_\infty(\sigma)'$, it follows immediately
from Lemma~2.3 of~\cite{tay} that the support of
$M_\infty(\sigma^\circ)$ is a union of irreducible components of
$R_\infty(\sigma)'$. (Of course, conjecturally
  $R_\infty(\sigma)$ is actually equal to
  $R_\infty(\sigma)'$.)

  That $M_\infty(\sigma^\circ)[1/p]$ is locally free over
  $R_\infty(\sigma)[1/p]$ follows by an argument of Diamond
  (\emph{cf.}\ \cite{MR1440309}). More precisely,
  $R_{\tp}^\square(\sigma)[1/p]$ and each of the rings
  $R_{\tv}^{\sigma,\xi,\tau}[1/p]$ for places $v\mid p$, $v\ne\p$ are
  regular by Theorem~3.3.8 of~\cite{kisindefrings}, and
  $R_{\tv_1}^\square$ is formally smooth by Lemma~\ref{lem: at v1, we have smooth unramified
    lifts}, so
  $R_\infty(\sigma)'[1/p]$ is regular by Corollary~\ref{cor_risi}. Therefore
  $R_\infty(\sigma)[1/p]$ is also regular, so $M_\infty(\sigma^\circ)[1/p]$ is   locally free over $R_\infty(\sigma)[1/p]$ by Lemma~3.3.4
  of~\cite{kis04} (or rather by its proof, which goes over unchanged
  to our setting, where we do not assume that $R_\infty(\sigma)[1/p]$
  is a domain).

  That it is actually locally free of rank one can be checked at
  finite level, where it follows from the multiplicity one assertion
  in Theorem~\ref{thm: inertial local Langlands, N=0}, the choice
  of $v_1$ (and the fact that we have fixed the action mod $p$ of the
  Hecke operators at $\tv_1$), and the irreducibility of $\rhobar$, together
  with~\cite[Thms.\ 5.4 and 5.9]{labesse}.

This completes the proof of parts~(1) and (2), and so we turn to proving~(3).
To this end,
  let $\cA$ be the $R_\infty$-subalgebra of the endomorphism
  algebra of $M_\infty(\sigma^\circ)$ generated by
  $\cH(\sigma^\circ)$. Since $M_\infty(\sigma^\circ)$ is a finite type
  $R_\infty(\sigma)$-module, we see that $\cA$ is a finite
  $R_\infty(\sigma)$-algebra. Since $M_\infty(\sigma^\circ)[1/p]$ is
  in fact locally free of rank one over $R_\infty(\sigma)[1/p]$ we
  have the equality
  $\End_{R_\infty(\sigma)[1/p]}\bigl(M_\infty(\sigma^\circ)[1/p]\bigr)=R_\infty(\sigma)[1/p]$,
  so that $\cA[1/p]=R_\infty(\sigma)[1/p]$. So the natural map
  $\cH(\sigma^\circ) \to \cA$ lands inside $\barR_\infty(\sigma)$.
\end{proof}

The morphism $\alpha$ of Lemma~\ref{lem:basic facts about def rings acting on
  patched modules} induces an $E$-algebra morphism $\alpha: \cH(\sigma) \to R_\infty(\sigma)[1/p]$.

\begin{thm}
\label{thm:alpha equals beta}
 The map $\alpha$ coincides with the
 composition \[\cH(\sigma)\stackrel{\eta}{\to}R_{\tp}^\square(\sigma)[1/p]\to R_\infty(\sigma)[1/p].\]
\end{thm}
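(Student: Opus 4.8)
The plan is to deduce this purely formally from the local-global compatibility packaged in Lemma~\ref{lem:classical lg compatibility}(2) together with the construction of $\eta$ in Theorem~\ref{thm: algebraic local Langlands}; no new input is needed. Both $\alpha$ and the composite $\cH(\sigma)\xrightarrow{\eta}R_{\tp}^\square(\sigma)[1/p]\to R_\infty(\sigma)[1/p]$ are $E$-algebra homomorphisms, so it suffices to check that they agree on a subset of $\cH(\sigma)$ which spans it over $E$. I would take this subset to be $\Sigma_0:=\{h\in\cH(\sigma^\circ):\eta(h)\in R_{\tp}^\square(\sigma)\}$. Since $\cH(\sigma)=\cH(\sigma^\circ)[1/p]$ and $\eta$ carries $\cH(\sigma^\circ)$ into $R_{\tp}^\square(\sigma)[1/p]$, for any $h\in\cH(\sigma^\circ)$ some $\varpi^N h$ lies in $\Sigma_0$; hence $\Sigma_0$ spans $\cH(\sigma)$ over $E$.

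Next, fix $h\in\Sigma_0$. By Lemma~\ref{lem:classical lg compatibility}(2), the tautological (patched) Hecke action of $h$ on $M_\infty(\sigma^\circ)$ coincides with the action of $\eta(h)\in R_{\tp}^\square(\sigma)$ through the natural map $R_{\tp}^\square(\sigma)\to R_\infty(\sigma)'$. By Lemma~\ref{lem:classical lg compatibility}(1) the $R_\infty(\sigma)'$-module structure on $M_\infty(\sigma^\circ)$ factors through its quotient $R_\infty(\sigma)$, which by construction acts faithfully. Thus, as endomorphisms of $M_\infty(\sigma^\circ)$, the operator $h$ equals the image of $\eta(h)$ in $R_\infty(\sigma)$. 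On the other hand, by the definition of $\alpha$ (Lemma~\ref{lem:basic facts about def rings acting on patched modules}(3)), the operator $h$ is precisely $\alpha(h)\in\barR_\infty(\sigma)\subseteq R_\infty(\sigma)[1/p]$. Since $R_\infty(\sigma)$ acts faithfully on the $\cO$-torsion-free module $M_\infty(\sigma^\circ)$, the induced map $R_\infty(\sigma)[1/p]\hookrightarrow\End(M_\infty(\sigma^\circ))[1/p]$ is injective; as $\alpha(h)$ and the image of $\eta(h)$ both lie in $R_\infty(\sigma)[1/p]$ and have the same image under this injection, they are equal. Running this over all $h\in\Sigma_0$ and extending by $E$-linearity gives the theorem.

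The point worth being careful about — and what I would treat as the heart of the write-up — is not any deep obstacle but the bookkeeping in the second step: one must check that the two Hecke actions on $M_\infty(\sigma^\circ)$, namely the one patched from the global $G$-action on $M_\infty$ via Frobenius reciprocity and the one induced through $\eta$ and classical local-global compatibility, are \emph{literally the same operators}, rather than merely agreeing at classical (arithmetic) points; this is exactly what Lemma~\ref{lem:classical lg compatibility}(2) provides, so the main task is to invoke it correctly and to pass cleanly between $R_\infty(\sigma)'$ and its faithful quotient $R_\infty(\sigma)$. Everything else — the density of $\Sigma_0$ in $\cH(\sigma)$ and the faithfulness of $R_\infty(\sigma)[1/p]$ on $M_\infty(\sigma^\circ)[1/p]$ — is routine.
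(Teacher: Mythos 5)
Your proof matches the paper's own argument essentially line for line: reduce to the set $\Sigma_0$ of $h\in\cH(\sigma^\circ)$ with $\eta(h)\in R_{\tp}^\square(\sigma)$, which spans $\cH(\sigma)$ over $E$ because any $h\in\cH(\sigma^\circ)$ becomes such after multiplication by a power of $\varpi$; invoke Lemma~\ref{lem:classical lg compatibility}(2) on $\Sigma_0$; and use the $p$-torsion-freeness of $R_\infty(\sigma)$ (via its faithful action on the $\cO$-torsion-free $M_\infty(\sigma^\circ)$) to pass from an equality of endomorphisms to an equality in $R_\infty(\sigma)[1/p]$. You simply spell out the faithfulness/injectivity step that the paper compresses into the single remark that $R_\infty(\sigma)$ is $p$-torsion free.
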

\begin{proof}Note firstly that if $h\in\cH(\sigma^\circ)$ is such that
  $\eta(h)\in R_{\tilde{\mathfrak{p}}}^\square(\sigma)$, then the two maps agree on $h$ by
  Lemma~\ref{lem:classical lg compatibility}~(2). Since $R_\infty(\sigma)$ is
  $p$-torsion free by Lemma~\ref{lem:classical lg compatibility}~(1),
it is therefore enough to show that $\cH(\sigma)$ is
  spanned over $E$ by such elements.

Now, $\cH(\sigma^\circ)$ certainly spans $\cH(\sigma)$ over $E$, so it is enough
to show that for any element $h'\in\cH(\sigma^\circ)$, we have $\eta(p^Ch')\in
R_{\tilde{\mathfrak{p}}}^\square(\sigma)$ for some $C\ge 0$; but this is obvious.
\end{proof}

  \begin{rem}\label{rem: definition of an automorphic component} It follows from
  Lemma~\ref{lem:basic facts about def rings acting on patched
    modules}~(2) that the locus of closed points of $\Spec
  R_{\tp}^\square(\sigma)[1/p]$ which come from closed points of $\Spec
  R_{\infty}(\sigma)[1/p]$ is a union of irreducible components, which
  we call the set of \emph{automorphic components} of $\Spec
  R_{\tp}^\square(\sigma)[1/p]$. (Note that we do
  not know \emph{a priori} that this notion is independent of the
  choice of global setting, although of course we expect that in fact
  every component of $\Spec R_{\tp}^\square(\sigma)[1/p]$ is an
  automorphic component.)
\end{rem}
\begin{rem}\label{rem: bounded Hecke}
  We expect that $\eta(\cH(\sigma^\circ))$ is contained in the
normalisation of $R_{\tp}^\square(\sigma)$ in
$R_{\tp}^\square(\sigma)[1/p]$; it may well be possible to prove this
via our methods, but as we do not need this result, we have not
pursued it. It is easy to see that the analogous result holds for
the quotient of $R_{\tp}^\square(\sigma)$ corresponding to the
automorphic components in the sense of Remark~\ref{rem: definition of an automorphic component}.
\end{rem}

\subsection{The action of the centre of $G$} We next prove a structural
result (Proposition~\ref{prop:freeness over the centre} below) which describes
the action of the centre~$Z$
of $G$ on $M_{\infty}$. 

As usual, we identify $Z$ with $F^{\times}$,
by associating to each element of $F^{\times}$ the corresponding
scalar matrix.   Local class field theory then gives an embedding
$Z \cong F^{\times} \buildrel \Art_F \over \longrightarrow G_F^{ab},$
which we again denote by $\Art_F$.

If $r^{\univ}: G_F \to \GL_n(R_{\tp}^{\square})$ denotes the universal
lift of $\rbar$, then its determinant is
a character $\det r^{\univ}: G_F^{\ab}\to (R_{\tp}^{\square})^{\times},$
which, when composed with $\Art_F$, induces a character
$\det r^{\univ}\circ \Art_F: Z \to (R_{\tp}^{\square})^{\times}.$
If we let $\Lambda_Z$ denote the completion
of the group algebra $\cO[Z]$ at the maximal ideal generated by~$\varpi$ and the elements
$z-(\epsilon^{n(n-1)/2}\det r^{\univ})\circ \Art_F (z)$, then this character induces a homomorphism
$\Lambda_Z \to R_{\tp}^{\square};$ the corresponding
morphism of schemes $\Spec R_{\tp}^{\square} \to \Spec \Lambda_Z$
simply associates to each deformation $r$ of $\rbar$ the
character $ (\epsilon^{n(n-1)/2}\det r^{\univ})\circ \Art_F$ of $Z$; in this optic, the
complete local ring $\Lambda_Z$
is identified with the universal deformation ring of the character
$\epsilonbar^{n(n-1)/2}\det \rbar$.

 By local-global compatibility, $Z$ acts on $S_{\xi,\tau}(U_i(Q_N)_{2N},\F)[\m_{Q_N}]$ via the
character $(\epsilonbar^{n(n-1)/2}\det\rbar)\circ\Art_F$.
This implies the elements  of the form $z-(\epsilon^{n(n-1)/2}\det r^{\univ})\circ \Art_F (z)$ act nilpotently 
on $M_{\infty}^{\vee}[\mm_{\infty}^n]$, for all $n\ge 1$ and for all $z\in Z$. Hence the action of
of $\cO[Z]$ on $M_{\infty}^{\vee}$ extends to a continuous action of $\Lambda_Z$. 
Pontryagin duality induces an isomorphism $ \End_{\cO}^{\cont}(M_{\infty}^{\vee})\cong (\End_{\cO}^{\cont}(M_{\infty}))^{\op}$,
which makes $M_{\infty}$ into a continuous $\Lambda_Z^{\op}$-module. Since $\Lambda_Z$ is commutative, 
$\Lambda_Z^{\op}=\Lambda_Z$.

Let $\varpi_F$ be a choice of uniformiser of~$F$ and let $z=\diag(\varpi_F,\dots,\varpi_F)\in Z$, so that $Z=(Z\cap K)z^\Z$. Write
$S=z-[\mu]\in \cO[Z]$, where we set $\mu=(\epsilonbar^{n(n-1)/2}\det\rbar)\circ\Art_F(\varpi_F)$. Let $\Lambda$ be the closure in $\Lambda_Z$
of the subring $\cO[S]$. Since $S$ lies in the maximal ideal of
$\Lambda_Z$, we see that $\Lambda$ is isomorphic to $\cO[[S]]$.
We now make use of
the category of pseudo-compact $\Lambda[[K]]$-modules;
see~\cite[\S IV.3]{MR0232821}, \cite{MR0202790} for the definition and properties of this category.

\begin{prop}
  \label{prop:freeness over the centre}$M_\infty$ is  projective in the category of pseudo-compact $\Lambda[[K]]$-modules. \end{prop}
\begin{proof} Let $P$ be a pro-$p$ Sylow subgroup of $K$. Since the index $(K:P)$ is finite and is not divisible by $p$,
it is enough to show that $M_{\infty}$ is projective in the category of pseudo-compact $\Lambda[[P]]$-modules.
We will in fact show that $M_{\infty}$ is a pro-free $\Lambda[[P]]$-module, i.e.\ it is isomorphic to a product of copies of
$\Lambda[[P]]$.

Since $P$ is a pro-$p$ group, $\Lambda[[P]]$ is a local ring with
residue field $\F$. It follows from the topological Nakayama's lemma
for pseudo-compact $\Lambda[[P]]$-modules, that  it is enough to show
that the first right derived functor of $-\wtimes_{\Lambda[[P]]}\F$
vanishes, see \cite[Proposition 3.1]{MR0202790}. We denote this derived functor by $\widehat{\Tor}^{\Lambda[[P]]}_1(\F, M_{\infty})$.

Note that the functor $-\wtimes_{\Lambda[[P]]} \F$ is the composite of the
two functors $-\wtimes_{\cO[[P]} \F$ and $-\wtimes_{\Lambda/\varpi}
\F$. Considering the corresponding spectral sequence, we see that it
is enough to show that both $\widehat{\Tor}^{\cO[[P]]}_1(\F, M_{\infty})$ and
  $\widehat{\Tor}^{\Lambda/\varpi}_1(\F, M_{\infty}\wtimes_{\cO[[P]]}
  \F)$ vanish.

 We know by Proposition~\ref{prop:Minfty
  is projective and has a G-action} that $M_{\infty}$ is a finitely
generated projective $S_{\infty}[[K]]$-module, thus a free, finitely
generated $S_{\infty}[[P]]$-module, and thus  projective in the category of
pseudo-compact $\cO[[P]]$-modules.
This implies that $\widehat{\Tor}^{\cO[[P]]}_1(\F,
M_{\infty})$ vanishes.

  Since $\Lambda/\varpi\cong \F[[S]]$ is a DVR, to show that $\widehat{\Tor}^{\Lambda/\varpi}_1(\F, M_{\infty}\wtimes_{\cO[[P]]} \F)$ vanishes it suffices to
show that $M_\infty \wtimes_{\OO[[P]]}\F$ is $S$-torsion free. Since $M_{\infty}$ is a free finitely generated $S_{\infty}[[P]]$-module, $M_{\infty}\wtimes_{\cO[[P]]} \F$ is a free finitely generated
  $S_{\infty}/\varpi$-module. Since $S_{\infty}/\varpi$ is a domain, it is enough to show that there is a polynomial $q\in \F[X]$ with zero constant term,
  such that the action of $q(S)$ on $M_{\infty}\wtimes_{\cO[[P]]} \F$ is given by a non-zero element of $S_{\infty}/\varpi$.

  We will now construct such a polynomial $q$, thus finishing the proof. Let~$\tZ$ denote
the centre of~$\tG$, so that in particular ~$\tZ(\A_{\tF^+}^{\infty})$
contains~$Z$; since the group
$\tZ(\A_{\tF^+}^\infty)/\bigl(\tZ(\A_{\tF^+}^{\infty})\cap U_0\bigr)
\tZ(\tilde F^+)$ is finite,
we can choose some $a>0$ for which we can write $z^a=u\gamma$, with $u\in
\tZ(\A_{\tF^+}^{\infty})\cap U_0$ and $\gamma\in  \tZ(\tilde F^+)$. We may write $u=u_p u^p$ with  $u_p\in Z\cap K$, $u^p\in
U_{0}^{\p}\cap \tZ(\A_{\tF^+}^{\infty})$. By replacing $a$ by a multiple we may assume that $u_p\in Z\cap P$.

We claim that $q(X)= (X+\mu)^a- \mu^a$ does the job, and note that $q(S)= z^a -\mu^a= \gamma u^p u_p-\mu^a$. Since $\gamma$ acts trivially on each $M_{1,Q_N}$, it acts trivially on $M_{\infty}$, and hence the action of $z$ on $M_{\infty}$
coincides with the action of $u_p u^p$. Since $u_p\in Z\cap P$, the action of $q(S)$ on $M_{\infty}\wtimes_{\cO[[P]]} \F$
coincides with the action of $u^p-\mu^a$. The action of $u^p$ on each $M_{1,Q_N}$ factors through
that of $\Delta_{Q_{N}}$. (Indeed, for each $v\in Q_N$, the component of $u^p$ at $v$
lies in $U_0(Q_N)_v$; since $U_1(Q_n)_v$ acts trivially on
$M_{1,Q_N}$, the claim is immediate from the definition
of~$\Delta_{Q_N}$.) Thus the action of
$q(S)$ on $M_{\infty}\wtimes_{\cO[[P]]} \F$ coincides with the action of an element of $S_{\infty}/\varpi$, and this
action is zero if and only if $u^p$ acts trivially on $M_{\infty}$ and $\mu^a=1$. In this case,
$z^a$ would act trivially on  $M_{\infty}/ \mf{I} M_{\infty}$, where $\mf{I}$ is the maximal ideal of $\cO[[Z\cap P]]$.

Suppose for
the sake of contradiction that this happens. We choose a locally algebraic type~$\sigma$ such
that~$M_\infty(\sigma^\circ)\ne 0$. It follows from \cite[Lemma 2.14]{paskunasBM} and the fact that $Z\cap P$ acts trivially on $\sigma^\circ/\varpi$ that
\begin{displaymath}
\begin{split}
M_\infty(\sigma^\circ)/\varpi M_\infty(\sigma^\circ)&\cong \Hom^{\cont}_{\cO[[K]]}(M_{\infty}/\varpi M_{\infty}, (\sigma^{\circ}/\varpi)^{\vee})^{\vee}\\ &\cong
\Hom^{\cont}_{\cO[[K]]}(M_{\infty}/\mf{I}M_{\infty}, (\sigma^{\circ}/\varpi)^{\vee})^{\vee}.
\end{split}
\end{displaymath}
Thus to see that $(z^a-\mu^a)$ does not act by zero on $M_\infty/\mf{I}
M_\infty$, it is enough to show that it does not act by zero on $M_\infty(\sigma^\circ)/\varpi M_\infty(\sigma^\circ)$.

Theorem~\ref{thm:alpha equals beta} implies that the action of $z$ on $M_\infty(\sigma^\circ)$ is the same as the action of $z$ under the map $\cH(\sigma^\circ)\to R^{\square}_{\tp}(\sigma)\to R_\infty(\sigma)$, which can be checked to be compatible with the map $\Lambda_Z\to R^{\square}_{\tp}\to R_\infty$. Explicitly, if $\Frob_p$ is the element of $G_F^{\ab}$ corresponding to $\varpi_F$ by local class field theory, then the action of $z$ on $M_\infty(\sigma^\circ)$ matches the determinant of $\Frob_p$. Then by
Theorem~\ref{thm:alpha equals beta},
$M_\infty(\sigma^\circ)/\varpi_E M_\infty(\sigma^\circ)$ would be
supported on a quotient of $R_{\tp}^{\square}(\sigma)$ corresponding
to representations where the determinant of~$\Frob^a_p$ is
fixed; that is to say (again by local-global compatibility),
for any representation
$r: G_F \to \GL_n(\Qbar_p)$ arising from a $\Qbar_p$-valued
point of $\Spec R(\sigma)[1/p],$
the value of $\det r$ on $\Frob^a_p$ would be determined modulo $\varpi$.

However, we know that $\Spec R(\sigma)$ is a union of irreducible components
of $\Spec R(\sigma)' := \Spec R_{\infty} \otimes_{R_{\tp}^{\square}}
R_{\tp}^{\square}(\sigma),$
and hence any twist of $r$ by an unramified character which is trivial
modulo $\mathfrak m_{\Zbar_p}$
also arises from a $\Qbar_p$-valued point of $\Spec R(\sigma)$.
Making an unramified twist by an appropriate character (e.g.\ by a character which takes $\Frob_p$ to
$1 +\unif_{E'}$,
for  some sufficiently ramified extension~$E'$ of~$E$),
shows that $\det r(\Frob^a_p)$ is not constant mod $\unif_E$, as required.
\end{proof}

\begin{rem}\label{rem: pseudo compact duality} Since Pontrjagin duality induces an anti-equivalence
  between pseudo-compact and discrete $\Lambda[[K]]$-modules
  \cite[Prop.\ 2.3]{MR0202790},
Proposition \ref{prop:freeness over the centre} may be reformulated as
the statement that~$M_{\infty}^{\vee}$ is injective in the category of discrete $\Lambda[[K]]$-modules.
\end{rem}

\begin{rem}  Following \cite[Definition 2.3.1]{emertonord1}, we say that a smooth representation $V$ of $ZK$ is locally $Z$-finite, if
for every $v\in V$ the $\cO[Z]$-submodule generated by $v$ is finitely generated as an $\cO$-module. Such representations
form a full subcategory $\Mod^{\Zfin}_{ZK}(\cO)$ of the
category $\Mod^{\sm}_{ZK}(\OO)$ of smooth representations of $ZK$ on $\OO$-torsion modules.
The category of discrete $\Lambda[[K]]$-modules coincides with the full subcategory of $\Mod^{\Zfin}_{ZK}(\cO)$ consisting of representations $V$,
such that every $v\in V$ is annihilated by a power of the maximal ideal
of $\Lambda$.

It follows from the Chinese remainder theorem that the category of discrete $\Lambda[[K]]$-modules is a direct summand of $\Mod^{\Zfin}_{ZK}(\cO)$,
so by Remark~\ref{rem: pseudo compact duality}, $M_{\infty}^{\vee}$ is injective in $\Mod^{\Zfin}_{ZK}(\cO)$. Since $K$ is compact, every smooth representation of $K$ is locally admissible. Combining this
observation with \cite[Lemma 2.3.4]{emertonord1} we deduce that $\Mod^{\Zfin}_{ZK}(\cO)$ coincides with the category of locally admissible representation
of $ZK$ on $\OO$-torsion modules, $\Mod^{\ladm}_{ZK}(\OO)$. Thus $M_{\infty}^{\vee}$ is injective in $\Mod^{\ladm}_{ZK}(\OO)$.
\end{rem}

In some situations it can be useful to consider quotients of
$M_{\infty}$ having a fixed central character.  (This corresponds, on the
Galois side, to considering deformations of $\rbar$ having a fixed
determinant.)  To this end, we state and prove Corollary~\ref{cor:fixed
central char} below.

Let $x: \Lambda_Z\rightarrow \OO$ be an $\OO$-algebra homomorphism. Let $\xi$ be the composition $Z\rightarrow \Lambda_Z^{\times}\overset{x}{\rightarrow} \OO^{\times}$.
Let $\Mod^{\sm, \xi}_{ZK}(\OO)$ be the full subcategory of $\Mod^{\sm}_{ZK}(\OO)$ consisting of those representations on which $Z$ acts by the character $\xi^{-1}$.
Let $\Mod^{\proaug}_{ZK}(\OO)$ be the category of profinite augmented
representations of $ZK$ over $\OO$, as defined in~ \cite[Definition 2.1.6]{emertonord1}. Pontrjagin duality induces an anti-equivalence of categories between $\Mod^{\sm}_{ZK}(\OO)$ and $\Mod^{\proaug}_{ZK}(\OO)$, \cite[(2.2.8)]{emertonord1}.
Let $\mathfrak C(\OO)$ be the full subcategory of $\Mod^{\proaug}_{ZK}(\OO)$ consisting of those representations on which $Z$ acts by the character $\xi$, so that
$\mathfrak C(\OO)$ is anti-equivalent to $\Mod^{\sm, \xi}_{ZK}(\OO)$ via Pontrjagin duality.

\begin{cor}
\label{cor:fixed central char}
$M_{\infty}\wtimes_{\Lambda_Z, x} \OO$ is a non-zero, projective object in $\mathfrak C(\OO)$.
\end{cor}
\begin{proof} Note that $\mathfrak C(\OO)$ is naturally a full
  subcategory of the category of pseudo-compact $\Lambda[[K]]$-modules.
The projectivity of $M_{\infty}\wtimes_{\Lambda_Z, x} \OO$ follows from the fact that the functor
$\Hom_{\mathfrak C(\OO)}(M_{\infty}\wtimes_{\Lambda_Z, x} \OO,  -)$ coincides with the restriction of the functor
$\Hom^{\cont}_{\Lambda[[K]]}(M_{\infty}, -)$ to $\mathfrak C(\OO)$; the exactness of the latter follows from Proposition \ref{prop:freeness over the centre}.
The reduction of $M_{\infty}\wtimes_{\Lambda_Z, x} \OO$ modulo $\varpi$ is isomorphic to $M_{\infty}\wtimes_{\Lambda_Z} \F$, which is non-zero, as
otherwise the topological Nakayama's lemma for pseudo-compact $\Lambda_Z$-modules would imply that $M_{\infty}$ is zero.
\end{proof}

\begin{rem} The same argument as in Remark \ref{rem: pseudo compact duality} shows that the dual $(M_{\infty}\wtimes_{\Lambda_Z, x} \OO)^{\vee}$ is injective in $\Mod^{\sm, \xi}_{ZK}(\OO)$.
\end{rem}

\subsection{Locally algebraic vectors}
We conclude this section by computing
the locally algebraic vectors in $V(r)$, in the case when $r$ is a generic,
potentially crystalline point of some $R_\infty(\sigma)$. (In other words, we identify the locally algebraic vectors at such points on automorphic
components of type $\sigma$.) 

Let $\Omega$ be  a Bernstein component corresponding to the inertial
type $\tau$ and let $(J, \lambda(\tau))$ be a type for this component,
as in \S \ref{subsec:BK theory}. The representation
$\sigma(\tau)$ in Theorem \ref{thm: inertial local Langlands, N=0} is a quotient of $\Ind_J^K \lambda(\tau)$.

\begin{rem}To orientate the reader not familiar with types, the example to keep
in mind is the following: if $\tau$ is a direct sum of copies of the
trivial representation, then $J$ is the Iwahori subgroup,
$\lambda(\tau)$ is the trivial representation of~$J$, and $\sigma(\tau)$ is the trivial
representation of $K$. The Steinberg representation $\St$ lies in $\Omega$, but $\Hom_K(\sigma(\tau), \St)=0$. If we worked only with $\sigma(\tau)$, we would not be able to control
copies of $\St$ tensored with an algebraic representation inside the
locally algebraic vectors of our patched modules.
This explains the need to work with $\lambda(\tau)$ instead of $\sigma(\tau)$.
\end{rem}

We will redo some of the lemmas in \S \ref{sec:local-global compatibility}
with $\lambda$ instead of $\sigma$. We denote by $\lambda_{\alg}$ the
representation denoted by $\sigma_{\alg}$ in \S \ref{sec:local-global
  compatibility}. We let  $\lambda:=\lambda(\tau)\otimes
\lambda_{\alg}$ and fix a $J$-stable $\cO$-lattice $\lambda^\circ$ in $\lambda$.
Set \[M_\infty(\lambda^\circ):=\left(\Hom^{\mathrm{cont}}_{\cO[[J]]}(M_\infty,(\lambda^\circ)^d)\right)^d.\]

\begin{lem}\label{free_over_S_infty} $M_{\infty}(\lambda^{\circ})$ is a free $S_{\infty}$-module of finite rank.
\end{lem}
\begin{proof} This follows from the fact, proved in Proposition \ref{prop:Minfty is projective and has a G-action}, that $M_{\infty}$ is projective as an
$S_{\infty}[[K]]$-module; see the proof of Lemma \ref{lem:basic facts about def rings acting on patched modules}.
\end{proof}

Let $R_\infty(\lambda)$ be the quotient of $R_\infty$ which acts
faithfully on $M_\infty(\lambda^\circ)$,
and set $R_\infty(\lambda)':=R_\infty\otimes_{R_{\tp}^\square}R_{\tp}^\square(\lambda)$, where $R_{\tp}^\square(\lambda)$ is the unique reduced and
$p$-torsion free quotient of $R_{\tp}^\square$ corresponding to
potentially  semi-stable  lifts of weight $\lambda_{\alg}$ and inertial type $\tau$.

\begin{lem}\label{quotient_control} $R_{\infty}(\lambda)$  is a $p$-torsion free quotient of $R_{\infty}(\lambda)'$.
\end{lem}
\begin{proof} The proof is the same as the proof of part (1) of  Lemma \ref{lem:classical lg compatibility}.
\end{proof}

\begin{lem}\label{union_irred} The support of $M_\infty(\lambda^\circ)$ in $\Spec R_{\infty}(\lambda)'$
is a union of irreducible components of $R_{\infty}(\lambda)'$. In particular, $R_{\infty}(\lambda)$ is reduced.
\end{lem}
\begin{proof} The first assertion follows from Lemma \ref{free_over_S_infty} and the fact that $S_{\infty}$ and $R_{\infty}(\lambda)'$
have the same Krull dimension. Since $R_{\infty}(\lambda)'$ is reduced, any non-reduced quotient of the same dimension will have
an associated prime, which is not minimal. It follows from Lemma \ref{free_over_S_infty} that $M_{\infty}(\lambda^{\circ})$ is a faithful Cohen--Macaulay module
over $R_{\infty}(\lambda)$, thus this cannot happen, and so $R_{\infty}(\lambda)$ is reduced.
\end{proof}

 \begin{prop}\label{control_smooth} Let $x$ be a closed $E$-valued point of $\Spec R_{\infty}(\lambda)[1/p]$, let $r_x$ be the corresponding Galois representation and let
 $V(r_x)$ be the unitary Banach space representation defined in \S \ref{subsec:admissible unitary Banach}, and let $V(r_x)^{\lalg}$ be the subspace of
 locally algebraic vectors in $V(r_x)$. Then $V(r_x)^{\lalg}\cong \pi\otimes \pi_{\alg}(r_x)$, where $\pi$ is a smooth admissible representation which lies in $\Omega$.
 \end{prop}
 \begin{proof} Let $\Pi^{\lalg}$ be any locally algebraic representation of $G$. Let $W$ be an irreducible algebraic representation of $G$. We assume that $E$ is large enough, so that
 any such $W$ is absolutely irreducible.  Then $W$ is also an absolutely irreducible representation of the Lie algebra of $G$ and this category is semi-simple, as $E$ has characteristic $0$. This induces an
 isomorphism
 $$ \Pi^{\lalg}\cong \bigoplus_{W} \Hom_E(W, \Pi^{\lalg})^{\sm}\otimes_E W,$$
 where the sum is taken over all irreducible algebraic representations $W$ of $G$ and $\Hom_E(W, \Pi^{\lalg})^{\sm}$ denotes the smooth vectors for the conjugation action of $G$ on
 $\Hom_E(W, \Pi^{\lalg})$. The theory of the Bernstein centre asserts that any smooth representation $\pi$ of $G$ decomposes as
 $$ \pi\cong \bigoplus_{\Omega} \pi[\Omega],$$
 where the sum is taken over all the Bernstein components and $\pi[\Omega]$ is the maximal subquotient of $\pi$ lying in $\Omega$. Thus
 $$ \Pi^{\lalg}\cong \bigoplus_{W, \Omega} \Hom_E(W, \Pi^{\lalg})^{\sm}[\Omega] \otimes_E W.$$
We claim that  $V(r_x)^{\lalg}\cong \pi\otimes \pi_{\alg}(r_x)$ with
$\pi$ in $\Omega$. If this was not the case then by the above there would be $\lambda'=\lambda_{\sm}(\tau')\otimes \lambda'_{\alg}$,
such that either $\tau\not\cong \tau'$ or $\lambda'_{\alg}\not\cong \lambda_{\alg}$ and $\Hom_{J'}(\lambda', V(r_x)^{\lalg})\neq 0$. But Lemma \ref{quotient_control} implies that the inertial type of $r_x$ is
$\tau'$ and the Hodge--Tate weights correspond to the highest weight
of $\lambda_{\alg}'$. This is a contradiction. Since $\pi$ lies in $\Omega$, $\pi$ is admissible if and only if
$\Hom_J(\lambda(\tau), \pi)$ is finite dimensional. We have
$$\dim_E \Hom_J(\lambda(\tau), \pi)=\dim_ E \Hom_J(\lambda, V(r_x))=\dim_E M_{\infty}(\lambda^{\circ})\otimes_{R_{\infty}, x} E,$$
 where the last equality  follows from~\cite[Prop.~2.20]{paskunasBM}. Since $M_{\infty}(\lambda^{\circ})$ is a finitely generated $R_{\infty}$-module, we deduce that the
 above dimensions are finite and hence $\pi$ is admissible.
 \end{proof}

\begin{prop}\label{bound_smooth}Let $x,y$ be closed, $E$-valued points of $\Spec R_{\infty}(\lambda)[1/p]$, lying on the same irreducible component. If $x$ is smooth then
$$\dim _E \Hom_J (\lambda, V(r_x)^{\lalg})\le \dim_E \Hom_J(\lambda, V(r_y)^{\lalg}).$$
\end{prop}
\begin{proof} Since $\lambda$ is locally algebraic  we have
$$\Hom_J(\lambda, V(r_y)^{\lalg})\cong \Hom_J(\lambda, V(r_y)).$$
It follows from Proposition 2.20 of~\cite{paskunasBM} that
$$ \dim_E \Hom_J(\lambda, V(r_y))= \dim_E M_{\infty}(\lambda^{\circ})\otimes_{R_{\infty}, y} E.$$
If $x$ is a smooth closed point of $\Spec
R_{\infty}(\lambda^{\circ})[1/p]$ then the localisation
$R_{\infty}(\lambda^{\circ})_{\mm_x}$ at $x$ is a regular ring. Since
$M_{\infty}(\lambda^{\circ})$ is a Cohen--Macaulay module, so is its localisation
$M_{\infty}(\lambda^{\circ})_{\mm_x}$ at $x$. Since $R_{\infty}(\lambda^{\circ})_{\mm_x}$ is regular, the standard  argument using the Auslander--Buchsbaum theorem allows us to conclude
$M_{\infty}(\lambda^{\circ})_{\mm_x}$ is a free $R_{\infty}(\lambda^{\circ})_{\mm_x}$-module of rank equal to $ \dim_E M_{\infty}(\lambda^{\circ})\otimes_{R_{\infty}, x} E$.
Let $V(\qqq)$ be the irreducible component of $\Spec R_{\infty}(\lambda^{\circ})$ containing $x$.   By further localising $M_{\infty}(\lambda^{\circ})_{\mm_x}$  at $\qqq$ we deduce  that
$$  \dim_E M_{\infty}(\lambda^{\circ})\otimes_{R_{\infty}, x} E=\dim_{\kappa(\qqq)}  M_{\infty}(\lambda^{\circ})\otimes_{R_{\infty}} \kappa(\qqq).$$
Since the function $\pp\mapsto \dim_{\kappa(\pp)} M_{\infty}(\lambda^{\circ})\otimes_{R_{\infty}} \kappa(\pp)$ is upper semi-continuous on $\Spec R_{\infty}$, we conclude that for any $E$-valued point $y\in V(\qqq)$ we have
\[\dim_{\kappa(\qqq)}  M_{\infty}(\lambda^{\circ})\otimes_{R_{\infty}} \kappa(\qqq)\le   \dim_E M_{\infty}(\lambda^{\circ})\otimes_{R_{\infty}, y} E.\qedhere\]
\end{proof}

\begin{thm}\label{computation of locally algebraic vectors} Let $x$ be a closed $E$-valued point of $\Spec
  R_{\infty}(\sigma)[1/p]$, such that $\pi_{\sm}(r_x)$ is
  generic. Then $$ V(r_x)^{\lalg}\cong \pi_{\sm}(r_x) \otimes\pi_{\lalg}(r_x).$$
\end{thm}
\begin{proof} We claim that $x$ is a smooth point of $\Spec
  R_{\infty}(\lambda)$. Lemma \ref{union_irred} implies that it is enough to check that $x$ is a smooth point of $\Spec R_{\infty}(\lambda)'$. It follows from \cite[Thm 3.3.8]{kisindefrings} that
$R^{\square, \xi, \tau}_{\tilde{v}}[1/p]$ is a regular ring for all $v\in S_p\setminus\{\mathfrak p\}$. Moreover,  both $R^{\square}_{\tilde{v}_1}$ and
$\OO[[x_1, \ldots, x_{q-[\tilde{F}^+:\Q] n(n-1)/2}]]$ are regular
rings (the former by Lemma~\ref{lem: at v1, we have smooth unramified
    lifts}). We let

$$B:= \left(\widehat{\otimes}_{v \in
  S_p\setminus\{\p\}}R_\tv^{\square,\xi,\tau}\right)\wtimes {R}^{\square}_{\tv_1}\wtimes  \OO[[x_1,\dots,x_{q-[\tilde F^+:\Q]n(n-1)/2}]].$$
Corollary \ref{cor_risi} implies that $B[1/p]$ is a regular ring.  It
follows from~\cite[Thm.\ D]{allen2014deformations} that the restriction of $r_x$ to $G_F$ defines a smooth point $x_{\tp}$ of
$\Spec R_{\tp}^\square(\lambda)[1/p]$. Since complete local noetherian rings are excellent and the localisations of excellent rings are excellent, the smooth locus
is open in $\Spec R_{\tp}^\square(\lambda)[1/p]$.
Thus there is $f\in R_{\tp}^\square(\lambda)$, such that
$\Spec R_{\tp}^\square(\lambda)[1/pf]$ is an open neighborhood of $x_{\tp}$ contained in the smooth locus. It follows from Corollary \ref{cor_risi} that
$(R_{\tp}^\square(\lambda)\wtimes_{\OO} B)[1/pf]$ is a regular ring. Since $R_{\infty}(\lambda)'= R_{\tp}^\square(\lambda)\wtimes_{\OO} B$, this proves the claim.

 Let
 $y$ be any closed point in $V(\mathfrak a) \cap \Spec R_{\infty}(\sigma)[1/p]$, where \[\mathfrak a=(y_1, \ldots, y_h)\subset S_{\infty}.\] It follows from the Corollary \ref{cor: comparison with completed cohomology} via Proposition 2.20 of~\cite{paskunasBM}, that
 $V(r_y)$ is identified with the closed subspace of the completed cohomology $\tilde S_{\xi,\tau}(U^\p,\cO)_{\m}\otimes_{\cO} E$, consisting of vectors annihilated by the maximal ideal $\mm_y$ corresponding to $y$.
 Thus
 $$ V(r_y)^{\lalg} \cong (\tilde S_{\xi,\tau}(U^\p,\cO)_{\m}\otimes_{\cO} E)^{\lalg}[\mm_y]\cong \pi_{\sm}(r_y)\otimes \pi_{\alg}(r_y).$$ The last isomorphism follows from Prop.~3.2.4 of~\cite{MR2207783}, which shows that locally algebraic vectors of any given weight are precisely the algebraic automorphic forms of that weight, together with classical local-global compatibility (Thm 1.1 of~\cite{1202.4683}). A priori, $\pi_{\sm}(r_y)\otimes\pi_{\alg}(r_y)$ may appear with some multiplicity, but this multiplicity is seen to be $1$ by our choice of $U^\p$ (and the fact that we have fixed the action mod $p$ of the
  Hecke operators at $\tv_1$), and the irreducibility of $\rhobar$, together
  with~\cite[Thms.\ 5.4 and 5.9]{labesse}.

Proposition \ref{control_smooth} implies that $V(r_x)^{\lalg}\cong \pi\otimes \pi_{\alg}(r_x)$, where $\pi$ is a smooth representation lying in $\Omega$. Since $x$ lies in the support of $M_{\infty}(\sigma)$,
 $\Hom_K(\sigma(\tau), \pi)\neq 0$. It follows from Theorems \ref{thm: algebraic local Langlands},  \ref{thm:alpha equals beta} and Corollary \ref{cor: generic implies hecke iso} that $\pi_{\sm}(r_x)$ is a subrepresentation
 of $\pi$. Since $\lambda(\tau)$ is a type for $\Omega$, it is enough to show that
 $$\dim_E \Hom_J(\lambda(\tau), \pi)\le \dim_E \Hom_J(\lambda(\tau), \pi_{\sm}(r_x)) .$$
Since $r_x$ and $r_y$ have the same Hodge--Tate weights,
$\pi_{\alg}(r_y)=\pi_{\alg}(r_x)$, and the restriction of these representations to $J$ is equal to $\lambda_{\alg}$.  Since $\lambda_{\alg}$ is an irreducible representation of the Lie algebra of $G$,
we have isomorphisms
$$\Hom_J(\lambda(\tau), \pi)\cong \Hom_J(\lambda, V(r_x)^{\lalg})$$
and
$$\Hom_J(\lambda(\tau), \pi_{\sm}(r_y))\cong \Hom_J(\lambda, V(r_y)^{\lalg}).$$
Proposition \ref{bound_smooth} implies that
$$\dim_E \Hom_J(\lambda(\tau), \pi)\le \dim_E \Hom_J(\lambda(\tau), \pi_{\sm}(r_y)).$$
Thus it is enough to show that
 $$\dim_E \Hom_J(\lambda(\tau), \pi_{\sm}(r_y))\le \dim_E \Hom_J(\lambda(\tau), \pi_{\sm}(r_x)).$$
Since both $r_x$ and $r_y$ are potentially crystalline Theorem  \ref{thm: inertial local Langlands, N=0} together with Proposition \ref{stronger_dat} implies that there
is a surjection
$$\pi_1'\times\ldots \times \pi_r'\twoheadrightarrow \pi_{\sm}(r_y).$$
Since $\pi_{\sm}(r_x)$ is assumed to be generic, the same argument together with Corollary \ref{cor: generic implies hecke iso} gives an isomorphism
$$\pi_1\times\ldots \times \pi_r\cong \pi_{\sm}(r_x).$$
Moreover,  in this case we have $\pi_1\times\ldots \times \pi_r\cong \pi_{\alpha(1)}\times\ldots\times \pi_{\alpha(r)}$ for any permutation $\alpha\in S_r$. Since $\pi_{\sm}(r_x)$ and $\pi_{\sm}(r_y)$ lie in the
same Bernstein component, they have the same inertial support. Thus we may assume that each $\pi'_i$ is a twist of $\pi_i$ by an unramified character.
 This implies  that there is a $J$-equivariant surjection
$\pi_{\sm}(r_x)|_J \twoheadrightarrow \pi_{\sm}(r_y)|_J$, which implies the desired inequality.
 \end{proof}

\begin{rem} When $r$ is ordinary (more precisely when $r$ satisfies the assumptions on $r_w$ in Theorem 4.4.8 of~\cite{breuil-herzig}), it should be possible to prove that
$V(r)$ contains the locally-defined representation $\Pi(r)^{\mathrm{ord}}$ of~\cite{breuil-herzig}. This should follow precisely the same strategy of proof as Theorem 4.4.8 of \emph{op. cit.}, which roughly shows that, when $r$ is the restriction of a global automorphic representation, the representation $\Pi(r)^{\mathrm{ord}}$ occurs in completed cohomology. The global ingredients used for this are the computation of locally algebraic vectors in completed cohomology and the fact that the reduction mod $\varpi$ of completed cohomology is an injective object in the category of smooth $K$-representations. In our case, $V(r)$ is obtained by taking the fibre of $M_\infty^d[1/p]$ at a point corresponding to $r$, and $M_\infty^d[1/p]$ can be thought of as a patched version of completed cohomology. The analogous ingredients are the computation of locally algebraic vectors in Theorem~\ref{computation of locally algebraic vectors} and the projectivity of $M_\infty$ in Proposition~\ref{prop:Minfty is projective and has a G-action}.
\end{rem}

\begin{rem} The computation of $V(r_x)^{\lalg}$, when  $x$ is a closed point of $R_{\infty}(\lambda)[1/p]$ for which the corresponding representation $r_x$
is not necessarily potentially crystalline,
and related questions connected to the Breuil--Schneider conjecture,
will be discussed in the forthcoming thesis of Alexandre Pyvovarov.
\end{rem}

\section{The Breuil--Schneider conjecture}\label{sec:breuilschneider} 

Continue to assume that $p\nmid 2n$, and that $F$ is a finite
extension of $\Qp$. If $r:G_F\to\GL_n(E)$ is a de Rham
representation of regular weight then we say that $r$ is generic
 if $\pi_\sm(r)$ is generic. In this case, we
set \[\BS(r):=\pi_\alg(r)\otimes\pi_\sm(r).\]
(In fact, our $\BS(r)$ differs from the definition made in~\cite{MR2359853} in that $\pi_\alg(r)$ and $\pi_\sm(r)$ are their analogues in~\cite{MR2359853} times the characters $\det^{n-1}$ and $|\det|^{n-1}$, respectively. Since $(\det|\det|)^{n-1}$ is a unitary character, this makes no difference to the following conjecture.
See also Section 2.4 of~\cite{sorensenBS2} for a discussion of the difference between
these conventions.) The following is \cite[Conjecture 4.3]{MR2359853} (in the
open direction, in the generic case).

\begin{conj}\label{conj: BS}If $r:G_F\to\GL_n(E)$ is de
  Rham and has regular weight, then $\BS(r)$ admits a
  nonzero unitary Banach completion.
  \end{conj}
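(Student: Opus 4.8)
The plan is to deduce the conjecture from the properties of the patched $\GL_n(F)$-representation $M_\infty$ of Section~\ref{sec:patching}, reducing it to the (folklore, and in many cases known) assertion that $r$ lies on an automorphic component of an appropriate local Galois deformation ring. First I would reduce to the case where $r$ is generic (otherwise $\BS(r)$ is not even defined in the present setup) and, by a ramified extension of $E$ together with conjugation, to the case where $r$ factors through $\GL_n(\cO)$ with semisimple reduction $\rbar$, so that the hypotheses of Subsection~\ref{subsec:globalization} hold (via Lemma~\ref{lem: existence of pot diagonal lift}) and the globalisation $\rhobar$, the definite unitary group $\tG$, and the module $M_\infty$ are all available; \emph{cf.}\ Remark~\ref{rem: getting a Banach space from a general r without having to worry about a pot diag lift of rbar}. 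The crux is then to show that $\BS(r)=\pi_\alg(r)\otimes\pi_\sm(r)$ embeds into the unitary admissible Banach representation $V(r)=(M_\infty\otimes_{R_\infty,y}\cO)^d[1/p]$ attached to $r$ in Subsection~\ref{subsec:admissible unitary Banach}.

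To make this precise (in the potentially crystalline case), let $\sigma=\sigma_\alg\otimes\sigma_\sm$ be the locally algebraic type with $\sigma_\alg$ of the weight of $r$ and $\sigma_\sm=\sigma(\tau)$ for $\tau=\WD(r)|_{I_F}$, so that $r$ determines a closed point $x$ of $\Spec R_{\tp}^\square(\sigma)[1/p]$ and a homomorphism $y:R_\infty\to E$ extending it. By Theorems~\ref{thm: algebraic local Langlands} and~\ref{thm:alpha equals beta}, the tautological Hecke action of $\cH(\sigma)$ on $M_\infty(\sigma^\circ)$ factors through the interpolated local Langlands map $\eta$; combined with Lemma~\ref{lem:basic facts about def rings acting on patched modules}, which shows $M_\infty(\sigma^\circ)[1/p]$ is locally free of rank one over $R_\infty(\sigma)[1/p]$, this forces the fibre of $M_\infty(\sigma^\circ)$ at $x$ --- \emph{when it is nonzero} --- to realise the one-dimensional space $\Hom_K(\sigma,\pi_{\lalg,x})$ with the correct $\cH(\sigma)$-action. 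Dualising, applying Frobenius reciprocity, and passing to the Banach completion then yields a $G$-equivariant embedding $\BS(r)=\pi_{\lalg,x}\hookrightarrow V(r)$, together with the admissibility of $V(r)$ (Proposition~\ref{prop: V(r) is admissible Banach}). Thus the conjecture holds --- with an admissible completion --- whenever $x$ lies in the support of $M_\infty(\sigma^\circ)$, i.e.\ whenever $r$ lies on an automorphic component of $\Spec R_{\tp}^\square(\sigma)[1/p]$ in the sense of Remark~\ref{rem: definition of an automorphic component}.

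It therefore remains to prove that every generic de Rham $r$ of regular weight lies on such an automorphic component; this is the \emph{main obstacle}, and is essentially equivalent to deducing the Fontaine--Mazur conjecture from generalisations of Serre's conjecture via automorphy lifting, so is out of reach in general --- the present machinery proves the conjecture exactly on the automorphic components, no more. To obtain unconditional cases I would feed in the potential automorphy and automorphy lifting theorems of~\cite{BLGGT}: for potentially diagonalizable $r$ one globalises $r$ itself to an automorphic Galois representation, and then Lemma~\ref{lem:basic facts about def rings acting on patched modules}~(2) (that the support is a union of irreducible components) forces the component through $r$ to be automorphic; this covers the cases of Theorem~\ref{main thm on BS, explicit intro version} ($n=2$ potentially Barsotti--Tate, and $F/\Qp$ unramified with Hodge--Tate weights in the Fontaine--Laffaille range and $n\ne p$). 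Finally, to address the conjecture \emph{as worded} --- for all de Rham $r$ of regular weight, not merely potentially crystalline ones --- one must extend the type-theoretic input of Section~\ref{sec:types} and the interpolation Theorem~\ref{thm: algebraic local Langlands} to potentially semistable representations with arbitrary monodromy $N$ (working with Schneider--Zink types without the maximality restriction on the relevant partition-valued function), and deal with the possible reducibility of $\BS(r)=\pi_\alg(r)\otimes\pi_\sm(r)$ in that generality; I expect these extensions to be technical rather than conceptual, so that the genuine difficulty throughout remains the automorphy input.
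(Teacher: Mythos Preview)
The statement in question is a \emph{conjecture}, not a theorem: the paper does not prove Conjecture~\ref{conj: BS} in full, and you correctly recognise this. Your outline is an accurate summary of exactly the strategy the paper uses to obtain its partial results --- Theorem~\ref{thm: BS for points in support of patched modules} (for $r$ on an automorphic component), Corollary~\ref{cor: our result on BS} (for potentially diagonalizable $r$), and Corollary~\ref{cor: specifc cases of BS} (the explicit cases) --- and you identify the genuine obstruction (automorphy of every component of the local deformation ring) in precisely the same terms the paper does.

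Two small points of comparison. First, your phrase ``reduce to the case where $r$ is generic'' is slightly misleading: the paper does not reduce to this case but \emph{restricts} to it, and the non-generic case is left open (see Remark~\ref{rem:non-generic cases}); there is no reduction step available. Second, in the paper's proof of Theorem~\ref{thm: BS for points in support of patched modules} the nonvanishing of the fibre $M_\infty(\sigma^\circ)\otimes_{R_\infty,y}E$ is transferred to $\Hom_K(\sigma,V(r))$ via Proposition~2.20 of~\cite{paskunasBM} rather than by a direct dualisation argument, but the content is the same as what you wrote. Your remarks on the potentially semistable extension also match the paper's own discussion in Remark~\ref{rem:non-generic cases}.
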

    \begin{rem}
    In fact, it seems reasonable (particularly in the light of
    Corollary~\ref{cor: our result on BS} below) to conjecture that
    there is even a nonzero \emph{admissible} completion. (We recalled
    the definition of admissibility in Section~\ref{subsec:admissible unitary Banach}.) Indeed, completed cohomology always gives rise to admissible Banach space representations, so this is a reasonable expectation from the point of view of the global $p$-adic Langlands correspondence.
Further motivation for focussing on admissible completions is provided by
the functor constructed in~\cite{scholze}, which takes as input admissible $\mathbb{F}$-representation of $G$.
  \end{rem}

  Fix a representation $r:G_F\to\GL_n(E)$, and assume from now on that
  $r$ is potentially crystalline of regular weight, and that $r$ is
  generic. By Remark~\ref{rem: getting a Banach space from a general r
    without having to worry about a pot diag lift of rbar} (and
  possibly replacing $E$ with a finite extension if necessary), we may
  replace $r$ with a conjugate representation so that
  $r:G_F\to\GL_n(\cO)$, and $\rbar$ satisfies the hypotheses of
  Section~\ref{sec:patching}. We can therefore carry out the
  construction of Section~\ref{sec:patching}, obtaining the patched
  module $M_\infty$.  Recall that $r$ is induced from an $\cO$-algebra homomorphism
$x:R_{\tp}^\square\to\cO$, which we  extended  to an $\cO$-algebra
homomorphism $y:R_\infty\to\cO$. Then $V(r)$ is obtained from the fibre
of $(M_\infty)^d[1/p]$ above the closed point of $R_\infty[1/p]$
determined by $y$.

 Write $\sigma_\sm(r)$ for $\sigma(\tau)$,
$\sigma_\alg(r)$ for $\pi_\alg(r)|_K$, and let
$\sigma:=\sigma_\alg(r)\otimes\sigma_\sm(r)$, keeping in mind the convention at the
end of Section \ref{god_save_the_queen}. (Also enlarge $E$ to another finite extension
if necessary as explained in that section.)
As above, we write $\cH(\sigma)$
for $\End_G(\cInd_K^G\sigma)$, which is isomorphic to $\End_G\bigl(\cInd_K^G\sigma(\tau)\bigr)$ via $\iota_\sigma$, so that $\pi_\sm(r)$ determines a character
$\chi_{\pi_\sm(r)}:\cH(\sigma)\to E$. Since $r$ is generic, we see from
Corollary~\ref{cor: generic implies hecke iso} that
$\pi_\sm(r)\cong\bigl(\cInd_K^G\sigma_\sm(r)\bigr)\otimes_{\cH(\sigma),\chi_{\pi_\sm(r)}}E$. Tensoring
with $\pi_\alg(r)$, we
have \[\BS(r)\cong(\cInd_K^G\sigma)\otimes_{\cH(\sigma),\chi_{\pi_\sm(r)}}E.\]

Since our representations $V(r)$
are unitary Banach representations, and since $\BS(r)$ is irreducible by \cite[Appendix]{MR1835001}, in order to prove Conjecture~\ref{conj: BS} it would be enough to
check that $\Hom_G(\BS(r),V(r))\ne 0$. While we cannot at present do
this in general, we are able to reinterpret the problem in terms of
automorphy lifting theorems, and deduce new cases of
Conjecture~\ref{conj: BS}. In particular, Corollary~\ref{cor: specifc
  cases of BS} below gives the first general results in the principal
series case.

\begin{thm}
\label{thm: BS for points in support of patched modules}
Suppose that $p\nmid 2n$, and that $r:G_F\to\GL_n(E)$ is a generic
potentially crystalline representation of regular weight. If $r$
corresponds to a closed point on an automorphic component of
$R_{\tp}^\square(\sigma)[1/p]$
{\em (}in the sense of Remark~{\em \ref{rem: definition of an automorphic component})},
then $\BS(r)$ admits a non-zero unitary admissible Banach completion.
\end{thm}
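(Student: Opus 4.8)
The plan is to realise the desired completion as a closed subrepresentation of an admissible unitary Banach representation of the form $W := (M_\infty \otimes_{R_\infty, y} \cO)^d[1/p]$, where $y : R_\infty \to \cO$ is a continuous $\cO$-algebra homomorphism lying over the point $x$ of $\Spec R_{\tp}^\square(\sigma)[1/p]$ attached to $r$, chosen so that $W$ detects $\BS(r)$. I would first make the usual preliminary reductions: by Remark~\ref{rem: getting a Banach space from a general r without having to worry about a pot diag lift of rbar} and Lemma~\ref{lem: existence of pot diagonal lift}, after conjugating $r$ and possibly enlarging $E$ we may assume $r : G_F \to \GL_n(\cO)$ and that $\rbar$ satisfies the hypotheses of Section~\ref{sec:patching}, so that $M_\infty$, $R_\infty$ and $R_\infty(\sigma)$ are defined; and by the proof of Proposition~\ref{prop: V(r) is admissible Banach}, $W$ is an admissible unitary Banach representation of $G$ for any such $y$. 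Since $\BS(r) \cong (\cInd_K^G \sigma) \otimes_{\cH(\sigma), \chi_{\pi_\sm(r)}} E$ and $\BS(r)$ is irreducible ($r$ being generic), it suffices, by Frobenius reciprocity for $\cInd_K^G$, to exhibit a nonzero element of $\Hom_K(\sigma, W)$ on which $\cH(\sigma)$ acts through $\chi_{\pi_\sm(r)}$: the resulting nonzero $G$-map $\BS(r) \to W$ is then injective, and the closure of its image is a nonzero closed $G$-subrepresentation of $W$, hence an admissible unitary Banach representation into which $\BS(r)$ embeds with dense image.

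The key point is to identify $\Hom_K(\sigma, W)$, $\cH(\sigma)$-equivariantly, with the $E$-linear dual of the fibre $M_\infty(\sigma^\circ) \otimes_{R_\infty, y} E$. Combining Schikhof duality with the tensor--hom adjunction --- essentially as in Lemma~\ref{furniture} and the identifications preceding it, and using that $M_\infty$ is projective over $S_\infty[[K]]$ --- one gets a natural isomorphism $\Hom_K(\sigma, W) \cong (M_\infty(\sigma^\circ) \otimes_{R_\infty, y} E)^*$ carrying the $\cH(\sigma)$-action induced by the $G$-action on $W$ to the $E$-linear dual of the tautological $\cH(\sigma^\circ)$-action on $M_\infty(\sigma^\circ)$, specialised at $y$. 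By Theorem~\ref{thm:alpha equals beta} this tautological action factors as $\cH(\sigma) \xrightarrow{\eta} R_{\tp}^\square(\sigma)[1/p] \to R_\infty(\sigma)[1/p]$, so on the fibre at $y$, which lies over $x$, it factors as $\cH(\sigma) \xrightarrow{\eta} R_{\tp}^\square(\sigma)[1/p] \xrightarrow{x} E$. By the interpolation property of $\eta$ (Theorem~\ref{thm: algebraic local Langlands}) this composite is the character through which $\cH(\sigma)$ acts on the one-dimensional space $\Hom_K(\sigma, \pi_{\lalg, x}) = \Hom_K(\sigma, \BS(r))$, which --- from the presentation of $\BS(r)$ above --- is exactly $\chi_{\pi_\sm(r)}$. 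Hence $\cH(\sigma)$ acts on the whole of $\Hom_K(\sigma, W)$ through $\chi_{\pi_\sm(r)}$, and so $\Hom_G(\BS(r), W) = \Hom_K(\sigma, W) = (M_\infty(\sigma^\circ) \otimes_{R_\infty, y} E)^*$.

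It remains to choose $y$ so that $M_\infty(\sigma^\circ) \otimes_{R_\infty, y} E \neq 0$, and this is where the hypothesis enters. Since $r$ lies on an automorphic component of $\Spec R_{\tp}^\square(\sigma)[1/p]$, the point $x$ lies in the automorphic locus, so by the definition of that notion (Remark~\ref{rem: definition of an automorphic component}) $x$ is the image of a closed point $z$ of $\Spec R_\infty(\sigma)[1/p]$ under the morphism induced by the structure map $R_{\tp}^\square(\sigma) \to R_\infty(\sigma)$; after enlarging $E$ we may assume $z$ corresponds to a continuous $\cO$-algebra map $R_\infty(\sigma) \to \cO$, and we take $y$ to be its composite with $R_\infty \to R_\infty(\sigma)$ (this restricts to $x$ on $R_{\tp}^\square$ because $R_{\tp}^\square \to R_\infty(\sigma)$ factors through $R_{\tp}^\square(\sigma)$). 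Since $M_\infty(\sigma^\circ)$ is an $R_\infty(\sigma)$-module and $M_\infty(\sigma^\circ)[1/p]$ is locally free of rank one over $R_\infty(\sigma)[1/p]$ (Lemma~\ref{lem:basic facts about def rings acting on patched modules}(1)), the fibre $M_\infty(\sigma^\circ) \otimes_{R_\infty, y} E = M_\infty(\sigma^\circ) \otimes_{R_\infty(\sigma), z} E$ is one-dimensional, hence nonzero. Then $\Hom_G(\BS(r), W) \neq 0$, and the first paragraph yields the required nonzero admissible unitary Banach completion of $\BS(r)$.

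With the interpolation result (Theorem~\ref{thm: algebraic local Langlands}), its underlying boundedness estimates from Section~\ref{sec:local-global compatibility}, the local--global compatibility of Theorem~\ref{thm:alpha equals beta}, and the structural facts of Lemma~\ref{lem:basic facts about def rings acting on patched modules} all in hand, the theorem is essentially an assembly. The step demanding the most care is the $\cH(\sigma)$-equivariance of the identification $\Hom_K(\sigma, W) \cong (M_\infty(\sigma^\circ) \otimes_{R_\infty, y} E)^*$: one must match three a priori distinct descriptions of the relevant Hecke action --- via the patched $G$-action on $M_\infty$, via $\eta$ together with the local deformation ring, and via $\chi_{\pi_\sm(r)}$ together with the explicit shape of $\BS(r)$ --- threading each comparison through the Schikhof-duality and Frobenius-reciprocity isomorphisms.
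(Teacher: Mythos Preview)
Your proof is correct and follows essentially the same route as the paper's. The paper's proof cites Proposition~2.20 of \cite{paskunasBM} for the identification $\dim_E \Hom_K(\sigma,V(r)) = \dim_E M_\infty(\sigma^\circ)\otimes_{R_\infty,y} E$, whereas you sketch this directly via Schikhof duality and Lemma~\ref{furniture}; and you are more explicit than the paper in constructing $y$ as a point of $\Spec R_\infty(\sigma)[1/p]$ witnessing the automorphic-component hypothesis (the paper simply asserts ``$y$ is in the support of $M_\infty(\sigma^\circ)[1/p]$ by assumption''), but these are presentational differences rather than a different argument.
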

\begin{proof}[Proof of Theorem~\ref{thm: BS for points in support of
    patched modules}]As remarked above, since $\BS(r)$ is irreducible it suffices to show that
  $\Hom_G\bigl(\BS(r),V(r)\bigr)\ne 0$. This follows  immediately from Theorem \ref{computation of locally algebraic vectors}. We also observe that it admits a simpler direct proof.
   Proposition 2.20 of~\cite{paskunasBM}  implies that
  $$ \dim_E   \Hom_K\bigl(\sigma,V(r)\bigr)= \dim_E M_\infty(\sigma^\circ)\otimes_{R_\infty,y} E.$$
(More
specifically, in the notation of that paper we take $R=R_\infty$,
$\Theta=\sigma^\circ$, $V=\sigma$, $N=M_\infty$, and
$\mathrm{m}^\circ=\cO$, regarded as an $R_\infty$-module via
$y$. Note that~\cite{paskunasBM} assumes that $N$ is finitely generated as an $R[[K]]$-module, which is satisfied in our case: $M_\infty$ is a finitely generated $R_\infty[[K]]$-module, since it is finite over $S_\infty[[K]]$ and the $S_\infty$-action on $M_\infty$ factors through a map $S_\infty \to R_\infty$.)

Together with Frobenius reciprocity, this shows that
$$\Hom_G\bigl(\cInd_K^G\sigma,V(r)\bigr)=\Hom_K\bigl(\sigma,V(r)\bigr)\ne 0,$$ as $y$ is in the support of
$M_\infty(\sigma^\circ)[1/p]$ by assumption. Since
$\BS(r)$ is isomorphic to $(\cInd_K^G\sigma)\otimes_{\cH(\sigma),\chi_{\pi_\sm(r)}}E$,
we need only show that the action of $\cH(\sigma^\circ)$ on
$M_\infty(\sigma^\circ)\otimes_{R_\infty,y}\cO$ factors through the
character $\chi_{\pi_\sm(r)}$; but this is immediate from
Theorem~\ref{thm:alpha equals beta}. \end{proof}

  \begin{cor}
    \label{cor: our result on BS}Suppose that $p\nmid 2n$ and that
    $r:G_F\to\GL_n(E)$ is de Rham of regular weight and potentially diagonalisable in the sense of~{\em \cite{BLGGT}}. Suppose
    also that $r$  is generic. Then $\BS(r)$ admits a
    nonzero unitary admissible Banach completion.
  \end{cor}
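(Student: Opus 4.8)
The plan is to deduce Corollary~\ref{cor: our result on BS} from Theorem~\ref{thm: BS for points in support of patched modules} together with the automorphy lifting theorems of~\cite{BLGGT}. First I would observe that since potential diagonalizability is, by definition, a property of potentially crystalline representations, the hypotheses that $r$ be de Rham of regular weight and potentially diagonalizable force $r$ to be potentially crystalline of regular weight; together with the genericity assumption, $r$ then satisfies all the hypotheses of Theorem~\ref{thm: BS for points in support of patched modules} with the possible exception of the requirement that $r$ lie on an automorphic component of $R_{\tp}^\square(\sigma)[1/p]$ (in the sense of Remark~\ref{rem: definition of an automorphic component}). So the entire problem reduces to checking that last condition.

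To set things up I would run the construction of Section~\ref{sec:patching}: since $r$ is itself a potentially diagonalizable lift of $\rbar=\bar r$ of regular weight, the hypotheses of Subsection~\ref{subsec:globalization} are satisfied (one may take the auxiliary lift $r_{\mathrm{pot.diag}}$ to be $r$; passing first to a conjugate of $r$ with semisimple reduction via Remark~\ref{rem: getting a Banach space from a general r without having to worry about a pot diag lift of rbar} is harmless but not needed here). This produces the patched module $M_\infty$, the quotient $R_\infty(\sigma)$, and $M_\infty(\sigma^\circ)$. By Lemma~\ref{lem:basic facts about def rings acting on patched modules}(2) the support of $M_\infty(\sigma^\circ)$ is a union of irreducible components of $\Spec R_\infty(\sigma)'$, equivalently the automorphic components of $\Spec R_{\tp}^\square(\sigma)[1/p]$ form a union of irreducible components. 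Hence it suffices to show that the component containing the closed point determined by $r$ is among them.

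For this I would invoke the main automorphy lifting theorem of~\cite{BLGGT} in the potentially diagonalizable case. The globalisation $\rhobar$ is automorphic and satisfies the standard Taylor--Wiles hypotheses by construction, and $r$ is a potentially diagonalizable lift of $\rbar$ of regular weight; the theorem then shows that the component of $R_{\tp}^\square(\sigma)[1/p]$ through $r$ is automorphic, i.e.\ it produces a globalisation $\rho$ of $\rhobar$, automorphic for the definite unitary group $\tG$ of Section~\ref{sec:patching} (via the usual base change and descent between $\GL_n$ over the CM field $\tF$ and $\tG$), with $\rho|_{G_{\tF_{\tp}}}\cong r$ and with the prescribed potentially crystalline type at the places above $p$. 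The Galois representation carried by the corresponding automorphic forms defines a point of $\Spec R_\infty(\sigma)[1/p]$ lying over the point of $\Spec R_{\tp}^\square(\sigma)[1/p]$ given by $r$, so $r$ indeed lies on an automorphic component, and Theorem~\ref{thm: BS for points in support of patched modules} finishes the proof.

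The main obstacle is precisely this last step: although it is morally just an application of~\cite{BLGGT}, one must match the local deformation conditions occurring in the patching setup here (the fixed type $\sigma$ at $\tp$, the fixed weight/type at the other places above $p$, the prescribed unramified behavior at $v_1$, and the reduction mod $p$ of the Hecke action at $\tv_1$) with the hypotheses of the automorphy lifting theorems, and check that an automorphic representation of $\GL_n$ over $\tF$ with the required local behavior transfers to an automorphic form on $\tG$ that contributes to the spaces $S_{\xi,\tau}(U_m,-)_{\m}$ out of which $M_\infty$ is patched in Subsection~\ref{subsec:patching}; both points are routine within the framework already built, but need care. (For the explicit statements of Theorem~\ref{main thm on BS, explicit intro version} / Corollary~\ref{cor: specifc cases of BS} one then simply substitutes the unconditional automorphy lifting results of~\cite{BLGGT} available in the stated ranges.)
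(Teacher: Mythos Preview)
Your strategy is the paper's strategy: reduce to Theorem~\ref{thm: BS for points in support of patched modules} and then show, via an automorphy lifting/realisation theorem, that the component of $R_{\tp}^\square(\sigma)[1/p]$ through $r$ is automorphic. Two refinements are worth noting.

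First, the paper invokes Theorem~A.4.1 of~\cite{blggU2} (cf.\ the proof of Corollary~4.4.3 of~\cite{geekisin}) rather than~\cite{BLGGT} directly, and only claims the existence of an automorphic global lift whose restriction at $\tp$ lies on the \emph{same irreducible component} of $R_{\tp}^\square(\sigma)[1/p]$ as $r$, not that $\rho|_{G_{\tF_{\tp}}}\cong r$ on the nose. Your stronger claim is not needed and is not what these results give in general; since automorphic components are unions of irreducible components (Lemma~\ref{lem:basic facts about def rings acting on patched modules}(2) and Remark~\ref{rem: definition of an automorphic component}), the weaker statement suffices.

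Second, the step you flag as the ``main obstacle'' is exactly where the paper does the real work, and your sentence ``defines a point of $\Spec R_\infty(\sigma)[1/p]$'' skips it: an automorphic form gives a point of the Hecke algebra (hence of $R^{\univ}_{\cS}$), not of $R_\infty$. The paper bridges this using Corollary~\ref{cor: comparison with completed cohomology}: the automorphic form contributes to $S_{\xi,\tau}(U_0,(\sigma^\circ)^d)_{\m}[1/p]$, which via the identification of locally algebraic vectors in completed cohomology and Schikhof duality is $\Hom_{\cO[[K]]}^{\cont}(M_\infty/\ga M_\infty,(\sigma^\circ)^d)[1/p]$; hence $(M_\infty(\sigma^\circ)/\ga M_\infty(\sigma^\circ))[1/p]$ is supported at that point of $R_{\tp}^\square(\sigma)[1/p]$, and therefore $M_\infty(\sigma^\circ)[1/p]$ is supported at a point of $\Spec R_\infty(\sigma)[1/p]$ on the same component as $r$. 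Making this explicit is the missing ingredient in your outline; once you do, your argument coincides with the paper's.
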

\begin{proof} By Theorem~\ref{thm: BS for points in support of patched
    modules}, we need only prove that $r$ corresponds to a point  on
  an automorphic component of $R_{\tp}^\square(\sigma)[1/p]$.
Recalling that $y$
was chosen to correspond to the potentially diagonalisable
representation $r_{\textrm{pot.diag}}$ at the places $v\mid p$, $v\ne
\p$, this follows from Theorem A.4.1 of~\cite{blggU2}, which constructs a
global automorphic Galois representation corresponding to a point on the same
component of  $R_{\tp}^\square(\sigma)[1/p]$ as $r$ (\emph{cf.}\ the
proof of Corollary 4.4.3 of~\cite{geekisin}).

Indeed, the existence of a global automorphic Galois representation corresponding to a point on $R_{\tp}^\square(\sigma)[1/p]$ shows that $S_{\xi,\tau}(U_0, (\sigma^\circ)^d)_{\m}[1/p]$ is supported at this point. We have $R_{\tp}^\square(\sigma)[1/p]$-equivariant isomorphisms \[S_{\xi,\tau}(U_0, (\sigma^\circ)^d)_{\m}[1/p]\simeq\Hom_K\bigl(\sigma^\circ, \tilde S_{\xi, \tau}(U^\p,\cO)_{\m}\bigr)[1/p]\]\[\simeq \Hom_{\cO[[K]]}^\mathrm{cont}\bigl(M_\infty/\ga M_\infty,(\sigma^\circ)^d\bigr)[1/p],\] where the former comes by identifying locally algebraic vectors in completed cohomology and the latter follows from Schikhof duality and Corollary~\ref{cor: comparison with completed cohomology}. These isomorphisms imply that $\bigl(M_\infty(\sigma^\circ)/\ga M_\infty(\sigma^\circ)\bigr)^d[1/p]$ is supported at the same point of $R_{\tp}^\square(\sigma)[1/p]$ coming from a global automorphic Galois representation. Therefore, $M_\infty(\sigma^\circ)^d[1/p]$ is supported at a point of $R_\infty(\sigma)[1/p]$ on the same component as $r$. Finally, we conclude that the component of $R_{\tp}^\square(\sigma)[1/p]$ corresponding to $r$ is automorphic in the sense of Remark~\ref{rem: definition of an automorphic component}. \end{proof}

 \begin{cor}
    \label{cor: specifc cases of BS}Suppose that $p>2$, that
    $r:G_F\to\GL_n(E)$ is de Rham of regular weight, and that $r$ is
    generic. Suppose further that either
    \begin{enumerate}
    \item $n=2$, and $r$ is potentially Barsotti--Tate, or
    \item $F/\Qp$ is unramified, $r$ is crystalline, $n\ne p$ and $r$ has Hodge--Tate weights in
      the extended Fontaine--Laffaille range; that is, for each
      $\kappa:F\into E$, any two elements of $\HT_\kappa(r)$ differ by at
      most $p-1$.
\end{enumerate}
Then  $\BS(r)$ admits a nonzero unitary admissible
    Banach completion.
  \end{cor}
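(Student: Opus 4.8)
The plan is to deduce this corollary directly from Corollary~\ref{cor: our result on BS}, whose hypotheses are that $p\nmid 2n$, that $r$ is de Rham of regular weight, that $r$ is generic, and that $r$ is potentially diagonalizable in the sense of~\cite{BLGGT}. Three of these four conditions are already part of the hypotheses of the statement to be proved, so the entire content of the argument is to verify, in each of the two listed cases, that $p\nmid 2n$ and that $r$ is potentially diagonalizable.

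First I would dispose of the numerical condition $p\nmid 2n$. In case~(1) we have $n=2$ and $p>2$, so $p\nmid 2n=4$ trivially. In case~(2), since $r$ has regular Hodge--Tate weights lying, for each embedding $\kappa:F\into E$, in an interval of length at most $p-1$, the multiset $\HT_\kappa(r)$ consists of $n$ \emph{pairwise distinct} integers inside such an interval; this forces $n-1\le p-1$, i.e.\ $n\le p$, and since we assume $n\ne p$ we get $n<p$, whence $p\nmid n$, and together with $p>2$ this gives $p\nmid 2n$.

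Next I would check potential diagonalizability. In case~(1), $r$ is two-dimensional and potentially Barsotti--Tate, and every such representation is potentially diagonalizable; I would cite \cite{geekisin} for this (the underlying input being the description of the irreducible components of the two-dimensional potentially Barsotti--Tate framed deformation rings, together with the fact that each such component contains an ordinary point, which is potentially diagonalizable). In case~(2), $F/\Qp$ is unramified and $r$ is crystalline with Hodge--Tate weights in the extended Fontaine--Laffaille range, and such representations are potentially diagonalizable (indeed diagonalizable): the relevant crystalline framed deformation ring is formally smooth by Fontaine--Laffaille theory and is connected, on it, to a direct sum of crystalline characters. I would cite the appropriate part of \cite[Lemma 1.4.3]{BLGGT} for this. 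With these two verifications in hand, Corollary~\ref{cor: our result on BS} applies in each case and produces the desired nonzero unitary admissible Banach completion of $\BS(r)$.

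The main --- indeed the only --- difficulty here is purely bibliographic: pinning down the precise references for potential diagonalizability, especially in the potentially Barsotti--Tate case, where the statement, though well known to experts, is scattered across the literature. No genuinely new mathematics is required beyond invoking Corollary~\ref{cor: our result on BS}, which already packages the automorphy-lifting input (via \cite{blggU2} and \cite{geekisin}) that makes $r$ lie on an automorphic component of $R_{\tp}^\square(\sigma)[1/p]$.
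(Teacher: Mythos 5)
Your reduction to Corollary~\ref{cor: our result on BS}, and the explicit check that $p\nmid 2n$, are both correct and match the paper's approach. (The paper leaves the $p\nmid 2n$ verification implicit, but your argument for it is right: in case~(2), regularity together with the bound on gaps of Hodge--Tate weights forces $n\le p$, so $n\ne p$ gives $n<p$, and then $p>2$ gives $p\nmid 2n$.) For case~(1), the citation to \cite{geekisin} is exactly what the paper uses (Lemma~4.4.1 of that reference).

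For case~(2), however, the reference you propose does not cover the stated hypothesis. Lemma~1.4.3 of \cite{BLGGT} obtains (potential) diagonalizability via classical Fontaine--Laffaille theory, which applies when the Hodge--Tate weights for each embedding lie in an interval of length at most $p-2$. The statement you are proving assumes only the \emph{extended} Fontaine--Laffaille range, where the gap can be as large as $p-1$, and this extra step --- together with the auxiliary condition $n\ne p$, which is needed precisely to make it work --- is not handled by classical Fontaine--Laffaille theory or by \cite[Lemma~1.4.3]{BLGGT}. The paper instead appeals to the main result of \cite{GaoLiu12}, which establishes potential diagonalizability for $F/\Q_p$ unramified, $n\ne p$, and crystalline $r$ with Hodge--Tate weights differing by at most $p-1$. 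You should replace your citation of \cite[Lemma~1.4.3]{BLGGT} with a citation of \cite{GaoLiu12}; as written, your argument only covers the sub-case where the gaps are at most $p-2$.
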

  \begin{proof}
Note that in case~(2), the hypothesis on the Hodge--Tate weights
implies that $p\ge n$, so as $p>2$ and we are assuming that $n\ne p$,
we certainly have $p\nmid 2n$. By Corollary~\ref{cor: our result on BS}, it is enough to check that our hypotheses
imply that $r$ is potentially diagonalisable; in case~(1), this is
Lemma 4.4.1 of~\cite{geekisin}, and in case~(2), it is the main result
of~\cite{GaoLiu12}.
  \end{proof}
  \begin{rem}
    \label{rem: removing extension of coefficients in BM results}The
    attentive reader will have noticed that since throughout the paper
    we assumed that $E$ is sufficiently large (and allowed it to be
    enlarged in the course of making our argument), we have
    not proved cases of Conjecture~\ref{conj: BS} as it is written,
    but rather an apparently weaker version, which allows a finite
    extension of scalars. However, Conjecture~\ref{conj: BS} is an
    immediate consequence of this version, in the following way: given
    an (admissible) unitary Banach completion of $\BS(r)\otimes_E E'$,
    where $E'/E$ is a finite extension, we may regard this
    completion as being a representation over $E$, and then the closure of
    $\BS(r)$ inside it gives the required representation.
  \end{rem}

\section{Relationship with a hypothetical $p$-adic local Langlands correspondence}
\label{sec:LL}

In this section we describe the relationship between our construction and a hypothetical
$p$-adic local Langlands correspondence.

\subsection{A hypothetical formulation of the $p$-adic local Langlands correspondence}
Perhaps the strongest hypothesis one might make regarding a $p$-adic local Langlands
correspondence is the following: that given $\rbar:G_F \to \GL_n(\F)$
with trivial endomorphisms\footnote{We make this assumption in what
follows for simplicity, since the discussion is purely hypothetical in any case.
If $\rbar$ admits non-trivial endomorphisms, then we would instead
work with the lifting ring $R_{\rbar}^{\square}$ in everything
that follows, and the representation $L_{\infty}$ would be endowed with a
further equivariant structure for the group $\GL_n$ acting
by ``change of basis''.}
and with associated deformation ring $R_{\rbar}$,
there exists a finitely generated $R_{\rbar}[[K]]$-module $L_{\infty}$
which is $\cO$-torsion free, and is equipped with an $R_{\rbar}$-linear
$G$-action which extends the $K$-action arising from its $R_{\rbar}[[K]]$-module
structure. \begin{remark}\label{L_infty for GL_2(Q_p)} In the case of
  $\GL_2(\mathbb{Q}_p)$, let $\bar{\pi}$ be the mod $p$ representation
  of $\GL_2(\mathbb{Q}_p)$ attached to $\rbar$ by the mod $p$ local
  Langlands correspondence of~\cite{MR2642409}. Then $L_\infty$ can be
  taken to be the projective envelope of $\bar{\pi}^\vee$ in an
  appropriate category $\mathfrak{C}(\cO)$ of
  $\GL_2(\mathbb{Q}_p)$-representations. See the discussion in Section
  1.2 of~\cite{paskunasimage} for more details. As it follows from the
  discussion on page 10 of \emph{op.\ cit.}, in the case when~$\rbar$, (and thus $\bar{\pi}$), has trivial endomorphisms, the local deformation ring $R_{\rbar}$ can be identified with the endomorphism ring of the projective envelope $L_\infty$ in $\mathfrak{C}(\cO)$.
\end{remark}

Given such an object $L_{\infty}$, then for any $r: G_F \to \GL_n(E)$
arising from an $\cO_E$-valued point $x$ of $\Spec R_{\rbar}$,
we may associate a unitary Banach space representation
$B(r) := (L_{\infty}\otimes_{R_{\rbar},x}\cO_E)^d[1/p]$ of $G$,
which should be ``the'' representation of $G$ associated
to $r$ via the $p$-adic local Langlands correspondence.

One might conjecture that such a
structure should exist and satisfy the following properties:

\subsubsection{Relationship with classical local Langlands}
\label{subsubsec:loc-alg}
For any potentially semistable
lift $r:G_F \to \GL_n(E)$ of $\rbar$
with regular Hodge--Tate weights,
the locally algebraic vectors of
$B(r)$ are isomorphic to $\BS(r)$. 

\subsubsection{Local-global compatibility}
\label{subsubsec:LG}
Using the notation for completed cohomology and the Hecke algebra
that acts on it introduced in the discussion preceding Corollary~\ref{cor: comparison
with completed cohomology},
there is an isomorphism of
$\T_{\xi,\tau}^{S_p}(U^{\mathfrak p},\mathcal O)_{\mathfrak m}[G]$-modules
$$\widetilde{S}_{\xi, \tau}(U^{\mathfrak p},\mathcal O)^d_{\mathfrak m}
\cong
\left(\T_{\xi,\tau}^{S_p}(U^{\mathfrak p},\mathcal O)_{\mathfrak m}
\otimes_{R_{\rbar}} L_{\infty}\right)^{\oplus m(U^\mathfrak{p})}$$ (here
$\T_{\xi,\tau}^{S_p}(U^{\mathfrak p},\mathcal O)_{\mathfrak m}$ is regarded
as an $R_{\rbar}$-algebra via the natural maps
$R_{\rbar} \to R^{\univ}_S \to
\T_{\xi,\tau}^{S_p}(U^{\mathfrak p},\mathcal O)_{\mathfrak m},$
where the first morphism corresponds to restricting global
Galois representations to the decomposition group at $\mathfrak p$,
and the second morphism is induced by the universal automorphic
deformation of $\rhobar$),
as well as analogous isomorphisms when we add the auxiliary $Q_N$-level
structure, as in the patching process.\footnote{We remark that in the local-global compatibility isomorphism above there is a multiplicity $m(U^{\mathfrak{p}})$, which depends on the level away from $\mathfrak{p}$. However, it should be possible to impose certain global conditions, as we do in Section~\ref{subsec:globalization}, which will ensure that this multiplicity can be taken to be $1$. This multiplicity should not increase when we add auxiliary $Q_N$-level, since we also apply the projection operators defined in~\cite{jack} at primes in $Q_N$.}

\subsection{The relationship between $L_{\infty}$ and $M_{\infty}$.}
If Condition~\ref{subsubsec:LG} held, then we would find that our patched $R_{\infty}[G]$-module $M_{\infty}$
is obtained by restricting $R_{\infty}\otimes_{R_{\rbar}} L_{\infty}$
to the smallest closed subscheme of $\Spec R_{\infty}$ which contains
the support of all the modules $(M_1,Q_{N'(N)}^{\square}\otimes_{S_{\infty}}S_{\infty}/\mathfrak b_N)_{K_{2N}}$
that arise in the patching process.
Optimistic conjectures (of ``big $R$ equals big $\T$''-type) might suggest that
this support is all of $R_{\infty}$, and thus that $M_{\infty}$ is obtained
from $L_{\infty}$ simply by pulling it back along the natural map
$\Spec R_{\infty} \to \Spec R_{\rbar}$.
For this reason, we are hopeful that our patched representation $M_{\infty}$
{\em is} a good candidate for (the pull-back to $R_{\infty}$ of) $p$-adic local
Langlands. (We note that, in the case of $\GL_2(\mathbb{Q}_p)$, we can prove that $L_\infty$, constructed as in Remark~\ref{L_infty for GL_2(Q_p)}, and $M_\infty$ have the desired relationship.) 

\subsection{The relationship with the Fontaine--Mazur conjecture}
Note that if both Conditions~\ref{subsubsec:loc-alg} and~\ref{subsubsec:LG} held,
together with an appropriate
``big $R$ equals big $\T$'' result, then we would find that
if $\rho$ is a de Rham deformation of $\rhobar$ corresponding to a point
of $R^{\univ}_S$, then it would contribute to the locally algebraic
vectors of completed cohomology.   However, locally algebraic vectors
in the completed cohomology arise
precisely from algebraic automorphic forms (see Prop.~3.2.4 of~\cite{MR2207783}),
and hence we would conclude that $\rho$ would be an automorphic Galois representation.
(This is an abstraction of the strategy used to deduce the Fontaine--Mazur conjecture
for most odd two-dimensional representations of $G_{\mathbb Q}$ in \cite{emerton2010local}.)

\subsection{Concluding remarks}
The preceding discussion shows that the question of whether one can in fact relate $M_{\infty}$ to a purely
local correspondence which satisfies the above two conditions is closely
related to the Fontaine--Mazur conjecture for deformations of $\rhobar$.
Given this, we expect it to be a difficult question to precisely
determine the support of $M_{\infty}$ and the related modules $M_{\infty}(\sigma^{\circ})$
in general, and we likewise expect it to be difficult to analyse the extent
to which $M_{\infty}$ arises from a purely local construction over $\Spec R_{\rbar}$.

Furthermore, it seems quite possible that our hypothetical formulation of the $p$-adic
local Langlands correspondence is too naive; even if a local correspondence of some
kind exists, it may be of a more subtle nature.  In this case, we would still expect it to
have a strong relationship to our patched modules $M_{\infty}$, but perhaps not as
direct as the one considered in the above discussion.

In any case,
whatever the eventual truth might be, the preceding discussion suggests that the
further investigation of the patched representation $M_{\infty}$ is a problem of substantial
interest, which we hope to return to in future work.

\appendix

\section{Completed tensor product and Serre's conditions}
Let $L$ be a finite extension of $\Qp$ with the ring of integers $\OO$ and residue field $k$. Let $\mathcal{C}$ be the category of complete local noetherian $\OO$-algebras,
which are $\OO$-flat and have residue field $k$. If $A$ and $B$ are objects in $\mathcal{C}$ then the completed tensor product over $\OO$ is defined as
$$A\wtimes_{\OO} B:= \varprojlim_n A/\mm_A^n \otimes_{\OO} B/\mm_B^n.$$
It is easy to see that $A\wtimes_{\OO} B$ is again in $\mathcal{C}$. For example, if $A=\OO[[x_1,\ldots, x_n]]$ and $B=\OO[[y_1, \ldots, y_m]]$, then $A\wtimes_{\OO} B\cong \OO[[x_1, \ldots, x_n, y_1,\ldots, y_m]]$, and every ring in $\mathcal{C}$ can be obtained as a quotient of such rings.
The aim  of this note is the following Proposition.

\begin{prop}\label{risi} Let $A, B$ be objects in $\mathcal{C}$, let $x\in A$ and let $y\in B$. If $A[1/px]$ and $B[1/py]$ satisfy Serre's condition $(R_i)$ {\em (}resp.\ $(S_i)${\em )}  then so does $(A\wtimes_{\OO} B)[1/pxy]$.
\end{prop}

We note that the completed tensor product does not commute with localisation. Indeed, $A[1/p]$ is not even a local ring. A standard application
of Serre's conditions $(R_i)$ and $(S_i)$, see \cite[\S 23]{matsumura},  yields the following result.

\begin{cor}\label{cor_risi} Let \textbf{P} be one of the following properties: reduced, regular, Cohen--Macaulay, normal. If $A[1/px]$ and $B[1/py]$ have \textbf{P} then so does
$(A\wtimes_{\OO} B)[1/pxy]$.
\end{cor}

We will split the proof of Proposition \ref{risi} into several steps.

\begin{lem}\label{risi_lem_1} Let $K$ be a field, $A$ a noetherian $K$-algebra and $K'$ a finite separable field extension of $K$. If $A$  satisfies $(R_i)$
{\em (}resp.\ $(S_i)${\em )} then so does $K'\otimes_K A$.
\end{lem}
\begin{proof} Let $B= K'\otimes_K A$, let $\mathfrak P$ be a prime ideal of $B$ and let $\mathfrak p=A \cap \mathfrak P$.
 Then $B$ is a free $A$-module of finite rank. This implies that $B_{\mathfrak P}$ is flat over $A_{\mathfrak p}$. Since the conditions
 $(R_i)$ and $(S_i)$ are local, $A_{\mathfrak p}$ satisfies $(R_i)$ (resp.\ $(S_i)$). It is enough to show that for every prime ideal $\mathfrak q$
 of $A_{\mathfrak p}$ the fibre ring $B_{\mathfrak P} \otimes_{A_{\mathfrak p}} \kappa(\mathfrak q)$ satisfies $(R_i)$ (resp.\ $(S_i)$),
 \cite[Thm.23.9]{matsumura}.  The fibre ring
 is a localisation at $\mathfrak P$ of $B\otimes_A \kappa(\mathfrak q)\cong K'\otimes_{K} \kappa(\mathfrak q)$. Since $K'$ is a finite separable
 extension of $K$, this ring is isomorphic to a finite product of fields, and hence is regular.
\end{proof}
\begin{lem}\label{risi_lem_2} Let $A, B\in \mathcal{C}$ be integral domains and let $K(A)$ and $K(B)$ be the quotient fields of $A$ and $B$, respectively. Then $K(A)\otimes_A (A \wtimes_{\OO} B) \otimes_B K(B)$
is a regular ring.
\end{lem}
\begin{proof} We first note
\begin{equation}\label{risi_eq_1}
K(A)\otimes_A (A \wtimes_{\OO} B) \otimes_B K(B)\cong S^{-1}_A ( S^{-1}_B( A \wtimes_{\OO} B)),
\end{equation}
where $S_A$ and $S_B$ denote the multiplicative sets $A\setminus\{0\}$ and $B\setminus \{0\}$, respectively.

If both $A$ and $B$
are formally smooth, then $A\wtimes_{\OO} B$ is formally smooth, as explained above, and hence regular. Since a localisation of a regular ring
is again regular, \cite[Thm. 19.3]{matsumura}, we deduce from \eqref{risi_eq_1} that the assertion holds if both $A$ and $B$ are formally smooth.

In general,  by Cohen's structure theorem for complete local rings there are subrings $A'\subset A$, $B'\subset B$, such that $A'$ and $B'$ are formally smooth objects of $\mathcal{C}$ and $A$ is a finite $A'$-module, $B$ is a finite $B'$-module. The last property  implies that
$$ A\wtimes_{\OO} B \cong A \otimes_{A'} ( A'\wtimes_{\OO} B')\otimes_{B'} B.$$
This induces an isomorphism between $K(A)\otimes_A (A \wtimes_{\OO} B) \otimes_B K(B)$ and
$$ K(A)\otimes_{K(A')} ( K(A')\otimes_{A'} (A'\wtimes_{\OO} B')\otimes_{B'} K(B'))\otimes_{K(B')} K(B).$$
Since $A'$ and $B'$ are formally smooth, and $K(A')$, $K(B')$ are of  characteristic $0$,   Lemma \ref{risi_lem_1} implies the assertion.
\end{proof}

\begin{lem}\label{risi_lem_3} Let $A, B \in \mathcal{C}$ with $A$ an integral domain with quotient field $K(A)$ let $y\in B$. If $B[1/py]$ satisfies $(R_i)$
{\em (}resp.\ $(S_i)${\em )},
then so does $K(A)\otimes_A (A\wtimes_{\OO} B)[1/y]$.
\end{lem}
\begin{proof} Since $A$ is $\OO$-flat, $A\wtimes_{\OO} B$ is $B$-flat. Moreover,
\[C:=K(A)\otimes_A (A\wtimes_{\OO} B)[1/y]\] is $A\wtimes_{\OO} B$-flat, since it is
a localisation of $A\wtimes_{\OO} B$ at $S_A$ and $\{1, y, y^2,\ldots\}$. Thus $C$ is $B$-flat. Let $\mathfrak P$ be a prime ideal of $C$  and let $\mathfrak p= B \cap \mathfrak P$. Then $C_{\mathfrak P}$
is flat over $B_{\mathfrak p}$. We note that $p, y\not \in \mathfrak p$. Since $B[1/py]$ is assumed to satisfy $(R_i)$ (resp.\ $(S_i)$) and
these conditions are local, $B_{\mathfrak p}$ satisfies $(R_i)$ (resp.\ $(S_i)$). It follows from \cite[Thm. 23.9]{matsumura}  that it is
enough to show that for every prime $\mathfrak q$ of $B_{\mathfrak p}$ the fibre ring $C_{\mathfrak P} \otimes_{B_{\mathfrak p}} \kappa(\mathfrak q)$ satisfies $(R_i)$ (resp.\ $(S_i)$). We claim that $C_{\mathfrak P} \otimes_{B_{\mathfrak p}} \kappa(\mathfrak q)$ is regular, so that these conditions
are satisfied. Since it  is the localisation
of $C\otimes_B \kappa(\qqq)$ at $\mathfrak P$, it is enough to show that  $C\otimes_B \kappa(\qqq)$ is regular. Since
$$ C\otimes_B \kappa(\qqq)\cong K(A) \otimes_A (A \wtimes_{\OO} B/\qqq)\otimes_{B/\qqq} K(B/\qqq)$$
the assertion follows from Lemma \ref{risi_lem_2}.
\end{proof}

\begin{proof}[Proof of Proposition \ref{risi}] The idea is the same as in the proof of Lemma \ref{risi_lem_3}. Let $C:=A\wtimes_{\OO} B$
and let $\mathfrak P$ be a prime ideal of $C$ not containing $p$, $x$ and $y$. Let $\mathfrak p:=A \cap \mathfrak P$. Then
$A_{\mathfrak p}$ satisfies $(R_i)$ (resp.\ $(S_i)$) and $C_{\mathfrak P}$ is flat over $A_{\mathfrak p}$. It is enough to
show that for all prime ideals $\mathfrak q$ of $A_{\mathfrak p}$ the fibre ring $C_{\mathfrak P}\otimes_{A_{\mathfrak p}} \kappa(\qqq)$
satisfies $(R_i)$ (resp.\ $(S_i)$). The ring $K(A/\mathfrak q)\otimes_{A/\mathfrak q} ( A/\mathfrak q \wtimes_{\OO} B)[1/y]$ satisfies $(R_i)$
(resp.\ $(S_i)$) by Lemma \ref{risi_lem_3}. Since the fibre ring is the localisation of this ring at $\mathfrak P$, the assertion follows.
\end{proof}

\emergencystretch=3em
\printbibliography
\end{document}